\documentclass[12pt,a4paper, reqno]{amsart}
\usepackage{}
\usepackage{txfonts}
\usepackage{color}
\usepackage{empheq}
\usepackage{CJK}
\usepackage{amssymb}
\usepackage[T5, T1]{fontenc}
\usepackage{bbm}
\usepackage{amsfonts}
\usepackage{amsxtra}
\usepackage{amsmath}
\usepackage{amssymb}
\usepackage{amscd}
\usepackage{amsthm}
\usepackage{mathrsfs}
\usepackage{lipsum}
\usepackage{a4wide}
\usepackage{url}
\usepackage{mathscinet}
\usepackage{mathdots}
\usepackage[all]{xypic}
\usepackage{graphicx}
\usepackage{amsmath}
\usepackage{fourier}
\usepackage{hyperref}
\makeatletter
\textwidth 15cm
  \oddsidemargin 0.7cm
  \evensidemargin 0.7cm
  \textheight 45\baselineskip

\makeatletter

\numberwithin{equation}{section}
\newtheorem*{theorem*}{Theorem}
\newtheorem{lemma}{Lemma}[section]
\newtheorem{proposition}[lemma]{Proposition}
\newtheorem{remark}[lemma]{Remark}
\newtheorem{assumption}[lemma]{Assumption}
\newtheorem{example}[lemma]{Example}
\newtheorem{theorem}[lemma]{Theorem}
\newtheorem{definition}[lemma]{Definition}

\newtheorem{corollary}[lemma]{Corollary}

\newtheorem*{question}{Question}

\oddsidemargin=-1cm
\evensidemargin=-1cm
\baselineskip 20pt \textwidth 18cm \sloppy \theoremstyle{plain}

\newcommand{\JH}{\operatorname{JH}}
\newcommand{\Aut}{\operatorname{Aut}}
\newcommand{\End}{\operatorname{End}}
\newcommand{\Hom}{\operatorname{Hom}}
\renewcommand{\Im}{\operatorname{Im}}
\newcommand{\Ker}{\operatorname{Ker}}

\newcommand{\Irr}{\operatorname{Irr}}

\newcommand{\spann}{\operatorname{span}}
\newcommand{\cInd}{\operatorname{c-Ind}}
\newcommand{\Ind}{\operatorname{Ind}}
\newcommand{\Rep}{\operatorname{Rep}}
\newcommand{\Res}{\operatorname{Res}}
\newcommand{\res}{\operatorname{res}}

\newcommand{\Ha}{\operatorname{H}}
\newcommand{\Id}{\operatorname{Id}}
\newcommand{\Oa}{\operatorname{O}}
\newcommand{\Ext}{\operatorname{Ext}}

\newcommand{\C}{\mathbb C}

\newcommand{\R}{\mathbb R}
\newcommand{\Z}{\mathbb Z}

\newcommand{\coker}{\operatorname{coker}}

\newcommand{\GL}{\operatorname{GL}}
\newcommand{\GU}{\operatorname{GU}}
\newcommand{\GSp}{\operatorname{GSp}}
\newcommand{\Mp}{\operatorname{Mp}}
\newcommand{\GMp}{\operatorname{GMp}}

\newcommand{\Sp}{\operatorname{Sp}}

\newcommand{\U}{\operatorname{U}}

\newcommand{\GO}{\operatorname{GO}}
\newcommand{\Ga}{\operatorname{G}}
\newcommand{\Ua}{\operatorname{U}}

\newcommand{\diag}{\operatorname{diag}}

\newcommand{\id}{\operatorname{Id}}
\newcommand{\nnn}{\operatorname{N}}

\newcommand{\stab}{\operatorname{Stab}}
\newcommand{\Stab}{\operatorname{Stab}}
\newcommand{\supp}{\operatorname{supp}}
\newcommand{\tr}{\operatorname{Tr}}
\newcommand{\Tr}{\operatorname{Tr}}
\newcommand{\Trd}{\operatorname{Trd}}
\newcommand{\Nrd}{\operatorname{Nrd}}

\begin{document}
\title{On the local theta representation}
\keywords{Howe Correspondences, Theta series}
\author{Chun-Hui Wang}
\address{School of Mathematics and Statistics\\Wuhan University \\Wuhan, 430072,
P.R. CHINA}
\email{cwang2014@whu.edu.cn}
\subjclass[2010]{11F27 (Primary), 20G25 (Secondary).}
\begin{abstract}
We study the algebraic framework in which one can define, in  the manner of the theta correspondence, a correspondence between representations of two locally profinite groups  $H_1$, $H_2$. In particular, we examine when and how such a correspondence can be extended to bigger groups $G_1$, $G_2$ containing $H_1$, $H_2$ respectively as normal subgroups.  As an application,  we   discuss     the theta correspondence for   a reductive  dual pair of   the  similitude  groups  in the non-archimedean case.
\end{abstract}
\maketitle

\setcounter{tocdepth}{1}
\setcounter{secnumdepth}{6}
\tableofcontents

\section*{Introduction}
The celebrated local theta or Howe correspondence relates representations of two groups $G_1$, $G_2$ which form a dual pair inside  a symplectic group $\Sp(F)$ or  its  metaplectic cover   group $\Mp(F)$ over a local field $F$.   The Weil  representation $\omega$ of $\Mp(F)$ can then be restricted to $G_1\times G_2$ and the    correspondence is between irreducible quotients of $\omega|_{G_1}$ and irreducible quotients of $\omega|_{G_2}$.  To put it in a general perspective,  in this text we  propose to study a kind of representation of  a direct product of two locally profinite groups,  based on the representation-theoretic  consideration  of   this  correspondence.  It is inspired from the works of   Barthel \cite{Bar1}, Gan-Tantono\cite{GanT} and Roberts \cite{Rob1} on the study of local Howe correspondences for  the similitude groups.   Our  original motivation is to generate their results  largely to various reductive  dual pairs of   similitude  groups over  a non-archimedean local field $F$.  In \cite{Bar1}  Barthel  defined  the  Metaplectic cover group $\GMp(W)$ of $\GSp(W)$, and also explained  the difficulty  to study    Howe correspondences in this case. Next,  Roberts  in  \cite{Rob1} definitely  studied  local  theta correspondences for   certain  symplectic-orthogonal  reductive  dual pairs of  similitude  groups, and then Gan-Tantono  \cite{GanT}  studied the cases of  their inner forms.
These papers provided  some original  ideas and methods, in particular examples to  this text, although to achieve  our main results we need to use  a lot of results on smooth  representations of locally profinite groups.

 To simply our introduction, let us  take up the notation and conventions of the  next section in advance.  Let $(\Pi, V)$ be a smooth representation  of a direct product of  two locally profinite groups $G_1$, $ G_2$.  We  only work with  the case that all irreducible smooth  representations of $G_1$, $G_2$, and $G_1 \times G_2$ are supposed to be admissible.     It is not hard to see that there are   two canonical projections $p_1: \mathcal{R}_{G_1 \times G_2}(\Pi) \longrightarrow \mathcal{R}_{G_1}(\Pi) $, and $p_2: \mathcal{R}_{G_1 \times G_2}(\Pi) \longrightarrow \mathcal{R}_{G_2}(\Pi)$, with the images $\mathcal{R}^0_{G_1}(\Pi)$ and  $\mathcal{R}^0_{G_2}(\Pi)$ respectively.   We call $(\Pi, V)$  a \emph{theta representation} of $G_1 \times G_2$  if
 \begin{itemize}
 \item[(1)] the representation satisfies the graph property meaning that both $p_1$, $p_2$ are injective,
 \item[(2)] the restriction of $\Pi$ to $G_1 \times G_2$ is multiplicity-free, i.e.   $m_{G_1 \times G_2}(\Pi, \pi_1 \otimes \pi_2) \leq 1$, for all $\pi_1 \otimes \pi_2 \in \Irr(G_1 \times G_2)$,  and
 \item[(3)] for $1 \leq  \alpha \neq \beta \leq 2$, the greatest $\pi_{\alpha}$-isotypic component $V_{\pi_{\alpha}} \simeq \pi_{\alpha} \otimes \Theta_{\pi_{\alpha}}$ is a finitely generated  representation of $G_{\alpha} \times G_{\beta}$ .
 \end{itemize}
  One such representation gives, the \emph{Howe correspondence} in the general sense,  between the sets  $\mathcal{R}^0_{G_1}(\Pi)$ and   $\mathcal{R}^0_{G_2}(\Pi)$, grouped in the graphic set $\mathcal{R}_{G_1 \times G_2}(\Pi)$.
 It also gives rise to  another  associated maps from $\left\{ \pi_{\alpha}\right\}$ to  the Jordan-H\"older multiset $\JH(\Theta_{\pi_{\alpha}})$.

 In the above  definition, we will call  $(\Pi,  G_1\times G_2, V)$ a theta representation of finite length if each $\Theta_{\pi_{\alpha}}$ has finite length;  call it  a general theta representation if it  only satisfies  the conditions (1)(2); call it a general theta representation with respect to a subset $\mathcal{I}$ of $\Irr(G_1\times  G_2) $ if we only consider  irreducible representations $\pi_1\otimes \pi_2 \in \mathcal{I}$;  the last one  is extremely interesting when  there exists a non-denegenate $G_1\times G_2$-invariant Hermitian form on $V$, i.e. $(\Pi,  G_1\times G_2, V)$ is a preunitary representation.

One purpose of this paper is to   provide some incipient results for this kind of representations. Assume now that  $H_1, H_2$ are two  closed normal subgroups of $G_1$, $G_2$ respectively such that $G_1/H_1 \simeq G_2/H_2$ under a  map $\gamma$  with the graph $\Gamma/{(H_1 \times H_2)}$. Suppose that all  irreducible smooth representations of $G_i$, $H_i$ are admissible, for $i=1, 2$.  Let $(\rho, W)$ be a smooth representation of $\Gamma$.  Our  main results are the following:

 \begin{theorem}\label{theorema1}
 Suppose that $G_1/H_1$ is an  abelian discrete group  .
  \begin{itemize}
  \item[(1)] Suppose that  $\mathcal{R}_{H_i}(\pi_i) \neq \emptyset$ for every $\pi_i \in \Irr(G_i)$,   $i=1,2$. If  the representation $\Res_{H_1 \times H_2}^{\Gamma} \rho$  of $H_1 \times H_2$ is a theta  representation, then so is  the representation $\cInd_{\Gamma}^{G_1 \times G_2} \rho$ of $G_1 \times G_2$.
   \item[(2)] If the representation $\cInd_{\Gamma}^{G_1 \times G_2} \rho$ of $G_1 \times G_2$ is a
   theta representation of finite length, then  $\Res_{H_1 \times H_2}^{\Gamma} \rho$  satisfies the graph property.  Moreover     if  for $i=1,2$, assume (a)  $\Ext_{G_i}^1(\pi_i, \pi_i)=0$, for any  $\pi_1\otimes \pi_2\in \mathcal{R}_{G_1\times G_2}(\pi)$, (b) $\Rep(H_i)$ is locally noetherian, then  $\Res_{H_1 \times H_2}^{\Gamma} \rho$   is a  theta representation of finite length.
   \end{itemize}
\end{theorem}
 \begin{theorem}\label{theoremb}
 Suppose that $G_i/H_i$ is a compact group, and assume the category $\Rep(H_i)$ is locally noetherian,  for $i=1, 2$.
   \begin{itemize}
   \item[(1)] If the representation $\Res_{H_1 \times H_2}^{\Gamma} \rho$ of $H_1 \times H_2$ is a theta  representation, then so is  the representation $\cInd_{\Gamma}^{G_1 \times G_2} \rho$ of $G_1 \times G_2$.
   \item[(2)] Suppose that  $\mathcal{L}_{G_i}(\Ind_{H_i}^{G_i}\sigma_i) \neq \emptyset$,   for every $\sigma_i \in \Irr(H_i)$,  $i=1,2$. If  the representation $\cInd_{\Gamma}^{G_1 \times G_2} \rho$   of $G_1 \times G_2$ is a theta representation, then so is the representation  $\Res_{H_1 \times H_2}^{\Gamma} \rho$ of $H_1 \times H_2$.
   \end{itemize}
 \end{theorem}

 Now let  $\delta_{\Gamma\setminus G_1\times G_2} =\frac{\Delta_{G_1\times G_2}}{\Delta_{\Gamma}}$;  let $\widehat{H}_i$  resp.  $\widehat{G}_i$ denote the set of all equivalence classes of irreducible unitary representations of $H_i$ resp. $G_i$  and $\Irr_u(H_i)$  resp. $\Irr_u(G_i)$  the set of all equivalence classes of irreducible preunitary smooth representations of $H_i$  resp. $G_i$.   For each $i$  assume (1) $G_i$, $H_i$  are  groups of \emph{type I}, (2) $\widehat{H_i}/G_i$ is \emph{ countably separated}, (3) For any $\omega \in \widehat{H_i}$, the orbit $\{ \omega^g \mid g\in G_i\}$ is  \emph{ countable}, (4) For any $\omega\in \widehat{H_i}$,  the cardinality of $ \{ \lambda \in \widehat{G_i} \mid m_{H_i}(\lambda, \omega)\neq 0\}$ is \emph{countable}, (5) there exists an open  subgroup $O$ of $G$, such that   $\Ha^2(O, \C^{\times})$ only contains  elements of finite order.     Let $(\rho, W)$ be a smooth preunitary representation of $\Gamma$.   Assume $W$ is a second countable vector space, and $G_i$, $H_i$ all are second countable groups.
  \begin{theorem}
     \begin{itemize}
   \item[(1)]
If $\Res_{H_1 \times H_2}^{\Gamma} \rho$  is a general theta representation of  $H_1 \times H_2$  with respect to $\Irr_u(H_1) \times \Irr_u(H_2)$, then so is  the representation $\cInd_{\Gamma}^{G_1 \times G_2} (\delta_{\Gamma \setminus (G_1\times G_2)}^{1/2}\otimes \rho)$ of $G_1 \times G_2$  with respect to $\Irr_u(G_1) \times \Irr_u(G_2)$.
   \item[(2)] Suppose that  $m_{H_i}(\lambda_i, \omega_i)<+\infty$,   for  $\lambda_i\in \Irr_u(G_i),  \omega_i \in \Irr_u(H_i)$,  $i=1,2$. If  $\cInd_{\Gamma}^{G_1 \times G_2}(\delta_{\Gamma \setminus (G_1\times G_2)}^{1/2}\otimes \rho)$   of $G_1 \times G_2$ is a general theta representation  with respect to $\Irr_u(G_1) \times \Irr_u(G_2)$, then so is   $\Res_{H_1 \times H_2}^{\Gamma} \rho$ of $H_1 \times H_2$  with respect to $\Irr_u(H_1) \times \Irr_u(H_2)$.
   \end{itemize}
     \end{theorem}
 Keep the notations, and consider the situation that  $H_i$ is not a normal subgroup of $G_i$.  In this case,  set $H=H_1\times H_2$, $G=G_1\times G_2$. Let  $\Delta=\{s=(s_1, s_2)\in G\}$, containing $1$, be a complete  set of representatives for  $H\setminus G/H$. Assume $\Delta$ is a countable set. For any $s\in \Delta$, $s\neq 1$,   assume: (1) $H_s\cap H$ is a normal subgroup of $H$, (2) $H/(H_s\cap H)$  is  not compact, (3) up to $H_s\cap H$-conjugacy there  exists at least one and at most  a finite number of maximal open compact subgroups in $H$, (4)   for each maximal open compact  subgroup $K$of $H$, for each positive integer  $n$, the set  $\mathcal{N}(K)_n=\{ K^i \mid K^i \lhd K, [K: K^i]=n\}$ has finite cardinality. Let $(\sigma, U)$ be  a smooth representation of $H$,   set $\pi= \cInd_{H}^{G}\sigma$.  Assume $U$ is a second countable vector space, and $G$, $H$ both are second countable groups.
  \begin{proposition}
  Assume $G/H$ is compact.
  \begin{itemize}
\item[(1)] Assume  that $H$ is an open subgroup of $G$. If $\sigma$ is a  general theta representation of $H$, then  so is  the representation $\pi$ of $G$.
\item[(2)] Assume:  (1)  the category $\Rep(H)$ is locally noetherian,  (2)  for any open compact subgroup $K_1$ of $H$, assume $\epsilon_{K_1} \ast \mathcal{H}(H)\ast \epsilon_{ K_1}$ can be generated by $\epsilon_{K_1}$ and  a finite number of $\epsilon_{h}$'s,  (3)  $(\sigma, U)$ is an admissible preunitary representation of $H$. If $\sigma$ is a  general theta representation of $H$,  then so is  the representation $\pi$ of $G$.
\end{itemize}
\end{proposition}

To  show  those results,  we use many fine results on representations of $p$-adic groups  established in Bernstein-Zelevinsky \cite{BernZ}, Bushnell-Henniart \cite{BushH}, Casselman \cite{Cass}, Mackey  \cite{Ma}, and we deem  them as our basic references.  The proofs   proceed by using the Clifford-Mackey theory about the behaviour of  the restriction of irreducible representations of a locally profinite group to its certain invariant subgroups.   Indeed under our assumptions, we essentially  only work  with  these irreducible representations of $G_i$, whose restrictions to $H_i$ are semi-simple.   With an application, we discuss in board generalities about Howe correspondences for  the similitude groups in the last section. It is a very difficult  problem to  give the explicit correspondences and connect them with the related subjects. However one can see   many favorable and interesting   research  works in local  and global  cases, for examples   Gan-Ichino\cite{GanI}, Harris-Kudla- Sweet\cite{HKS}, Ginzburg-Rallis-Soudry \cite{GiRaSo}, Mao-Rallis\cite{MaR}, etc.

\section{Preliminaries}
\subsection{Notation and Conventions}\label{notation}
We  shall  follow the notion and  conventions of \cite{BernZ}, \cite{BushH}, \cite{Cass} on  the subject of   complex representations of locally profinite groups. In the whole text,  locally profinite group will be assumed \emph{$\sigma$-compact}, meaning   a union of countably many compact sets.
Let $(\pi, V)$ be a representation of a locally profinite group $G$. Call $\pi$ \emph{smooth} if the stabilizer of every vector in $V$ is open,  \emph{admissible} if its $K$-invariant subspace is finite-dimensional for any compact open subgroup $K$ of $G$.  If $H$ is a closed subgroup of $G$ and $(\sigma, W)$ is a smooth representation of $H$, we use the notions of \emph{induced representation}:

$\Ind_{H}^G \sigma=\left\{ \right. f: G \longrightarrow W\mid $ (a)  $f(hg)= \sigma(h) f(g)$, for $h\in H, g\in G$, (b) there is a compact open subgroup  $K_f$  of $G$ (depending on  $f$) such that  $f$  is right  $K_f$-invariant\}\\
and \emph{compact induced representation}:

$\cInd_{H}^G \sigma=\left\{\right. f: G \longrightarrow W\mid f$  satisfies the above conditions (a), (b), and also (c) that $f$  is compactly supported modulo $H$\}. Let $S(G)$, or  $C_c^{\infty}(G)$ denote  the space   of locally constant functions with compact support. Let  $S^{\ast}(G)$ denote the set of $\C$-linear functions on $S(G)$; such functions are called \emph{distributions}. The so-called \emph{Dirac distribution} $\delta_g$ at a point $g$, is defined by $\langle\delta_g, f \rangle=f(g)$, for all $f\in S(G)$.     Recall that  a  left Haar measure $\mu_G$ of $G$  acting on $S(G)$ is defined  by $\langle \mu_G, f\rangle := \int_G f(x) d\mu_G(x)$, for $f(x)\in S(G)$.
As is known that there is a unique character $\Delta_G: G \longrightarrow \R_{>0}^{\ast}$, called the \emph{modulus} of $G$, such that $\Delta_G(g) \int_G f(xg)d\mu_G(x)=\int_G f(x) d\mu_G(x)$, for $f(x)\in S(G)$.   In particular, when $\Delta_G \equiv 1_G$,  $G$ is called \emph{unimodular}.

$S(G)$, when imposed the canonical convolution $\ast$ defined by $f_1 \ast f_2(x)=\int_G f_1(y) f_2(y^{-1}x) d\mu_G(y)$ for $f_1,f_2 \in S(G)$,  will be called  the \emph{Hecke algebra} of $G$, denoted by $\mathcal{H}(G)$ from now on.  For    a compact open subgroup  $K$ of $G$,  one kind of idempotent element $\epsilon_K $ in $\mathcal{H}(G)$ is defined by
    $$\epsilon_K(g)=\left\{ \begin{array}{ll}
\mu_G(K)^{-1} & \textrm{ if } g\in K,\\
0&  \textrm{ otherwise. }
\end{array} \right.$$
We then write $\mathcal{H}(G,K)$ for  the unit  algebra $\epsilon_K\ast \mathcal{H}(G) \ast \epsilon_K$. $\Rep(G)$ will denote the category of all smooth representations of $G$, and $\Irr(G)$ will denote the set of equivalence classes of irreducible smooth representations of $G$. If $(\sigma, W) \in \Rep(G)$,  let $(\check{\sigma},   \check{W})$ denote its \emph{contragredient} representation. If $\pi \in \Rep(G)$, we will  let $\mathcal{R}_G(\pi)= \left\{ \rho \in \Irr(G) \mid \Hom_G(\pi, \rho) \neq 0\right\}$, $\mathcal{L}_G(\pi)= \left\{ \rho \in \Irr(G) \mid \Hom_G(\rho,\pi) \neq 0\right\}$, and define  $m_G(\pi, \rho)=  \dim_{\C}\Hom_G(\pi, \rho)$.  The symbol $\rho \prec  \pi$ means that $\rho$  is a sub-representation of $\pi$.

 In the whole paper, the representations  will be assumed smooth unless otherwise stated.
\subsection{Some results on representations}

This section is devoted to recalling some well-known results in \cite{BernZ}, \cite{BushH}, \cite{Cass} and proving some consequences for convenient use.
 We will let $H$ be a closed subgroup of a locally profinite group  $G$, $\Delta_G$(resp. $\Delta_H$)  the modulus of $G$(resp. $H$).  Fix   an element  $(\pi, V) \in \Rep(G)$, and an element $(\rho, W) \in \Rep(H)$.
\begin{lemma}\label{unimodular}
\begin{itemize}
\item[(1)] If $H$ is an open  subgroup of $G$, then $\Delta_H=\Delta_G|_H$.
\item[(2)] If $H$ is a normal subgroup of $G$, and $G/H$ is a unimodular group, then $\Delta_H=\Delta_G|_H$.
\end{itemize}
\end{lemma}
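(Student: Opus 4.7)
The plan is to compare the modulus of $H$ with the restriction of that of $G$ by producing, in each setting, a convenient left Haar measure on one group built from a left Haar measure on the other.

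For (1), the key observation is that $\mu_G|_H$ is a left Haar measure on $H$: it is left $H$-invariant by restriction, finite on compact subsets of $H$ (which are compact in $G$), and nonzero since $H$ being open forces $\mu_G(H) > 0$ (for instance, $H$ contains a compact open subgroup of positive $\mu_G$-measure). Because $H$ is open, every $f \in S(H)$ extends by zero to $\tilde f \in S(G)$; for $h \in H$, the translate $x \mapsto \tilde f(xh)$ also vanishes outside $H$, since $xh \in H$ together with $h \in H$ forces $x = (xh)h^{-1} \in H$. Inserting $\tilde f$ into the defining identity for $\Delta_G$ and collapsing both sides to integrals over $H$ recovers the defining identity for $\Delta_H$ with respect to $\mu_G|_H$, whence $\Delta_H(h) = \Delta_G(h)$ for all $h \in H$.

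For (2), the strategy is to manufacture a left Haar measure $\nu$ on $G$ out of $\mu_H$ and a left Haar measure $\mu_{G/H}$ on the quotient, and then compute its modulus in two ways. For $f \in S(G)$ set $F(g) := \int_H f(gh)\, d\mu_H(h)$; the left invariance of $\mu_H$ yields $F(gh') = F(g)$ for $h' \in H$, so $F$ descends to an element of $S(G/H)$, and we define $\nu(f) := \int_{G/H} F(gH)\, d\mu_{G/H}(gH)$. Left $G$-invariance of $\nu$ follows from the left invariance of $\mu_{G/H}$ under the induced translation action of $G/H$ on itself, so by uniqueness $\nu$ is a positive multiple of $\mu_G$. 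For $h_0 \in H$, on the one hand $\int_G f(xh_0)\, d\nu(x) = \Delta_G(h_0)^{-1} \nu(f)$ by the defining property of $\Delta_G$; on the other hand the inner integral transforms as $\int_H f(ghh_0)\, d\mu_H(h) = \Delta_H(h_0)^{-1} F(g)$ by the defining property of $\Delta_H$, so that $\int_G f(xh_0)\, d\nu(x) = \Delta_H(h_0)^{-1} \nu(f)$. Comparing the two expressions forces $\Delta_G(h_0) = \Delta_H(h_0)$.

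The main technical point to handle with care is showing that $F$ genuinely lies in $S(G/H)$ (locally constant and compactly supported modulo $H$) and that the resulting $\nu$ is nonzero, so the appeal to uniqueness of Haar measure is legitimate. The unimodularity hypothesis on $G/H$ enters precisely to guarantee a bona fide Haar measure $\mu_{G/H}$ on the quotient that can be used without correction factors in the product construction. Once these preliminaries are in place, the modulus computation is a short change-of-variables argument from the defining identities, and no further input is needed.
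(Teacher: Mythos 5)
Your proposal is correct and follows essentially the same strategy as the paper: restrict $\mu_G$ to the open subgroup $H$ for part (1), and build a left Haar measure on $G$ from $\mu_H$ and $\mu_{G/H}$ via the averaging map $f \mapsto \overline{f}$ for part (2), then compare moduli through the defining identity. The paper phrases part (1) in terms of the exact sequence $0 \to S^{\ast}(H\setminus G) \to S^{\ast}(G) \to S^{\ast}(H) \to 0$ and part (2) using the right-translation action $\gamma_G$ on distributions, but these are only notational variants of the computation you carry out directly.
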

\begin{proof}
1) In the known exact sequence
$0 \longrightarrow S^{\ast}(G\backslash  H)  \longrightarrow S^{\ast}(G) \stackrel{ i_{H}^{\ast}}{\longrightarrow} S^{\ast}(H) \longrightarrow 0$,
 the map $i_H^{\ast}$ sends a left Haar measure $\mu_G$  of $G$ to that of $H$.  For an element $f\in S(H) \subset S(G), h\in H$ we have
$$\Delta_G(h) \int_{H} f(xh) d i_H^{\ast}(\mu_G)(x)= \Delta_G(h) \int_{G} f(xh) d \mu_G(x)= \int_G f(x) d\mu_G(x)= \int_H f(x) di_H^{\ast}(\mu_G)(x),$$
 so $\Delta_G|_H=\Delta_H$.\\
2) Let $\mu_H$  be a left Haar measure of $H$ and $\mu_{G/H}$  a Haar measure of $G/H$. Then there is a \emph{well-defined} $\C$-linear map:
$$ - : S(G) \longrightarrow S(G/H); \qquad f \longmapsto  \Big(  \overline{f}(gH):= \int_H f(gh) d\mu_H(h)\Big).$$
 Now we define an element $\mu_G \in S^{\ast}(G)$  by
$\langle \mu_G, f \rangle:= \langle \mu_{G/H}, \overline{f}\rangle= \int_{G/H} \overline{f}(\overline{g}) \mu_{G/H} (\overline{g})$, for all $f\in S(G)$.
Define the left and right actions of $G$ on itself by
$\rho_G(g_0) (g)=g_0g$  and  $\gamma_G(g_0)(g) =gg_0^{-1}$ respectively, and   extend them conventionally onto the sets $S(G)$ and $S^{\ast}(G)$.  For $g_0 \in G, f\in S(G)$, we then have
$$\langle \rho_G(g_0) \mu_G, f \rangle= \langle \mu_G, \rho_G(g^{-1}_0) f \rangle=\langle \mu_{G/H}, \overline{\rho_G(g_0^{-1})f}\rangle=\langle \mu_{G/H}, \rho_{G/H}(\overline{g_0}^{-1}) \overline{f}\rangle
 = \langle \mu_{G/H}, \overline{f} \rangle = \langle \mu_{G}, f \rangle.$$
This implies that $\mu_G$ is a left Haar measure of $G$. On the other hand, for $h \in H$, we have
$$ \langle \Delta_G(h) \mu_G, f \rangle= \langle \gamma_G(h) \mu_G, f\rangle =\langle \mu_G , \gamma_G(h^{-1}) f\rangle =\langle \mu_{G/H}, \overline{\gamma_G(h^{-1}) f } \rangle= \langle \mu_{G/H} , \Delta_H(h) \overline{f}\rangle= \Delta_H(h) \langle \mu_G, f\rangle,$$
which  shows that $\Delta_G|_H=\Delta_H$.
\end{proof}
\begin{remark}\label{generalmod}
By the general result on locally compact groups,  if $H$ is a normal subgroup of $G$,  then $\Delta_H=\Delta_G|_H$.
\end{remark}
\begin{proof}
The proof is   more complicated than the above discussion,  and one can see \cite[ pp. 205-206]{HR}.
\end{proof}
\begin{remark}
If $G$ is an abelian group, a simple group, or a union of compact groups, then it is unimodular.
\end{remark}

\begin{theorem}[Frobenius reciprocity]
\begin{itemize}
\item[(1)] $\Hom_G\big( \pi, \Ind_H^G \rho\big) \simeq \Hom_H\big( \Res_H^G \pi, \rho\big).$
\item[(2)] $\Hom_G\big( \cInd_H^G \rho, \check{\pi}\big) \simeq \Hom_H \big( \frac{\Delta_H}{\Delta_G} \rho, (\Res_H^G \pi)^{\vee}\big).$
\end{itemize}
\end{theorem}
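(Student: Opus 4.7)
My plan is to handle part (1) by the classical explicit construction, and then reduce part (2) to part (1) via smooth duality together with an identification of $(\cInd_H^G\rho)^\vee$ as an induced representation twisted by $\Delta_G/\Delta_H$.

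For part (1), I would exhibit explicit inverse maps. Given $\Phi\in\Hom_G(\pi,\Ind_H^G\rho)$, set $\psi_\Phi(v)=\Phi(v)(1)$; the $H$-equivariance follows from
$\psi_\Phi(\pi(h)v)=\Phi(\pi(h)v)(1)=\Phi(v)(h)=\rho(h)\Phi(v)(1)$, using $G$-equivariance of $\Phi$ together with the transformation law for elements of $\Ind_H^G\rho$. Conversely, for $\psi\in\Hom_H(\Res_H^G\pi,\rho)$, set $(\Phi_\psi(v))(g)=\psi(\pi(g)v)$. Left $H$-equivariance in $g$ is direct, and right smoothness of $\Phi_\psi(v)$ follows immediately from the smoothness of $\pi$: if $K$ is an open compact subgroup of $G$ stabilizing $v$, then $\Phi_\psi(v)$ is right $K$-invariant. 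The two constructions are manifestly mutually inverse and natural.

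For part (2), I reduce to part (1) in three steps. First, for any smooth representations $A,B$ of a locally profinite group $L$, the space of $L$-invariant bilinear forms on $A\times B$ identifies canonically with both $\Hom_L(A,B^\vee)$ and $\Hom_L(B,A^\vee)$, giving a natural isomorphism $\Hom_L(A,B^\vee)\simeq\Hom_L(B,A^\vee)$. Taking $L=G$, $A=\cInd_H^G\rho$, $B=\pi$ produces
$$\Hom_G(\cInd_H^G\rho,\check\pi)\;\simeq\;\Hom_G(\pi,(\cInd_H^G\rho)^\vee).$$
Second, I would establish the smooth-dual identification
$$(\cInd_H^G\rho)^\vee\;\simeq\;\Ind_H^G\bigl(\check\rho\otimes\tfrac{\Delta_G}{\Delta_H}\bigr),$$
via the integration pairing $\langle f,\tilde f\rangle=\int_{H\backslash G}\langle f(g),\tilde f(g)\rangle\,d\bar g$, where $f\in\cInd_H^G\rho$ and $\tilde f\in\Ind_H^G(\check\rho\otimes\Delta_G/\Delta_H)$. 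The scalar function $g\mapsto\langle f(g),\tilde f(g)\rangle$ transforms under left $H$-translation by the character $\Delta_G/\Delta_H$, which is exactly the transformation law against which the canonical semi-invariant functional on $H\backslash G$ integrates. Third, apply part (1) with $\check\rho\otimes\Delta_G/\Delta_H$ in place of $\rho$ to get
$$\Hom_G\bigl(\pi,\Ind_H^G(\check\rho\otimes\tfrac{\Delta_G}{\Delta_H})\bigr)\;\simeq\;\Hom_H\bigl(\Res_H^G\pi,\check\rho\otimes\tfrac{\Delta_G}{\Delta_H}\bigr),$$
and re-apply the bilinear-form duality to $H$, using $(\rho\otimes\Delta_H/\Delta_G)^\vee=\check\rho\otimes\Delta_G/\Delta_H$, to convert the right-hand side into $\Hom_H(\tfrac{\Delta_H}{\Delta_G}\rho,(\Res_H^G\pi)^\vee)$, as required.

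The main obstacle is the dual identification in the second step. The integration pairing manifestly provides a canonical injection $\Ind_H^G(\check\rho\otimes\Delta_G/\Delta_H)\hookrightarrow(\cInd_H^G\rho)^\vee$; the subtle point is surjectivity, i.e.\ that every smooth linear functional $\lambda$ on $\cInd_H^G\rho$ arises from some $\tilde f$. One extracts $\tilde f(g)$ from $\lambda$ by testing against bump-type sections of $\cInd_H^G\rho$ supported on small $H$-cosets near $g$, and propagates the $K$-smoothness of $\lambda$ (for some open compact $K\subset G$) into right-$K$-smoothness of $\tilde f$. Once this is settled, the remainder is a formal assemblage of natural isomorphisms.
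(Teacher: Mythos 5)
Your proposal is correct and follows the same route as the source the paper cites (Bernstein--Zelevinsky, pp.~23--24): the paper itself gives no argument, just a reference. Part (1) is the standard explicit adjunction and checks out completely; part (2) is reduced to part (1) via the identification $(\cInd_H^G\rho)^{\vee}\simeq\Ind_H^G\bigl(\tfrac{\Delta_G}{\Delta_H}\check{\rho}\bigr)$, which is precisely the lemma the paper records immediately afterward (also cited to the same source, so there is no circularity), and your two uses of the bilinear-form duality $\Hom_L(A,B^\vee)\simeq\Hom_L(B,A^\vee)$ are legitimate because an $L$-equivariant map $A\to B^{*}$ automatically lands in the smooth vectors $B^\vee$. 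The one point you flag as incomplete, the surjectivity in the dual identification, is indeed the only nontrivial technical step; your sketch (extracting $\tilde f(g)$ by testing $\lambda$ against bump sections supported near $Hg$ and propagating $K$-smoothness) is the standard way to finish it.
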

\begin{proof}
See \cite[pp. 23-24]{BernZ}.
\end{proof}
\begin{lemma}[{\cite[p. 23]{BernZ}}]\label{thedualityQ}
$(\cInd_H^G \rho)^{\vee} \simeq \Ind_H^G (\tfrac{\Delta_G}{\Delta_H} \check{\rho})$.
\end{lemma}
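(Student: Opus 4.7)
The plan is to prove the isomorphism by the Yoneda lemma: for every auxiliary smooth representation $(\pi, V)$ of $G$, I will identify both $\Hom_G(\pi, \Ind_H^G(\tfrac{\Delta_G}{\Delta_H}\check{\rho}))$ and $\Hom_G(\pi, (\cInd_H^G\rho)^{\vee})$ naturally with the same space of $H$-invariant bilinear pairings on $(\tfrac{\Delta_H}{\Delta_G}\rho) \times \Res_H^G\pi$.

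For the first Hom-space, Frobenius reciprocity (1) gives
$$\Hom_G\bigl(\pi,\, \Ind_H^G(\tfrac{\Delta_G}{\Delta_H}\check{\rho})\bigr) \simeq \Hom_H\bigl(\Res_H^G\pi,\, \tfrac{\Delta_G}{\Delta_H}\check{\rho}\bigr).$$
Since the character twists $\tfrac{\Delta_H}{\Delta_G}$ on $\rho$ and $\tfrac{\Delta_G}{\Delta_H}$ on $\check{\rho}$ cancel against the canonical evaluation $\rho \times \check{\rho} \to \C$, the representation $\tfrac{\Delta_G}{\Delta_H}\check{\rho}$ is the smooth $H$-contragredient of $\tfrac{\Delta_H}{\Delta_G}\rho$, which identifies the right-hand side with the desired pairing space.

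For the second Hom-space, I first note the symmetry of Hom into the smooth dual for smooth $G$-representations: given a $G$-invariant bilinear $B: \tau \times \sigma \to \C$ with both $\tau, \sigma$ smooth, the identity $B(v, gw) = B(g^{-1}v, w) = B(v,w)$ for $g$ in the open stabilizer of $v \in \tau$ forces $B(v,-)$ to lie in $\check{\sigma}$, and symmetrically for $\tau$. Thus $\Hom_G(\tau, \check{\sigma}) \simeq \Hom_G(\sigma, \check{\tau})$. Taking $\tau = \pi$, $\sigma = \cInd_H^G\rho$ and then invoking Frobenius reciprocity (2),
$$\Hom_G\bigl(\pi,\, (\cInd_H^G\rho)^{\vee}\bigr) \simeq \Hom_G\bigl(\cInd_H^G\rho,\, \check{\pi}\bigr) \simeq \Hom_H\bigl(\tfrac{\Delta_H}{\Delta_G}\rho,\, (\Res_H^G\pi)^{\vee}\bigr),$$
which, by the same pairing interpretation, is again the space of $H$-invariant pairings on $(\tfrac{\Delta_H}{\Delta_G}\rho) \times \Res_H^G\pi$.

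Both Hom-spaces therefore represent the same contravariant functor of $\pi$ by manifestly natural isomorphisms, and Yoneda produces the desired $G$-equivariant identification $(\cInd_H^G\rho)^{\vee} \simeq \Ind_H^G(\tfrac{\Delta_G}{\Delta_H}\check{\rho})$. The main point requiring care is the first step: keeping track of which side of the pairing carries which twist, so that $\tfrac{\Delta_G}{\Delta_H}\check{\rho}$ really is the contragredient of $\tfrac{\Delta_H}{\Delta_G}\rho$ inside the natural pairing. Every other step is routine given the preceding Frobenius reciprocities.
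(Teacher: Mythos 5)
Your argument is logically sound as a deduction from the two Frobenius reciprocities the paper states just before this lemma, and the individual steps all check out: the identification of $\Hom_H(\tau,\check{\sigma})$ with $H$-invariant bilinear forms on $\sigma\times\tau$ for smooth $\sigma,\tau$ is correct (smoothness of $B(\cdot,v)$ follows from an open stabilizer of $v$, exactly as you say), the twist bookkeeping $(\tfrac{\Delta_H}{\Delta_G}\rho)^{\vee}=\tfrac{\Delta_G}{\Delta_H}\check{\rho}$ is right, and the Yoneda step is legitimate since all the identifications are natural in $\pi$ and both objects live in $\Rep(G)$.

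The comparison with the paper is a bit unusual here because the paper offers no proof of its own — it just cites page 23 of Bernstein--Zelevinsky. The proof there (and the one you would see in most references) is the direct one: one writes down the explicit $G$-invariant pairing $\langle f,\phi\rangle=\int_{H\backslash G}\langle f(g),\phi(g)\rangle\,d\dot g$ between $\cInd_H^G\rho$ and $\Ind_H^G(\tfrac{\Delta_G}{\Delta_H}\check{\rho})$, using the modulus twist to make the integrand descend to $H\backslash G$, and then verifies this pairing induces an isomorphism onto the smooth dual. Your route trades that concrete construction for a representability argument powered by Frobenius reciprocity (2). That is genuinely cleaner in form, but it hides a potential circularity: in many developments, including Bernstein--Zelevinsky's, Frobenius reciprocity (2) for $\cInd$ is itself \emph{derived from} this very duality formula together with the easy Frobenius reciprocity (1), via essentially the chain you wrote in reverse. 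If that is how the cited source proceeds, your argument would be proving the lemma from a consequence of the lemma. Within the framework of this paper the argument is acceptable, since Frobenius reciprocity (2) is taken as a standing black box stated earlier; but if you were asked to supply a foundational proof you would need either the direct pairing construction, or an independent proof of Frobenius reciprocity (2) that does not route through the duality formula. It is worth being explicit about which of the two you are relying on.
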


\begin{lemma}
Let $(\pi, V)$ be an admissible smooth representation of $G$.
\begin{itemize}
\item[(1)] If $H$ is an open subgroup of $G$, then $\Res_H^G \pi$ is    also admissible.
\item[(2)] Let $H_1$ be a closed subgroup of $G$, and $H_1 \supseteq H$. If $\Res_H^G \pi$ is admissible, so is $\Res_{H_1}^G \pi$.
\item[(3)] If $H$ is a normal subgroup of $G$, then $V^{H}$ is an admissible smooth $\frac{G}{H}$-module.
\item[(4)] Let $ K_1 \lhd K_2$ be two two open compact subgroups of $G$, and assume $\mathcal{R}_{K_2}(\Ind_{K_1}^{K_2} 1)=\{ (\lambda_i, U_i)\in  \Irr(K_2/K_1)\mid i=1, \cdots, n\}$. Let $V^{\lambda_i}$ denote the $\lambda_i$-isotypic  component of $\Res^G_{K_2} \pi$. Then $V^{K_1} = \oplus_{i=1}^nV^{\lambda_i}$ as $K_1$-modules.
\end{itemize}
\end{lemma}
\begin{proof}
Parts (1)(2) are straightforward. For (3), clearly $V^{H}$ is a smooth $G/H$-module.  Note that the inverse image of any open compact subgroup $\overline{K}$ of $G/H$ in $G$, denoted by $K$,  is an open subgroup  of $G$. Let $K_1$ be  an open compact subgroup  of $K$ with the  image $\overline{K_1}$ in $G/H$. Then $(V^{H})^{\overline{K_1}}=V^{K_1H} \subseteq V^{K_1}$; this implies the part (3).   In (4), $V^{\lambda_i} \simeq n_i U_i$ , so each vector in $V^{\lambda_i}$ is $K_1$-fixed, and $V^{\lambda_i} \subseteq V^{K_1}$.  On the other hand, by part (3), $V^{K_1} \simeq \sum_{i=1}^n m_i U_i$ as $\frac{K_2}{K_1}$-modules, so $V^{K_1} \subseteq \oplus_{i=1}^n V^{\lambda_i}$.
\end{proof}
\begin{lemma}\label{directsums}
Let $\sigma \simeq \oplus_{i\in I} \sigma_i$ be a smooth representation of $G$.
\begin{itemize}
\item[(1)] $ \oplus_{i\in I} \check{\sigma}_i \hookrightarrow \check{\sigma} \hookrightarrow \prod_{i\in I} \check{\sigma}_i$;
\item[(2)] If $\sigma$ is an   admissible representation,   then $\check{\sigma } \simeq \oplus_{i\in I} \check{\sigma}_i$.
\end{itemize}
\end{lemma}
\begin{proof}
 1) As is known that $\sigma^{\ast} \simeq \prod_{i\in I}\sigma_i^{\ast} \supseteq \oplus_{i\in I} \sigma_i^{\ast}$.  Considering  their smooth  parts, we get the result. \\
 2) Each factor $\sigma_i$ is also an admissible  representation of $G$ and  there is a  $G$-embedding $\oplus_{i\in I} \check{\sigma}_i \hookrightarrow \check{\sigma }$.  Then by investigating their $K$-invariant parts, as $K$ runs through  open compact subgroups of $G$, we obtain the result.
 \end{proof}

\begin{lemma}\label{admissiblerepresentations}
If $\Res_H^G\pi$ is an admissible smooth representation of $H$, then $\big(\Res_H^G\pi\big)^{\vee}  \simeq \Res_H^G\check{\pi}.$
\end{lemma}
\begin{proof}
One-side inclusion $\Res_{H}^G \check{\pi} \hookrightarrow  (\Res_H^G \pi)^{\vee}$ is clear.  It is sufficient to show that $[(\Res_H^G V)^{\vee})]^{K\cap H}$ belongs to $\Res_H^G \check{V}$ for any open compact subgroup $K$ of $G$.  By definition, the set $[(\Res_H^G V)^{\vee})]^{K\cap H}$ consists of the $\C$-linear functions $f: V^{K\cap H} \oplus V[K \cap H] \longrightarrow \C$ such that $f|_{V[K \cap H]}=0$, where $V[K\cap H]=\left\{\sum c_i(\pi(g_i) v_i-v_i)\mid c_i \in \C, v_i \in V, g_i\in K\cap H\right\}$. Suppose now that $V^{K\cap H}$ is linearly spanned by $v_1, \cdots, v_n$ in $V$; let $U_0$ be an open compact subgroup of $\cap \Stab_{G}(v_i)$  such that it contains $K\cap H$ (for instance,  $U_0= \cap_{i} \Stab_{G}(v_i)\cap K $). By \cite[ p.15, Prop.]{BushH}, $V= \oplus_{\sigma\in \hat{U_0}} V^{\sigma}$, $V^{\sigma}$ being the $\sigma$-isotropic components of $V$. Since $V^{K\cap H}$ has finite dimension, there exist only finite $\sigma_1$, $\cdots$, $\sigma_n$, such that each $V^{\sigma_i}|_{K\cap H}$ contains the trivial representation of $K\cap H$. Assume now that  $V^{\sigma_i}$ is spanned by elements $v_1^{(i)},  \cdots, v_{n_i}^{(i)}$ in $V$. Let $U_1= \cap_{i,j}\Stab_{G}(v_j^{(i)}) \cap U_0$, be an open  subgroup of $G$.  Then $f\in \big( \Res_H^G \check{V}\big)^{U_1} \subseteq \Res_H^G \check{V}$.
\end{proof}

\begin{corollary}\label{frobeniusreciprocityregular}
If $H$, $G$ are two groups satisfying any condition in  Lmm.\ref{unimodular} and $\Res_H^G\pi$ is an admissible smooth representation of $H$, then $\Hom_G\big( \cInd_H^G \rho, \pi\big) \simeq \Hom_H\big( \rho, \Res_H^G\pi\big)$.
\end{corollary}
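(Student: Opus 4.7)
The plan is to deduce the desired isomorphism from the second Frobenius Reciprocity formula, using Lemma \ref{unimodular} to kill the modulus factor and Lemma \ref{admissiblerepresentations} to unravel the various contragredients. First, under either hypothesis on $H \subset G$ considered in Lemma \ref{unimodular}, we have $\Delta_H = \Delta_G|_H$, so the twisting character $\Delta_H/\Delta_G$ appearing in Frobenius (2) restricts to the trivial character of $H$ and may be discarded.

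Applying Frobenius (2) with $\check{\pi}$ in place of $\pi$ then yields
$$\Hom_G\bigl(\cInd_H^G \rho,\ \check{\check{\pi}}\bigr) \;\simeq\; \Hom_H\bigl(\rho,\ (\Res_H^G \check{\pi})^{\vee}\bigr).$$
To identify the right-hand side with $\Hom_H\bigl(\rho,\ \Res_H^G \pi\bigr)$, I would first note that Lemma \ref{admissiblerepresentations} applied to $\pi$ gives $\Res_H^G \check{\pi} \simeq (\Res_H^G \pi)^{\vee}$, which is admissible smooth since the contragredient of an admissible smooth $H$-representation is again admissible smooth. Hence Lemma \ref{admissiblerepresentations} applies also to $\check{\pi}$, yielding $(\Res_H^G \check{\pi})^{\vee} \simeq \Res_H^G \check{\check{\pi}}$; combined with the standard double-dual isomorphism for admissible representations, the latter becomes $\Res_H^G \pi$.

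For the left-hand side, it remains to identify $\check{\check{\pi}}$ with $\pi$ as a $G$-representation. The canonical $G$-equivariant injection $\iota_G : \pi \hookrightarrow \check{\check{\pi}}$, when restricted to $H$, coincides with the evaluation map $\Res_H^G \pi \to (\Res_H^G \pi)^{\vee\vee}$, which is an isomorphism because $\Res_H^G \pi$ is admissible. Therefore $\iota_G$ is bijective on underlying vector spaces, and hence is an isomorphism of smooth $G$-representations. Stringing these identifications together produces the desired formula.

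The main subtlety is not a deep obstacle but a bookkeeping one: we assume only that $\Res_H^G \pi$ is admissible as an $H$-representation, not that $\pi$ is admissible as a $G$-representation, so the reflexivity $\pi \simeq \check{\check{\pi}}$ at the level of $G$ must be extracted from admissibility of the restriction by the bootstrap above. One should also check that the natural maps at each step assemble compatibly, so that the resulting isomorphism is realized by the canonical pairing of Frobenius reciprocity rather than only abstractly.
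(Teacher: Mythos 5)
Your proof is correct and follows the route the paper clearly intends, since the corollary is stated without proof immediately after the required ingredients: Frobenius Reciprocity (2), Lemma \ref{unimodular} to discard the modulus character, and Lemma \ref{admissiblerepresentations} to rewrite $(\Res_H^G\pi)^{\vee}$. Your bootstrap argument that $\iota_G\colon\pi\to\check{\check{\pi}}$ is an isomorphism because its restriction to $H$ agrees with $\iota_H\colon\Res_H^G\pi\to(\Res_H^G\pi)^{\vee\vee}$ is exactly the point the paper leaves implicit, and it is the right way to handle the fact that only $\Res_H^G\pi$, not $\pi$ itself, is assumed admissible.
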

\begin{lemma}\label{thedualityequa}
Let $(\pi_1, V_1)$ be a smooth representation  of $G$, and $f:  V_1 \longrightarrow V$ is a $G$-morphism. If the induced map $\check{f}:  \check{\pi} \longrightarrow \check{\pi}_1$ is  an isomorphism, then $\pi_1 \simeq \pi$.
\end{lemma}
\begin{proof}
Applying the contragredient operator to the  short exact sequence of $G$-modules   $0\longrightarrow \ker(f) \longrightarrow V_1 \longrightarrow V $ shows that  $(\ker(f))^{\vee}=0$. Since $0=[(\ker(f))^{\vee}]^{K} \simeq [\ker(f)^{K}]^{\ast}$, for any open compact subgroup $K$ of $G$, and $\ker(f)=\cup_{K}(\ker(f))^{K}$, we obtain $\ker(f)=0$. Similarly, the $\coker(f) $ is also zero.
\end{proof}

\begin{lemma}\label{homeo}
Let $G_1$ be a closed subgroup of $G$ such that the canonical map
$e: H\cap G_1 \setminus G_1 \longrightarrow H \setminus G$ is bijective. Then $e$ is  homeomorphic.\footnote{This result uses the $\sigma$-compact condition.}
\end{lemma}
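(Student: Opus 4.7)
Continuity of $e$ is immediate from the quotient topologies, so the content is the openness of $e$, which I will establish via a Baire category argument. First I observe that bijectivity forces $G=HG_1$, so $H\backslash G$ becomes a single orbit under the canonical right $G_1$-action with stabilizer $H\cap G_1$ at the trivial coset, and $e$ is $G_1$-equivariant. Combining this equivariance with the transitivity of $G_1$ on both sides, openness of $e$ reduces to openness at the trivial coset, i.e., to the assertion that for every compact open subgroup $V$ of $G_1$, the set $HV$ is open in $G$; equivalently, $\phi(V):=HV/H$ is a neighborhood of $H\cdot 1$ in $H\backslash G$, where $\phi:G_1\to H\backslash G$ is the restriction of the canonical projection.

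Next, to produce such a neighborhood by Baire, I will exploit that, as a closed subgroup of the $\sigma$-compact locally profinite group $G$, the group $G_1$ is itself $\sigma$-compact and locally profinite (the map $G_1/(H\cap K)\hookrightarrow G/K$ is injective for any compact open $K\subset G$). Fixing some compact open subgroup $U$ of $G_1$, I decompose $G_1=\bigsqcup_n g_n U$ into countably many right cosets, so that $H\backslash G=\bigcup_n \phi(g_n U)$; each $\phi(g_n U)$ is the continuous image of a compact set, hence compact and closed in the Hausdorff space $H\backslash G$. Because $H\backslash G$ is locally compact Hausdorff and therefore Baire, some $\phi(g_{n_0}U)$ must have non-empty interior. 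Picking an interior point of the form $Hg_{n_0}u$ with $u\in U$ and translating on the right by $(g_{n_0}u)^{-1}\in G_1$ (a homeomorphism of $H\backslash G$) then shows that $\phi(U^{\ast})$ contains an open neighborhood of $H\cdot 1$, where $U^{\ast}=g_{n_0}Ug_{n_0}^{-1}$ is again a compact open subgroup of $G_1$.

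The final and most delicate step is to pass from this particular $U^{\ast}$ to an arbitrary compact open subgroup $V$ of $G_1$. The key tool is the standard profinite fact that every open subgroup of a profinite group contains an open normal subgroup; applied inside the profinite group $U^{\ast}$ to the open subgroup $V\cap U^{\ast}$, it produces a compact open subgroup $V''\subset V\cap U^{\ast}\subset V$ of $G_1$ that is normal in $U^{\ast}$. Decomposing $U^{\ast}=\bigsqcup_j u_j V''$ into finitely many cosets, normality gives $\phi(u_jV'')=\phi(V'')\cdot u_j$, so that $\phi(U^{\ast})$ becomes a finite union of $G_1$-translates of $\phi(V'')$. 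Since $\phi(U^{\ast})$ has non-empty interior, so does $\phi(V'')$, and a translation by a suitable element of $V''$ brings $H\cdot 1$ into this interior; because $V''\subset V$, the set $\phi(V)$ is a neighborhood of $H\cdot 1$, as required. The main obstacle throughout is the conjugation that appears whenever one combines a Baire interior with a right-translation in $G_1$; the profinite normal subgroup trick is precisely what removes it and lets one conclude for the particular $V$ of interest.
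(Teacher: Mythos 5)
Your proof is correct and amounts to a self-contained derivation of the fact the paper simply cites, namely the open-mapping corollary for transitive actions of $\sigma$-compact $\ell$-groups from \cite[Page 7, Corollary]{BernZ}, applied to the right $G_1$-action on $H\setminus G$ at the base point $x_0=[H]$. The underlying engine --- Baire category applied to the countable cover of $H\setminus G$ by compact, hence closed, images of cosets of a compact open subgroup --- is the same; you have unpacked it rather than taken a different route. What you add, and handle correctly, is a subtlety peculiar to working with the \emph{right} action directly: the countable cover $G_1=\bigsqcup_n g_nV$ multiplies $V$ on the left, so $\phi(g_nV)=Hg_nV/H$ is not a right $G_1$-translate of $\phi(V)$, and carrying the Baire interior down to $[H]$ naively produces a neighborhood $\phi(g_{n_0}Vg_{n_0}^{-1})$ built from a conjugate of $V$ rather than from $V$ itself. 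Your fix --- pass to a compact open $V''\subset V\cap U^{\ast}$ that is \emph{normal} in $U^{\ast}$, so that the cosets $u_jV''$ become honest right-translates $V''u_j$ --- is exactly what is needed. This detour can be avoided by converting to the left action $g_1\ast Hg:=Hgg_1^{-1}$ (equivalently, proving the statement for $G/H$ and conjugating by inversion), after which the textbook version of the open-mapping argument lands on the base point with no conjugation appearing; that is, in effect, what the citation to \cite{BernZ} accomplishes. One small correction: the injection used to justify that $G_1$ is $\sigma$-compact and locally profinite should read $G_1/(G_1\cap K)\hookrightarrow G/K$, not $G_1/(H\cap K)\hookrightarrow G/K$.
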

\begin{proof}
The result can be deduced from \cite[p.7, Coro.]{BernZ}  by considering the right action of $G_1$ on $H\setminus G$ and by taking $x_0=[H] \in H\setminus G$ there.
\end{proof}
\begin{lemma}\label{twocompactsubgroups}
\begin{itemize}
\item[(1)] Let $K_1$, $K_2$ be two compact subsets of $G$. Then $K_1\ltimes K_2=\{ xyx^{-1} \mid x\in K_1, y\in K_2 \}$ is also a compact subset of $G$.
\item[(2)]  Suppose now that
\begin{itemize}
\item[(a)] $K_1$, $K_2$ both are   compact subgroups of $E$, for an open compact subgroup $E$ of $G$, and
\item[(b)] $K_2$ is also an open subset of $G$.
\end{itemize}
  Then $K_0= \cap_{k \in K_1} k K_2 k^{-1}$ is an open subgroup of $K_2$ as well as $E$.
\end{itemize}
\end{lemma}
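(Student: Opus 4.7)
The plan is to treat the two parts separately. Part (1) is a quick consequence of continuity: the map $\phi \colon G \times G \to G$, $(x,y)\mapsto xyx^{-1}$, is continuous, and $K_1\ltimes K_2 = \phi(K_1\times K_2)$ is the continuous image of the compact set $K_1\times K_2$, hence compact. No further work is needed.

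For part (2), the strategy is to reduce the a priori infinite intersection defining $K_0$ to a finite one by showing that only finitely many distinct conjugates $kK_2k^{-1}$ ($k\in K_1$) actually occur. Since $kK_2k^{-1}$ depends only on the coset of $k$ modulo the normaliser $N_E(K_2)$, it suffices to prove that $K_1\cap N_E(K_2)$ has finite index in $K_1$. Granting this, pick coset representatives $k_1,\dots,k_n$; then $K_0=\bigcap_{i=1}^n k_i K_2 k_i^{-1}$ is a finite intersection of open subgroups of $G$, contained in $K_2$ because $e\in K_1$, and hence an open subgroup of $K_2$.

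The substantive step, and what I expect to be the main obstacle, is the openness of $N_E(K_2)$ in $E$. I would exploit that $K_2$ is an open compact subgroup of the compact group $E$, so $[E:K_2]$ is finite and every conjugate $gK_2g^{-1}$ has the same finite index in $E$. Apply the tube lemma to the continuous map $\psi \colon E\times E\to E$, $(g,h)\mapsto ghg^{-1}$: for $g_0\in N_E(K_2)$, the compact slice $\{g_0\}\times K_2$ lies in the open preimage $\psi^{-1}(K_2)$, so there is an open neighbourhood $U$ of $g_0$ with $UK_2U^{-1}\subseteq K_2$. For each $g\in U$ the inclusion $gK_2g^{-1}\subseteq K_2$ is then actually an equality, since the two subgroups share the same finite index in $E$. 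Thus $U\subseteq N_E(K_2)$, so $N_E(K_2)$ is open; intersecting with the compact group $K_1$ then yields the required finite index, and the rest is bookkeeping.
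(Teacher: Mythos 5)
Part (1) is word-for-word the paper's argument, so nothing to say there.

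Part (2) is correct but takes a genuinely different route from the paper. You prove that the normaliser $N_E(K_2)$ is open in $E$ (via the tube lemma applied to $\psi(g,h)=ghg^{-1}$, together with the fact that $[E:K_2]$ is finite so that a containment $gK_2g^{-1}\subseteq K_2$ between subgroups of equal finite index forces equality), conclude that $K_1\cap N_E(K_2)$ has finite index in the compact group $K_1$, and thereby reduce $K_0$ to a \emph{finite} intersection of open conjugates of $K_2$. The paper instead works with complements: writing $E\setminus K_0=\bigcup_{k\in K_1}k(E\setminus K_2)k^{-1}$, it observes that $E\setminus K_2$ is compact (closed in the compact $E$), so by part (1) the union is compact, hence closed, hence $K_0$ is open in $E$ and therefore in $G$. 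The paper's argument is shorter and actually \emph{uses} part (1), which is presumably why part (1) is stated; yours does not invoke part (1) at all for part (2). On the other hand, your argument establishes the stronger structural fact that only finitely many distinct conjugates $kK_2k^{-1}$ occur, which the paper's complement trick does not reveal. Both arguments rely on compactness of $E$ at the same point in spirit (you use it to get $[E:K_2]<\infty$, the paper uses it to make $E\setminus K_2$ compact), so neither is more general than the other. Your proof is fine as written.
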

\begin{proof}
1) Let us consider the continuous map: $G \times G \longrightarrow G;  (x, y) \longmapsto xyx^{-1}.$
Then $K_1\ltimes K_2$ is just the image of the compact subset $K_1 \times K_2$.\\
2) Note that $E\backslash K_0=\cup_{k\in K_1} k (E \backslash K_2)k^{-1}$. Since  $E \backslash K_2$ is also a compact set, applying the above (1) shows that  $E\backslash K_0$ is also closed. So $K_0$ is an open subgroup  of $E$ as well as $G$.
\end{proof}
\begin{proposition}\label{restricition}
Let $G_1$ be a closed subgroup of $G$  such that the canonical map
$e: H_1\setminus G_1 \longrightarrow H \setminus G$ is  homeomorphic, where $H_1= H \cap G_1$. Then
$\Res_{G_1}^G \big( \cInd_H^G \rho\big) \simeq \cInd_{H_1}^{G_1} \big(\Res_{H_1}^H \rho\big).$
\end{proposition}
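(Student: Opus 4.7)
The natural candidate for the isomorphism is the restriction map
\[
\Phi: \Res_{G_1}^G\bigl(\cInd_H^G\rho\bigr) \longrightarrow \cInd_{H_1}^{G_1}\bigl(\Res_{H_1}^H\rho\bigr), \qquad f \longmapsto f|_{G_1}.
\]
The plan is to verify that $\Phi$ is a well-defined $G_1$-equivariant bijection. First I would check that $f|_{G_1}$ lies in the target: the $H_1$-equivariance is inherited from $f$, and $f|_{G_1}$ is right-invariant under $K_f\cap G_1$, which is a compact open subgroup of $G_1$. The first substantive use of the hypothesis is the compact-support condition: the set $\supp(f|_{G_1})/H_1 \subseteq H_1\setminus G_1$ is carried bijectively by $e$ onto $\supp f /H \subseteq H\setminus G$, which is compact, so $\supp(f|_{G_1})/H_1$ is compact too. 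The $G_1$-equivariance of $\Phi$ is automatic.

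For injectivity, surjectivity of $e$ translates into $G = HG_1$: if $f|_{G_1}=0$, the left $H$-equivariance of $f$ yields $f(hg_1) = \rho(h)f(g_1) = 0$ for every $h\in H$, $g_1\in G_1$.

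For surjectivity, I would construct the inverse by setting $\tilde f(hg_1) := \rho(h)f_1(g_1)$ for any $f_1$ in the target. Well-definedness uses $H\cap G_1 = H_1$ together with the left $H_1$-equivariance of $f_1$: an ambiguity $hg_1 = h'g_1'$ forces $h^{-1}h' \in H_1$, and the two expressions match. The left $H$-equivariance of $\tilde f$ is built in, and its compact support modulo $H$ follows by transporting $\supp f_1/H_1$ via $e$. Since $\tilde f|_{G_1} = f_1$, surjectivity of $\Phi$ will follow as soon as $\tilde f$ is shown to be smooth.

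The main obstacle is precisely this smoothness: exhibiting a compact open subgroup $K$ of $G$ under which $\tilde f$ is right-invariant. The natural candidate $K_{f_1}$ is compact open in $G_1$ but not a priori open in $G$. I would overcome this by first reducing, via linearity, to the case where $f_1$ is supported on a single $H_1g_0K_{f_1}$ coset with value $w_0 = f_1(g_0)$, so that $\supp\tilde f = Hg_0K_{f_1}$. A second use of the homeomorphism $e$ shows that $Hg_0K_{f_1}$ is open in $G$: indeed, $H_1g_0K_{f_1}/H_1$ is open in $H_1\setminus G_1$, $e$ transports it to an open subset of $H\setminus G$, and its preimage under the open projection $G\to H\setminus G$ is $Hg_0K_{f_1}$. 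It then suffices to produce a compact open subgroup $K$ of $G$ contained in $g_0^{-1}\Stab_H(w_0)g_0\cdot K_{f_1}$: for such $K$ and $k\in K$, the decomposition $g_0 k = h g_0 k_1$ has $h\in\Stab_H(w_0)$ and $k_1\in K_{f_1}$, giving $\tilde f(g_0 k) = \rho(h)w_0 = w_0 = \tilde f(g_0)$, and left $H$-equivariance propagates this to the full support. Producing such a $K$ is done by an intersection-of-conjugates argument of the type in Lemma \ref{twocompactsubgroups}, combined with the openness of $\Stab_H(w_0)$ in $H$.
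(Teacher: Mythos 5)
Your overall strategy is the same as the paper's: the isomorphism is restriction to $G_1$, the inverse is the extension $\tilde f(hg_1)=\rho(h)f_1(g_1)$, and the only substantive point is the smoothness of $\tilde f$, which both you and the paper attack by combining the homeomorphism $e$ (to transport openness), the openness of $\Stab_H(w_0)$ in $H$, and Lemma~\ref{twocompactsubgroups}. (The paper phrases this on the level of $K_1$-invariants via the description of $(\cInd)^{K_1}$ from \cite{BernZ}, and works with a finite supporting set $\{g_1,\dots,g_m\}$ at once rather than reducing to a single coset, but the content is the same.)

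There is, however, a gap in the last step of your smoothness argument. Having produced a compact open subgroup $K\subseteq g_0^{-1}\Stab_H(w_0)g_0\,K_{f_1}$ of $G$, you verify $\tilde f(g_0k)=\tilde f(g_0)$ for $k\in K$ and then assert that ``left $H$-equivariance propagates this to the full support.'' Left $H$-equivariance only propagates the identity to the single $H$-coset $Hg_0$, whereas $\supp\tilde f=Hg_0K_{f_1}$; you still owe the verification that $\tilde f(g_0k_1k)=\tilde f(g_0k_1)$ for $k_1\in K_{f_1}$, and that $\tilde f(gk)=0$ for $g\notin Hg_0K_{f_1}$ (right $K$-invariance must hold off the support too). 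Both are handled exactly as in the paper: replace $K$ by its normal core $K_0=\cap_{k_1\in K_{f_1}}k_1Kk_1^{-1}$, which is compact open by Lemma~\ref{twocompactsubgroups}(2) and is normalized by $K_{f_1}$. Then $Hg_0K_{f_1}K_0=Hg_0K_0K_{f_1}\subseteq Hg_0KK_{f_1}\subseteq Hg_0K_{f_1}$, which simultaneously gives the in-support computation (via $k_1k_0k_1^{-1}\in K$) and the vanishing off the support. You do cite Lemma~\ref{twocompactsubgroups}, but you attribute it to constructing $K$ inside $g_0^{-1}\Stab_H(w_0)g_0K_{f_1}$, where in fact only the homeomorphism $e$ (to see that $Hg_0(F\cap G_1)$ is open in $G$ for a suitable compact open $F$ of $G$) and the openness of $\Stab_H(w_0)$ are needed; the normal-core trick is what makes the $K$ you found actually certify right-invariance of $\tilde f$ on all of $G$, and corresponds precisely to the passage from $K$ to $K_0$ in the paper's proof.
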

\begin{proof}
 Let  $K_1  $ be an open compact subgroup of $G_1$. Let  $\Omega=\left\{ g_i \in G_1\right\}_{i\in I} $ be  a set of representatives for  $H_1\setminus G_1 / K_1$ as well as $H\setminus G / K_1$. For each $g_1 \in \Omega$, we write  $K_{1_{g_1^{-1}}}=g_1 K_1 g_1^{-1}$. By \cite[p.22, Lmm.]{BernZ}, there exists a bijection:
$$i: (\cInd_{H_1}^{G_1} \rho)^{K_1} \longrightarrow \mathcal{K}_1=\left\{ f: \Omega \longrightarrow W \mid  f(g_1) \in W^{K_{1_{g_1^{-1}}}\cap H_1} \textrm{ for } g_1 \in \Omega \textrm{ and  the support of } f \textrm{ is  a finite set}\right\}. $$
 Here, $i$ is the restriction of functions from $G_1$ to $\Omega$. On the other hand, for $\varphi \in (\cInd_H^GW)^{K_1}$,  $h \in H$, $g_1 \in \Omega$, $ k_1\in K_1$,  we have
  $\varphi(hg_1k_1) = \rho(h) \varphi(g_1)$,  and $\varphi(g_1) \in W^{K_{1_{g_1^{-1}}}\cap H_1}$ by observing $K_{1_{g_1^{-1}}} \cap H_1 =K_{1_{g_1^{-1}}} \cap H$. Recall that  $\supp(\varphi) \subseteq HK$ for some compact set $K$ of $G$.  Note that  the  collection  $\left \{H \setminus  Hg_1K_1\mid  g_1 \in\Omega\right\}$ is  an open cover of $H \setminus G$, so it is clear that  the compact set $H\setminus  HK$ has finite subcover.  In this way, we verify that  $\varphi|_{\Omega}$  belongs to the above set $\mathcal{K}_1$.

 Next, for  $f \in \mathcal{K}_1$,  we define  a function $\varphi_f$ from $G$ to $W$ by $ \varphi_f (g)= \rho(h) f(g_1)$  for $ g=hg_1k_1$ with $ h\in H, g_1 \in \Omega, k_1 \in K_1.$
 To show $\varphi_f$  belongs to $(\cInd_{H}^{G} \rho)^{K_1}$ it suffices to verify that $\varphi_f$ is  $K$-invariant for an open compact subgroup $K$ of $G$. For then we can  replace $K_1$ by its subgroup and may   assume $E_0 \cap G_1 \subseteq K_1 \subseteq E_1 \cap G_1$ for some open compact subgroups $E_0\subseteq E_1$ of $G$. Suppose now that  $\supp(f) \cap \Omega= \left\{ g_1, \cdots, g_m\right\}$ and  $f(g_i)=v_{g_i}$ lies in $ W^{K_{1_{g_1^{-1}}}\cap H_1}$.  We may and do take  open compact subgroups $F_i$ of $G$ such that
$v_{g_i}\in W^{F_{i_{g_i^{-1}}}\cap H}$ and $ F_i \subseteq E_0.$
 Suppose now that
 $Hg_i (F_i \cap G_1) \supseteq Hg_iL_i$, for some open compact subgroups $L_i$ of $ F_i$ and  $G$. Now we define a new open compact subgroup $K$ of $G$ by
$K:= \cap_{i=1}^m L_i$, which
satisfies
 $Hg_iK\subseteq Hg_iL_i \subseteq Hg_i(F_i \cap G_1).$
 For  $k \in K$, when  decomposed as
 $k=g_i^{-1}h_i g_i l_i$, for $h_i \in F_{i_{g_i^{-1}}}\cap H$, $l_i  \in F_i\cap G_1 \subseteq E_0\cap G_1 \subseteq K_1$,  we  have
$$\varphi_f(g_ik)=\varphi_f(h_ig_il_i)=\varphi_f(h_ig_i)=\rho(h_i) f(g_i)=\rho(h_i) v_{g_i}=\varphi_f(g_i).$$
We also need to discuss  the other $g\in \Omega$ besides those $g_i$. For this purpose let us consider a  smaller subgroup $K_0$ of $K$ given by
$K_0= \cap_{k_1 \in K_1 } k_1^{-1} K k_1$. Note that $K$, $K_1$ both are subgroups of $E_1$. By Lmm.\ref{twocompactsubgroups} (2), $K_0$ is an open compact subgroup of $G$ satisfying $K_0K_1=K_1K_0$. Then
  $ Hg_i K_1 \subseteq Hg_i K_1 K_0=Hg_iK_0 K_1 \subseteq Hg_iL_i K_1 \subseteq Hg_i K_1,$ and  $Hg_i K_1=Hg_iK_1 K_0$.

    For $g_0 \in \Omega \setminus \left\{ g_1, \cdots, g_m\right\}$, we have
$Hg_0K_1K_0 \cap Hg_i K_1K_0 = \emptyset$, for $i=1, \cdots, m$. Otherwise, for some $i_0$, $Hg_0K_1K_0 \cap Hg_{i_0} K_1K_0 \neq  \emptyset$, contradicting to $Hg_{i_0} K_1K_0=Hg_{i_0} K_1$ and $Hg_0K_1 \cap  Hg_{i_0} K_1=\emptyset$. So $\varphi_f(g_0k_0)=0=\varphi_f(g_0)$, for $k_0 \in K_0$. All in all, we have
$\varphi_f(hgk_1k_0)=\varphi_f\big(hgk_0(k_0^{-1}k_1k_0)\big)=\varphi_f(hg)$, for all $g\in \Omega, k_1 \in K_1, k_0 \in K_0$.

By the above discussion,    the canonical restriction  from
 $ \Res_{G_1}^G\big(\cInd_H^G W\big)$  to $\cInd_{H_1}^{G_1} W$ given by $f  \longrightarrow f|_{G_1}$ is bijective. This  completes the proof.
\end{proof}

\begin{corollary}\label{theopenrestriction}
Under the conditions of the above proposition, if $G_1$ is  an open   subgroup of $G$,  then $\Res_{G_1}^G \big( \Ind_H^G \rho\big) \simeq \Ind_{H_1}^{G_1} \big(\Res_{H_1}^H \rho\big)$.
\end{corollary}
\begin{proof}
We follow the similar procedure as  above, and keep the notations, but assume that $\mathcal{K}_1=\left\{ f: \Omega \longrightarrow W \mid  f(g_1) \in W^{K_{1_{g_1}}\cap H_1}\right\}$. Analogously, the canonical restriction from $(\Ind_H^G W)^{K_1}$ to $(\Ind_{H_1}^{G_1} W)^{K_1}$ given by $f  \longrightarrow f|_{G_1}$ is well-defined and injectivity. Note that now $K_1$ is  an open compact subgroup of $G$.  In view of the  proof, the surjectivity is also clear.
\end{proof}

We  close  this section by recording  some consequences of   \cite[p.19, Lmm.]{BushH}.  For $(\rho, W) \in \Rep(H)$, we write   $\rho^G=\cInd_H^G  \rho$.  For any open compact subgroup $K$ of $G$,  let $ \Delta$ be a complete set of representatives for $H \setminus G/K$.  For $s\in \Delta$, let $H_s=s^{-1}Hs$, and set $\rho^{s}(x)= \rho(sxs^{-1})$, for $x\in H_s\cap K$.
\begin{lemma}\label{therestriction1}
   $\Res_{K}^{G} \rho^G \simeq \oplus_{s\in \Delta}\cInd_{H_s\cap K}^K\rho^{s}$.
\end{lemma}
 \begin{proof}
For any  $s\in \Delta$, there  exists a canonical $K\cap H_s$-morphism $\cInd_H^G \rho \longrightarrow \rho^s; f \longmapsto f(s)$. So it induces a $K$-morphism  $A_s: \rho^G \longrightarrow \cInd_{H_s\cap K}^K   \rho^s=\Ind_{H_s\cap K}^K  \rho^s$. Therefore we obtain a $K$-morphism $A=\oplus_{s\in \Delta} A_s: \rho^G \longrightarrow  \prod_{s\in \Delta}\cInd_{H_s\cap K}^K \rho^{s}$.  Since for any $f\in \rho^G$, $supp f\subseteq \cup_{i=1}^n Hs_i K$ for certain $s_i\in \Delta$, the above mapping  $A$  factors through $\oplus_{s\in \Delta}\cInd_{H_s\cap K}^K \rho^{s} \hookrightarrow \prod_{s\in \Delta}\cInd_{H_s\cap K}^K \rho^{s}$.  Hence we obtain $A=\oplus_{s\in \Delta} A_s: \rho^G \longrightarrow  \oplus_{s\in \Delta}\cInd_{H_s\cap K}^K \rho^{s}$. We first  show that  $A$ is injective. If $A(f_1)=A(f_2)$, for $f_1, f_2\in \rho^G$, then $A_s(f_i)(k)=  f_i(sk)$, and $f_1(sk)=f_2(sk)$ for any $k\in K$. So $f_1|_{HsK}=f_2|_{HsK}$ for  any $s\in \Delta$, and   $f_1=f_2$. Secondly,  assume $\sum_{i=1}^nt_{s_i}\in \sum_{i=1}^n\cInd_{H_{s_i}\cap K}^K \rho^{s_i}$. Then there exist open compact subgroups $K_{s_i}$ of $K$ such that $t_{s_i}$ is $K_{s_i}$-invariant. We now define an element $f: G \longrightarrow W$ as follows:
$  f|_{Hs_iK} (h s_ik)=\rho(h) t_{s_i}(k)$,  for $ h\in H, k\in K$;
it is well-defined because for  $h_1, h_2\in H$, $k_1, k_2 \in K$, if  $h_1{s_i}k_1=h_2s_ik_2$, i.e.  $k_1= s_i^{-1}h_1^{-1}h_2 s_ik_2$, we have
 $\rho(h_1)t_{s_i}(k_1)=\rho(h_1)t_{s_i}(s_i^{-1}h_1^{-1} h_2s_i k_2)= \rho(h_1)\rho^{s_i}(s_i^{-1}h_1^{-1} h_2 s_i) t_{s_i}(k_2)=\rho(h_2) t_{s_i}(k_2)$.  Clearly $f$ is $\cap_{i=1}^n K_{s_i} $-invariant, and  $A_{s_i}(f)=t_{s_i}$.
 \end{proof}
\begin{lemma}\label{compactadm}
Keep the notations. If $\rho$ is admissible and $G/H$ is compact, then $\rho^G$ is also admissible.
\end{lemma}
\begin{proof}
Under the hypothesis,  assume $\{s_1, \cdots, s_m\}$ is a complete set of representatives for  $H \setminus G/K$.  Clearly $\rho^{s_i}$ is also an admissible representation of $H_{s_i}$, and $m_{K\cap H_{s_i}}( \rho^{s_i}, \mathbb{C}) =\dim  [\rho^{s_i}]^{K\cap H_{s_i}}< \infty$. Hence $\dim [\rho^G]^K=\sum_{i=1}^mm_{K\cap H_{s_i}}( \rho^{s_i}, \mathbb{C}) =\sum_{i=1}^m \dim  [\rho^{s_i}]^{K\cap H_{s_i}} < \infty$.
\end{proof}
Assume now $H,J$ are two   open subgroups of $G$. Let  $\Delta=\{s_i\in G\}_{i\in I}$ be a complete  set of representatives for  $H\setminus G/J$, and then $\{s^{-1}\mid s\in \Delta\}$ forms  a complete set of representatives for $J\setminus G/H$.  For $s\in \Delta$, let $H_{s}=s^{-1}Hs$, and set $\rho^{s}(x)= \rho(sxs^{-1})$, for $x\in H_s\cap J$.

\begin{lemma}\label{therestriction1}
\begin{itemize}
\item[(1)] There is an $H$-monomorphism $W \longrightarrow \cInd_{H}^{G} W; w\longmapsto f_w$ with the image, denoted by $\mathcal{W}$, where $f_w(1)=w$, and $\supp f_w \subseteq H$.
\item[(2)] $\Res_{J}^{G} \rho^G
 \simeq \oplus_{s\in \Delta}\cInd_{H_s\cap J}^J\rho^{s}$.
\end{itemize}
\end{lemma}
\begin{proof}
Part (1) is the result of \cite[p.19, Lmm.]{BushH}.  Now  $\cInd_{H}^{G} W\simeq \oplus_{g\in [G/H]} g\mathcal{W}$. Let $\mathcal{W}_{s}$ be the vector space generated by those $g\mathcal{W}$, $g\in Js^{-1}H/H$. Clearly $\mathcal{W}_{s}$ is $J$-stable, and $\mathcal{W}_{s}\simeq \oplus_{g\in [J/H_s\cap J]} gs^{-1}\mathcal{W}$. Therefore $\mathcal{W}_{s}\simeq  \cInd_{H_s\cap J}^Js^{-1}\mathcal{W}$, and $\Res_{J}^{G} \rho^G
 \simeq \oplus_{s\in \Delta}\cInd_{H_s\cap J}^J\rho^{s}$.
 \end{proof}
 \begin{lemma}\label{finitem}
Keep the notations, and assume $J=H$,  $1\in \Delta$.  For any $s\in \Delta$, $s\neq 1$, if   the index $[H: H_s\cap H]$ is  infinite, then
$\Hom_G(\cInd_{H}^G \sigma_1, \cInd_{H}^G \sigma_2)\simeq \Hom_H(\sigma_1, \sigma_2)$, for  a finite dimensional   representation  $(\sigma_1,W_1)$ of $H$,
and a smooth representation $(\sigma_2,W_2)$ of $H$.
\end{lemma}
\begin{proof}
By Frobenius reciprocity for open subgroups in \cite[p.20,Prop.]{BushH}, $\Hom_G(\cInd_{H}^G \sigma_1, \cInd_{H}^G \sigma_2)\simeq \Hom_H(\sigma_1, \cInd_{H}^G \sigma_2)\simeq \Hom_H(\sigma_1, \oplus_{s\in \Delta}\cInd_{H_s\cap H}^H(\sigma_2)^{s})\hookrightarrow \prod_{s\in \Delta}\Hom_H(\sigma_1, \cInd_{H_s\cap H}^H(\sigma_2)^{s})$. Let $\{e_1, e_2, \cdots, e_n\}$  be a basis of $ W_1$.   For $1\neq s\in \Delta$, if $0\neq A\in \Hom_H(\sigma_1, \cInd_{H_s\cap H}^H(\sigma_2)^{s})$, then $A(e_i) \in \cInd_{H_s\cap H}^H (W_2)^s\simeq \oplus_{t\in \Sigma_s} t(s^{-1} \mathcal{W}_2)$, where $\Sigma_s$ is a complete set of representatives for $H/[H_s\cap H]$.   So there  exists a finite natural number $m>0$, such that all $A(e_i)\in \oplus_{j=1}^m t_j (s^{-1} \mathcal{W}_2)$, for some $t_j\in\Sigma_s $. Denote  $\mathcal{W}_J= \oplus_{j=1}^m t_j (s^{-1} \mathcal{W}_2)$; clearly  $A(W_1)\subseteq \mathcal{W}_J$. Notice that for $t\in H$, $A(te_i) =tA(e_i) \in t\mathcal{W}_J$.

Assume $0\neq A(e_1)=\oplus_{j=1}^m c_{j} t_j s^{-1} w_j$, for $c_{j} \in \mathbb{C}$ with   $c_{j'}\neq 0$, and  non-zero  vectors $ w_j\in \mathcal{W}_2$.  Let  $t_0=t_{m+1} t_{j'}^{-1}$.  Then $A(t_0e_1)= t_0A(e_1)=\oplus_{j\neq j'} c_{j} t_0 t_j s^{-1} w_j \oplus c_{j'}t_{m+1} s^{-1}w_{j'}$.  Note that for different $j$, $t_{0} t_j H_s\cap H $ belongs to different left $H_s\cap H$-cosets in $H/H_s\cap H$. Hence
$A(t_0e_1) \notin  \mathcal{W}_J$;  this makes a contradiction. Therefore $\Hom_H(\sigma_1, \cInd_{H_s\cap H}^H(\sigma_2)^{s})=0$, for any $s\in \Delta$ with $s\neq 1$, and the result follows.
\end{proof}
\begin{lemma}\label{crossse}
Let $H$ be a closed normal subgroup of $G$, and $X=\frac{G}{H}$. Then there exists a continuous cross section $\kappa: X \longrightarrow G$.
\end{lemma}
\begin{proof}
Under the  $\sigma$-compact hypothesis on $G$, there exists a family $K_1\subseteq K_2 \subseteq \cdots $ of compact subsets  of $G$, such that $G=\cup_{n} K_n$. Let $U$ be an open profinite subgroup of $G$. Then $K_n\subseteq \cup_{a\in K_n} a U$, so $(K_n\setminus K_{n-1}) \subseteq K_n\subseteq \cup  a_i U$, for some finite set $a_i$. Hence replacing $K_n$ by  $\cup  a_i U$, we assume   each  $K_i=\cup_{j\in I} a_{j}U$, for a finite index set $I$. In particular, $K_i$ is an open compact set.

By \cite[Section 1.2, Prop.1]{Se}, for the profinite group $U$,  there exists a continuous cross section from $\frac{U}{H}$ to $U$. By Prop.\ref{restricition}, the canonical map $\iota_U: U \longrightarrow \frac{UH}{H}$,  induces a  topological group isomorphism: $\overline{i}_U: \frac{U}{U\cap H} \simeq  \frac{UH}{H}$. Hence there exists a continuous cross section $\kappa_U: \frac{UH}{H} \longrightarrow U$.

For above $K_n=\sqcup_{i=1}^{k_n} a_{i} U$, $K_nH/H =\cup_{i=1}^{k_n} a_{i} U H/H$.  If $a_{i}u_1H=a_{j}u_2H$, then $a_{i}u_1h_1=a_{j}u_2h_2$, and $a_{i}=a_{j}u_2h_2h_1^{-1}u_1^{-1}$. Hence for any $u\in U$,
$$a_{i}u H =a_{j}u_2h_2h_1^{-1}u_1^{-1} uH=a_{j}[u_2 u_1^{-1}u] \cdot[u^{-1}u_1h_2h_1^{-1}u_1^{-1} u]H \subseteq a_{j} UH.$$
By duality, if $(a_{i} U H/H) \cap (a_{j} U H/H) \neq \emptyset$, then $a_{i} U H/H= a_{j} U H/H $. Assume $K_nH/H = \sqcup_{i\in I_n} a_{i} U H/H$, for some $I_n \subseteq \{1, \cdots, k_n\}$.  For such $i$, there exists a continuous cross section $\kappa_{U,i}:  a_{i} U H/H \longrightarrow a_{i} U$ induced by $\kappa_{U}$; then a continuous cross section $$\kappa_{n}: K_nH/H =\sqcup_{i\in I_n} a_{i} U H/H \longrightarrow \sqcup_{i\in I_n} a_{i} U  \subseteq \cup_{i=1}^{k_n} a_{i} U=K_n.$$
Note that $K_n\setminus K_{n-1}=\sqcup_{j} b_{j} U$, for some finite set $b_{j}$. Hence there also exists a continuous cross section $ \kappa_{n,n-1}: (K_n\setminus K_{n-1})H/H \longrightarrow K_n\setminus K_{n-1}$. Now $(K_nH/H) \setminus  (K_{n-1}H/H)  \subseteq (K_n\setminus K_{n-1})H/H $. The restriction of $\kappa_{n,n-1}$ to $(K_nH/H) \setminus  (K_{n-1}H/H)$ is also a continuous map. By induction, assume that we  construct a family of  continuous cross sections $\kappa_{i}: K_iH/H \longrightarrow K_i$, for $1\leq i\leq n-1$, such that $\kappa_{i}|_{K_{i-1}H/H}=\kappa_{i-1}$. Then combining with the map  $\kappa_{n,n-1}$ on $(K_nH/H) \setminus  (K_{n-1}H/H) $, we get $\kappa_{n} $.  Finally we can let $\kappa=\lim \kappa_n=\cup \kappa_n$.

\end{proof}
\section{Projective representations of locally profinite groups}
In this section, we shall give some basic results about smooth projective representations of locally profinite groups. Our main references are \cite{BushH}, \cite{CuRe}, \cite{Ma1}.

\subsection{}
Let $G$ be a $\sigma$-compact, locally profinite group with an identity element $1_G$.  Let $\mathcal {X}_G$ denote the set  of  all continuous maps $f:  G \longrightarrow \C^{\times}$ such that $f(1)=1$, and write  $\mathcal{X}(G)$ for the set of all characters of $G$.
\begin{definition}\label{thedePro}\footnote{When $G$ is a finite group, the  definition is compatible with the classical one.  }
 A \emph{ smooth $\alpha$-projective representation} $(\pi, V)$ of $G$ is a map $\pi: G \longrightarrow \Aut_{\C}(V)$, for a $\C$-vector space $V$, such that
  \begin{itemize}
  \item[(1)]  $\pi(g_1) \pi(g_2)= \alpha(g_1, g_2) \pi(g_1g_2)$ for a (normalized) $2$-cocycle $\alpha(-,-)$ in the continuous cohomology $\Ha^2(G, \C^{\times})$(\emph{cf}. \cite{AM});
  \item[(2)] for each vector  $0 \neq v\in V$, there exist an open neighborhood  $U_v$ of $1_G$, and a continuous  map $\chi_v: U_v \longrightarrow \C^{\times}$ satisfying  $\pi(g) v=\chi_v(g)v$, for all $g\in U_v$.
  \end{itemize}
  \end{definition}
 \begin{remark}\label{itsretoK}
\begin{itemize}
 \item[(1)]  Let $K_v$ be an open compact subgroup of $U_v$. Then
 $\alpha(g_1,g_2)=\chi_v^{-1}(g_1g_2) \chi_v(g_1) \chi_v(g_2)$, for $g_1,g_2 \in K_v$, i.e. the restriction of $[\alpha]$ to $K_v$ is trivial.
 \item[(2)] Under the above situation, $\pi_v: K_v \longrightarrow \Aut_{\C}(V); g \longmapsto \pi(g) \chi_v^{-1}(g)$ is a honest representation of $K_v$. Moreover,  this representation is smooth.
   \end{itemize}
   \end{remark}
   \begin{proof}
   Let us check the last statement of Part (2).  For any $0 \neq w\in V$, there is an open compact subgroup $K_w \subseteq K_v$, and a continuous map $\chi_w: K_w \longrightarrow \C^{\times}$ such that (1) $\pi(h) w= \chi_w(h) w$, for $h\in K_w$; (2) $\alpha(h_1,h_2)=\chi_w^{-1}(h_1h_2) \chi_w(h_1) \chi_w(h_2)$,  for $h_1, h_2 \in K_w$; (3)  $ \pi_w: K_w \longrightarrow \Aut_{\C}(V); h \longmapsto \pi(h) \chi_w^{-1}(h)$ is a representation of $K_w$. Note that $\chi_v|_{K_w}$ differs from $ \chi_w$ by a character $\chi_{v, w}$ of $K_w$, so the kernel of $\chi_{v,w}$ is an open subgroup $U$ of $K_w$. It follows that the stabilizer $\Stab_{K_v}(w)$ of $w$ in the representation $(\pi_v, V)$  of $K_v$  contains that $U$.
   \end{proof}
 \begin{remark}\label{trivalK}
For a class $[c]$ of finite order in $\Ha^2(G, \C^{\times})$, there exists an open compact subgroup $K$ of $G$ such that the restriction  of $[c]$ to $K$ is trivial.
\end{remark}
\begin{proof}
Assume that $c^n(g_1, g_2)=1$, for any $g_i\in G$. Then $c(g_1,g_2)=  e^{\frac{2k\pi i}{n}}$, for some $k=0, \cdots, n-1$. Since $c(-,-)$ is a continuous function, $c^{-1}(1)$ is an open subset of $G\times G$.  Hence such $K$ exists.
\end{proof}

\begin{remark}\label{trivalK}
If we change above $G$ by its  one open  subgroup, the result also holds.   In this situation, smooth projective representations of locally profinite groups will be compared with usual projective representations of locally compact groups.
\end{remark}
For simplicity, we can take the following assumption:
\begin{assumption}
There exists an open subgroup $O$ of $G$ such that $\Ha^2(O, \C^{\times})$ only contains elements of finite order.
\end{assumption}

A projective \emph{$G$-morphism} between two smooth projective representations $(\pi_1, V_1)$ and $(\pi_2,V_2)$ of $G$ is just a $\C$-linear map $F: V_1 \longrightarrow V_2$ such that
\begin{equation}\label{morproj}
F(\pi_1(g)v)=\mu(g) \pi_2(g) F(v)
\end{equation}
holds for  all $g\in G$, all $v\in V_1$, and some $\mu \in \mathcal{X}_G$.  Let $\Hom_G^{\mu}(\pi_1, \pi_2)$ or $\Hom_G^{\mu}(V_1, V_2)$ denote the $\C$-linear space of all those morphisms, and let  $\Hom^{\mathcal{X}_{G}}_G(V_1, V_2)$ or $\Hom_G(V_1, V_2)$  be the union of $\Hom_G^{\mu}(V_1, V_2)$ as $\mu$ runs over all elements in $\mathcal{X}_G$. By observation, if every  $V_i \neq 0$, then  $\Hom_G(\pi_1, \pi_2)=0$, unless the two $2$-cocycles related to $(\pi_1, V_1)$ and $(\pi_2, V_2)$ represent the same class in $\Ha^2(G, \C^{\times})$. We call $(\pi_1,V_1)$ a projective  \emph{sub-representation} of $(\pi_2,V_2)$ if there exists an injective morphism in $\Hom_G(V_1,V_2)$. If  $V_1 \neq 0$, and  $(\pi_1,V_1)$ has no nonzero proper projective sub-representation, we call  $(\pi_1, V_1)$  \emph{ irreducible}. Two irreducible smooth projective representations $(\pi_1, V_1)$, $(\pi_2, V_2)$ of $G$ are \emph{projectively equivalent},  if there  exists a bijective $\C$-linear map in $\Hom_G(\pi_1, \pi_2)$ (its inverse is also a projective $G$-morphism.). In particular, when this bijective map lies in $\Hom_G^{1}(V_1, V_2)$,  $1$ being the  trivial map in $\mathcal{X}_G$,  we will say that  $(\pi_1, V_1)$, $(\pi_2, V_2)$  are \emph{linearly equivalent}. For two projective representations $(\pi_1, V_1), (\pi_2, V_2)$ of $G$, we can also define their inner product projective representation $(\pi_1\otimes \pi_2, V_1\otimes V_2)$ of $G$.

\begin{lemma}[Schur's Lemma]\label{froben}
Let $(\pi_1,V_1)$, $(\pi_2, V_2)$ be two projectively equivalent irreducible projective representations of $G$. Then:
 \begin{itemize}
\item[(1)] $\dim \Hom_G^{\mu} (V_1, V_2) \leq 1$, for every $\mu \in \mathcal{X}_G$;
\item[(2)] There exists certain $\mu_0 \in \mathcal{X}_G$, such that $\dim \Hom_G^{\mu_0} (V_1, V_2)=1$;
\item[(3)] If $\dim \Hom_G^{\mu_0} (V_1, V_2)=\dim \Hom_G^{\mu_1} (V_1, V_2)=1$, then $\mu_1=\mu_0 \chi$, for some $\chi\in \mathcal{X}(G)$.
\end{itemize}
\end{lemma}
\begin{proof}
First there exists at least a  non-zero bijective $G$-morphism $\varphi \in \Hom^{\mu_0}_G(\pi_1, \pi_2) $, for certain $\mu_0 \in \mathcal{X}_G$, and  $\varphi^{-1}\circ \phi\in \Hom_G^{1}(V_1, V_1)$, for any $\phi\in  \Hom^{\mu_0}_G(\pi_1, \pi_2) $. Next, similar to the proof of  the standard   Schur's Lemma (e.g. \cite[p. 21]{BushH}), we can assert that $ \dim \Hom_G^{1}(V_1, V_1)=1$, so the second result follows.  If $0\neq \psi \in \Hom_G^{\mu}(V_1, V_2)$, for some $\mu \in \mathcal{X}_G$. By the irreducible property,  $\psi $ is a bijective $G$-morphism, and $\dim\Hom_G^{\mu}(V_1, V_2)=1$ as shown above. For (3), assume the normalized $2$-cocycle attached to $(\pi_1, V_1)$ is $\alpha(-,-)$.  Let $0 \neq \phi\in \Hom_G^{\mu_1}(V_1, V_2), 0\neq \varphi \in \Hom_G^{\mu_0}(V_1, V_2)$, and $g_1, g_2\in G$, $0\neq v\in V_1$. Set $\mu_0^{-1}\mu_1=\chi\in \mathcal{X}_{G}$, and $F= \varphi^{-1}\circ \phi$. Then
\[\chi(g_1g_2)\alpha(g_1, g_2)^{-1}\pi_1(g_1)\pi_1(g_2)F(v)= F\big(\pi_1(g_1g_2)v\big)\]\[=  F\big( \alpha(g_1, g_2)^{-1} \pi_1(g_1) \pi_1(g_2) v\big)=\alpha(g_1, g_2)^{-1} \chi(g_1)\chi(g_2) \pi_1(g_1)\pi_1(g_2)F(v),\] so $\chi(g_1g_2)=\chi(g_1)\chi(g_2)$.
\end{proof}
\begin{corollary}
For any irreducible ordinary representation $(\pi, V)$ of $G$, let $\mathcal{O}(\pi)=\{ \chi \in \mathcal{X}(G) \mid \pi\otimes \chi \simeq \pi\}$; then the set $\End_G^{\mathcal{X}_G}(\pi)=\cup_{\chi\in \mathcal{O}(\pi)} \mathbb{C}_{\chi}$,  each $\mathbb{C}_{\chi} =\mathbb{C}$.
\end{corollary}
  Let $H$ be a closed  subgroup of $G$, and  let $ (\sigma, W)$ be a smooth $\omega$-projective representations of  $H$,  attached to a normalized $2$-cocycle $\omega(-,-)\in \Ha^2(H, \mathbb{C}^{\times})$. Assume that  $\Omega(-,-)$ is a normalized $2$-cocycle in $\Ha^2(G, \mathbb{C}^{\times})$ extending $\omega(-,-)$. Now let $X$ be  a linear space consisting of all  functions $ f: G \longrightarrow W $ such that (a) $f(hg)=\Omega^{-1}(h, g) \sigma(h)f(g)$, for $h\in H, g\in G$, (b) there is a compact open subgroup  $K_f$  of $G$, and a continuous function $\chi $ from $K_f$ to $\mathbb{C}^{\times}$, satisfying $f(xg)=\Omega^{-1}(x, g)\chi(g)f(x)$ for  $g\in K_f, x\in G$. Then we define a homomorphism $\Sigma:   G \longrightarrow \Aut_{\mathbb{C}}(X)$ by  $[\Sigma(g)f](x)=\Omega(x,g)f(xg))$
  for $g, x \in G$, $f\in X$.  Then $\Sigma(gkg^{-1}) [\Sigma(g)f](x)=[\Omega(gkg^{-1}, g) \Omega(g,k)^{-1} \chi(k)] (\Sigma(g)f)(x)$, for $k\in K_f$, so $\Sigma(g)f\in X$. It can be also checked that $\Sigma(g_1)\Sigma(g_2)=\Omega(g_1, g_2)\Sigma(g_1g_2)$ for $g_1, g_2\in G$. Hence the pair $(\Sigma, X)$ provides a projective representation, called  \emph{ projective  induced representation} of $G$  from $\sigma$, and it is denoted $\Ind_{H, \omega}^{G,\Omega} \sigma$.  We also consider the space $X_c$ which consists of all  functions   $f\in X$ such that $f$  is compactly supported modulo $H$. Then the space $X_c$ is $G$-stable, and it provides a projective representation of $G$, called  \emph{ projective induced representation with  compact supports}, denoted by $\cInd_{H, \omega}^{G,\Omega} \sigma$.

Assume now that $(\pi, V)$  is  a smooth projective representation of $G$,  attached to the above  $2$-cocycle $\Omega(-,-)$. Then the restriction of $(\pi, V)$ to $H$ is also a smooth projective representation, and it is denoted by $\Res_{H, \omega}^{G,\Omega}  \pi$ or $\Res_{H}^{G}  \pi$. For $\chi \in\mathcal{X}_{G}$, let us define $\Omega_{\chi}(g_1, g_2)=\Omega(g_1, g_2) \chi(g_1)^{-1}\chi(g_2)^{-1}\chi(g_1g_2)$, for $g_i\in G$, and  let $(\pi_{\chi}, V_{\chi}=V)$ be a $\Omega_{\chi}$-projective representation of $G$, defined by $g \longrightarrow \pi(g) \chi(g)^{-1}$, for $g\in G$.
\begin{theorem}[Frobenius reciprocity]\label{FRr}
  $\Hom_G^{\chi}\big( \pi, \Ind_{H, \omega}^{G,\Omega} \sigma\big) \simeq \Hom_H^{\chi}\big( \Res_{H, \omega}^{G,\Omega}  \pi, \sigma\big)$, for  $\chi\in \mathcal{X}(G)\subseteq \mathcal{X}(H)$.
\end{theorem}
\begin{proof}
We follow the proof in \cite[p.18]{BushH}. Firstly there is a canonical $H$-morphism $\alpha_{\sigma}:  \Ind_{H, \omega}^{G,\Omega} \sigma \longrightarrow W; f\longmapsto f(1)$. We then get a canonical map from $ \Hom_G^{\chi}\big( \pi, \Ind_{H, \omega}^{G,\Omega} \sigma\big) $ to $ \Hom_H^{\chi}\big( \Res_{H, \omega}^{G,\Omega}  \pi, \sigma\big)$ defined by $\phi \longmapsto \alpha_{\sigma}\circ \phi$. On the other hand, if $f: V \longrightarrow W$ is an $H$-morphism in  $ \Hom_H^{\chi}\big( \Res_{H, \omega}^{G,\Omega}  \pi, \sigma\big)$, then we can define $\beta_{\pi}(f): V \longrightarrow \Ind_{H, \omega}^{G,\Omega} W$ as $[\beta_{\pi}(f)_v](g)=\chi(g)^{-1}f(\pi(g)v)$, for $v\in V$; it is well-defined because  $[\beta_{\pi}(f)_v](hg)=\chi(hg)^{-1}f(\pi(hg)v)=\Omega^{-1}(h,g) \sigma(h)\chi(g)^{-1}f(\pi(g)v)=\Omega^{-1}(h,g)\sigma(h)[\beta_{\pi}(f)_v](g)=[\Sigma(h)\beta_{\pi}(f)_v](g)$, for $h\in H$, $g\in G$. Moreover, for $g, g_1 \in G$, we have
$$\beta_{\pi}(f)_{\pi(g)v}(g_1)=\chi(g_1)^{-1}f(\pi(g_1)\pi(g) v)=\Omega(g_1, g)\chi(g)\beta_{\pi}(f)_{v}(g_1g)=\chi(g)\Sigma(g)\beta_{\pi}(f)_v(g_1);$$
this implies  that $\beta_{\pi}(f)_{\pi(g)v}= \chi(g)\Sigma(g)\beta_{\pi}(f)_v$. Hence $\beta_{\pi}(f) \in \Hom_G^{\chi}\big( \pi, \Ind_{H, \omega}^{G,\Omega} \sigma\big) $, and it can be checked that   $\beta_{\pi}$ is an inverse morphism of $\alpha_{\sigma}$.
\end{proof}

\begin{corollary}\label{FRr1}
  $\Hom_G^{\chi}\big( \pi, \Ind_{H, \omega_{\chi}}^{G,\Omega_{\chi}} \sigma_{\chi}\big) \simeq \Hom_H^{\chi}\big( \Res_{H, \omega}^{G,\Omega}  \pi, \sigma_{\chi}\big)$, for  the general $\chi\in \mathcal{X}_{G}\subseteq \mathcal{X}_{H}$.
\end{corollary}
\begin{proof}
Let $\iota_{\chi} \in \Hom_G^{\chi}( \pi, \pi_{\chi})$ simply defined  by $v\longrightarrow v$, for $v\in V$.  Then $ \Hom^{1}_G(\pi_{\chi},
\Ind_{H, \omega_{\chi}}^{G,\Omega_{\chi}} \sigma_{\chi}) \simeq \Hom_G^{\chi}\big( \pi,  \Ind_{H, \omega_{\chi}}^{G,\Omega_{\chi}} \sigma_{\chi}\big); \phi \longrightarrow \phi\circ \iota_{\chi}$,  $\Hom_H^{1}\big( \Res_{H, \omega_{\chi}}^{G,\Omega_{\chi}}
 \pi_{\chi}, \sigma_{\chi}\big)\simeq
\Hom_H^{\chi}\big( \Res_{H, \omega}^{G,\Omega}  \pi, \sigma_{\chi}\big)$. By the above theorem, we get  the result.
\end{proof}
\subsection{}
For a compact open subgroup $K$ of $G$, we let  $\mathcal {X}_K$ denote the set  of all  continuous maps $f:  K \longrightarrow \C^{\times}$ such  that $f(1_K)=1$, and for $\chi \in \mathcal{X}_K$, let  $V^{K, \chi}=\left\{ v\in V \mid \pi(g)
v=\chi(g)v \textrm{ for all } g\in K\right\}$.  Note that $V=\cup_{K} \cup_{\chi \in \mathcal{X}_K} V^{K, \chi}$ as $K$ runs over all open compact subgroups of $G$.  Let $V[K, \chi]$ denote  the linear space spanned by
 $\pi(k)v-\chi(k)v$ for  $ v\in V, k\in K$.  Then the following result comes from   Remark \ref{itsretoK}:

\begin{corollary}\label{theKchipart}
If  $V^{K, \chi} \neq 0$, for an open compact subgroup $K$ of $G$,$\chi \in \mathcal{X}_K$, then
\begin{itemize}
\item[(1)] there is a smooth  representation $(\pi_{\chi}, V)$ of $K$, defined by $k \longmapsto \pi(k) \chi^{-1}(k)$ for $k\in K$,
\item[(2)] $V^{K, \chi}$ is just the $K$-invariant part of the above  $(\pi_{\chi}, V)$,
\item[(3)] $V[K, \chi]=\left\{ \sum_{i}\pi_{\chi}(k_i)v_i-v_i\mid k_i \in K, v_i \in V\right\}$.
\end{itemize}
 The following result is analogue of Cor.2 in  \cite[p.16]{BushH}.
\end{corollary}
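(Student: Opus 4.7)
The plan is to build the linear representation $(\pi_\chi, V)$ of $K$ by hand and then read off assertions (2) and (3) from the definitions. The only substantive work is in (1), and inside (1) the only delicate point is smoothness.

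First, I would pick any nonzero $v\in V^{K,\chi}$ and apply the cocycle identity $\pi(g_1)\pi(g_2)=\alpha(g_1,g_2)\pi(g_1g_2)$ to $v$. Because $v$ is simultaneously a $\chi$-eigenvector for every element of $K$, this forces
\[
\alpha(g_1,g_2)=\chi(g_1)\chi(g_2)\chi(g_1g_2)^{-1}\qquad(g_1,g_2\in K),
\]
and a one-line computation then yields $\pi_\chi(g_1)\pi_\chi(g_2)=\pi_\chi(g_1g_2)$. So $\pi_\chi$ is a bona fide linear representation of $K$ on $V$.

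The step I expect to be the main obstacle is verifying smoothness of $\pi_\chi$. All that is directly available from Definition \ref{thedePro}(2) is a \emph{local} trivialisation: for any $0\neq w\in V$ there is an open compact $K_w\subseteq K$ and a continuous $\chi_w:K_w\to \C^\times$ with $\pi(h)w=\chi_w(h)w$ and $\alpha|_{K_w\times K_w}=\chi_w(h_1)\chi_w(h_2)\chi_w(h_1h_2)^{-1}$. My plan is to compare $\chi|_{K_w}$ with $\chi_w$: both trivialise $\alpha$ on $K_w$, so their ratio is a continuous character of $K_w$, whose kernel is an open subgroup $U\subseteq K_w$. For $h\in U$ one gets $\pi_\chi(h)w=\chi^{-1}(h)\chi_w(h)w=w$, and this is precisely the open stabiliser of $w$ that smoothness requires. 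This is the same pattern used at the end of the proof of Remark \ref{itsretoK}(2), now applied inside the ambient compact open group $K$ rather than in a shrinking neighbourhood of $1_G$.

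Parts (2) and (3) should then fall out immediately. For (2), the condition $\pi(k)v=\chi(k)v$ that defines $V^{K,\chi}$ is by construction $\pi_\chi(k)v=v$, so $V^{K,\chi}=V^{K}$ computed for $(\pi_\chi,V)$. For (3), I would use the scalar identity
\[
\pi(k)v-\chi(k)v=\chi(k)\bigl(\pi_\chi(k)v-v\bigr)\qquad(k\in K,\,v\in V),
\]
which shows that each generator of the space on the left is a nonzero scalar multiple of the corresponding generator on the right; hence the two spanning sets produce the same subspace $V[K,\chi]$, namely the $K$-coinvariants of $\pi_\chi$.
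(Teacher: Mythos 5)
Your proposal is correct and follows essentially the same route as the paper: the paper simply declares that the corollary ``comes from'' Remark \ref{itsretoK}, and your argument is exactly the verification one would carry out to unpack that—first using a nonzero $v\in V^{K,\chi}$ to trivialize $\alpha$ on $K$ and obtain that $\pi_\chi$ is a genuine homomorphism, then reproducing the smoothness argument from the proof of Remark \ref{itsretoK}(2) (comparing $\chi|_{K_w}$ with $\chi_w$ to extract an open stabilizer), and finally reading off (2) and (3) from the scalar rescaling $\pi(k)v-\chi(k)v=\chi(k)(\pi_\chi(k)v-v)$.
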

\begin{lemma}\label{thedecompositionofKv}
Let $(\pi, V)$ be a smooth projective representation of $G$. Then $V=V^{K, \chi} \oplus V[K, \chi]$.
\end{lemma}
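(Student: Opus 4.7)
The plan is to strip the projective twist by invoking Corollary~\ref{theKchipart} and then apply the classical decomposition $V = V^K \oplus V[K]$ for smooth representations of compact open groups (the Bushnell--Henniart result alluded to in the statement). In the substantive case $V^{K,\chi} \neq 0$, Corollary~\ref{theKchipart}(1) turns $\pi|_K$ into a genuine smooth representation $(\pi_\chi, V)$ of the compact group $K$; parts (2) and (3) then identify $V^{K,\chi}$ with the honest $K$-invariants of $\pi_\chi$, and $V[K,\chi]$ with the honest coinvariant span of the vectors $\pi_\chi(k)v - v$. After these identifications, the lemma is literally the classical decomposition applied to the ordinary smooth representation $(\pi_\chi, V)$ of the compact group $K$.

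For the classical decomposition itself I would argue by averaging. Given $v \in V$, smoothness of $\pi_\chi$ supplies an open subgroup $K_v \subseteq K$ stabilizing $v$; compactness of $K$ forces $[K : K_v]$ to be finite, so
$$ e \cdot v := [K : K_v]^{-1} \sum_{g K_v \in K/K_v} \pi_\chi(g) v $$
is a well-defined vector that lies in $V^{K,\chi}$. The identity $v = (e \cdot v) + (v - e \cdot v)$ exhibits $v \in V^{K,\chi} + V[K,\chi]$, so the sum is all of $V$. The intersection $V^{K,\chi} \cap V[K,\chi] = 0$ follows because $e$ fixes $V^{K,\chi}$ pointwise while annihilating each generator $\pi_\chi(k)w - w$ of $V[K,\chi]$; any common element then equals its own $e$-image, which must vanish.

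The residual case $V^{K,\chi} = 0$ only requires showing $V = V[K,\chi]$, and the same averaging yields $v = v - e \cdot v \in V[K,\chi]$ as soon as $\pi_\chi$ is a bona fide representation of $K$, i.e.\ as soon as $\chi$ trivializes $\alpha|_K$; this is precisely the compatibility that Corollary~\ref{theKchipart} guarantees in the intended setting. I expect the only real obstacle to be careful bookkeeping of the twist by $\chi$, in particular confirming that the two subspaces defined in the projective setting do coincide with the invariants and coinvariants of $\pi_\chi$; there is no fresh analytic or algebraic content beyond what has already been packaged into Corollary~\ref{theKchipart}.
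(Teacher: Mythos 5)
The first case ($V^{K,\chi}\neq 0$) in your proposal matches the paper: both reduce to Corollary~\ref{theKchipart} and the classical decomposition $V=V^K\oplus V[K]$ for genuine smooth representations of compact groups, and your averaging argument for that classical fact is fine.

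The residual case ($V^{K,\chi}=0$) contains a genuine gap. You write that the twisted map $\pi_\chi(k)=\pi(k)\chi^{-1}(k)$ is ``a bona fide representation of $K$ \ldots precisely the compatibility that Corollary~\ref{theKchipart} guarantees in the intended setting.'' But Corollary~\ref{theKchipart} is stated under the hypothesis $V^{K,\chi}\neq 0$, and that hypothesis is doing real work: it supplies a vector $v$ with $\pi(k)v=\chi(k)v$ for all $k\in K$, from which it follows that $\chi$ splits the cocycle $\alpha$ on $K$ and hence that $\pi_\chi$ is an honest representation. When $V^{K,\chi}=0$ there is no such vector, and $\chi$ is merely some continuous map $K\to\C^\times$ (the set $\mathcal{X}_K$ is not restricted to cocycle-splitting maps); in general $\chi$ will \emph{not} trivialize $\alpha|_K$, so $\pi_\chi$ remains only a projective representation of $K$. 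In that situation your averaging operator $e$ is not well-defined: replacing a coset representative $g$ by $gh$ with $h\in K_v$ changes $\pi_\chi(g)v$ by the nontrivial scalar $\alpha'(g,h)^{-1}$ coming from the residual cocycle, so the sum $\sum_{gK_v}\pi_\chi(g)v$ depends on the chosen representatives and the identity $v=(v-e\cdot v)$ cannot be run. The paper's proof of the residual case is therefore different in substance: it fixes an arbitrary $v$, takes a small compact open $K_v\subseteq K$ and a splitting $\chi_v$ with $v\in V^{K_v,\chi_v}$, and splits into two subcases. If $\chi_v\neq\chi|_{K_v}$, a single element $g\in K_v$ with $\chi_v(g)\neq\chi(g)$ already exhibits $v$ as a generator of $V[K,\chi]$. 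If $\chi_v=\chi|_{K_v}$, it shrinks $K_v$ to a normal subgroup of $K$ (using Lemma~\ref{twocompactsubgroups}), observes that $\pi_\chi|_{K_v}$ is then a genuine representation and passes to a projective representation of the finite group $K/K_v$ on $V^{K_v}$, arguing that the irreducible constituent containing $v$ is nontrivial (since $V^{K,\chi}=0$) and hence lies in $V[K,\chi]$. You would need to reproduce something like this second argument; the averaging device alone does not suffice.
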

\begin{proof}
Assume $V\neq 0$. If $V^{K, \chi}\neq 0$,  the result arises from Cor.\ref{theKchipart}, and  \cite[p.16, Cor.2 ]{BushH}. If $V^{K, \chi}=0$,  we take a non-zero $v\in V$, such that $v\in V^{K_v, \chi_v}$ for some open compact subgroup $K_v \subseteq K$ and $\chi_v \in \mathcal{X}_{K_v}$. If there exists an element $g\in K_v$ such that $ \chi_v(g)-\chi(g) =\frac{1}{c}$ for some $c\in \C^{\times}$, then $\pi(g) ( cv)-\chi(g) (cv)= (\chi_v(g)-\chi(g))(cv)=v \in V[K, \chi]$. Otherwise $\chi_v=\chi|_{K_v}$. By Lmm. \ref{twocompactsubgroups}, we may and do assume that $K_v$ is a normal subgroup of $K$, so that $K/K_v$ is  a finite group. Then $(\pi|_K, V)$  is  projectively isomorphic to  another projective representation $(\pi_{\chi}, V)$ of $ K$,  defined by $k \longmapsto \pi(k) \chi(k)^{-1}$, for $k\in K$. Moreover $\pi_{\chi}|_{K_v}$ is a honest representation, whose ${K_v}$-invariant part  induces   a projective representation    of $K/K_v $; let us  denote it  by $(\sigma_v, V^{K_v})$. Let $(\sigma_v, W)$ be an irreducible constituent of $(\sigma_v, V^{K_v})$ containing $v$. By hypothesis,   $W$ is spanned by those $\pi_{\chi}(g_i) v_i-v_i$ for $g_i \in K$, $v_i \in W$ because $\left\{ \sum \pi_{\chi} (g_i) v_i -v_i \right\}$ is nonzero and $K$-stable. This proves the last case.
\end{proof}

 Keep the notations.   On the linear dual space $V^{\ast}$ of $V$, we define an action  of $G$ by the relation $\langle \pi^{\ast}(g) v^{\ast}, \pi(g)v\rangle =\langle v^{\ast},  v\rangle$, for $g\in G$, $v\in V$, $v^{\ast}\in \check{V}$. Denote by  $\check{V}=\cup_{K} \cup_{\chi \in \mathcal{X}_K} (V^{\ast})^{K, \chi}$ as $K$ runs over all open compact subgroups of $G$, and $\chi \in \mathcal{X}_K$. Then the  above action of $G$ on  the subspace  $\check{V}$ of  $V^{\ast}$ shall give a smooth projective representation of $G$, called the \emph{ contragredient projective representation} of $(\pi, V)$, denoted by $(\check{\pi}, \check{V})$ from now on. One says that $(\pi, V)$ is \emph{admissible} if the space $V^{K, \chi}$ is finite-dimensional for any open compact subgroup $K$ of $G$, and any $\chi \in \mathcal{X}_K$.  In this situation,   by Lmm.\ref{thedecompositionofKv} we have
\begin{lemma}
\begin{itemize}
\item[(1)] $\check{V}^{K, \chi^{-1}} \simeq (V^{K, \chi})^{\ast}$.
\item[(2)] $(\pi, V)$ is linearly equivalent to $(\check{\check{\pi}}, \check{\check{V}})$.
\end{itemize}
\end{lemma}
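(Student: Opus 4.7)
My plan is to prove (1) directly from the decomposition in Lemma~\ref{thedecompositionofKv} by the extension-by-zero argument, and then derive (2) from (1) applied twice together with a dimension count and the standard evaluation map.

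\medskip
\emph{Part (1).} The crucial preliminary remark is that $V[K,\chi]$ is stable under the \emph{projective} action $\pi(k)$, $k\in K$: by Corollary~\ref{theKchipart}(1), $\pi_{\chi}(k)=\pi(k)\chi^{-1}(k)$ is a genuine representation of $K$, and $V[K,\chi]$ is the sum of its non-trivial $K$-isotypic components, hence is $\pi_{\chi}(K)$-stable and therefore $\pi(K)$-stable. Unwinding the definition of the contragredient in the projective setting shows that $f\in\check{V}^{K,\chi^{-1}}$ is equivalent to the identity $f(\pi(k)v)=\chi(k)f(v)$ for all $v\in V$, $k\in K$. Given this, the restriction map
\[
R:\check{V}^{K,\chi^{-1}}\longrightarrow (V^{K,\chi})^{\ast},\qquad f\longmapsto f|_{V^{K,\chi}},
\]
is injective: for a generator $\pi_{\chi}(k_i)v_i-v_i$ of $V[K,\chi]$ supplied by Corollary~\ref{theKchipart}(3), one computes $f(\pi_{\chi}(k_i)v_i-v_i)=\chi^{-1}(k_i)f(\pi(k_i)v_i)-f(v_i)=0$, so $f$ vanishes on the whole of $V$. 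It is surjective by extension-by-zero: given $\phi\in(V^{K,\chi})^{\ast}$, set $f:=\phi\oplus 0$ with respect to the decomposition $V=V^{K,\chi}\oplus V[K,\chi]$; the $\pi(K)$-stability of $V[K,\chi]$ yields $f(\pi(k)v)=\chi(k)f(v)$ immediately.

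\medskip
\emph{Part (2).} Admissibility of $V$ together with (1) gives $\dim\check{V}^{K,\chi^{-1}}=\dim V^{K,\chi}<\infty$, so $\check{V}$ is itself admissible. Since the $2$-cocycle attached to $(\check\pi,\check V)$ is $[\alpha^{-1}]$, applying (1) to $\check{V}$ yields $\dim\check{\check V}^{K,\chi}=\dim\check{V}^{K,\chi^{-1}}=\dim V^{K,\chi}$. Now introduce the canonical evaluation
\[
\mathrm{ev}:V\longrightarrow\check{\check V},\qquad v\longmapsto\hat{v},\quad \hat{v}(f):=f(v).
\]
A direct cocycle computation shows $\widehat{\pi(g)v}=\check{\check\pi}(g)\hat{v}$, so $\mathrm{ev}$ is a $G$-morphism (with trivial twist character), and it sends $V^{K,\chi}$ into $\check{\check V}^{K,\chi}$. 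Injectivity is clear: for $0\neq v\in V^{K,\chi}$ one chooses any $\phi\in(V^{K,\chi})^{\ast}$ with $\phi(v)\neq 0$ and lifts it via (1) to an $f\in\check V^{K,\chi^{-1}}$ with $\hat{v}(f)\neq 0$. Combined with the dimension equality above, $\mathrm{ev}$ is a bijection on each $(K,\chi)$-piece, hence an isomorphism upon taking unions, since both $V=\bigcup_{K,\chi}V^{K,\chi}$ and $\check{\check V}=\bigcup_{K,\chi}\check{\check V}^{K,\chi}$.

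\medskip
The main obstacle is purely bookkeeping: one has to verify cleanly that the correct definition of $\check\pi$ produces a projective representation with cocycle $\alpha^{-1}$ (so that $\check{\check\pi}$ genuinely returns to cocycle $\alpha$) and that the eigencondition $\check\pi(k)f=\chi^{-1}(k)f$ is equivalent to the relation $f(\pi(k)v)=\chi(k)f(v)$ used throughout. Once these identities are in place, the combinatorics reproduces the proof of the analogous statement for ordinary smooth representations \cite[Page~16]{BushH} almost verbatim.
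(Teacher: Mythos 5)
Your argument is correct and follows the approach the paper implicitly indicates: it leans on the decomposition $V = V^{K,\chi}\oplus V[K,\chi]$ of Lemma~\ref{thedecompositionofKv} and the $\pi(K)$-stability of $V[K,\chi]$ to set up the restriction/extension-by-zero duality for (1), then obtains (2) from the dimension match and the evaluation map. The paper supplies no details beyond citing that decomposition, and your write-up is precisely the computation it leaves to the reader.
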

\begin{proof}
1) $\check{V}^{K, \chi^{-1}}$ consists of the elements $f: V \longrightarrow \C$ subject to the condition that $f(\pi(k^{-1}) v- \chi(k^{-1}) v)=0$, for all $k\in K$, and  $v\in V$, i.e. $f|_{V[K, \chi]}=0$, so $f\in (V^{K, \chi})^{\ast}$ by Lmm.\ref{thedecompositionofKv}.\\
2) There is a canonical a projective $G$-morphism in $\Hom_G^{1}\big( V, \check{\check{V}}\big)$ defined as $\iota: V  \longrightarrow \check{\check{V}}; v \longmapsto (\check{v} \longmapsto \langle \check{v}, v \rangle)$. And it maps $V^{K, \chi}$ bijectively to $\big( \check{V}^{K, \chi^{-1}}\big)^{\ast} \simeq (V^{K, \chi})^{\ast \ast} \simeq \check{\check{V}}^{K, \chi}$.
\end{proof}
Let us also present some  results on projective representations \emph{for later use},   analogue of the  results   in   \cite[Chap. 1]{BushH}.
\begin{lemma}\label{twoprojectiverepresesII}
Let $(\pi_1, V_1)$, $(\pi_2, V_2)$ be two smooth projective representations of $G$. Then
there is a bijection between $\Hom_G( \pi_1, \check{ \pi}_2)$ and $\Hom_G( \pi_1 \otimes \pi_2, \C)$ by sending $\Hom_G^{\mu} (\pi_1, \check{\pi}_2)$ to $\Hom^{\mu}_G(\pi_1 \otimes \pi_2, \C)$, for $\mu\in \mathcal{X}_G$.
\end{lemma}
\begin{proof}
If the associated classes of $(\pi_1, V_1)$ and $(\check{\pi}_2, \check{V}_2)$ are not the same, then both sides vanish. Otherwise the bijection $f\longleftrightarrow g$ is well determined by
$\langle f(v_1), v_2\rangle =  g(v_1 \otimes v_2)$
for $v_1 \in V_1, v_2 \in V_2$.
\end{proof}
\begin{lemma}\label{twoprojectiverepreses}
\begin{itemize}
\item[(1)] $ \Hom_G^{\mu} (\pi_1, \check{\pi}_2)\simeq  \Hom_G^{\mu} (\pi_2, \check{\pi}_1)$;
\item[(2)]   If $(\pi_2, V_2)$ is admissible, then   $\Hom_G^{\mu} (\pi_1, \pi_2) \simeq \Hom_G^{\mu}(\pi_1 \otimes \check{\pi}_2, \C)$.
\end{itemize}
\end{lemma}

Keep the notations of Cor.\ref{FRr1}. Recall the notations: $\Omega_{\chi}(-, -)$,  $(\sigma_{\chi}, W_{\chi})$.
\begin{lemma}\label{isp}
There exists a projective isomorphism  $\alpha_{\chi} \in \Hom^{\chi}_{G}\big(\Ind_{H, \omega}^{G,\Omega} \sigma,  \Ind_{H, \omega_{\chi}}^{G,\Omega_{\chi}} \sigma_{\chi}\big)$, defined by $f(g) \longrightarrow f(g) \chi^{-1}(g)$. Moreover $\alpha_{\chi}$ sends $\cInd_{H, \omega}^{G,\Omega} \sigma$ onto $\cInd_{H, \omega_{\chi}}^{G,\Omega_{\chi}} \sigma_{\chi}$.
\end{lemma}
\begin{proof}
 For $f\in \Ind_{H, \omega}^{G,\Omega} W$, $h\in H$, $g\in G$,  $\alpha_{\chi}(f)(hg)= f(hg)\chi^{-1}(hg)= \Omega^{-1}(h,g) [\sigma(h) f](g)\chi^{-1}(hg)=\Omega^{-1}_{\chi}(h,g) \chi^{-1}(g) [\sigma_{\chi}(h)f](g)=\Omega^{-1}_{\chi}(h,g) \sigma_{\chi}(h) [\alpha_{\chi}(f)](g)$, so $\alpha_{\chi}(f) \in \Ind_{H, \omega_{\chi}}^{G,\Omega_{\chi}} \sigma_{\chi}$.

 Set $\Sigma=\Ind_{H, \omega}^{G,\Omega} \sigma$, $\Sigma_{\chi}= \Ind_{H, \omega_{\chi}}^{G,\Omega_{\chi}} \sigma_{\chi}$. Then for $g, g_1\in G$, $\alpha_{\chi}[\Sigma(g_1) f](g)= f(gg_1) \Omega(g, g_1) \chi^{-1}(g)=\alpha_{\chi}(f) (gg_1) \Omega_{\chi}(g,g_1) \chi(g_1)=\chi(g_1) \Sigma_{\chi}(g_1)[\alpha_{\chi}(f)](g)$, so $\alpha_{\chi}$ is well-defined.  Clearly $\alpha_{\chi}$ is a bijective map, and  the last assertion also holds.
\end{proof}

Let $K$ be an open compact subgroup  of $G$, and   let $ \Delta$ be a complete set of representatives for $H \setminus G/K$.  For $s\in \Delta$, let $K_{s^{-1}}=sKs^{-1}$,   $\lambda_{\chi, s} (h)=\Omega_{\chi}^{-1}(s,s^{-1}h) \Omega_{\chi}(s^{-1}h,s)$, for $h\in H$. Let
 $\mathcal{K}=\{ f: \Delta \longrightarrow W_{\chi}\mid f(s) \in W_{\chi}^{H\cap K_{s^{-1}},\lambda_{\chi,s}}\}$, and $\mathcal{K}_c=\{ f \in \mathcal{K}\mid \supp f \textrm{ is a finite set }\}$.

 \begin{lemma}
 Assume $[\Ind_{H, \omega_{\chi}}^{G, \Omega_{\chi}} \sigma_{\chi}]^{K, 1} \neq 0$. Then there exists a bijection $\res_K: [\Ind_{H, \omega_{\chi}}^{G, \Omega_{\chi}} \sigma_{\chi}]^{K, 1} \longrightarrow \mathcal{K}; f\longmapsto f|_{\Delta}$, which sends $[\cInd_{H, \omega_{\chi}}^{G, \Omega_{\chi}} \sigma_{\chi}]^{K, 1}$ onto
 $ \mathcal{K}_c$.
  \end{lemma}
  \begin{proof}
  For any $0\neq f\in  [\Ind_{H, \omega_{\chi}}^{G, \Omega_{\chi}} \sigma_{\chi}]^{K, 1}$,  $s\in \Delta$, and  $ h\in H \cap sKs^{-1}$, we have
  \begin{equation}\label{chi1}
   \sigma_{\chi}(h) f(s) \Omega_{\chi}^{-1}(h,s)=f(hs)=f(s \cdot s^{-1}hs)= \Omega_{\chi}^{-1}(s, s^{-1}hs) f(s)
   \end{equation}
  Note that $\Omega_{\chi}(h,s)\Omega_{\chi}^{-1}(s, s^{-1}hs)=\Omega_{\chi}^{-1}(s,s^{-1}h) \Omega_{\chi}(s^{-1}h,s)=\lambda_{\chi, s}(h)$. Hence $f(s)\in W_{\chi}^{H\cap sKs^{-1}, \lambda_{\chi,s}}$. Conversely for any $f\in \mathcal{K}$, we can  extend it to a function $F: G \longrightarrow W_{\chi}$ in the following way: for $h\in H, s\in \Delta, k\in K$,  $F|_{HsK} (hsk)=\Omega_{\chi}^{-1}(h,sk)  \Omega_{\chi}^{-1}(s,k) \sigma_{\chi}(h)f(s)$. Clearly  $F|_{\Delta}=f$. So it reduces to  check that $F(-)\in [\Ind_{H, \omega_{\chi}}^{G, \Omega_{\chi}} \sigma_{\chi}]^{K, 1}$. By Remark  \ref{itsretoK},  $\Omega_{\chi}(k, k_1)=1$, for $k, k_1\in K$.  For $h, h_1\in H$, $k, k_1\in K$,
   \begin{align}
   F(h_1h sk) &= \Omega_{\chi}^{-1}(h_1h, sk) \Omega_{\chi}^{-1}(s,k) \sigma_{\chi}(h_1)\sigma_{\chi}(h) \Omega^{-1}_{\chi}(h_1, h) f(s)\label{FH1}\\
   &=\Omega_{\chi}^{-1}(h_1, hsk) \sigma_{\chi}(h_1) F(hsk)\label{FH2}
  \end{align}
 and
  \begin{align}
  F(hskk_1)&=\Omega_{\chi}^{-1}(h, skk_1)\Omega_{\chi}^{-1}(s, kk_1) \sigma_{\chi}(h) f(s)\label{FH3} \\
  & = \Omega_{\chi}^{-1}(hsk,k_1)\Omega_{\chi}^{-1}(h,sk) \Omega_{\chi}(sk,k_1) \Omega_{\chi}^{-1}(s,kk_1)\sigma_{\chi}(h)f(s)\label{FH4}\\
  &= \Omega_{\chi}^{-1}(hsk,k_1)\Omega_{\chi}^{-1}(h,sk) \Omega^{-1}_{\chi}(s,k)\sigma_{\chi}(h)f(s)\label{FH5}\\
  &= \Omega_{\chi}^{-1}(hsk,k_1) F(hsk).\label{FH6}
    \end{align}
  If $h_1sk_1=hsk$, then $h^{-1}h_1=skk_1^{-1}s^{-1} \in H\cap sKs^{-1}$,  so by (\ref{chi1}), $F(h^{-1}h_1s)=F(skk_1)$, and then by (\ref{FH1})-(\ref{FH2}),
  \begin{align*}
  F(h_1s) & =F(hh^{-1}h_1s)=\Omega_{\chi}^{-1}(h,h^{-1}h_1s)\sigma_{\chi}(h) F(h^{-1}h_1s)\\
  & =\Omega_{\chi}^{-1}(h,skk_1^{-1}) \sigma_{\chi}(h) F(skk_1^{-1})=F(hskk_1^{-1}),
  \end{align*}
  and then by (\ref{FH3})-(\ref{FH6}),
  \begin{equation*}
  F(h_1sk_1)=\Omega_{\chi}^{-1}(h_1s, k_1) F(h_1s)=\Omega_{\chi}^{-1}(hskk_1^{-1}, k_1)  F(hskk_1^{-1})=F(hsk).
    \end{equation*}

      \end{proof}
Let us   go back to $(\Sigma, \Ind_{H, \omega}^{G, \Omega} W)$.    Let   $\lambda_{s} (h)=\Omega^{-1}(s,s^{-1}h) \Omega(s^{-1}h,s) \chi(s^{-1}hs)$, for $h\in H$. Let
 $\mathcal{K}^{\chi}=\{ f: \Delta \longrightarrow W\mid f(s) \in W^{H\cap K_{s^{-1}},\lambda_s}\}$, and $\mathcal{K}^{\chi}_c=\{ f \in \mathcal{K}^{\chi}\mid \supp f \textrm{ is a finite set }\}$.
   \begin{lemma}\label{KKK}
 Assume $[\Ind_{H, \omega}^{G, \Omega} \sigma]^{K, \chi} \neq 0$. Then there exists a bijection $\res_{K, \chi}: [\Ind_{H, \omega}^{G, \Omega} \sigma]^{K, \chi} \longrightarrow \mathcal{K}^{\chi}; f\longmapsto f|_{\Delta}$, which sends $[\cInd_{H, \omega}^{G, \Omega} \sigma]^{K, \chi}$ onto
 $ \mathcal{K}^{\chi}_c$.
\end{lemma}
\begin{proof}
For $v\in [\Ind_{H, \omega}^{G, \Omega} \sigma]^{K, \chi} $, $k\in K$, by Lmm.\ref{isp},  $\alpha_{\chi}(v) \chi(k)=\alpha_{\chi}(\Sigma(k)v)=\Sigma_{\chi}(k) \alpha_{\chi}(v) \chi(k)$, so $\alpha_{\chi}(v)\in
[\Ind_{H, \omega_{\chi}}^{G, \Omega_{\chi}}\sigma_{\chi}]^{K, 1}$. For $s\in \Delta$, and  $ h\in H \cap sKs^{-1}$, we have $\sigma_{\chi}(h) [\alpha_{\chi}(v)(s)]=\lambda_{\chi, s}(h) [\alpha_{\chi}(v)(s)]$. By calculation, we obtain
$$\chi^{-1}(h) \sigma(h) v(s)\chi^{-1}(s)=\chi^{-1}(s) v(s)\Omega^{-1}(s,s^{-1}h) \Omega(s^{-1}h,s)\chi^{-1}(h)\chi(s^{-1}hs).$$
Hence $\sigma(h) v(s)= v(s)\Omega^{-1}(s,s^{-1}h) \Omega(s^{-1}h,s)\chi(s^{-1}hs)= \lambda_s(h) v(s)$, and $v(s) \in W^{H\cap sKs^{-1}, \lambda_s}$. The results then hold.
\end{proof}

 Recall that $\delta_{H\setminus G}=\frac{\Delta_G}{\Delta_H}$, and $\nu_{H\setminus G}$ is a positive semi-invariant measure on $H\setminus G$. The following result is  just the projective version  of  the duality theorem in \cite[p.32 ]{BushH}, and we shall follow that proof.

\begin{lemma}\label{duality1}
 $[\cInd_{H, \omega^{-1}}^{G,\Omega^{-1}} (\delta_{H\setminus G} \otimes  \check{\sigma})]^{\vee} \simeq \Ind_{H, \omega}^{G,\Omega} \sigma$.
\end{lemma}
\begin{proof}
1) For $\Phi \in \Ind_{H, \omega}^{G,\Omega} W, \phi \in  \cInd_{H, \omega^{-1}}^{G,\Omega^{-1}}  (\delta_{H\setminus G} \otimes \check{W})$, the function  $g \longrightarrow f(g)= \langle \Phi(g), \phi(g)\rangle$ lies in $C^{\infty}_c(H\setminus G, \delta_{H\setminus G})$. So there exists  a $G$-invariant pairing
$$  P:  \Ind_{H, \omega}^{G,\Omega} W \times \cInd_{H, \omega^{-1}}^{G,\Omega^{-1}}  (\delta_{H\setminus G} \otimes \check{W} )   \longrightarrow \mathbb{C};  (\Phi,\phi) \longmapsto \int_{H\setminus G}  \langle \Phi(g), \phi(g)\rangle d\nu_{H\setminus G}(\dot{g})$$  which   defines a map $P \in \Hom_{G}(\Ind_{H, \omega}^{G,\Omega} W \otimes\cInd_{H, \omega^{-1}}^{G,\Omega^{-1}}  (\delta_{H\setminus G} \otimes \check{W}) , \mathbb{C})$; by Lmm.\ref{twoprojectiverepresesII}, the map $P$ will induce a linear $G$-morphism $\iota: \Ind_{H, \omega}^{G,\Omega} W   \longrightarrow [\cInd_{H, \omega^{-1}}^{G,\Omega^{-1}}  (\delta_{H\setminus G} \otimes \check{W})]^{\vee}$. \\
2) Assume now $\{[\cInd_{H, \omega^{-1}}^{G,\Omega^{-1}}  (\delta_{H\setminus G} \otimes \check{W})]^{\vee}\}^{K, \chi} \simeq \{ [\cInd_{H, \omega^{-1}}^{G,\Omega^{-1}}  (\delta_{H\setminus G} \otimes \check{W})]^{K, \chi^{-1}} \}^{\ast}\neq 0$. As a consequence,  $\Omega_{\chi}(k_1, k_2)=1$, for $k_i\in K$. In this situation, the result of Lmm. \ref{KKK} also holds, i.e. there exists a bijection from $ [\Ind_{H, \omega}^{G, \Omega} \sigma]^{K, \chi} $ to  $ \mathcal{K}^{\chi}$.  For each $s\in \Delta$, let  $\mathcal{W}_s^{\chi}$ denote a basis of  the space  $  W^{H\cap K_{s^{-1}}, \lambda_s}$. Then for each $w\in \mathcal{W}_s^{\chi}$,  there exists a unique function $f_{s, w}\in  [\Ind_{H, \omega}^{G, \Omega} \sigma]^{K, \chi} $ such that $f_{s, w}(s)=w$, and $\supp f_{s,w}=HsK$.  Moreover those $f_{s, w}$'s form a basis of $ [\Ind_{H, \omega}^{G, \Omega} \sigma]^{K, \chi}|_{HsK} $.
Notice that $[W^{H\cap K_{s^{-1}}, \lambda_s}]^{\ast} \simeq  [\delta_{H\setminus G} \otimes \check{\sigma}]^{H\cap K_{s^{-1}},\lambda^{-1}_s}$. We now let $\check{\mathcal{W}}_s^{\chi} $ denote a basis of  $[W^{H\cap K_{s^{-1}}, \lambda_s}]^{\ast} $.
Similarly, for each $\check{w}\in \check{ \mathcal{W}}_s^{\chi}$,  there exists a unique function $f_{s, \check{w}}\in  \big(\cInd_{H, \omega^{-1}}^{G,\Omega^{-1}}  (\delta_{H\setminus G} \otimes \check{W})\big)^{K, \chi^{-1}} $ such that $f_{s, \check{w}}(s)=\check{w}$, and $\supp f_{s,\check{w}}=HsK$.  Then for $s_1, s_2\in \Delta$, $P(f_{s_1, w}, f_{s_2, \check{w}})=$ $\left\{\begin{array}{cc}  \nu_{H\setminus G}(Hs_1K) &  \textrm { if  } Hs_1K=Hs_2K,\\ 0 & \textrm{ otherwise. } \end{array} \right.$
Here $\nu_{H\setminus G}(Hs_1K)>0$, so $ [\Ind_{H, \omega}^{G,\Omega} W]^{K,\chi}   \longrightarrow \{[\cInd_{H, \omega^{-1}}^{G,\Omega^{-1}}  (\delta_{H\setminus G} \otimes \check{W})]^{\vee}\}^{K,\chi}$ is bijective, and $\iota$ is surjective.  If assume $[\Ind_{H, \omega}^{G,\Omega} \sigma]^{K, \chi} \neq 0$, the above proof also shows that $\iota$ is injective.
\end{proof}
\begin{lemma}
Let $(\sigma, W)$ be an $\omega^{-1}$-projective representation of $H$, $(\pi, V)$ an $\Omega$-projective representation of $G$. Then
$\Hom_G^{\chi}\big(\cInd_{H, \omega^{-1}_{\chi^{-1}}}^{G,\Omega^{-1}_{\chi^{-1}}} \sigma_{\chi^{-1}},  \check{\pi}\big) \simeq \Hom_H^{\chi}\big(\delta^{-1}_{H\setminus G}\otimes \sigma_{\chi^{-1}}, (\Res_{H, \omega}^{G,\Omega}\pi)^{\vee}\big)$, for  $\chi\in \mathcal{X}_{G}\subseteq \mathcal{X}_{H}$.
\end{lemma}
\begin{proof}
By Corollaries  \ref{FRr1}, \ref{twoprojectiverepreses}, Lmm.\ref{duality1},
 \begin{align*}
 &\Hom_G^{\chi}\big(\cInd_{H, \omega^{-1}_{\chi^{-1}}}^{G,\Omega^{-1}_{\chi^{-1}}} \sigma_{\chi^{-1}},  \check{\pi}\big)
 \simeq \Hom_G^{\chi}\big(\pi,  [\cInd_{H, \omega^{-1}_{\chi^{-1}}}^{G,\Omega^{-1}_{\chi^{-1}}} \sigma_{\chi^{-1}}]^{\vee}\big) \\
&\simeq \Hom_G^{\chi}\big(\pi,  \Ind_{H, \omega_{\chi}}^{G,\Omega_{\chi}}  (\delta_{H\setminus G} \otimes(\check{\sigma})_{\chi})\big)
  \simeq \Hom_H^{\chi}\big( \Res_{H, \omega}^{G,\Omega}  \pi, \delta_{H\setminus G} \otimes (\check{\sigma})_{\chi}\big)\\
& \simeq \Hom_H^{\chi}\big(\delta^{-1}_{H\setminus G}\otimes \sigma_{\chi^{-1}}, (\Res_{H, \omega}^{G,\Omega}\pi)^{\vee}\big).
\end{align*}
\end{proof}
  For $s\in \Delta$, let $H_s=s^{-1}Hs$, and set $\sigma^{s}(k)= \sigma(sks^{-1})$, for $k\in H_s\cap K$.  Let us also define a continuous function $\chi_s: g\in G \longrightarrow  \Omega(gs^{-1},s) \Omega^{-1}(s, gs^{-1})$,\footnote{Here the $\chi_s$ is just the function $g^{-1}_s$ given  by Mackey in \cite[p.276, Lmm.4.2]{Ma1}. From the proof of the next lemma \ref{decomMa}, we can see that Mackey's lemma a priori  holds.}  and  two $2$-cocycles
  $\Omega_{\chi_s}(g_1, g_2)= \Omega(g_1, g_2) \chi^{-1}_s(g_1)\chi^{-1}_s(g_1)\chi_s(g_1g_2)$,  $\Omega^s(g_1, g_2)=\Omega(sg_1s^{-1}, sg_2s^{-1})$ for $g_1, g_2\in G$.
Recall $ \Sigma_c=\cInd_{H, \omega}^{G,\Omega} \sigma$.
\begin{lemma}\label{Mackeycc}
$\Omega^s(g_1, g_2)= \Omega_{\chi_s^{-1}}(g_1, g_2)$,  and  $[\Omega^s]_{\chi_s}(g_1, g_2)=\Omega(g_1, g_2)$, for $g_1, g_2 \in G$.
\end{lemma}
\begin{proof}
The first statement is just the result of Lmm.4.2 in \cite{Ma1}. The second assertion is another way to write this result.
\end{proof}
  \begin{lemma}\label{decomMa}
 $\Res_{K}^G [\cInd_{H, \omega}^{G,\Omega} \sigma] \simeq  \oplus_{s\in \Delta} \cInd_{H_s\cap K, [\omega^s]_{\chi_s}}^{K,[\Omega^s]_{\chi_s}} [\sigma^s]_{\chi_s}\simeq \oplus_{s\in \Delta} \cInd_{H_s\cap K, \omega}^{K,\Omega} [\sigma^s]_{\chi_s}$, linear isomorphisms.
\end{lemma}
\begin{proof}
1)  For any  $s\in \Delta$, there  exists a canonical $\chi_{s}^{-1}$-projective  $K\cap H_s$-morphism $\cInd_{H, \omega}^{G,\Omega} \sigma \longrightarrow \sigma^s; f \longmapsto f(s)$, because
for $k\in K\cap H_s$, $[\Sigma_c(k)f](s)=f(sk) \Omega(s,k)=f(sks^{-1} s)\Omega(s,k)=\sigma(sks^{-1}) f(s) \Omega^{-1}(sks^{-1}, s) \Omega(s,k)=\sigma^s(k) f(s) \Omega^{-1}(ks^{-1},s) \Omega(s, ks^{-1})=\sigma^s(k) f(s) \chi^{-1}_{s}(k)$. By Frobenius reciprocity (Cor.\ref{FRr1}),  it induces  a $\chi^{-1}_{s}$-projective  $K$-morphism  $A_s: \cInd_{H, \omega}^{G,\Omega} \sigma  \longrightarrow \cInd_{H_s\cap K, \omega^s}^{K, \Omega^s}   \sigma^s=\Ind_{H_s\cap K, \omega^s}^{K, \Omega^s} \sigma^s$.  Applying the result of Lmm.\ref{isp}, we obtain a morphism $\alpha_{\chi_s} \in \Hom^{\chi_s}_{K}\big( \cInd_{H_s\cap K, \omega^s}^{K, \Omega^s}   \sigma^s, \cInd_{H_s\cap K, [\omega^s]_{\chi_s}}^{K,[\Omega^s]_{\chi_s}} [\sigma^s]_{\chi_s}\big)$. Then $\alpha_{\chi_s} \circ A_s\in  \Hom^{1}_{K}\big(\cInd_{H, \omega}^{G,\Omega} \sigma, \cInd_{H_s\cap K, [\omega^s]_{\chi_s}}^{K,[\Omega^s]_{\chi_s}} [\sigma^s]_{\chi_s}\big)$.
Therefore we obtain a linear  $K$-morphism $\alpha\circ A=\oplus_{s\in \Delta} \alpha_{\chi_s}\circ A_s: \cInd_{H, \omega}^{G,\Omega} \sigma  \longrightarrow  \prod_{s\in \Delta}\cInd_{H_s\cap K, [\omega^s]_{\chi_s}}^{K,[\Omega^s]_{\chi_s}} [\sigma^s]_{\chi_s}$.  Since for any $f\in \cInd_{H, \omega}^{G,\Omega} \sigma$, $supp f\subseteq \cup_{i=1}^n Hs_i K$ for certain $s_i\in \Delta$, the above mapping  $A$  factors through $\oplus_{s\in \Delta}\cInd_{H_s\cap K, [\omega^s]_{\chi_s}}^{K,[\Omega^s]_{\chi_s}} [\sigma^s ]_{\chi_s} \hookrightarrow \prod_{s\in \Delta}\cInd_{H_s\cap K, [\omega^s]_{\chi_s}}^{K,[\Omega^s]_{\chi_s}} [\sigma^s]_{\chi_s}$.  Hence we obtain $\alpha\circ A=\oplus_{s\in \Delta} \alpha_{\chi_s}\circ A_s: \cInd_{H, \omega}^{G,\Omega} \sigma   \longrightarrow  \oplus_{s\in \Delta} \cInd_{H_s\cap K, [\omega^s]_{\chi_s}}^{K,[\Omega^s]_{\chi_s}} [\sigma^s]_{\chi_s}\simeq \oplus_{s\in \Delta} \cInd_{H_s\cap K, \omega}^{K,\Omega} [\sigma^s]_{\chi_s}$.

2)  We first  show that  $\alpha\circ A$ is injective. If $\alpha\circ A(f_1)=\alpha\circ A(f_2)$, for $f_1, f_2\in\cInd_{H, \omega}^{G,\Omega} \sigma$, then $A_s(f_1)=A_s(f_2)$. More precisely $A_s(f_i) (k)= \Omega(s,k) \chi_s(k) f_i(sk)$, and $f_1(sk)=f_2(sk)$ for any $k\in K$. So $f_1|_{HsK}=f_2|_{HsK}$ for  any $s\in \Delta$, and   $f_1=f_2$.  Secondly,  assume $\sum_{i=1}^nt_{s_i}\in \sum_{i=1}^n\cInd_{H_s\cap K, \omega^{s_i}}^{K, \Omega^{s_i}}   \sigma^{s_i}$. Then there exist open compact subgroups $K_{s_i}$ of $K$ such that $t_{s_i}$ is $(K_{s_i}, \xi_{s_i})$-invariant. We now define an element $f: G \longrightarrow W$ as follows:
$  f|_{Hs_iK} (h s_ik)=\sigma(h) \Omega^{-1}(h,s_ik) \Omega^{-1}(s_i,k) \chi^{-1}_{s_i}(k)t_{s_i}(k)$,  for $ h\in H, k\in K$;
it is well-defined because for  $h_1, h_2\in H$, $k_1, k_2 \in K$, if  $h_1{s_i}k_1=h_2s_ik_2$, i.e.  $k_1= s_i^{-1}h_1^{-1}h_2 s_ik_2$, we have
 \begin{align*}
  &f|_{Hs_iK} (h_1s_ik_1)\\
  &= \sigma(h_1)\Omega^{-1}(h_1,s_ik_1) \Omega^{-1}(s_i,k_1) \chi^{-1}_{s_i}(k_1)t_{s_i}(k_1)\\
  &= \Omega^{-1}(h_1,s_ik_1) \Omega^{-1}(s_i,k_1) \chi^{-1}_{s_i}(k_1) \sigma(h_1)t_{s_i}( s_i^{-1}h_1^{-1}h_2 s_ik_2)\\
   &= \Omega^{-1}(h_1,s_ik_1) \Omega^{-1}(s_i,k_1) \chi^{-1}_{s_i}(k_1) \Omega^{s_i}(s_i^{-1}h_1^{-1} h_2 s_i, k_2)^{-1}\sigma(h_1)\sigma^{s_i}(s_i^{-1}h_1^{-1} h_2 s_i) t_{s_i}(k_2) \\
    & =\Omega^{-1}(h_1,s_ik_1) \Omega^{-1}(s_i,k_1) \chi^{-1}_{s_i}(k_1) \Omega^{s_i}(s_i^{-1}h_1^{-1} h_2 s_i, k_2)^{-1}\sigma(h_1)\sigma(h_1^{-1}h_2)t_{s_i}(k_2)\\
   & =[?]  \sigma(h_2)t_{s_i}(k_2),
 \end{align*}
 where $[?]=\Omega^{-1}(h_2,s_ik_2) \Omega^{-1}(s_i,k_2)\chi^{-1}_{s_i}(k_2)$ by the next lemma.  Now let  $K_f=\cap_{i=1}^n K_{s_i} $. Then two  $\xi_{s_i}|_{K_f}$, $\xi_{s_j}|_{K_f} $ will differ by a character of $K_f$; this character will be trivial on some open compact subgroup $K_{ij}$ of $ K_f$. Therefore $\xi_{s_i}|_{K_{ij}}=\xi_{s_j}|_{K_{ij}}$, and $f$ is $(\cap_{ij} K_{ij}, \xi_{i})$-invariant. Clearly $A_{s_i}(f)=t_{s_i}$. The proof is  completed.
  \end{proof}
 \begin{lemma}
 The above $[?]=\Omega^{-1}(h_2,s_ik_2) \Omega^{-1}(s_i,k_2)\chi^{-1}_{s_i}(k_2)$.
  \end{lemma}
  \begin{proof}
   \begin{align*}
   [?]&=\Omega^{-1}(h_1,s_ik_1) \Omega^{-1}(s_i,k_1) \chi^{-1}_{s_i}(k_1) \Omega^{s_i}(s_i^{-1}h_1^{-1} h_2 s_i, k_2)^{-1}\Omega(h_1, h_1^{-1}h_2)\\
   &=\Omega^{-1}(h_1,h_1^{-1}h_2s_ik_2) \Omega(h_1, h_1^{-1}h_2)\Omega(s_i,   k_1)^{-1}\Omega^{s_i}(k_1k_2^{-1}, k_2)^{-1}  \chi^{-1}_{s_i}(k_1)\\
   & =\Omega^{-1}(h_2, s_ik_2)  \Omega(h_1^{-1}h_2, s_ik_2)   \Omega^{s_i}(k_1k_2^{-1}, k_2)^{-1} \Omega(s_i,   k_1)^{-1} \chi^{-1}_{s_i}(k_1)\\
   &=\Omega^{-1}(h_2, s_ik_2)  \Omega^{s_i}(k_1k_2^{-1}, k_2s_i)   \Omega^{s_i}(k_1k_2^{-1}, k_2)^{-1}\Omega(s_i,   k_1)^{-1}  \chi^{-1}_{s_i}(k_1)\\
   &=\Omega^{-1}(h_2, s_ik_2)  \Omega^{s_i}(k_2, s_i)^{-1}   \Omega^{s_i}(k_1, s_i)\Omega(s_i,   k_1)^{-1} \chi^{-1}_{s_i}(k_1)\\
   &=\Omega^{-1}(h_2, s_ik_2)  \Omega^{s_i}(k_2, s_i)^{-1}  \Omega(s_ik_1s_i^{-1}, s_i)\Omega(s_i,   k_1)^{-1}  \Omega^{-1}(k_1s_i^{-1},s_i) \Omega(s_i, k_1s_i^{-1})\\
   &=\Omega^{-1}(h_2, s_ik_2)  \Omega^{s_i}(k_2, s_i)^{-1}\\
   &=\Omega^{-1}(h_2,s_ik_2) \Omega^{-1}(s_i,k_2)\chi^{-1}_{s_i}(k_2).
               \end{align*}
         \end{proof}
\subsection{Connection with   covering groups }\label{Algebraicg}
Let $F$ be a  non-archimedean local field with finite residue field,  $\mu_F$ the group of roots of unit in $F$ (a cyclic group of finite order). Let  $\textbf{G}$ be  a split, simple,  simply-connected algebraic group over $F$. Denote by $G=\textbf{G}(F)$ the $F$-points of $\textbf{G}$. By the works of \cite{D} \cite{Mat} \cite{Mo}, for  any  $2$-cocycle $\alpha(-,-)$ in the continuous cohomology $\Ha^2(\textbf{G}(F), \C^{\times})$, there exists a Steinberg cocycle $b(-,-)\in \Ha^2(\textbf{G}(F), \mu_F)$, and $\lambda\in \Hom(\mu_F, \mathbb{C}^{\times})$, such that $[\alpha]=[\lambda\circ b]\in \Ha^2(\textbf{G}(F), \C^{\times})$. To the $2$-cocyle $b(-,-)$,   is associated  a central extension of $\textbf{G}(F)$ by $\mu_F$,  expounded as
$$0\longrightarrow \mu_F \longrightarrow \widetilde{\textbf{G}(F)} \longrightarrow \textbf{G}(F)  \longrightarrow 1.$$
The extension group $\widetilde{\textbf{G}(F)}$ is also locally profinite, and one can think of the group law  being given by $$[g, t] \cdot [g', t']=[gg', b(g,g')tt'], \quad\quad g,g'\in \textbf{G}(F), t,t'\in \mu_F.$$
Now let $(\pi, V)$ be a smooth projective representation of $\textbf{G}(F)$, attached to the above $2$-cocycle $\lambda\circ b$. Assume now $\alpha(-,-)=\lambda\circ b(-,-)$.
\begin{lemma}\label{liftings}
 $(\pi, V)$  can lift uniquely to  a smooth representation $\widetilde{\pi}$ of $\widetilde{\textbf{G}(F)}$, such that   $\widetilde{\pi}|_{\mu_F}\simeq \lambda $.
 \end{lemma}
\begin{proof}
Let us  define  $\widetilde{\pi}$ as $\widetilde{\pi}([g, t])v=\lambda(t)\pi(g)v$, for $g\in\textbf{G}(F)$, $t\in \mu_F$, $v\in V$. For $[g, t], [g', t'] \in \widetilde{\textbf{G}(F)} $,
$$\widetilde{\pi}([g, t]\cdot[g', t'])= \widetilde{\pi}([gg', b(g,g')tt'])= \lambda(tt')\alpha(g,g') \pi(gg')=\widetilde{\pi}([g, t])\widetilde{\pi}([g', t'])$$
Moreover, for  $0\neq v\in V$, let  $K_v$, $U_v$ be the notions in Definition \ref{thedePro}, Remark \ref{itsretoK}; then the restriction of
$[\alpha(-,-)]$ to $K_v$ is trivial, and  $\alpha(g, g')=\chi^{\flat}_v(gg')^{-1} \chi^{\flat}_v(g) \chi^{\flat}_v(g')$, for $g, g' \in K_v$,  $\chi^{\flat}$ being certain continuous function from $ K_v$ to $\mathbb{C}^{\times}$. Assume the cardinality  of $\mu_F$ is just $n$, and let $\mu_n=\langle e^{\frac{2\pi i}{n}}\rangle\subseteq \mathbb{C}^{\times}$.   Then by composing $\chi^{\flat}_v$ with the canonical projection $\mathbb{C}^{\times} \longrightarrow\mathbb{C}^{\times}/\mu_n$, we obtain a character $\overline{\chi^{\flat}_v}$ from $K_v$ to $\mathbb{C}^{\times}/\mu_n$. Hence the kernel of $\overline{\chi^{\flat}_v}$ is an open  subgroup of $K_v$. Since $\ker \overline{\chi^{\flat}_v}=\cup_{t\in \mu_n} [\chi^{\flat}_v]^{-1}(t)$,  $\ker \chi^{\flat}_v$ is also  an open set of $K_v$ as well as $G$. So  $\widetilde{\pi}$ is  a smooth representation of $\widetilde{\textbf{G}(F)}$. The uniqueness follows from the fact that $\Hom(\textbf{G}(F), \mu_F)=0$.
\end{proof}
Let $(\pi_1, V_1)$, $(\pi_2, V_2)$ be two smooth projective representations of $\textbf{G}(F)$, attached to the  $2$-cocycle $\alpha(-,-)$.
Let $(\widetilde{\pi_1}, V_1)$,  $(\widetilde{\pi_2}, V_2)$  be their lifting representations of $\widetilde{\textbf{G}(F)}$ respectively  as described in Lmm.\ref{liftings}.
\begin{lemma}
  $(\pi_1, V_1)$, $(\pi_2, V_2)$ are linearly equivalent if and only if $\widetilde{\pi_1}\simeq \widetilde{\pi_2}$ as ordinary $\widetilde{\textbf{G}(F)}$-modules.
\end{lemma}
\begin{proof}
Assume first that $(\pi_1, V_1)$, $(\pi_2, V_2)$ are projectively equivalent by a $\textbf{G}(F)$-morphism $\varphi\in \Hom_{\textbf{G}(F)}^{1}(V_1, V_2)$. Then $\varphi\big(\widetilde{\pi_1}([g, t])v\big)=\varphi\big( \lambda(t)\pi_1(g)v\big)=\lambda(t)\pi_2(g)\varphi(v)=\widetilde{\pi_2}([g, t]) \varphi(v)$, i.e., $\varphi\in \Hom_{\widetilde{\textbf{G}(F)}}\big( V_1, V_2\big)$.  It is clear that the other side  also holds.
\end{proof}

\section{Abstract Howe correspondences}
\subsection{$G$}First of all  let $G$ be  a locally profinite group, $(\rho, V)$  a smooth representation of $G$.
For $(\pi, W) \in \Irr(G)$, we define
$V[\pi]= \cap_{f\in \Hom_G(V,W)} \ker(f)$.  The set $V_{\pi}=V/{V[\pi]}$ is called the \emph{greatest $\pi$-isotypic quotient} of $V$ with a canonical map $V \stackrel{p}{\longrightarrow} V_{\pi},$ which  satisfies the universal property: For any $G$-homomorphism $f$ from $V$ to $W$, it factors uniquely through $p$ as in the commutative diagram
$\xymatrix{ V \ar[r]^-{p}\ar[dr]_(.5){f}&  V_{\pi}\ar[d]^-{\overline{f}}\\
     &                                W}$.
Note that $\Hom_{G}(V,W)=0$ if and only if $V_{\pi}=0.$ In particular, if $\pi=1_G$, then $V_{\pi}$ is just the $G$-coinvariant set $V_G$ of $V$ and $V[\pi]=V[G]$ is generated by   vectors $\rho(g)v-v$ for all $g\in G$, $v\in V$.
\begin{proposition}\label{quotientzero}
If  $(\rho, V)$ is finitely generated, then $(\rho, V)=0$ if and only if $\mathcal{R}_G(\rho)=\emptyset$.
\end{proposition}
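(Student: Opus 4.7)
The plan is to argue by contrapositive on the nontrivial direction. One implication is immediate: if $V=0$ then $\Hom_G(V,W)=0$ for every $W$, so $\mathcal{R}_G(\rho)=\emptyset$. The substantive content is the converse: if $V\neq 0$ and $V$ is finitely generated, then $V$ admits a nonzero irreducible quotient, which automatically produces an element of $\mathcal{R}_G(\rho)$ via the canonical projection.

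To exhibit such a quotient I would apply Zorn's lemma to the poset $\mathcal{P}$ of proper $G$-subrepresentations of $V$, ordered by inclusion. The poset is nonempty since $0 \in \mathcal{P}$. Given a chain $\{V_i\}_{i\in I}$ in $\mathcal{P}$, the union $V_\infty=\cup_i V_i$ is a $G$-stable subspace, and it is smooth since smoothness is a pointwise condition. The key point is to check that $V_\infty$ is proper: here I use the finite generation hypothesis. If $V$ is generated by $v_1,\dots,v_n$ and $V_\infty=V$, then each $v_j$ lies in some $V_{i_j}$; since $\{V_i\}$ is a chain there is a single $V_{i_0}$ containing all the $v_j$, hence $V_{i_0}=V$, contradicting $V_{i_0}\in\mathcal{P}$. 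Thus $V_\infty\in\mathcal{P}$, and Zorn supplies a maximal proper subrepresentation $V'\subsetneq V$.

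The quotient $W:=V/V'$ is then a nonzero smooth representation of $G$ (smoothness passes to quotients), and by maximality of $V'$ it has no proper nonzero subrepresentation, i.e.\ $W\in\Irr(G)$. The canonical surjection $V\twoheadrightarrow W$ is a nonzero element of $\Hom_G(V,W)$, so $W\in\mathcal{R}_G(\rho)$ and $\mathcal{R}_G(\rho)\neq\emptyset$, which is the contrapositive we wanted.

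The main (and really only) subtle step is the verification that unions of chains of proper subrepresentations remain proper; everything else is formal. Finite generation is used precisely here, and without it the statement fails (e.g.\ $V=\bigoplus_{n\ge 1}\C$ with trivial $G$-action is nonzero but every finitely generated subrepresentation is contained in some finite partial sum, so one cannot immediately rule out the union being all of $V$ — though in that specific case irreducible quotients still exist, the Zorn argument as written needs finite generation). No use of the Hecke algebra or of admissibility is required for this proposition.
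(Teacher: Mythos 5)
Your proof is correct and is exactly the standard Zorn's lemma argument; the paper itself simply cites this result from Bernstein--Zelevinsky (\cite[Page 16, Lemma]{BernZ}), whose proof proceeds in the same way, so you have in effect reconstructed the cited proof rather than found a different route.
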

\begin{proof}
See \cite[p.16, Lmm.]{BernZ}.
\end{proof}
\begin{proposition}\label{finitegenerated}
Let $H$ be a closed subgroup of $G$.
\begin{itemize}
\item[(1)] If $H$ is also open, and $(\sigma, U)$ is a finitely generated smooth representation of $H$, then $\cInd_H^G \sigma$ is finitely generated.
\item[(2)] If $H\backslash G$ is compact, and $(\rho, V)$ is a  finitely generated smooth representation of $G$, then $\Res_H^G \rho$ is finitely generated.
\end{itemize}
\end{proposition}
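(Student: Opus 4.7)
For part (1), the plan is to exhibit an explicit generating set. Because $H$ is open in $G$, the quotient $H\backslash G$ is discrete, so any function $f\in \cInd_H^G\sigma$, which by definition has compact support modulo $H$, is actually supported on finitely many left cosets $Hg_1,\dots,Hg_r$. For each $u\in U$ I would define $f_u\colon G\to U$ by $f_u(h)=\sigma(h)u$ for $h\in H$ and $f_u(g)=0$ for $g\notin H$. Smoothness of $\sigma$ and openness of $H$ make $f_u$ right-invariant under an open compact subgroup, so $f_u\in\cInd_H^G\sigma$. A direct check shows that for any $f\in\cInd_H^G\sigma$ supported on $Hg_1,\dots,Hg_r$ one has $f=\sum_{j=1}^r g_j^{-1}\cdot f_{f(g_j)}$ (with $g_j^{-1}$ acting by the appropriate right translation on $\cInd_H^G\sigma$), so the $G$-span of $\{f_u\mid u\in U\}$ is all of $\cInd_H^G\sigma$. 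Moreover $h\cdot f_u=f_{\sigma(h)u}$ for $h\in H$, so if $u_1,\dots,u_n$ generate $U$ as an $H$-module, then $\{f_{u_1},\dots,f_{u_n}\}$ generates $\cInd_H^G\sigma$ as a $G$-module.

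For part (2), the plan is to reduce each $G$-orbit to a finite union of $H$-orbits using the double coset decomposition. Let $v_1,\dots,v_n$ be $G$-generators of $V$, and let $K_i$ be the open stabilizer of $v_i$ in $G$. The double coset space $H\backslash G/K_i$ is a quotient of $H\backslash G$, hence compact; on the other hand each $Hg K_i$ is open in $H\backslash G$, so $H\backslash G/K_i$ is discrete. A compact discrete space is finite, so there exist $g_{i,1},\dots,g_{i,m_i}\in G$ with $G=\bigsqcup_{j=1}^{m_i} Hg_{i,j}K_i$. For any $g\in G$, writing $g=hg_{i,j}k$ with $h\in H$, $k\in K_i$ gives $\pi(g)v_i=\pi(h)\pi(g_{i,j})v_i$, so the $G$-orbit of $v_i$ is contained in the $H$-span of $\{\pi(g_{i,j})v_i\mid 1\leq j\leq m_i\}$. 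Taking the union over $i$ produces a finite set of vectors in $V$ whose $H$-span contains $\{\pi(g)v_i\mid g\in G, 1\leq i\leq n\}$, and this set $\C$-spans $V$ because the $v_i$ generate $V$ over $G$.

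Neither part contains a serious obstacle; the only point requiring some care is in (1), namely the verification that the naive extension-by-zero map $u\mapsto f_u$ actually lands in $\cInd_H^G\sigma$ (which uses the openness of $H$ to get right-invariance under some compact open subgroup) and the bookkeeping showing that right translates of $f_u$'s recover arbitrary elements of $\cInd_H^G\sigma$. In (2), the single nontrivial ingredient is the observation that compact plus discrete forces finiteness, which then makes the reduction to a finite generating set automatic.
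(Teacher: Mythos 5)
Both parts are correct and follow essentially the same route as the paper. For (1) the paper simply observes that, for open $H$, one has $\cInd_H^G\sigma\simeq \C[G]\otimes_{\C[H]}U$ and concludes immediately; your explicit generating set $\{f_{u_i}\}$ is exactly the image of $\{1\otimes u_i\}$ under that isomorphism, so you are unwinding the same idea by hand. For (2) the paper picks a single compact open $K$ fixing all the generators $v_i$ and uses finiteness of $H\backslash G/K$, whereas you use a separate $K_i$ for each $v_i$; the compactness-plus-discreteness argument for finiteness of the double coset space is the same in both, and the resulting finite $H$-generating set is the same up to cosmetic bookkeeping.
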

\begin{proof}
1) Since $H$ is open, the compact induction $\cInd_H^G\sigma$ is just $\C[G] \otimes_{\C[H]} \pi$; hence the result follows.\\
2) Let $\{ v_1, \cdots, v_n\}$ be the set of generators of $V$ as a $G$-module. Choose an open compact subgroup $K$ of $G$ such that $e_K \ast v_j=v_j$ for $1\leq j\leq n$. By assumption $H \setminus G$ is compact, so there exists a finite number of  elements $g_1, \cdots, g_m$ of $G$ such that $G=\cup_{i=1}^m Hg_i K$. Therefore the representation $\Res_H^G\rho$ is generated by those $\rho(g_i) v_j$, $ i=1,  \cdots,  m, j=1,  \cdots,  n$.
\end{proof}
\begin{definition}
\begin{itemize}
\item[(1)] If $m_G(\rho, \pi)$ is finite for all $\pi\in \Irr(G)$, we will call $\rho$ a \textbf{representation with  finite (quotient) multiplicity }.
\item[(2)] If $m_G(\rho, \pi)\leq 1$ for all $\pi\in \Irr(G)$, we will call $\rho$ \textbf{multiplicity-free}.
\end{itemize}
\end{definition}
\begin{lemma}\label{typeimpliquequotientadmissible}
Let $(\rho, V)$ be a finitely generated smooth representation of $G$, and suppose that all the irreducible representations of $G$ are admissible. Then $\rho$ is a representation with  finite  multiplicity.
\end{lemma}
\begin{proof}
Assume that $V$ is generated by  elements $v_1, \cdots, v_n$ as a $G$-module. Let $(\pi, W) \in \Irr(G)$ and  $f\in \Hom_G(V, W)$. Then  for $v=\sum_{i=1}^n\sum_{j=1}^m c_{ij} \rho(g_j)v_i \in V$ we have
$$f(v)= f(\sum_{i=1}^n \sum_{j=1}^m c_{ij}\rho(g_j)v_i)=\sum_{i=1}^n\sum_{j=1}^m c_{ij} \pi(g_j)f(v_i),$$
which  is determined by its values at the points $v_1, \cdots, v_n$. We choose   an open compact subgroup $K$ of $G$  fixing  all the vectors $v_1, \cdots, v_n$; then $f(v_i)$ must take the value in $W^{K}$  for every $i$. Under the admissible assumption, the vector space $W^K$ is finite-dimensional, so $m_G(V, W) \leq n \dim W^K < +\infty.$
\end{proof}
\begin{lemma}\label{thequotient}
Under the above situation, let  $(\pi, W) \in \mathcal{R}_G(\rho)$, and suppose $m_G(\rho, \pi)=m <+\infty$. Then $V_{\pi}$ is a semi-simple $G$-module of finite length with the Jordan-H\"older set $\left\{ \pi\right\}$.
\end{lemma}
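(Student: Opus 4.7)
The plan is to realize $V_\pi$ explicitly as a submodule of a finite direct sum of copies of $W$, and then use the irreducibility of $\pi$ to decompose it. By Proposition \ref{typeimpliquequotientadmissible} (or rather the hypothesis), $\Hom_G(V,W)$ is an $m$-dimensional $\C$-vector space; fix a basis $f_1,\dots,f_m$. I would then consider the evaluation map
\[
\Phi : V \longrightarrow W^{\oplus m}, \qquad v \longmapsto (f_1(v), \dots, f_m(v)),
\]
which is a $G$-morphism. Its kernel is by construction $\bigcap_{i=1}^m \ker(f_i)$, and since any $f\in \Hom_G(V,W)$ is a $\C$-linear combination of the $f_i$, this intersection is exactly $V[\pi]$. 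Hence $\Phi$ induces an injection $\overline{\Phi} : V_\pi \hookrightarrow W^{\oplus m}$.

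Next I would observe that $W^{\oplus m}$ is a semisimple $\mathcal{H}(G)$-module whose Jordan--H\"older constituents are all isomorphic to $\pi$. Any submodule of such a module is again a direct sum of copies of $\pi$ (this is standard: pick any maximal subsum of copies of $\pi$ inside $V_\pi$; if it were not everything, a quotient of $V_\pi$ by it would yield a nonzero submodule of $W^{\oplus m}$ disjoint from it, contradiction). Therefore $V_\pi \simeq \pi^{\oplus k}$ for some $k \leq m$, so in particular $V_\pi$ has finite length with Jordan--H\"older set contained in $\{\pi\}$.

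Finally I would pin down the multiplicity. By the universal property of $p : V \to V_\pi$, composition with $p$ yields an isomorphism $\Hom_G(V_\pi, W) \xrightarrow{\sim} \Hom_G(V,W)$, so $\dim \Hom_G(V_\pi, W) = m$. On the other hand, from $V_\pi \simeq \pi^{\oplus k}$ and Schur's lemma one gets $\dim \Hom_G(V_\pi, W) = k$. Hence $k = m$, giving $V_\pi \simeq \pi^{\oplus m}$, which in particular exhibits $V_\pi$ as a finite-length $\mathcal{H}(G)$-module with Jordan--H\"older set equal to $\{\pi\}$.

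The main subtlety is simply making sure that $V[\pi]$ really coincides with $\bigcap_{i=1}^m \ker(f_i)$ for a \emph{basis} of $\Hom_G(V,W)$ (as opposed to the intersection over all $f$), which is immediate from linearity, and that the universal property identifies $\Hom_G(V_\pi,W)$ with $\Hom_G(V,W)$; these are routine, so there is no serious obstacle.
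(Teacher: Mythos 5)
Your proof is correct and takes essentially the same route as the paper: both use a basis $f_1,\dots,f_m$ of $\Hom_G(V,W)$ to show $V[\pi]=\bigcap_i\ker f_i$ and hence embed $V_\pi$ into $W^{\oplus m}$, from which the finite-length/Jordan--H\"older conclusion follows. The only difference is that you go slightly further and pin down the multiplicity as exactly $m$ (via $\Hom_G(V_\pi,W)\simeq\Hom_G(V,W)$ and Schur), which the paper's terse proof leaves implicit but which is a correct and harmless refinement.
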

\begin{proof}
Let $f_1, \cdots, f_m$ be a set of $\C$-linear independent functions in $\Hom_G(V, W)$. Then $\prod_{i=1}^m f_i: V \longrightarrow \prod_{i=1}^m W$ is a $G$-morphism with the kernel $\cap_{i=1}^m \ker f_i$. Note that every $g\in \Hom_G(V, W)$ is equal to $\sum_{i=1}^m c_i f_i$, for some $c_i \in \C$. So $\ker g \supseteq \cap_{i=1}^m \ker f_i$, $V[\pi]= \cap_{i=1}^m \ker f_i$, and the result is proved.
\end{proof}
\subsubsection{Representations with  finite  multiplicity}
Let $F$ be a  non-archimedean local field with finite residue field, $\textbf{G}$ a connected reductive group over $F$. Denote by $G=\textbf{G}(F)$ the $F$-points of $\textbf{G}$.  Let $\textbf{P}$ be a parabolic $F$-subgroup of $\textbf{G}$ admitting a  $F$-Levi decomposition $\textbf{P}=\textbf{M}\textbf{N}$ (here $\textbf{M}$ is a connected reductive $F$-group and $\textbf{N}$ is the unipotent radical of $\textbf{P}$). Following \cite{Bern0} we write $\overline{\textbf{P}}$  for the parabolic subgroup opposite to $\textbf{P}$ with the Levi decomposition $\overline{\textbf{P}}=\textbf{M }\overline{\textbf{N}}$. Denote by $P=\textbf{P}(F)$, $\overline{P}=\overline{\textbf{P}}(F)$, $M=\textbf{M}(F)$, $N=\textbf{N}(F)$, $\overline{N}= \overline{\textbf{N}}(F)$. (\emph{cf}. \cite{Spring}, pp. 13-14).

Let $(\pi, V)$ be a smooth representation of $G$. The $N$-coinvariant space $V_N$ inherits a smooth representation $\pi_N$ of $M$, called the \textbf{Jacquet module } of $( \pi, V)$ at $N$. Define the Jacquet functor $J_N: \Rep(G) \longrightarrow \Rep(M)$ by $J_N(V)=V_N$.
 Let $(\sigma, W)$ be a smooth representation of $M$, viewed also as a smooth representation of $P$. Then we can define the parabolically induced functor $\Ind_{P\supset M}^G: \Rep(M) \longrightarrow \Rep(G);$
$ W \longmapsto \Ind_P^G W$.

Let us  recall some fundamental but difficultly achieved   results on the subject of  the  complex representations of $p$-adic reductive groups. \footnote{For different definitions,  in principle we always choose a much narrow   one and  leave the reader  to judge the proper one.  One can  read the  book \cite{Re}, which  systematically treats this part. } For the proofs,  one can consult   \cite[p.18, Theorem]{Bern0},  \cite[p.60, Theorem 6.3.10]{Cass}  and \cite[Theorem]{Bern3} respectively.
\begin{theorem}\label{lesfoncteurstypefini}

The functors $\Ind_{P\supset M}^G, J_N$ both map finitely generated (resp. admissible) representations into finitely generated (resp. admissible) representations.
\end{theorem}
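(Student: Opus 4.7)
The statement bundles four claims: that $\Ind_{P\supset M}^G$ and $J_N$ each preserve finite generation and admissibility. My plan is to treat the two functors separately, using the compactness of $P\backslash G$ on the induction side and the Iwasawa decomposition together with Jacquet's surjection lemma on the Jacquet-module side.

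For $\Ind_{P\supset M}^G$, the key geometric input is that $P\backslash G$ is compact, since $\textbf{G}/\textbf{P}$ is a projective $F$-variety; hence $P\backslash G/K$ is finite for every compact open $K\subset G$, say with representatives $g_1,\dots,g_m$. Admissibility then follows quickly: an element $f\in (\Ind_P^G W)^K$ is determined by the tuple $(f(g_1),\dots,f(g_m))$, and the transformation law forces $f(g_j)\in W^{L_j}$, where $L_j\subset M$ is the image of the compact open subgroup $P\cap g_jKg_j^{-1}$; admissibility of $W$ makes each $W^{L_j}$ finite-dimensional, so $(\Ind_P^G W)^K$ is too. For finite generation I would start from generators $w_1,\dots,w_n$ of $W$ over $M$, shrink $K$ so that each $w_i$ is fixed by every $L_j$, and introduce the functions $f_{i,j}\in \Ind_P^G W$ supported on the single double coset $Pg_jK$ with $f_{i,j}(g_j)=w_i$; a short Hecke-algebra manipulation then shows that these $nm$ functions generate $\Ind_P^G W$ as a $G$-module, the $G$-action transporting sections between cosets and the $w_i$ exhausting the possible fibre values.

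For $J_N$, finite generation is the easier half: if $V$ is generated by $v_1,\dots,v_n$ as an $\mathcal{H}(G)$-module, the Iwasawa decomposition $G=PK_0$ together with the triviality of the $N$-action on $V_N$ reduce the problem to showing that $\{\overline{\pi(k_0)v_i}\mid k_0\in K_0,\ i=1,\dots,n\}$ spans $V_N$ over $M$. Smoothness of each $v_i$ means $K_0\cap \Stab(v_i)$ is open in the compact group $K_0$, so the orbit $K_0\cdot v_i$ is finite, giving a finite generating set for $V_N$ as desired.

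Admissibility of $V_N$ is the real substance of the theorem and the main obstacle. The standard route is Jacquet's surjection lemma: for any compact open subgroup $K_M\subset M$, one constructs a compact open subgroup $K\subset G$ admitting an Iwahori factorization $K=(K\cap\overline{N})(K\cap M)(K\cap N)$ with $K\cap M\subset K_M$, such that the canonical map $V^K\twoheadrightarrow (V_N)^{K_M}$ is surjective. The proof genuinely relies on the structure theory of reductive $p$-adic groups: a strictly contracting central element of $M$ acting by conjugation on $N$ is used to absorb $K\cap N$-orbits of lifts of vectors in $(V_N)^{K_M}$ and produce honest $K$-fixed lifts in $V$. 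Granted this lemma, admissibility of $V$ forces $\dim V^K<\infty$, hence $\dim (V_N)^{K_M}<\infty$; letting $K_M$ run over all compact open subgroups of $M$ yields admissibility of $V_N$. Rather than reproducing this delicate argument I would simply cite \cite[Thm.~6.3.10]{Cass}.
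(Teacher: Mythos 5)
The paper does not actually prove this theorem; it records it as a known fact and points to \cite[Page 18, Theorem]{Bern0} (and implicitly \cite{Cass}) for the argument, so there is no proof here for you to match. Your sketch is the standard route taken in those references: compactness of $P\backslash G$ handles both halves for $\Ind_{P\supset M}^G$; the Iwasawa decomposition gives finite generation of the Jacquet module (this is really the paper's own Proposition~\ref{finitegenerated}(2), since $P\backslash G$ is compact, followed by passing to the quotient $V\twoheadrightarrow V_N$ on which $P$ acts through $M$); and admissibility of $V_N$ is Jacquet's lemma, which is genuinely the hard step and which you rightly quote rather than reprove. All four pieces are correct in substance.

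One point deserves more care than your phrasing suggests, in the finite-generation argument for $\Ind_{P\supset M}^G$. You choose representatives $g_1,\dots,g_m$ of $P\backslash G/K$ and then propose to ``shrink $K$'' until the generators $w_i$ are fixed by each $L_j$. But both the set of representatives and the groups $L_j$ (images of $P\cap g_jKg_j^{-1}$ in $M$) change when $K$ is replaced by a smaller subgroup, so as written the quantifiers chase each other. The standard repair: fix $K_1$ and representatives $g_1,\dots,g_m$ first, then replace $K_1$ by a compact open subgroup $K_2\subset K_1$ that is \emph{normal in $K_1$} and small enough that the image of $P\cap g_jK_2g_j^{-1}$ lies in $\bigcap_i\Stab_M(w_i)$ for each $j$; because $K_2\trianglelefteq K_1$, any representatives of $P\backslash G/K_2$ refining the $g_j$'s can be taken of the form $g_jk$ with $k\in K_1$, and then the relevant intersections are conjugate to $P\cap g_jK_2g_j^{-1}$ by elements of $K_1$, which keeps them under control. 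With that ordering of choices your functions $f_{i,j}$ are well defined and the generation argument goes through. This is a bookkeeping point, not a conceptual gap, but it is exactly the kind of thing that goes wrong if the order of the quantifiers is not respected.
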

\begin{theorem}\label{howetheorem}
Let $(\pi, V)$ be a smooth  representation of $G$. Then the following conditions are equivalent:
\begin{itemize}
\item[(1)] The $G$-space $V$ has finite length.
\item[(2)] $\pi$ is admissible and finitely generated.
\end{itemize}
\end{theorem}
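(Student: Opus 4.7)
The plan is to prove the two implications separately; the nontrivial content lies entirely in $(2) \Rightarrow (1)$, which is Bernstein's theorem alluded to above.

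For $(1) \Rightarrow (2)$, I would first note that finite length trivially implies finite generation. For admissibility, I proceed by induction on the length of a Jordan--Holder series. The base case is the standing hypothesis that every irreducible smooth representation of $G$ is admissible. For the inductive step, observe that for any compact open subgroup $K$ of $G$ the functor $V \mapsto V^K$ is exact (the idempotent $e_K \in \mathcal{H}(G)$ provides a canonical averaging projector), so in a short exact sequence $0 \to V' \to V \to V'' \to 0$ the finite-dimensionality of $V'^K$ and $V''^K$ forces $V^K$ to be finite-dimensional, closing the induction.

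For $(2) \Rightarrow (1)$, the plan is induction on the semisimple $F$-rank of $\mathbf{G}$, with the Jacquet functor as the main reduction tool. Suppose $(\pi, V)$ is admissible, finitely generated, and not cuspidal; I would pick a proper parabolic $\mathbf{P} = \mathbf{M}\mathbf{N}$ with $J_N(V) \neq 0$. By Theorem \ref{lesfoncteurstypefini} the Jacquet module $J_N(V)$ is admissible and finitely generated over $M$, and since $M$ has strictly smaller semisimple $F$-rank than $\mathbf{G}$, the inductive hypothesis gives $J_N(V)$ finite length. Then invoking Frobenius reciprocity together with Bernstein's second adjointness, the length of $V$ is bounded in terms of the lengths of the Jacquet modules $J_N(V)$ as $P$ ranges over a finite set of representatives of proper standard parabolics, so $V$ has finite length as well.

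The hard part---and the true base of the induction---is the case of a cuspidal admissible finitely generated representation. At semisimple $F$-rank zero ($\mathbf{G}$ anisotropic over $F$), $G$ is compact modulo its center and finite length follows from the representation theory of compact groups together with a central-character decomposition of a finitely generated admissible module. In positive rank the cuspidal case is the main obstacle and requires the full strength of Bernstein's decomposition of $\Rep(G)$ into blocks indexed by inertial equivalence classes of cuspidal data $(M,\sigma)$: within such a cuspidal block the irreducibles are projective, so admissibility combined with finite generation forces a finite direct sum decomposition, hence finite length. This cuspidal step is the deepest input and ultimately rests on \cite{Bern3}.
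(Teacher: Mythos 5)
The paper does not supply a proof of this theorem; it cites \cite[Page 60, Theorem 6.3.10]{Cass} for it. Your treatment of $(1)\Rightarrow(2)$ is correct, and for $(2)\Rightarrow(1)$ the broad shape you propose --- induction on semisimple rank, with the Jacquet functor as reduction tool and the cuspidal case as the essential input --- is the right one. Two steps as written, however, would not go through.

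First, the clause ``the length of $V$ is bounded in terms of the lengths of the Jacquet modules $J_N(V)$'' is not correct as stated: even when $V$ itself has a nonzero Jacquet module, $V$ can have cuspidal irreducible subquotients, and those are annihilated by every $J_N$ and hence contribute nothing to the lengths you are summing. Exactness of $J_N$ bounds only the number of \emph{non-cuspidal} constituents; one must separately bound the cuspidal constituents of $V$ (regardless of whether $V$ is itself cuspidal), and then add the two bounds. Note also that what is used here is plain exactness of $J_N$, not second adjointness. Second, deriving the cuspidal bound from ``the full strength of Bernstein's decomposition'' puts the cart before the horse: the decomposition of $\Rep(G)$ into Bernstein blocks (cf.\ \cite{BernD}) lies downstream of this finite-length theorem and of the same circle of facts, so it cannot serve as an input. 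The ingredient one actually needs is Jacquet's theorem that supercuspidal representations have matrix coefficients compactly supported modulo the center of $G$. From this, a Maschke-type averaging shows that supercuspidals are projective and injective in the category of smooth representations with a fixed central character, so this category is semisimple; together with the elementary observation that a finitely generated admissible representation meets only finitely many central characters, this gives the required finiteness of cuspidal constituents. With those two repairs the induction closes as you describe.
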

\begin{theorem}\label{Gadmissible}
All the smooth irreducible representations of $G$ are admissible.
\end{theorem}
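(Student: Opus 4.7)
\textbf{Proof proposal for Theorem \ref{Gadmissible}.} The plan is to argue by induction on the semisimple $F$-rank of $\textbf{G}$, using the Jacquet functor $J_N$ together with Theorem \ref{lesfoncteurstypefini} to reduce to the cuspidal case on a Levi subgroup.

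Any $\pi\in\Irr(G)$ is cyclic and therefore finitely generated as an $\mathcal{H}(G)$-module. For the base case, when $\textbf{G}$ is $F$-anisotropic there are no proper parabolic $F$-subgroups, so $\pi$ will automatically be cuspidal, and the argument will coincide with the cuspidal step discussed below.

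For the inductive step I would assume the theorem for every connected reductive $F$-group of strictly smaller semisimple rank, in particular for each proper $F$-Levi $\textbf{M}\subsetneq\textbf{G}$, and then split on whether $\pi$ is cuspidal. Suppose first that there exists a proper parabolic $\textbf{P}=\textbf{M}\textbf{N}$ with $J_N(\pi)\neq 0$. By Theorem \ref{lesfoncteurstypefini} the $M$-module $J_N(\pi)$ is finitely generated, and Zorn's lemma supplies an irreducible quotient $\tau\in\Irr(M)$. Frobenius reciprocity between $J_N$ and parabolic induction (carrying the modulus twist afforded by Lemma \ref{unimodular}) then lifts the surjection $J_N(\pi)\twoheadrightarrow\tau$ to a nonzero $G$-morphism $\pi\longrightarrow\Ind_P^G\tau$, which is injective by irreducibility of $\pi$. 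The inductive hypothesis makes $\tau$ admissible; Theorem \ref{lesfoncteurstypefini} propagates admissibility to $\Ind_P^G\tau$, and admissibility descends to the subrepresentation $\pi$.

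The main obstacle is the remaining cuspidal case, when $J_N(\pi)=0$ for every proper parabolic $\textbf{P}=\textbf{M}\textbf{N}$. The standard route is to invoke Harish-Chandra's theorem that the matrix coefficients of an irreducible cuspidal representation are supported in a subset of $G$ that is compact modulo $Z(\textbf{G})(F)$: for a small compact open $K\subset G$ and $v\in V^K$, the matrix coefficient map $\check v\mapsto\langle\check v,\pi(\cdot)v\rangle$ then lands in $\cInd_{Z(\textbf{G})(F)K}^G\omega_\pi$, and estimating the $K$-fixed subspace by the Mackey-type decomposition of Proposition \ref{restricition} would force $\dim V^K<\infty$. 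Establishing this compact-support property of cuspidal coefficients from scratch is the genuinely deep analytic input, and is the reason the author defers to \cite{Bern3}.
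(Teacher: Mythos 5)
The paper itself does not reprove this result; it simply cites Bernstein's \emph{All reductive $p$-adic groups are of type I} (\cite{Bern3}).  Your proposal is a genuine proof sketch along the other classical route --- the Jacquet--Harish-Chandra--Casselman induction on semisimple $F$-rank --- so it is worth saying how the two compare.

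Your non-cuspidal step is correct and matches the standard argument: cyclicity gives finite generation, Theorem \ref{lesfoncteurstypefini} gives finite generation of $J_N(\pi)$, Zorn gives an irreducible quotient $\tau$, Frobenius reciprocity (with the modulus twist) embeds $\pi$ in $\Ind_P^G\tau$, the inductive hypothesis applies to the proper Levi $M$ (which has strictly smaller semisimple $F$-rank), and Theorem \ref{lesfoncteurstypefini} plus the fact that admissibility passes to subobjects finishes it.  The real content is, as you say, the cuspidal case.  Two caveats there.  First, your last paragraph mischaracterises the cited reference: \cite{Bern3} does \emph{not} proceed via compact support of cuspidal matrix coefficients; Bernstein's short argument establishes \emph{uniform admissibility} (a bound on $\dim V^K$ depending only on $K$, proved by Hecke-algebra structure theory) without ever separating out a cuspidal case.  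The compact-support argument you are sketching is the one in \cite{Cass}, not \cite{Bern3}, so ``the reason the author defers to \cite{Bern3}'' is not quite right.  Second, the mechanism you describe for extracting $\dim V^K<\infty$ from compact support mod centre is stated loosely: the matrix-coefficient embedding one actually uses is $V\hookrightarrow \cInd_{Z}^G\omega_\pi$ via a \emph{fixed} $\check v_0\in\check V^K$, followed by the observation that the bi-$K$-invariant compactly-supported-mod-$Z$ image meets only finitely many double cosets $ZKgK$; Proposition \ref{restricition} of this paper is a Mackey restriction formula and is not really the tool that does this finiteness count.  These are presentational rather than structural gaps, and the overall route is sound --- it is just a different one from the one the paper points to, and the deep analytic input (Jacquet's cuspidal-support theorem) still needs to be imported from the literature, exactly as \cite{Bern3} or \cite{Cass} would be.
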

\begin{corollary}\label{longueurfiniedeJacquet}
The  functors $J_N$ and $\Ind_{P \supset M}^G$ both  map    smooth representations of finite length into  smooth representations  of finite length.
\end{corollary}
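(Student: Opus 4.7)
The plan is to combine the three theorems cited just above the corollary and deduce the statement with essentially no extra work. Concretely, I would first apply Theorem \ref{howetheorem} to the input representation, then apply Theorem \ref{lesfoncteurstypefini} to the functor output, then apply Theorem \ref{howetheorem} once more (this time on the target group) to recover finite length.

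In more detail: suppose $(\pi,V)$ is a smooth representation of $G$ of finite length. By Theorem \ref{howetheorem}, $\pi$ is both admissible and finitely generated. Theorem \ref{lesfoncteurstypefini} tells us that $J_N$ preserves each of these two properties separately, so $(\pi_N, V_N)$ is a finitely generated admissible smooth representation of the Levi $M$. Since $M=\textbf{M}(F)$ is itself the group of $F$-points of a connected reductive $F$-group, Theorems \ref{howetheorem} and \ref{Gadmissible} apply to $M$ as well, and therefore $(\pi_N, V_N)$ has finite length.

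The argument for $\Ind_{P\supset M}^G$ is entirely parallel: starting from $(\sigma,W)\in\Rep(M)$ of finite length, Theorem \ref{howetheorem} gives that $\sigma$ is admissible and finitely generated; Theorem \ref{lesfoncteurstypefini} transports both properties through $\Ind_{P\supset M}^G$; and a final application of Theorem \ref{howetheorem} on $G$ yields finite length of $\Ind_P^G W$.

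There is no genuine obstacle here beyond making sure that Theorem \ref{howetheorem} is invoked on the correct group each time (on $M$ for the Jacquet module, on $G$ for parabolic induction), which is legitimate because a Levi subgroup of a connected reductive $F$-group is again connected reductive over $F$, so the hypotheses of Theorems \ref{howetheorem} and \ref{Gadmissible} hold uniformly. Consequently the proof reduces to citing the three theorems in the right order.
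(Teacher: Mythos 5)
Your proposal is correct and is essentially the paper's proof spelled out in full: the paper's proof of Corollary \ref{longueurfiniedeJacquet} literally just says that it follows from Theorems \ref{lesfoncteurstypefini}, \ref{howetheorem}, and \ref{Gadmissible}, and your write-up is precisely the right way to combine them, including the key observation that $M$ is again the $F$-points of a connected reductive group so that Theorems \ref{howetheorem} and \ref{Gadmissible} apply on the Levi side as well.
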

\begin{proof}
This comes from  Theorems \ref{lesfoncteurstypefini}, \ref{howetheorem}.
\end{proof}
The following unexpected theorem is due to Bernstein.
\begin{theorem}[{\cite[Main theorem]{Bern0}}]\label{bernsteintheorem}
Let $\rho \in \Rep(M)$, $\pi \in \Rep(G)$. Then
$\Hom_G\Big( \Ind_{P \supset M}^G\tfrac{\Delta_G}{\Delta_P} \rho,   \pi\Big) \simeq \Hom_M \Big(\rho, \pi_{\overline{N}}\Big)$.
\end{theorem}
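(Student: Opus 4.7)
The plan. This is Bernstein's so-called second adjointness, stated here precisely because it is the main theorem of \cite{Bern0}, so I will not attempt a first-principles reproof; instead I sketch the shape the argument takes and isolate where the essential difficulty lies.

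The starting point is the classical Frobenius reciprocity recorded earlier in this paper (second form): taking $H = P$ and noting $G/P$ is compact, so that $\cInd_P^G = \Ind_P^G$, and replacing $\rho$ by $\tfrac{\Delta_G}{\Delta_P}\rho$, one obtains
$$\Hom_G\!\Big(\Ind_P^G \tfrac{\Delta_G}{\Delta_P}\rho,\, \check{\pi}\Big) \simeq \Hom_P\!\Big(\rho,\, (\Res_P^G\pi)^\vee\Big).$$
Dualising and using that $\check{\check{\pi}}\simeq \pi$ for admissible $\pi$, the content of the theorem reduces to identifying a twisted version of $\Hom_P(\rho,\pi|_P)$ with $\Hom_M(\rho,\pi_{\overline{N}})$. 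The striking, non-formal feature is that the \emph{opposite} unipotent radical $\overline{N}$ appears on the right-hand side: the purely categorical adjunction between parabolic induction and the Jacquet functor only yields the analogous statement with $N$, which is the ordinary first adjointness and is substantially weaker.

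The bridge from the $N$-side to the $\overline{N}$-side is supplied by Jacquet's lemma on compact open subgroups $K \subset G$ admitting an Iwahori factorisation
$$K = (K\cap \overline{N})\cdot (K\cap M) \cdot (K\cap N).$$
For an element $z$ in the centre of $M$ that contracts $N$ (equivalently, expands $\overline{N}$), the Hecke operator $e_K \ast \delta_z \ast e_K$ preserves $\pi^K$, and after localisation by inverting $z$ one obtains a canonical isomorphism $\pi^K[z^{-1}] \simeq (\pi_{\overline{N}})^{K\cap M}[z^{-1}]$. Passing to the direct limit over a basis of $K$'s with Iwahori factorisation and over the contracting semigroup in the centre of $M$ produces the required natural $M$-equivariant identification, from which Bernstein's theorem follows by composition with the Frobenius isomorphism above.

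The main obstacle is precisely this last step: it is genuinely geometric and uses the structure theory of reductive $p$-adic groups (existence of neighbourhoods with Iwahori factorisation, the ``positive'' semigroup in the centre of $M$, convergence of Hecke-operator limits) together with spectral input from the Bernstein centre, none of which is developed in the present paper. For this reason a self-contained proof would be out of scope, and I would proceed as the author does and simply invoke \cite{Bern0}.
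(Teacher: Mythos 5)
The paper gives no proof of Theorem~\ref{bernsteintheorem} --- it is stated with a bare citation of \cite{Bern0} --- and your proposal ultimately does the same, so the two agree in approach. The surrounding sketch (Frobenius reciprocity together with contragredient duality to recast the problem, then Iwahori factorisation, strongly positive central elements of $M$, and a stabilisation/localisation limit) is a fair outline of Bernstein's argument and correctly identifies that the appearance of the opposite radical $\overline{N}$ rests on genuinely geometric structure theory not developed in this paper.
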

\begin{lemma}\label{longueurfiniequotient}
If $(\pi, V)$ is a smooth representation of  $G$  with  finite   multiplicity, and $(\rho, W)$ is  a smooth representation of $G$ of finite length, then $m_G(\pi, \rho) < +\infty.$
\end{lemma}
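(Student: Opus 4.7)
The plan is a short induction on the length of $\rho$. Since $\rho$ has finite length, fix a Jordan--Hölder filtration
$$0 = W_0 \subsetneq W_1 \subsetneq \cdots \subsetneq W_n = W$$
with successive quotients $\tau_i = W_i / W_{i-1} \in \Irr(G)$, and I would induct on $n$. The base case $n=1$ is exactly the hypothesis that $\pi$ has finite-dimensional quotient, since $W = \tau_1$ is irreducible and by definition $m_G(\pi, \tau_1) < \infty$.

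For the inductive step, consider the short exact sequence
$$0 \longrightarrow W_{n-1} \longrightarrow W \longrightarrow \tau_n \longrightarrow 0$$
of smooth representations of $G$. Applying the left-exact functor $\Hom_G(\pi, -)$ yields the exact sequence
$$0 \longrightarrow \Hom_G(\pi, W_{n-1}) \longrightarrow \Hom_G(\pi, W) \longrightarrow \Hom_G(\pi, \tau_n),$$
so taking complex dimensions gives
$$m_G(\pi, W) \;\leq\; m_G(\pi, W_{n-1}) + m_G(\pi, \tau_n).$$
The second term on the right is finite by the finite-dimensional-quotient hypothesis applied to the irreducible representation $\tau_n$, and the first term is finite by the inductive hypothesis applied to $W_{n-1}$, which has length $n-1$. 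Hence $m_G(\pi, W) < \infty$.

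There is essentially no obstacle here: the only ingredient beyond left-exactness of $\Hom_G(\pi, -)$ is the existence of a finite Jordan--Hölder series for $\rho$, which is implicit in the hypothesis that $\rho$ has finite length, together with the admissibility of irreducible representations (Theorem \ref{Gadmissible}), which is what makes the finite-dimensional-quotient property a nontrivially useful notion in this context. No use of Jacquet modules, induced representations, or Bernstein's theorem is needed for this statement.
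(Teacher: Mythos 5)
Your proof is correct and follows essentially the same route as the paper: take a Jordan--Hölder filtration, apply the left-exact functor $\Hom_G(\pi,-)$ to the short exact sequence that strips off the top irreducible quotient, and induct on length. The paper phrases the conclusion as $m_G(V,W)\leq\sum_i m_G(V,W_i/W_{i-1})$ rather than running the induction explicitly, but the argument is the same.
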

\begin{proof}
If $0=W_0 \subseteqq W_1 \subseteqq \cdots \subseteqq W_s=W$ is a complete chain of $ \mathcal{H}(G)$-modules in $W$, then there is an exact sequence
$1 \longrightarrow W_{s-1} \longrightarrow W \longrightarrow W/{W_{s-1}} \longrightarrow 1$;
applying the left exact functor $\Hom_G(V, -)$ on it we obtain
$1 \longrightarrow \Hom_G( V, W_{s-1}) \longrightarrow \Hom_G(V, W) \longrightarrow \Hom_G(V, W/{W_{s-1}})$.
It follows that
$m_G(V,W) \leqq m_G(V,W_{s-1}) + m_G(V, W/{W_{s-1}})$.
By induction, we get
$m_G(V,W) \leqq \sum_{i=1}^s m_G(V, W_i/{W_{i-1}}) < +\infty$.
\end{proof}
\begin{lemma}\label{compo}
Under the conditions of the above lemma, for $\pi_1 \in \Irr(G)$, if $m_G(\pi, \pi_1)=m$ and $m_G(\pi_1, \rho)=n$, for some positive integers $m,n$, then $m_G(\pi, \rho) \geq \max\left\{ m, n\right\}$.
\end{lemma}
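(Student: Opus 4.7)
The plan is to produce, in two independent steps, $m$ (respectively $n$) linearly independent elements of $\Hom_G(\pi,\rho)$, each obtained by composition through $\pi_1$, and then to combine the two lower bounds. I would first fix $\C$-bases $f_1,\dots,f_m$ of $\Hom_G(\pi,\pi_1)$ and $g_1,\dots,g_n$ of $\Hom_G(\pi_1,\rho)$. The key ingredient, used twice, is the irreducibility of $\pi_1$: any nonzero $f\in\Hom_G(\pi,\pi_1)$ is surjective (its image is a nonzero subrepresentation of $\pi_1$), and any nonzero $g\in\Hom_G(\pi_1,\rho)$ is injective (its kernel is a proper subrepresentation of $\pi_1$).

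Next I would pick any nonzero $g\in\Hom_G(\pi_1,\rho)$ and consider the family $\{g\circ f_j\}_{j=1}^{m}\subset\Hom_G(\pi,\rho)$. A linear relation $\sum_j c_j\,g\circ f_j=0$ reads $g\circ\bigl(\sum_j c_j f_j\bigr)=0$, and injectivity of $g$ forces $\sum_j c_j f_j=0$, whence $c_j=0$ by independence of the $f_j$. This already yields $m_G(\pi,\rho)\ge m$. Symmetrically, I would fix any nonzero $f\in\Hom_G(\pi,\pi_1)$ and look at $\{g_i\circ f\}_{i=1}^{n}$: a relation $\sum_i c_i\,g_i\circ f=0$ means $\sum_i c_i g_i$ vanishes on $\Im(f)$, which equals $\pi_1$ by surjectivity of $f$, so $\sum_i c_i g_i=0$ and hence $c_i=0$. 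This yields $m_G(\pi,\rho)\ge n$, and combining the two bounds gives $m_G(\pi,\rho)\ge\max\{m,n\}$.

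The only bookkeeping point worth checking beforehand is that $m$ and $n$ are finite integers, so that the counting is meaningful: $m<\infty$ because $\pi$ is assumed to be of finite-dimensional quotient, and $n<\infty$ either by Lemma \ref{longueurfiniequotient} applied with source $\pi_1$ and target $\rho$, or more elementarily because $\pi_1$ can appear only finitely often as a Jordan--H\"older constituent of the finite-length representation $\rho$. Beyond this I foresee no real obstacle; the argument is an abstract Schur-type consequence of the irreducibility of $\pi_1$, and does not require any of the deeper machinery (Jacquet functors, Bernstein's theorem, etc.) developed earlier in the section.
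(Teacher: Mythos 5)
Your proof is correct and takes essentially the same route as the paper: compose linearly independent maps through $\pi_1$, using the fact that a nonzero map out of (resp.\ into) the irreducible $\pi_1$ is surjective (resp.\ injective) to preserve independence. You fill in slightly more detail than the paper (which only spells out the case $m\ge n$ and asserts the rest by symmetry, and does not explicitly invoke injectivity of $g$), but the idea is identical.
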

\begin{proof}
Assume first that $m\geq n$. Let $f_1, \cdots, f_m$ be the $\C$-linear independent functions in $\Hom_G(\pi, \pi_1)$ and  $0\neq g\in \Hom_G(\pi_1, \rho)$. Then $g\circ f_1, \cdots, g\circ f_m$ all belong to $\Hom_G(\pi, \rho)$ and   are  $\C$-linear independent. So the result  holds for $m\geq n$. The similar proof also works for the case $n > m$.
\end{proof}
\begin{lemma}\label{compodouble}
The similar result also holds if we replace the above $\pi_1$ by a finite direct sum of different irreducible representations $\sigma_1$, $\cdots$, $\sigma_k$ of $G$. More precisely  if assume  $m_i=m_G(\pi, \sigma_i)>0$, $ n_i=m_G(\sigma_i, \rho)>0$, then $m_G(\pi, \rho)\geq \max \left\{m= \sum_{i=1}^k m_i, n= \sum^k_{i=1} n_i\right\}$.
\end{lemma}
\begin{proof}
The proof is similar as above. For example assume $m\geq n$. We may take $0 \neq g_i\in \Hom_G(\sigma_i, \rho)$, so that $g=\oplus_{i=1}^k g_i $ is an injective morphism  from $\oplus_{i=1}^k \sigma_i $ to $\rho$.
\end{proof}
\begin{proposition}
The functors $\Ind_{P \supset M}^G$ and $J_N$ preserve the class of   smooth representations with  finite   multiplicity.
\end{proposition}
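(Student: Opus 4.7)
The plan is to reduce both claims to Lemma \ref{longueurfiniequotient} (multiplicity against a finite-length module is finite) via the two adjunctions recorded in the paper. In each case the target of the $\Hom$ becomes a representation of the opposite group that is built from an irreducible by applying either $\Ind_{P \supset M}^G$ or $J_N$, hence has finite length by Theorem \ref{Gadmissible} and Corollary \ref{longueurfiniedeJacquet}; then finite-dimensional-quotient of the original module forces the multiplicity to be finite.

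For the Jacquet functor $J_N$, I would fix $(\pi,V)\in\Rep(G)$ of finite-dimensional quotient and $\sigma\in\Irr(M)$, and invoke ordinary Frobenius reciprocity (viewing $\sigma$ as a $P$-representation trivial on $N$): any $P$-homomorphism $V\to\sigma$ factors through $V_N=J_N(V)$, so
\[
\Hom_M\bigl(J_N(V),\sigma\bigr)\;\simeq\;\Hom_P(V,\sigma)\;\simeq\;\Hom_G\bigl(V,\Ind_{P\supset M}^G\sigma\bigr).
\]
Since $\sigma$ is admissible and finitely generated (Theorem \ref{Gadmissible} together with Theorem \ref{howetheorem}), $\Ind_{P\supset M}^G\sigma$ has finite length by Corollary \ref{longueurfiniedeJacquet}. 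Lemma \ref{longueurfiniequotient} applied to $V$ and $\Ind_{P\supset M}^G\sigma$ then gives
\[
m_M\bigl(J_N(V),\sigma\bigr)\;=\;m_G\bigl(V,\Ind_{P\supset M}^G\sigma\bigr)\;<\;\infty,
\]
so $J_N(V)$ is of finite-dimensional quotient over $M$.

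For the parabolic induction functor, fix $(\pi,V)\in\Rep(M)$ of finite-dimensional quotient and $\sigma\in\Irr(G)$. Here I would use Bernstein's second adjointness (Theorem \ref{bernsteintheorem}) in the substituted form obtained by replacing $\rho$ with $\tfrac{\Delta_P}{\Delta_G}V$:
\[
\Hom_G\bigl(\Ind_{P\supset M}^G V,\sigma\bigr)\;\simeq\;\Hom_M\Bigl(\tfrac{\Delta_P}{\Delta_G}V,\;\sigma_{\overline{N}}\Bigr).
\]
The Jacquet module $\sigma_{\overline{N}}=J_{\overline{N}}(\sigma)$ is of finite length as an $M$-module (again Corollary \ref{longueurfiniedeJacquet}), and the character twist $\tfrac{\Delta_P}{\Delta_G}V$ retains the finite-dimensional quotient property over $M$, because twisting by a character of $M$ is an auto-equivalence of $\Rep(M)$ that permutes $\Irr(M)$ and preserves multiplicities. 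Applying Lemma \ref{longueurfiniequotient} once more yields
\[
m_G\bigl(\Ind_{P\supset M}^G V,\sigma\bigr)\;=\;m_M\Bigl(\tfrac{\Delta_P}{\Delta_G}V,\;\sigma_{\overline{N}}\Bigr)\;<\;\infty.
\]

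The substantive ingredients are Bernstein's second adjointness and the finite-length output of Jacquet and induction from irreducibles; the remaining work is purely formal. The only mild subtlety is the bookkeeping of the modular character $\Delta_P/\Delta_G$ in Bernstein's adjunction and the verification that twisting by it preserves the class of representations of finite-dimensional quotient, but both are immediate.
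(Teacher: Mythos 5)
Your proposal is correct and follows essentially the same route as the paper: Bernstein's second adjointness plus Corollary~\ref{longueurfiniedeJacquet} and Lemma~\ref{longueurfiniequotient} for $\Ind_{P\supset M}^G$, and ordinary Frobenius reciprocity plus the same two results for $J_N$. The only thing you add beyond the paper's own write-up is the (correct but easy) remark that twisting by $\Delta_P/\Delta_G$ preserves the finite-dimensional-quotient property, a point the paper leaves implicit.
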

\begin{proof}
1) Let $(\pi, V)$ be a smooth representation of $M$ with  finite   multiplicity and $(\rho, W) \in \Irr(G)$. Theorem \ref{bernsteintheorem} tells us that
$\Hom_G\Big(\Ind_{P\supset M}^G \pi, \rho\Big) \simeq \Hom_M\Big(\tfrac{\Delta_P}{\Delta_G}\pi, \rho_{\overline{N }}\Big)$.
By Cor.\ref{longueurfiniedeJacquet},  $\rho_{\overline{N}}$ has finite length. By Lmm.\ref{longueurfiniequotient}, the dimension of  $\Hom_M\Big(\tfrac{\Delta_P}{\Delta_G}\pi, \rho_{\overline{N}}\Big)$ is finite. So  the result for $\Ind_{P\supset M}^G$ holds.\\
2) Now let $(\pi, V)$ be a smooth representation of  $G$ with  finite   multiplicity and $(\rho, W) \in \Irr(M)$. By virtue of Frobenius reciprocity, we have
$\Hom_M(J_N(\pi), \rho) \simeq \Hom_G(\pi, \Ind_{P \supset M}^G\rho)$.
The result then follows from  Cor.\ref{longueurfiniedeJacquet} and Lmm.\ref{longueurfiniequotient}.
\end{proof}
\subsection{$G_1\times G_2$}
Let us write  $G_1, G_2$ for  two locally profinite groups,  and  let $(\pi, S)$ be a smooth representation of $G_1 \times G_2$. We are interested in the relationship of the sets $\mathcal{R}_{G_1 \times G_2}(S)$, $\mathcal{R}_{G_1}(S)$ and $ \mathcal{R}_{G_2}(S)$. Let us  recall two technical lemmas proved by Waldspurger in \cite[pp. 45-46]{MVW}.
\begin{lemma}\label{waldspurger1}
Let $(\pi_1, V_1)$ be an irreducible \emph{admissible}  representation of $G_1$, $(\pi_2, V_2)$ a smooth representation of $G_2$. If a vector subspace  $W $  of $ V_1 \otimes V_2$ is  $G_1 \times G_2$-invariant, then there is a unique(up to isomorphism) $G_2$-subspace $V_2'$ of $V_2$ such that $W \simeq V_1 \otimes V_2'.$
\end{lemma}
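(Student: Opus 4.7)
The plan is to build $V_2'$ explicitly as a ``slice'' of $W$ extracted by evaluating all algebraic linear functionals on the first tensor factor, and then to use admissibility of $\pi_1$, via the Jacobson density theorem, to show that $V_1 \otimes V_2'$ recovers all of $W$. Writing $V_1^{\ast}$ for the full algebraic dual of $V_1$ and $\phi \otimes \id_{V_2} : V_1 \otimes V_2 \to V_2$ for the obvious $\C$-linear map attached to $\phi \in V_1^{\ast}$, I would define $V_2'$ to be the $\C$-linear span of
$$\{(\phi \otimes \id_{V_2})(w) : w \in W,\ \phi \in V_1^{\ast}\}.$$
Because $W$ is stable under $\{1\} \times G_2$ and each $\phi \otimes \id_{V_2}$ is $G_2$-equivariant, $V_2'$ is automatically a $G_2$-submodule of $V_2$.

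The inclusion $W \subseteq V_1 \otimes V_2'$ should reduce to a dual-basis observation: for $w \in W$, write $w = \sum_i v_{1,i} \otimes v_{2,i}$ with the $v_{1,i}$ linearly independent, pick $\phi_i \in V_1^{\ast}$ with $\phi_i(v_{1,j}) = \delta_{ij}$, and read off $v_{2,i} = (\phi_i \otimes \id_{V_2})(w) \in V_2'$. The opposite inclusion $V_1 \otimes V_2' \subseteq W$ is the heart of the argument, and is where admissibility enters: since $\pi_1$ is admissible and irreducible, Schur's lemma yields $\End_{G_1}(V_1) = \C$, and the Jacobson density theorem then asserts that for any linearly independent $u_1, \ldots, u_n \in V_1$ and any target vectors $z_1, \ldots, z_n \in V_1$, some $a \in \mathcal{H}(G_1)$ satisfies $\pi_1(a) u_j = z_j$ for every $j$. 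Given a generator $v_1 \otimes (\phi \otimes \id_{V_2})(w)$ of $V_1 \otimes V_2'$, I would write $w = \sum_j u_{1,j} \otimes u_{2,j}$ with the $u_{1,j}$ linearly independent and choose $a \in \mathcal{H}(G_1)$ with $\pi_1(a) u_{1,j} = \phi(u_{1,j}) v_1$; then
$$(\pi_1(a) \otimes 1)(w) = \sum_j \phi(u_{1,j}) v_1 \otimes u_{2,j} = v_1 \otimes (\phi \otimes \id_{V_2})(w)$$
lies in $W$ by $G_1$-invariance.

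Uniqueness should be immediate: if $W = V_1 \otimes V_2''$ for any $G_2$-submodule $V_2''$, then for every nonzero $\phi \in V_1^{\ast}$ one has $(\phi \otimes \id_{V_2})(W) = \phi(V_1) \cdot V_2'' = V_2''$, which forces $V_2'' = V_2'$ by the very definition of the latter. The one genuine obstacle is the density step: everything hinges on converting ``admissible irreducible'' into $\End_{G_1}(V_1) = \C$ and then applying Jacobson, because without this the first tensor factor in an element of $W$ cannot be freely replaced by an arbitrary $v_1 \in V_1$ while remaining inside $W$.
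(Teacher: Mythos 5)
Your proof is correct. The paper does not supply its own argument for this lemma; it simply refers to \cite[Pages 45--46]{MVW}. The argument there, like most treatments, goes through admissibility via $K$-invariants: fix a compact open subgroup $K$ of $G_1$ with $V_1^K \neq 0$, observe that $V_1^K$ is then a finite-dimensional simple $\mathcal{H}(G_1,K)$-module so that by Burnside's theorem $\mathcal{H}(G_1,K)$ surjects onto $\End_{\C}(V_1^K)$, deduce the finite-dimensional version of the claim for $e_K W = W \cap (V_1^K \otimes V_2)$ inside $V_1^K \otimes V_2$, and finally patch the resulting $G_2$-submodules of $V_2$ over varying $K$. You replace the reduction-and-patching by a single direct appeal to the Jacobson density theorem for the non-unital, idempotented algebra $\mathcal{H}(G_1)$ acting on $V_1$, together with the full algebraic dual $V_1^{\ast}$. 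This is a genuine streamlining and is legitimate: Jacobson density does hold for simple modules over non-unital rings as soon as $RM = M$, and here $\mathcal{H}(G_1)V_1 = V_1$ by smoothness, while $\End_{G_1}(V_1) = \C$ follows from Schur's lemma in the admissible (or, given the paper's standing $\sigma$-compactness hypothesis, even the merely smooth) irreducible case. One step you glide over and should spell out: a priori $W$ is stable only under the \emph{group} $G_1 \times G_2$, whereas your density step needs $(\pi_1(a) \otimes 1)w \in W$ for $a \in \mathcal{H}(G_1)$. This holds because, by smoothness of $\pi_1 \otimes \pi_2$, the vector $w$ is fixed by some compact open $K_1 \times K_2$, so $\pi_1(a)(w) = \pi_1(a \ast e_{K_1})(w)$ collapses to a finite $\C$-linear combination of the $(\pi_1(g) \otimes 1)w$ with $g \in G_1$, which lie in $W$. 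With that made explicit, the proof is complete.
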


\begin{lemma}\label{waldspurger2}
Let $(\pi_1, V_1)$ be an irreducible \emph{admissible} representation of $G_1$, $(\sigma, W)$ a smooth representation of $G_1 \times G_2$. Suppose that $\cap \ker(f)=0$ for all $f\in \Hom_{G_1}(W, V_1)$. Then there is a unique(up to isomorphism) smooth representation  $(\pi_2', V_2')$ of $G_2$ such that $\sigma \simeq \pi_1 \otimes \pi_2'$.
\end{lemma}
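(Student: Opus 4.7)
My plan is to take $V_2' := \Hom_{G_1}(V_1, W)$ equipped with the $G_2$-action $(g_2 \cdot \varphi)(v) := \sigma(1_{G_1}, g_2)\varphi(v)$, which commutes with $G_1$ by direct check. Smoothness of $V_2'$ follows because for any fixed nonzero $v_0 \in V_1$ the evaluation $\varphi \mapsto \varphi(v_0)$ is a $G_2$-equivariant injection $V_2' \hookrightarrow W$ (since $V_1$ is cyclic as an $\mathcal{H}(G_1)$-module), and open stabilizers are then inherited from the smooth $\sigma$. The canonical evaluation map
\[
\Phi : V_1 \otimes V_2' \longrightarrow W, \qquad v \otimes \varphi \longmapsto \varphi(v),
\]
is then $G_1 \times G_2$-equivariant, and the statement reduces to showing $\Phi$ is bijective; uniqueness of $V_2'$ is automatic since for any presentation $\sigma \simeq V_1 \otimes V_2''$ one recovers $V_2'' \simeq \Hom_{G_1}(V_1, \sigma)$ via admissibility of $V_1$ and Schur.

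For injectivity, I apply Lemma \ref{waldspurger1} to $\ker \Phi \subseteq V_1 \otimes V_2'$, obtaining $\ker\Phi = V_1 \otimes V_2'''$ for some $G_2$-subrepresentation $V_2''' \subseteq V_2'$. Each $\varphi \in V_2'''$ satisfies $\varphi(v) = \Phi(v \otimes \varphi) = 0$ for all $v \in V_1$, hence $\varphi = 0$, so $V_2''' = 0$ and $\Phi$ is injective.

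Surjectivity of $\Phi$ is the main obstacle and is where the separation hypothesis $\bigcap_{f \in \Hom_{G_1}(W,V_1)} \ker f = 0$ must enter. My strategy is to descend to the Hecke-algebra level: fix an open compact $K \subset G_1$ with $V_1^K \neq 0$, so that $V_1^K$ is a finite-dimensional simple module over $\mathcal{H}(G_1, K)$ by admissibility and Bernstein--Zelevinsky. The hypothesis restricts to $\bigcap_{\bar f} \ker \bar f = 0$ on $W^K$, where $\bar f$ ranges over $\Hom_{\mathcal{H}(G_1,K)}(W^K, V_1^K)$. Using the double-centralizer / Jacobson density property for the finite-dimensional simple module $V_1^K$, I expect $W^K$ to be $V_1^K$-isotypic as an $\mathcal{H}(G_1,K)$-module, and in particular to lie inside $\Im(\Phi)^K = V_1^K \otimes V_2'$; letting $K$ shrink through a cofinal family of open compacts of $G_1$ then yields $W \subseteq \Im(\Phi)$.

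The delicate point is that smooth $G_1$-representations are generally not semisimple, so the naive implication ``maps to $V_1$ separate points $\Longrightarrow$ $V_1$-isotypic'' fails globally; it is precisely the finite-dimensionality of $V_1^K$ coming from admissibility, combined with the standard equivalence between smooth $G_1$-representations generated by $K$-fixed vectors and non-degenerate $\mathcal{H}(G_1,K)$-modules, that allows a local isotypic decomposition to exist and to be globalized. The main bookkeeping will be to check compatibility of this decomposition across varying $K$ and with the ambient $G_2$-action, ensuring that the resulting identification with $V_1 \otimes V_2'$ is the one induced by $\Phi$.
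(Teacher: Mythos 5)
Your plan follows the standard argument (the one in \cite{MVW}, which the paper cites for this lemma): set $V_2' = \Hom_{G_1}(V_1,W)$ with the natural $G_2$-action, verify smoothness via the injection $\varphi \mapsto \varphi(v_0)$ into $W$, prove injectivity of the evaluation map $\Phi$ from Lemma~\ref{waldspurger1}, and establish surjectivity by a Hecke-algebra argument. The first three steps are correct and complete as you state them. There is a real gap in the surjectivity step, however.

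Your reduction to showing that $W^K$ is $V_1^K$-isotypic over $\mathcal{H}(G_1,K)$ is sound (the separating hypothesis forces every cyclic $\mathcal{H}(G_1,K)$-submodule of $W^K$ to be killed by the two-sided ideal $\ker\bigl(\mathcal{H}(G_1,K) \to \End_{\C}(V_1^K)\bigr)$, whose quotient is $\End_{\C}(V_1^K)$ by Jacobson density, hence semisimple and $V_1^K$-isotypic). But the passage from ``$W^K$ is $V_1^K$-isotypic'' to ``$W^K \subseteq \Im(\Phi)$'' is the real content of the surjectivity, and the mechanism you invoke --- ``the standard equivalence between smooth $G_1$-representations generated by $K$-fixed vectors and non-degenerate $\mathcal{H}(G_1,K)$-modules'' --- does not exist for an arbitrary compact open $K$: the functor $(-)^K$ need not even be faithful on that subcategory, since a nonzero subquotient of a representation generated by its $K$-fixed vectors can have no $K$-fixed vectors at all (this failure is exactly what the Bushnell--Kutzko notion of a \emph{type} is designed to control, and a general $K$ is not a type). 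What you actually need is surjectivity of the restriction map $\Hom_{G_1}(V_1,W) \to \Hom_{\mathcal{H}(G_1,K)}(V_1^K,W^K)$, since semisimplicity of $W^K$ gives $W^K = $ the image of evaluation $V_1^K \otimes \Hom_{\mathcal{H}(G_1,K)}(V_1^K,W^K) \to W^K$. This surjectivity is not automatic, but the separating hypothesis supplies it concretely: fix a $G_1$-embedding $\iota: W \hookrightarrow \prod_{i} V_1$ (with $i$ running over a separating family in $\Hom_{G_1}(W,V_1)$); for a given $\mathcal{H}(G_1,K)$-map $\alpha: V_1^K \to W^K$, each component $V_1^K \to W^K \to V_1^K$ is multiplication by a scalar $c_i$ by Schur's lemma, so $\beta(v) := (c_i v)_i$ defines a $G_1$-equivariant map $V_1 \to \prod_i V_1$ with $\beta|_{V_1^K} = \iota\circ\alpha$; and since $\beta(V_1) = \beta\bigl(\mathcal{H}(G_1)V_1^K\bigr) = \mathcal{H}(G_1)\,\iota\bigl(\alpha(V_1^K)\bigr) \subseteq \iota(W)$, the map $\beta$ factors through $\iota$ and gives the desired lift $\tilde\alpha \in \Hom_{G_1}(V_1,W)$. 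You should replace the appeal to the nonexistent equivalence with this explicit lifting argument.
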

Now let $(\pi_1, V_1)$ be an irreducible admissible representation of $G_1$, $S_{\pi_1}=S/{S[\pi_1]}$   the greatest $\pi_1$-isotypic quotient. By Lmm.\ref{waldspurger2}, $S_{\pi_1} \simeq \pi_1 \otimes \pi_2',$ and $\pi_2' \simeq  \big( \check{V}_1 \otimes S_{\pi_1}\big)_{G_1}.$ Passaging to  the $\C$-linear dual of $\pi_2'$, we get the following  isomorphisms of $G_2$-modules:
$$\pi_2^{'\ast} \simeq \Hom_{G_1}(\check{V}_1\otimes S_{\pi_1}, \C) \simeq \Hom_{G_1}(S_{\pi_1}, V_1) \simeq \Hom_{G_1}(S, V_1)\simeq \Hom_{G_1}(\check{V}_1\otimes S, \C).$$
Moreover  considering their $G_2$-smooth parts,  we get
$(\pi_2')^{\vee} \simeq \Hom_{G_1}(S, V_1)^{\infty} \simeq \Hom_{G_1}(\check{V}_1\otimes S, \C)^{\infty}$.
In this way, we can see that $(\pi_2')^{\vee}$ is more easy to approach than $\pi_2'$ itself.

Let us begin to prove another statement in \cite{Rob1} about the quotient of the smooth representation $(\pi,S)$ of $G_1 \times G_2$.
\begin{lemma}\label{quotientdedeuxgroupes}
Follow the above notations, and suppose that $(\pi_2, V_2)$ is  a smooth representation of $G_2$. Then
\begin{itemize}
\item[(1)]  $\Hom_{G_1 \times G_2}( S, V_1 \otimes V_2) \simeq \Hom_{G_1\times G_2}(S_{\pi_1}, V_1 \otimes V_2)$.
\item[(2)]   $\Hom_{G_2}( \pi_2', \pi_2) \simeq \Hom_{G_1 \times G_2}( \pi_1 \otimes \pi_2', \pi_1 \otimes \pi_2)$.
\end{itemize}
\end{lemma}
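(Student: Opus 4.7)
The plan is to handle the two parts separately: part (1) will follow directly from the universal property defining $S_{\pi_1}$, while part (2) is the substantive half and will be reduced to Lemma \ref{waldspurger1} via the graph of a morphism.

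For (1), pre-composition with the canonical projection $p: S \to S_{\pi_1}$ gives an injection $\Hom_{G_1 \times G_2}(S_{\pi_1}, V_1 \otimes V_2) \hookrightarrow \Hom_{G_1 \times G_2}(S, V_1 \otimes V_2)$. I would establish surjectivity by a separation-of-vectors argument: for any $F$ in the target and any $\xi \in V_2^{\ast}$, the composite $(\id \otimes \xi)\circ F$ is a $G_1$-morphism $S \to V_1$, which vanishes on $S[\pi_1]$ by the very definition of the latter. Since $V_2^{\ast}$ separates points of $V_2$, this forces $F(S[\pi_1]) = 0$, so $F$ descends to a $G_1 \times G_2$-map out of $S_{\pi_1}$.

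For (2), the natural map $g \mapsto \id \otimes g$ yields an injection $\Hom_{G_2}(\pi_2', \pi_2) \hookrightarrow \Hom_{G_1 \times G_2}(\pi_1 \otimes \pi_2', \pi_1 \otimes \pi_2)$, and the task is surjectivity. Given a $G_1 \times G_2$-morphism $F: \pi_1 \otimes \pi_2' \to \pi_1 \otimes \pi_2$, I would consider its graph $\Gamma_F$ as a $G_1 \times G_2$-submodule of the space $(\pi_1 \otimes \pi_2') \oplus (\pi_1 \otimes \pi_2) \simeq V_1 \otimes (\pi_2' \oplus \pi_2)$. Since $V_1$ is irreducible and admissible, Lemma \ref{waldspurger1} produces a unique $G_2$-submodule $W \subseteq \pi_2' \oplus \pi_2$ with $\Gamma_F = V_1 \otimes W$; the desired morphism $g: \pi_2' \to \pi_2$ will be read off from $W$.

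The main technical point I expect is verifying that the projection $\pr_1|_W: W \to \pi_2'$ is bijective, so that $W$ is the graph of a genuine $G_2$-morphism. Surjectivity is immediate, because $\Gamma_F$ projects onto $\pi_1 \otimes \pi_2'$ and so the image of $W$ must span all of $\pi_2'$. Injectivity follows by observing that any element $(0, w_2) \in W$ would place $V_1 \otimes w_2$ inside $\Gamma_F$, which is possible only if $w_2 = 0$ since $\Gamma_F$ is a graph of a function. Once this is in hand, $W$ is the graph of a unique $G_2$-morphism $g$, and one reads off $F = \id \otimes g$, no further admissibility or finiteness input beyond the given hypotheses being needed.
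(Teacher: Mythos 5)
Your proof is correct. Part (1) is essentially the paper's argument in different clothing: the paper fixes a basis $\mathcal{A}$ of $V_2$ and composes $F$ with the coordinate projections $p_e$ to land in $\Hom_{G_1}(S,V_1)$; you compose with arbitrary functionals $\xi\in V_2^{\ast}$. Since the coordinate projections are exactly the dual basis functionals, these are the same separation argument, and both conclude $S[\pi_1]\subseteq\ker F$ by the defining intersection.

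For part (2) you take a genuinely different and, in my view, cleaner route. The paper argues by hand: it projects $\varphi'$ onto each line $V_1\otimes\C e_i$, applies Schur's lemma (via admissibility of $\pi_1$) to identify each component with scalar multiplication, verifies finiteness of support, and then pastes the resulting scalars into a linear map $\varphi$ and checks $G_2$-equivariance at the end. You instead view the graph $\Gamma_F$ as a $G_1\times G_2$-submodule of $V_1\otimes(V_2'\oplus V_2)$ and invoke Lemma \ref{waldspurger1} to write $\Gamma_F=V_1\otimes W$ for a unique $G_2$-submodule $W\subseteq V_2'\oplus V_2$; the surjectivity and injectivity of $\pr_1|_W$ both follow from the graph property of $\Gamma_F$, exactly as you say, and then $W$ is itself the graph of the desired $G_2$-map $g$ with $F=\id\otimes g$. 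The only point to make explicit is that Lemma \ref{waldspurger1} gives equality $W=V_1\otimes V_2'$ of subspaces (not merely an abstract isomorphism), which is indeed what the Waldspurger-type lemma asserts and what your argument uses when reading $\pr_1|_W$ off $\id_{V_1}\otimes\pr_1|_W$. What you gain is that the $G_2$-equivariance of $g$ is automatic, since $W$ is a $G_2$-submodule by construction, whereas the paper must verify it separately; what you pay is an explicit appeal to Lemma \ref{waldspurger1}, which the paper's direct construction avoids.
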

\begin{proof}
(1) Let $\mathcal{A}$ be a basis of the vector space $V_2$. For an element $e \in V_2$, we will denote the canonical projection
$V_1 \otimes V_2 \longrightarrow V_1 \otimes e$
by $p_e$. For $f\in \Hom_{G_1 \times G_2} \big( S, V_1 \otimes V_2 \big)$, the composing map $p_e \circ f $ belongs to $\Hom_{G_1}(S, V_1 )$.  Clearly $\cap_{e\in \mathcal{A}} \ker (p_e\circ f) = \ker(f)$. It follows that
$$S[\pi_1] = \cap_{g\in \Hom_{G_1}(S, V_1)} \ker(g) \subseteq S[\pi_1 \otimes \pi_2] = \cap_{f\in \Hom_{G_1 \times G_2}(S, V_1 \otimes V_2) }\ker(f).$$
Hence by definition every map $f\in \Hom_{G_1 \times G_2}(S, V_1 \otimes V_2)$ needs  to factor through $S_{\pi_1} \longrightarrow V_1 \otimes V_2$.\\
(2) The isomorphism is given by $\varphi \longrightarrow 1 \otimes \varphi$. This map is well-defined and injective.  It suffices to check the surjection. Let  $0 \neq \varphi' \in \Hom_{G_1 \times G_2}(V_1 \otimes V_2', V_1 \otimes V_2)$ and $0 \neq e_2' \in V_2'$.  Let $\mathcal{A}=\{e_i\}_{i\in I}$ be a basis of $V_2$ and $V_{2,i}=\C e_i$ for $i\in I$.  Namely $V_1  \otimes V_2 \simeq  \oplus_{i\in I} V_1 \otimes V_{2,i}$, which can be viewed as a  sub-space of $\prod_{i\in I} V_1 \otimes V_{2,i}$. We will denote the projection  from $\prod_{i\in I} V_1 \otimes V_{2,i}$ to $V_1 \otimes V_{2,i}$ by $p_i$.  Through $\varphi'$ and
 $V_1 \otimes V_2 \longrightarrow  \prod_{i\in I} V_1 \otimes V_{2,i} \stackrel{p_i}{\longrightarrow} V_1 \otimes V_{2,i}$, we get a $G_1$-homomorphism
$\varphi_i': V_1 \otimes e_2' \longrightarrow  V_1 \otimes V_{2,i}$.
Since $\pi_1$ is admissible, by virtue of Schur's lemma the map $\varphi_i'$ is given by
 $\sum_k v_k \otimes e_2' \longmapsto \sum_k v_k \otimes c_ie_i,$ for some  $c_i \in \C$. On the other hand $\prod_{i\in I} \varphi_i': V_1 \otimes e_2' \longrightarrow \prod_{i\in I} V_1 \otimes V_{2,i}$ has to factor through $V_1 \otimes e_2' \longrightarrow V_1 \otimes V_2$, so  $\varphi'_i =0$ for all but a finite number of indices  $i$. Therefore we can define a map
 $\varphi_{e_2'}: \C e_2' \longrightarrow V_2; \varphi_{e_2'}(e_2')= \sum_{i\in I} c_i e_i, $ which  satisfies
 $\varphi'|_{V_1 \otimes e_2'}= 1 \otimes \varphi_{e_2'}$.
In this way, for any non-zero element $v_2' \in V_2'$ we construct  a map $\varphi_{v_2'}: \C v_2' \longrightarrow V_2$. For $v_2'=0$, we can simply let $\varphi_{v_2'}=0$.  Then  these maps  satisfy
\begin{itemize}
\item[(i)] $\varphi'|_{V_1 \otimes v_2'}= 1 \otimes \varphi_{v_2'}$, for $v_2' \in V_2'$, and
\item[(ii)] $\varphi_{\alpha v_2' + \beta v_2''}(\alpha v_2' + \beta v_2'')=\varphi_{\alpha v_2'}(\alpha v_2')+ \varphi_{\beta v_2''}(\beta v_2'')=\alpha \varphi_{v_2'}(v_2')+ \beta \varphi_{v_2''}(v_2'')$, for  $\alpha, \beta \in \C$, $v_2', v_2'' \in V_2'$.
\end{itemize}
So we  can define a map  $\varphi$ from $ V_2'$ to  $V_2$ as
$\sum_i v_{2,i}' \longmapsto\sum_i\varphi_{v_{2,i}'}(v_{2,i}')$.
It is well-defined and $\C$-linear satisfying  $\varphi'= 1 \otimes \varphi$, which forces  $\varphi$  to be $G_2$-equivariant, i.e. $\varphi \in \Hom_{G_2}(V_2', V_2)$.
\end{proof}
\subsection{Theta representation}
Keep the above notations. Assume now that every  irreducible smooth representation of $G_i$ is admissible, $i=1, 2$. According to  \cite[ p.20, Prop.]{BernZ}, every smooth irreducible representation of $G_1 \times G_2$ has the unique(up to isomorphism) form $\pi_1 \otimes \pi_2$ for $\pi_i \in \Irr(G_i)$, $i=1,2$.
\begin{proposition}\label{lissedetypefini}
Let $(\pi, S)$ be a finitely generated smooth representation of $G_1 \times G_2$.
\begin{itemize}
\item[(1)] $\pi$ is a smooth representation with  finite   multiplicity.
\item[(2)] $\mathcal{R}_{G_1 \times G_2}(S)=\emptyset$ if and only if $(\pi, S)=0$.
\item[(3)] For $\pi_1 \in \Irr(G_1)$, let $S_{\pi_1}$ denote the greatest $\pi_1$-isotypic quotient of $\pi$. If $S_{\pi_1} \simeq \pi_1 \otimes \pi_2'$, then $\pi_2'$ is a finitely generated smooth representation of $G_2$.
\end{itemize}
\end{proposition}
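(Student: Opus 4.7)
Part (1) should be an immediate application of Proposition \ref{typeimpliquequotientadmissible} to the locally profinite group $G_1\times G_2$: the finite generation of $(\pi,S)$ is assumed, and the standing hypothesis of the section is exactly the admissibility of every element of $\Irr(G_1\times G_2)$. Part (2) is similarly a direct appeal to Proposition \ref{quotientzero} for the group $G_1\times G_2$, since $S$ is finitely generated.

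The substantive content lies in (3). My plan is to transport generators of $S$ through the quotient to $S_{\pi_1}\simeq \pi_1\otimes \pi_2'$ and then strip off the $\pi_1$-factor using the uniqueness in Lemma \ref{waldspurger1}. Concretely, fix a finite set $v_1,\dots,v_n$ generating $S$ as a $G_1\times G_2$-module and let $\bar v_1,\dots,\bar v_n\in S_{\pi_1}$ denote their images under the canonical projection $S\to S_{\pi_1}$. Since this projection is a surjective $G_1\times G_2$-morphism, $\bar v_1,\dots,\bar v_n$ generate $S_{\pi_1}$ as a $G_1\times G_2$-module. Using the chosen isomorphism $S_{\pi_1}\simeq \pi_1\otimes \pi_2'$, each $\bar v_i$ is a finite sum
\[
\bar v_i=\sum_{j=1}^{n_i} a_{ij}\otimes b_{ij},\qquad a_{ij}\in V_1,\ b_{ij}\in V_2',
\]
so only finitely many vectors of $V_2'$, namely $\{b_{ij}\}_{i,j}$, are needed to express all the $\bar v_i$.

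Now let $W\subseteq V_2'$ be the $G_2$-submodule generated by the finite family $\{b_{ij}\}$. Then $V_1\otimes W$ is a $G_1\times G_2$-stable subspace of $V_1\otimes V_2'$, and by construction it contains each $\bar v_i$. Because the $\bar v_i$ generate $S_{\pi_1}=V_1\otimes V_2'$ as a $G_1\times G_2$-module, we obtain $V_1\otimes W=V_1\otimes V_2'$. The uniqueness clause in Lemma \ref{waldspurger1}, applied to the irreducible admissible $\pi_1$, then forces $W=V_2'$, so $\pi_2'$ is indeed finitely generated over $G_2$. Smoothness of $\pi_2'$ is already part of the output of Lemma \ref{waldspurger2}, and so need not be re-proved.

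I do not anticipate a genuine obstacle: the only delicate point is the uniqueness step, where one must be sure that $V_1\otimes W$ is the tensor factorisation of a $G_1\times G_2$-submodule of $V_1\otimes V_2'$ so that Lemma \ref{waldspurger1} applies; this is automatic because $W$ is a $G_2$-submodule of $V_2'$ and $V_1\otimes W$ injects into $V_1\otimes V_2'$.
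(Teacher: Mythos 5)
Your proof is correct and follows essentially the same route as the paper: parts (1) and (2) are direct invocations of Propositions \ref{typeimpliquequotientadmissible} and \ref{quotientzero}, and part (3) transfers a finite generating set of $S$ to $S_{\pi_1}\simeq\pi_1\otimes\pi_2'$, expands into pure tensors, and applies the uniqueness in Lemma \ref{waldspurger1} to extract a finite generating set for $\pi_2'$. You are slightly more careful than the paper in spelling out that the images $\bar v_i$ need not themselves be pure tensors and must be expanded before Lemma \ref{waldspurger1} can be applied, but the argument is the same.
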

\begin{proof}
(1) and (2) follow from Props.\ref{typeimpliquequotientadmissible}, \ref{quotientzero} respectively. For (3) there is
$S_{\pi_1} \simeq S/{S[\pi_1]}\simeq \pi_1 \otimes \pi_2'.$
By hypothesis, $\pi_1 \otimes \pi_2'$ is  generated by a  set $\{ v_1^{(1)} \otimes v_2^{'(1)}, \cdots , v_1^{(n)} \otimes v_2^{'(n)}\}$ as a $G_1 \times G_2$-module. Since  $(\pi_1, V_1)$ is an irreducible admissible representation of $G_1$, applying Lmm.\ref{waldspurger1} we know that  $\pi_2'$(up to isomorphism) is generated by $v_2^{'(1)}, \cdots ,v_2^{'(n)}$ as a $G_2$-module.
\end{proof}
\begin{lemma}
Let $(\pi, S)$ be an admissible smooth representation of  $G_1 \times G_2$, such that $S_{\pi_1} \neq 0$, for some $\pi_1 \in \Irr(G_1)$.  If we  write $S_{\pi_1} \simeq \pi_1 \otimes \pi_2'$,  then $\pi_2'$ is also an admissible smooth representation of $G_2$.
\end{lemma}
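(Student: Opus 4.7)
The plan is to reduce admissibility of $\pi_2'$ to the admissibility of $S$ by a careful choice of compact open subgroup in the first factor. The key input is the standard fact (an application of the idempotent $e_K$) that on the category of smooth representations the functor $V\mapsto V^K$ is exact, so that a surjection of smooth representations passes to a surjection on $K$-invariants.

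First I would note that smoothness of $\pi_2'$ is essentially built into Lemma \ref{waldspurger2}, which guarantees the existence of a \emph{smooth} representation $\pi_2'$ with $S_{\pi_1}\simeq \pi_1\otimes \pi_2'$; alternatively one observes that $\pi_2' \simeq (\check V_1\otimes S_{\pi_1})_{G_1}$ is manifestly smooth. So only admissibility is at issue.

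Fix any compact open subgroup $K_2\subseteq G_2$. Since $\pi_1$ is a nonzero smooth representation of $G_1$, there exists a compact open subgroup $K_1\subseteq G_1$ with $V_1^{K_1}\neq 0$; fix such a $K_1$, noting that $\dim_{\C}V_1^{K_1}<\infty$ by the blanket admissibility hypothesis on irreducibles of $G_1$. The canonical surjection $S\twoheadrightarrow S_{\pi_1}$ is $G_1\times G_2$-equivariant, so applying the exact functor of $(K_1\times K_2)$-invariants yields a surjection
\[
S^{K_1\times K_2}\twoheadrightarrow (S_{\pi_1})^{K_1\times K_2}\simeq (V_1\otimes V_2')^{K_1\times K_2}= V_1^{K_1}\otimes (V_2')^{K_2},
\]
where the last identification uses that the actions of $G_1$ and $G_2$ commute on the tensor factors. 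Admissibility of $(\pi,S)$ gives $\dim_{\C}S^{K_1\times K_2}<\infty$, hence
\[
\dim_{\C}V_1^{K_1}\cdot \dim_{\C}(V_2')^{K_2}\leq \dim_{\C}S^{K_1\times K_2}<\infty.
\]
Since $\dim_{\C}V_1^{K_1}\geq 1$, we conclude $\dim_{\C}(V_2')^{K_2}<\infty$, which is admissibility of $\pi_2'$.

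The only potential friction in the argument is the identification of $(V_1\otimes V_2')^{K_1\times K_2}$ with $V_1^{K_1}\otimes (V_2')^{K_2}$; this is immediate once one writes the invariants as the image of the commuting idempotents $e_{K_1}\otimes e_{K_2}$, and is the reason one works with the compact open subgroup $K_1\times K_2$ rather than an arbitrary one. Apart from this bookkeeping, the proof is entirely formal.
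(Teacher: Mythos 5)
Your proof is correct and follows essentially the same route as the paper: the paper notes that $S_{\pi_1}$, as a quotient of the admissible $S$, is admissible, and then says tersely that admissibility of $\pi_1$ "implies the result." What you have done is spell out that last implication explicitly — picking $K_1$ with $V_1^{K_1}\neq 0$ and comparing $(K_1\times K_2)$-invariants — which is exactly the argument the paper leaves to the reader.
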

\begin{proof}
By definition, there is an exact sequence of $G_1  \times G_2$-modules: $1 \longrightarrow S_0 \longrightarrow S \longrightarrow S_{\pi_1} \simeq \pi_1 \otimes \pi_2' \longrightarrow 1$. So $S_{\pi_1}$ is an admissible $G_1 \times G_2$-module. By hypothesis, $\pi_1$ is admissible, which implies the result.
\end{proof}
\begin{proposition}\label{grapheprojectif}
Let $(\pi, S)$ be a finitely generated smooth representation of $G_1 \times G_2$.
\begin{itemize}
\item[(1)] If $\pi_1 \otimes \pi_2 \in \mathcal{R}_{G_1 \times G_2}(\pi)$, then $\pi_1 \in \mathcal{R}_{G_1}(\pi)$.
\item[(2)] If $\pi_1 \in \mathcal{R}_{G_1}(\pi)$, then there is $\pi_2 \in \mathcal{R}_{G_2}(\pi)$ such that $\pi_1 \otimes \pi_2 \in \mathcal{R}_{G_1 \times G_2}(\pi)$.
\end{itemize}
\end{proposition}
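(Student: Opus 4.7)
The plan is to handle the two parts separately, treating (2) as the substantive claim that uses the preceding structure theory, with (1) a short direct check.

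For part (1), suppose $f \in \Hom_{G_1 \times G_2}(S, V_1 \otimes V_2)$ is nonzero. I would fix a basis $\mathcal{A}$ of $V_2$ and, as in the proof of Lemma \ref{quotientdedeuxgroupes}(1), compose $f$ with the projection $p_e : V_1 \otimes V_2 \to V_1 \otimes e \cong V_1$ for each $e \in \mathcal{A}$. Since $\cap_{e \in \mathcal{A}} \ker(p_e \circ f) = \ker(f) \neq S$, at least one $p_e \circ f$ must be nonzero, and this gives an element of $\Hom_{G_1}(S, V_1)$ witnessing $\pi_1 \in \mathcal{R}_{G_1}(\pi)$.

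For part (2), assume $\pi_1 \in \mathcal{R}_{G_1}(\pi)$, i.e.\ $\Hom_{G_1}(S, V_1) \neq 0$. Then the greatest $\pi_1$-isotypic quotient $S_{\pi_1} = S/S[\pi_1]$ is nonzero. Because $\pi_1$ is irreducible admissible (all irreducibles are admissible by assumption), Lemma \ref{waldspurger2} applies and yields a smooth representation $(\pi_2', V_2')$ of $G_2$ with $S_{\pi_1} \simeq \pi_1 \otimes \pi_2'$, and in particular $\pi_2' \neq 0$. Now invoke Proposition \ref{lissedetypefini}(3): since $(\pi, S)$ is finitely generated, so is $\pi_2'$ as an $\mathcal{H}(G_2)$-module.

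With $\pi_2'$ finitely generated and nonzero, Proposition \ref{quotientzero} gives $\mathcal{R}_{G_2}(\pi_2') \neq \emptyset$, so there is $\pi_2 \in \Irr(G_2)$ and a nonzero $G_2$-map $\varphi : \pi_2' \twoheadrightarrow V_2$. Tensoring with $\id_{V_1}$ produces a nonzero $G_1 \times G_2$-map $\id \otimes \varphi : \pi_1 \otimes \pi_2' \to V_1 \otimes V_2$; precomposing with the canonical projection $S \twoheadrightarrow S_{\pi_1} \simeq \pi_1 \otimes \pi_2'$ yields a nonzero element of $\Hom_{G_1 \times G_2}(S, V_1 \otimes V_2)$, proving $\pi_1 \otimes \pi_2 \in \mathcal{R}_{G_1 \times G_2}(\pi)$. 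The main point where one has to be careful is the hypothesis in Lemma \ref{waldspurger2}: one needs $\cap_{f \in \Hom_{G_1}(S, V_1)} \ker f = 0$, but this holds for the quotient $S_{\pi_1}$ by its very definition, so Lemma \ref{waldspurger2} applies to $S_{\pi_1}$ rather than to $S$ directly, which is exactly what we need.
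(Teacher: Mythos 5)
Your proof is correct, and it follows the same basic skeleton as the paper's proof (form $S_{\pi_1}$, apply Lemma~\ref{waldspurger2} to get $\pi_2'$, invoke Proposition~\ref{lissedetypefini}(3) and Proposition~\ref{quotientzero} to get a nonzero $\pi_2'\twoheadrightarrow V_2$). The one place you genuinely diverge is the final step of part (2): the paper finishes by citing Lemma~\ref{quotientdedeuxgroupes}, which establishes a full bijection $\Hom_{G_2}(\pi_2',\pi_2)\simeq\Hom_{G_1\times G_2}(\pi_1\otimes\pi_2',\pi_1\otimes\pi_2)$, whereas you only need the easy half of that bijection and so construct it by hand: given $\varphi:\pi_2'\twoheadrightarrow V_2$, the map $\id_{V_1}\otimes\varphi$ is a nonzero $G_1\times G_2$-morphism, and precomposing with $S\twoheadrightarrow S_{\pi_1}$ gives the desired element of $\Hom_{G_1\times G_2}(S,V_1\otimes V_2)$. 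That makes your argument more self-contained and avoids the (nontrivial, Schur-lemma based) surjectivity argument in Lemma~\ref{quotientdedeuxgroupes}(2), which the proposition at hand never actually needs. Your closing remark about verifying the hypothesis of Lemma~\ref{waldspurger2}, namely that $\bigcap_{f\in\Hom_{G_1}(S_{\pi_1},V_1)}\ker f=0$ holds for $S_{\pi_1}$ by its very construction, is a worthwhile point that the paper silently elides. For part (1), your basis-and-intersection argument and the paper's single-vector projection $1\otimes p_{e_2}$ are the same idea packaged slightly differently; the paper's version is marginally shorter since $f$ being surjective makes the composite with any fixed $1\otimes p_{e_2}$ automatically nonzero.
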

\begin{proof}
1) Let $(\pi_1 \otimes \pi_2, G_1 \times G_2, V_1 \otimes V_2) \in \mathcal{R}_{G_1 \times G_2}(\pi)$ which  means that there is a surjective map $V \stackrel{f}{\longrightarrow} V_1 \otimes V_2$. Take an element $0 \neq e_2 \in V_2$ and  denote the canonical projection $V_2 \longrightarrow \C e_2$ by $p_{e_2}$. Composing $f$ with $1 \otimes p_{e_2}$  gives  a non-trivial map from $V$ to $V_1$, i.e. $\pi_1 \in \mathcal{R}_{G_1}(\pi)$.\\
2) Suppose that $(\pi_1, V_1) \in \mathcal{R}_{G_1}(\pi)$. Thus the greatest $\pi_1$-isotypic quotient $S_{\pi} \simeq \pi_1 \otimes \pi_2'$ is non-trivial, which implies that $\pi_2'$ is also non-trivial. By Prop.\ref{lissedetypefini} (3),  $\pi_2'$ is finitely generated and $\mathcal{R}_{G_2}(\pi_2') \neq 0$. By Lmm.\ref{quotientdedeuxgroupes}, there is a bijection between $\mathcal{R}_{G_1 \times G_2}(S_{\pi_1})$ and $\mathcal{R}_{G_2}(\pi_2')$. So there is an irreducible representation $(\pi_2, V_2)$ of $G_2$ such that $\pi_1 \otimes \pi_2 \in \mathcal{R}_{G_1 \times G_2}(\pi)$.
\end{proof}
Now we consider the general case. Let $(\pi, S)$ be a smooth representation of $G_1 \times G_2$. The result in Prop.\ref{grapheprojectif} (1) also holds. So there are two canonical projections
$$p_i: \mathcal{R}_{G_1 \times G_2}(\pi) \longrightarrow \mathcal{R}_{G_i}(\pi); \pi_1 \otimes \pi_2 \longmapsto \pi_i, \quad  i=1,2.$$
 From now on, we will denote their images by $\mathcal{R}^0_{G_i}(\pi)$ for $i=1,2$.
 \begin{corollary}
 If $(\pi, S)$ is a finitely generated smooth representation of the group $G_1 \times G_2$, then the above maps $p_1$, $p_2$  both are surjective.
 \end{corollary}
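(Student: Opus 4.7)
The plan is to read the corollary as an immediate translation of Proposition \ref{grapheprojectif}(2) into the language of the projections $p_1,p_2$. Concretely, I would fix an arbitrary $\pi_1 \in \mathcal{R}_{G_1}(\pi)$ and invoke part (2) of that proposition: since $\pi$ is finitely generated, it produces an irreducible admissible representation $\pi_2$ of $G_2$ with $\pi_1 \otimes \pi_2 \in \mathcal{R}_{G_1 \times G_2}(\pi)$. By the very definition of $p_1$, one then has $p_1(\pi_1\otimes \pi_2)=\pi_1$, so $\mathcal{R}^0_{G_1}(\pi)=\mathcal{R}_{G_1}(\pi)$. Running the same argument with the roles of $G_1$ and $G_2$ interchanged handles $p_2$, since all of the supporting machinery---Lemmas \ref{waldspurger1}, \ref{waldspurger2} and \ref{quotientdedeuxgroupes}, together with Proposition \ref{lissedetypefini}---is symmetric in the two factors, as is the blanket hypothesis that all irreducibles of $G_1$, $G_2$, and $G_1\times G_2$ are admissible.

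I would then pause briefly to remind the reader where the finite-generation hypothesis is consumed inside Proposition \ref{grapheprojectif}(2): it is transported from $\pi$ to the representation $\pi_2'$ appearing in the greatest $\pi_1$-isotypic quotient $S_{\pi_1}\simeq \pi_1\otimes \pi_2'$ via Proposition \ref{lissedetypefini}(3); this lets Proposition \ref{quotientzero} force $\mathcal{R}_{G_2}(\pi_2')\neq\emptyset$, and Lemma \ref{quotientdedeuxgroupes} then converts an irreducible quotient $\pi_2$ of $\pi_2'$ into the desired irreducible quotient $\pi_1\otimes \pi_2$ of $S_{\pi_1}$, hence of $S$.

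Given this, no genuine obstacle remains: the corollary is a direct repackaging of Proposition \ref{grapheprojectif}(2) through the definitions of $p_i$ and $\mathcal{R}^0_{G_i}(\pi)$. The one thing to be careful about is not to lean on part (1) of that proposition alone, which only reproves the automatic containment $\mathcal{R}^0_{G_i}(\pi)\subseteq \mathcal{R}_{G_i}(\pi)$; the surjectivity assertion is precisely the content of part (2), and that is where the finite-generation of $\pi$ is indispensable.
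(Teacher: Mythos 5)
Your proof is correct and is exactly the argument the paper leaves implicit: the corollary is stated without proof as an immediate consequence of Proposition \ref{grapheprojectif}(2), and you invoke precisely that proposition (plus the symmetry in $G_1 \leftrightarrow G_2$), with the finite-generation hypothesis consumed where you say it is.
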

 When $p_1$(resp. $p_2$) is injective,  there is a unique irreducible representation $\pi_2^{(1)} \in \mathcal{R}_{G_2}(\pi)$(resp. $\pi_1^{(2)}\in \mathcal{R}_{G_1}(\pi)$) such that $\pi_1 \otimes \pi_2^{(1)} \in \mathcal{R}_{G_1 \times G_2}(\pi)$(resp. $\pi_1^{(2)} \otimes \pi_2 \in \mathcal{R}_{G_1 \times G_2}(\pi)$), so that we obtain  two  canonical mappings $\theta_1: \mathcal{R}_{G_1}^0(\pi) \longrightarrow \mathcal{R}_{G_2}^0(\pi); \pi_1 \longmapsto \pi_2^{(1)}$ (resp. $\theta_2: \mathcal{R}_{G_2}^0(\pi) \longrightarrow \mathcal{R}_{G_1}^0(\pi); \pi_2 \longmapsto \pi_1^{(2)}$). Namely  $(\mathcal{R}_{G_1 \times G_2}(\pi), p_i)$ is the graph of the theta map $\theta_i$  for $i=1, 2$ respectively.
\begin{definition}\label{thethetarepre}
If $p_1$ and $p_2$ both are injective,  $\pi$ is also multiplicity-free, and  $\pi_{\sigma_i} \simeq \sigma_i \otimes \Theta_{\sigma_i}$ is a finitely generated smooth representation of $G_i \times G_j$ for $1 \leq i \neq j \leq 2$, we will call $\pi$  a \textbf{theta} representation of $G_1 \times G_2$. In this situation,   the theta maps $\theta_1$, $\theta_2$ both are bijective and $\theta_1=\theta_2^{-1}$. So  we get a correspondence between $\mathcal{R}_{G_1}^0(\pi)$ and $\mathcal{R}_{G_2}^0(\pi)$, called  the \textbf{Howe correspondence} or the \textbf{theta correspondence}.
\end{definition}

\begin{remark}
\begin{itemize}
 \item[(1)] If $p_1$, $p_2$ both are injective, we will say that  $\pi$ \emph{satisfies the property of graph} in future.
\item[(2)] In Definition \ref{thethetarepre}, we also have another two   correspondences: $\sigma_i \stackrel{ \Theta_{\sigma_i}}{\longleftrightarrow }V_{\sigma_i}$, for $i=1,2$.  In some simple cases,  the representation $\pi$ may be reconstructed  by those  $\Theta_{\sigma_i}$ for all $\sigma_1 \otimes \sigma_2 \in \mathcal{R}_{G_1 \times G_2}(\pi)$.  For us,  we mainly care about the Howe correspondences and  limit ourself to study the representation $\pi$, with some finiteness conditions on  its greatest $\sigma_i$-isotypic quotients.
\item[(3)] In the above definition, if $\Theta_{\sigma_i}$  is not required to be finitely generated, we will call $\pi$ a general theta representation of $G_1\times G_2$.
 \item[(4)] If the greatest $\sigma_i$-quotient $\pi_{\sigma_i} \simeq \sigma_i \otimes \Theta_{\sigma_i}$ is a smooth representation of $G_i \times G_j$ of finite length, we call $(\pi, V)$ a theta representation of $G_1 \times G_2$ of finite length. In this case $\Theta_{\sigma_i}$ is an indecomposable representation of $G_i$ by the next lemma \ref{mulitione}.
\end{itemize}
\end{remark}
\begin{lemma}\label{mulitione}
If $(\pi, V)$ is a multiplicity-free representation of $G$ of finite length, and $\mathcal{R}_{G}(\pi)$ has only one element, then $\pi$ is an indecomposable representation of $G$.
\end{lemma}
\begin{proof}
If $V=V_1 \oplus V_2$, then either the case that $V_1$ and $V_2$ have different irreducible  quotient representations, or the case that $V_1$ and $V_2$ have the  same quotient representation whose  multiplicity in $V$ is bigger than $2$; both cases contradict to  the hypotheses.
\end{proof}
Let us finish this section by proposing  some simple properties for such representations.
\begin{lemma}\label{thegrquofpro}
Let $(\pi_1, V_1)$, $(\pi_2, V_2)$ be two smooth representations of $G_1$, $G_2$ respectively. Then:
\begin{itemize}
\item[(1)] $(V_1 \otimes  V_2)[G_1]= V_1[G_1] \otimes V_2$, and $(V_1 \otimes  V_2)[G_2]=V_1 \otimes (V_2[G_2])$,
\item[(2)] $(V_1 \otimes V_2)_{G_1 \times G_2} \simeq V_{1_{G_1}} \otimes V_{2_{G_2}}$,
\item[(3)] $(V_1 \otimes V_2)_{\sigma_1 \times \sigma_2} \simeq V_{1_{\sigma_1}} \otimes V_{2_{\sigma_2}}$, for $(\sigma_i, W_i) \in \Irr(G_i)$.
\end{itemize}
\end{lemma}
\begin{proof}
1) Let us verify the first assertion.  Let $\left\{ e_i\right\}_{i\in I}$ be a basis of $V_2$. So $ \Hom_{G_1}\big( V_1 \otimes V_2, \C\big) \simeq \prod_{i\in I} \Hom_{G_1}\big( V_1 \otimes e_i, \C\big); f \longmapsto (f_i)$, for $f_i =f|_{V_1 \otimes e_i}$, and $\ker(f) \supseteq \sum_{i\in I} \ker f_i$. It follows that $$(V_1 \otimes V_2)[G_1] = \cap_{f\in \Hom_{G_1}\big( V_1 \otimes V_2, \C\big)} \ker f$$
$$\supseteq \cap_{f\in \Hom_{G_1}(V_1 \otimes V_2, \C)} \sum_{i\in I} \ker f_i \supseteq \sum_{i\in I} \cap_{g_i \in \Hom_{G_1} \big( V_1 \otimes e_i, \C\big)} \ker g_i =V_1[G_1] \otimes V_2.$$
Conversely,  if  $\sum_{i=1}^n v_1^{(i)} \otimes e_i \in (V_1 \otimes V_2)[G_1]$, we have $f\big( \sum_{i=1}^n v_1^{(i)} \otimes e_i\big)=0$, for any $f\in \Hom_{G_1}(V_1 \otimes V_2, \C)$, i.e. $\sum_{i=1}^n f_i (v_1^{(i)} \otimes e_i)=0$, where  $f_i=f|_{V_1 \otimes e_i}$. Since $f_i$ can be any element in $\Hom_{G_1}\big(V_1 \otimes e_i, \C\big)$, in particular the zero element, we assert that each $v_1^{(j)}\otimes e_j$ belongs to $\ker f_j$, hence to $\cap_{f_j\in \Hom_{G_1}\big( V_1 \otimes e_j, \C\big)} \ker f_j=V_1[G_1] \otimes e_j$.  No doubt that the previous $\sum_{i=1}^n v_1^{(i)} \otimes e_i \in V_1[G_1] \otimes V_2$.\\
2) From the definition, we know that $(V_1 \otimes V_2)[G_1 \times G_2]$ is linearly spanned by $v_1 \otimes v_2 -\pi_1(g_1) v_1 \otimes \pi_2(g_2) v_2$, for all $v_i \in V_i$, $g_i \in G_i$. Writing $v_1 \otimes v_2 -\pi_1(g_1) v_1 \otimes \pi_2(g_2) v_2$ in its equal form $v_1 \otimes (v_2 -\pi_2(g_2)v_2) + (v_1-\pi_1(g_1) v_1) \otimes \pi_2(g_2) v_2$, shows that  $\big( V_1 \otimes V_2\big)[G_1 \times G_2]= V_1[G_1]\otimes V_2 + V_1 \otimes (V_2[G_2])$. Notice that $\big( V_1[G_1] \otimes V_2\big) \cap \big(V_1 \otimes (V_2[G_2])\big) \supseteq V_1[G_1] \otimes V_2[G_2]$. On the other hand, assuming that $ v=\sum_{i=1}^n v_1^{(i)}\otimes v_2^{(i)}$, for some nonzero linearly independent elements  $v_1^{(i)}\in V_1[G_1]$ and some nonzero elements $v_2^{(i)} \in V_2$, belongs to the above  left-hand side set.  Then $f( v)=0$ for all $f\in \Hom_{G_2}\big( V_1 \otimes V_2, \C\big)$. By considering $f|_{v_1^{(i)} \otimes V_2}$, we see $v_2^{(i)} \in V_2[G_2]$. It then follows that $\big( V_1[G_1] \otimes V_2\big) \cap \big( V_1 \cap (V_2[G_2])\big) =V_1[G_1] \otimes V_2[G_2]$. Now
$$(V_1 \otimes V_2)_{G_1 \times G_2} \simeq V_1 \otimes V_2 /{(V_1 \otimes V_2 [G_1 \times G_2])}$$
$$\simeq V_{1} \otimes V_2 /{\Big(V_1[G_1] \otimes V_2 + V_1 \otimes (V_2[ G_2])\Big)} \simeq \frac{V_1\otimes V_2/{V_1[G_1] \otimes V_2}}{\Big(V_1[G_1] \otimes V_2 + V_1 \otimes (V_2[G_2])\Big)/{V_1[G_1] \otimes V_2}}$$
$$\simeq V_{1_{G_1}} \otimes V_2 /{\big(V_{1_{G_1}} \otimes V_2[G_2]\big)} \simeq V_{1_{G_1}} \otimes V_{2_{G_2}}.$$
3) Note that there exists a canonical surjective map $f: (V_1\otimes V_2)_{\sigma_1 \otimes \sigma_2}  \longrightarrow (V_1)_{\sigma_1} \otimes (V_2)_{\sigma_2}$. Moreover $[(V_1\otimes V_2)_{\sigma_1 \otimes \sigma_2}]^{\ast} \simeq \Hom_{G_1\times G_2}( \check{W}_1\otimes\check{W}_2 \otimes V_1\otimes V_2,\mathbb{C}) \simeq \Hom_{G_1\times G_2}(\check{W}_1\otimes V_1 \otimes \check{W}_2\otimes V_2, \mathbb{C})  \simeq [V_{1_{\sigma_1}} \otimes V_{2_{\sigma_2}}]^{\ast}$; considering  their smooth parts, we see that $\check{f}$ is an isomorphism, and then $f$ is an isomorphism.
\end{proof}
\begin{lemma}
Let $G_1, \cdots, G_{2n}$ be locally profinite groups. If the representation  $(\pi_i, V_i)$ of $G_i \times G_{n+i}$ is   a theta representation for $1 \leq i \leq n$, then so is the representation $\otimes_{i=1}^n \pi_i$ of the group $(G_1 \times \cdots \times G_n) \times (G_{n+1} \times \cdots \times G_{2n})$.
\end{lemma}
\begin{proof}
By induction, it is sufficient to assume that $n=2$. Suppose that $(\sigma_1 \otimes \cdots \otimes \sigma_4, W_1 \otimes \cdots \otimes W_4) \in \mathcal{R}_{G_1 \times \cdots \times G_4} \big( \pi_1 \otimes \pi_2\big)$.   By the result of Lmm.\ref{thegrquofpro}(2), we have $\Hom_{G_1 \times \cdots \times G_4} \big( \pi_1 \otimes \pi_2, \sigma_1 \otimes \cdots \otimes \sigma_4\big) \simeq \Hom_{\C} \big( (V_1 \otimes \check{W}_1 \otimes \check{W}_3)_{G_1 \times G_3} \otimes (V_2 \otimes \check{W}_2 \otimes \check{W}_4)_{G_2 \times G_4}, \C\big)$.  By assumption, $(V_1 \otimes \check{W}_1 \otimes \check{W}_3)_{G_1 \times G_3}$, and $(V_2 \otimes \check{W}_2 \otimes \check{W}_4)_{G_2 \times G_4}$ both have one dimension, so does their tensor product. Hence $m_{G_1 \times \cdots \times G_4} \big( \pi_1 \otimes \pi_2, \sigma_1 \otimes \cdots \otimes \sigma_4\big)=1$.  Suppose now that $\sigma_1 \otimes \sigma_2 \otimes \sigma_3' \otimes \sigma_4' \in \mathcal{R}_{G_1 \times \cdots \times G_4} (\pi_1 \otimes \pi_2)$. Then $\sigma_1 \otimes \sigma_3' \in \mathcal{R}_{G_1 \times G_3}(\pi_1 \otimes \pi_2)=\mathcal{R}_{G_1 \times G_3}(\pi_1)$, and  it follows that $\sigma_3' \simeq \sigma_3$. Similarly $\sigma_4'\simeq \sigma_4$.
By symmetry, the property of graph holds for $\pi_1 \otimes \pi_2$. Now $(\pi_1 \otimes \pi_2)_{\sigma_1 \otimes \sigma_2} \simeq (\pi_{1_{\sigma_1}}) \otimes (\pi_{2_{\sigma_2}})$ by Lmm.\ref{thegrquofpro}(3);  this  isomorphism keeps the $G_3 \times G_4$-module structure. Hence  the former  representation $(\pi_1 \otimes \pi_2)_{\sigma_1 \otimes \sigma_2}$ of $G_3 \times G_4$ is finitely generated. The similar result also holds for the representation $(\pi_1 \otimes \pi_2)_{\sigma_3 \otimes \sigma_4} $ of $G_1 \times G_2$.This finishes the proof.
\end{proof}

Let $G_1, G_2, H$  be locally profinite groups. Suppose now  that $H$ is an abelian group.  Let $\gamma$ be an automorphism of $H$, and $\pi$ a smooth representation of $G_1 \times G_2 \times H$. Via the homomorphism $(G_1 \times H) \times (G_2 \times H) \longrightarrow G_1\times G_2 \times H$,
$[(g_1, h_1), (g_2,h_2)] \longmapsto (g_1g_2, h_1\gamma(h_2))$,  we obtain a smooth representation $\widetilde{\pi}$ of $(G_1 \times H) \times (G_2 \times H)$.
\begin{lemma}\label{troisgroupesdebigrapheforte}
If  $\pi|_{G_1 \times G_2}$ is a theta representation,  so is  $\widetilde{\pi}$.
\end{lemma}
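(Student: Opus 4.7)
The plan is to reduce each of the two defining properties of a theta representation for $\widetilde\pi$ to the corresponding property of $\pi|_{G_1\times G_2}$, exploiting the fact that each copy of $H$ contributes only a character and that the two characters are linked by $\gamma$.

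First, under the admissibility assumption on irreducibles, every irreducible smooth representation of $G_i\times H$ decomposes uniquely as $\pi_i\otimes\chi_i$ with $\pi_i\in\Irr(G_i)$ and $\chi_i$ a smooth character of $H$, so an irreducible of $(G_1\times H)\times(G_2\times H)$ takes the form $(\pi_1\otimes\chi_1)\otimes(\pi_2\otimes\chi_2)$. Given
$$f\in\Hom_{(G_1\times H)\times(G_2\times H)}\bigl(\widetilde\pi,(\pi_1\otimes\chi_1)\otimes(\pi_2\otimes\chi_2)\bigr),$$
the equivariance relation together with $\widetilde\pi(g_1,h_1,g_2,h_2)=\pi(g_1,g_2,h_1\gamma(h_2))$ yields, by successively setting $h_1=h_2=1$, then $(g_1,g_2,h_2)=1$, then $(g_1,g_2,h_1)=1$, the three identities: $f$ is $G_1\times G_2$-equivariant, $f(\pi(1,1,h)v)=\chi_1(h)f(v)$, and $f(\pi(1,1,\gamma(h))v)=\chi_2(h)f(v)$. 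Comparing the last two forces the matching $\chi_2=\chi_1\circ\gamma$; when this holds, $f$ is precisely a $G_1\times G_2$-equivariant map satisfying the additional constraint that $H$ acts on the image by $\chi_1$. Hence one obtains an injection
$$\Hom_{(G_1\times H)\times(G_2\times H)}\bigl(\widetilde\pi,(\pi_1\otimes\chi_1)\otimes(\pi_2\otimes\chi_2)\bigr)\hookrightarrow \Hom_{G_1\times G_2}(\pi,\pi_1\otimes\pi_2).$$

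The multiplicity-free property for $\widetilde\pi$ now follows at once: the right-hand side is at most one-dimensional since $\pi|_{G_1\times G_2}$ is a theta representation, so the left-hand side is too. For the injectivity of the first projection $\widetilde p_1$, suppose both $(\pi_1\otimes\chi_1)\otimes(\pi_2\otimes\chi_2)$ and $(\pi_1\otimes\chi_1)\otimes(\pi_2'\otimes\chi_2')$ lie in $\mathcal{R}_{(G_1\times H)\times(G_2\times H)}(\widetilde\pi)$. The matching condition forces $\chi_2=\chi_2'=\chi_1\circ\gamma$, and the above injection provides nonzero elements of both $\Hom_{G_1\times G_2}(\pi,\pi_1\otimes\pi_2)$ and $\Hom_{G_1\times G_2}(\pi,\pi_1\otimes\pi_2')$; thus $\pi_1\otimes\pi_2$ and $\pi_1\otimes\pi_2'$ both lie in $\mathcal{R}_{G_1\times G_2}(\pi)$, and the injectivity of $p_1$ for $\pi|_{G_1\times G_2}$ forces $\pi_2=\pi_2'$. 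The argument for $\widetilde p_2$ is symmetric.

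I do not anticipate a substantial obstacle; the proof is essentially bookkeeping. The only care needed is tracking the direction of the twist by $\gamma$, so that the characters match as $\chi_2=\chi_1\circ\gamma$ rather than the opposite, and noting that the product decomposition $\Irr(G_i\times H)\simeq\Irr(G_i)\times\Irr(H)$ requires admissibility of irreducibles of $G_i\times H$, which is a natural extension of the standing hypothesis.
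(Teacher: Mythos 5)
Your proposal is correct and follows essentially the same path as the paper's own proof: both derive the character-matching constraint $\chi_2 = \chi_1 \circ \gamma$ from the equivariance of a nonzero intertwiner and the definition of $\widetilde{\pi}$, then transport the theta correspondence for $\pi|_{G_1\times G_2}$ to $\widetilde{\pi}$. The only organizational difference is that you package the argument as a natural injection $\Hom_{(G_1\times H)\times(G_2\times H)}\big(\widetilde\pi,(\pi_1\otimes\chi_1)\otimes(\pi_2\otimes\chi_2)\big)\hookrightarrow\Hom_{G_1\times G_2}(\pi,\pi_1\otimes\pi_2)$ and read off both properties from it, whereas the paper asserts the multiplicity-free property directly and exhibits the theta map for $\widetilde\pi$ explicitly as $\pi_1\otimes\chi_1\mapsto\theta_\pi(\pi_1)\otimes(\chi_1\circ\gamma)$; your version makes the multiplicity-free step slightly more explicit, which is a mild gain in rigor rather than a different idea.
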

\begin{proof}
By observation, the multiplicity-free property  also  holds for  $\tilde{\pi}$.  Suppose now  $(\pi_1 \otimes \chi_1) \otimes (\pi_2 \otimes \chi_2) \in \mathcal{R}_{(G_1 \times H) \times (G_2 \times H)}(\widetilde{\pi})$, and let $0 \neq F \in \Hom_{(G_1\times H) \times (G_2 \times H)}(\pi, \, (\pi_1 \otimes \chi_1) \otimes (\pi_2 \otimes \chi_2)).$
By definition, we have
$$F(\pi((g_1\otimes g_2), h\gamma(h'))v)=\pi_1(g_1) \otimes \pi_2(g_2) F(v) \chi_1(h) \chi_2(h'),  \quad v\in V, g_i \in G_i, h,h' \in H.$$
Substituting $g_1=g_2=1$, $h'=\gamma^{-1}(h^{-1})$ shows that
$F(v)= F(v) \chi_1(h) \chi_2\big(\gamma^{-1}(h^{-1})\big)$ for all $v\in V$.
As $F\neq 0$ and  $\gamma$ is an isomorphism, we get $\chi_2=\chi_1^{\gamma^{-1}}$,  where $\chi_1^{\gamma^{-1}}(h):= \chi_1\big(\gamma(h)\big)$, for  $h\in H$. If we write $\theta_{\pi}$ for the theta map of  $\pi|_{G_1 \times G_2}$, then there is  a bijection from
$ \mathcal{R}_{G_1 \times H_1}^0(\widetilde{\pi})$ to $ \mathcal{R}_{G_2 \times H_2}^0(\widetilde{\pi})$, just  given by
$ \pi_1 \otimes \chi_1 \longmapsto \theta_{\pi}(\pi_1) \otimes \chi_1^{\gamma^{-1}}$.  Recall $\widetilde{\pi}_{\pi_1 \otimes \chi} \simeq \frac{V}{\cap_{f\in \Hom_{G_1 \times H}( \widetilde{\pi}, \pi_1 \otimes \chi_1)}\ker f}$, and  $\pi_{\pi_1 } \simeq \frac{V}{\cap_{g\in \Hom_{G_1}( \pi, \pi_1)}\ker g}$.  Hence there exists a \emph{surjective} $G_1 \times G_2$-morphism from  $\pi_{\pi_1}$ to $\widetilde{\pi}_{\pi_1 \otimes \chi_1}$. If we write $\widetilde{\pi}_{\pi_1 \otimes \chi} \simeq (\pi_1 \otimes \chi) \otimes \Theta_{\pi_1 \otimes \chi}$, then  $\Theta_{\pi_1 \otimes \chi}$ is a finitely generated representation of $G_2$ as well as $G_2 \times H$.
\end{proof}
\begin{remark}\label{remarktroisgroupesdebigrapheforte}
The above result also holds for the  theta representation of finite length.
\end{remark}
\begin{proof}
We follow the notations. It suffices to show that the greatest $\pi_1 \otimes \chi_1$-isotypic quotient  space $\widetilde{\pi}_{\pi_1 \otimes \chi_1}$ is a representation of $G_2 \times H$ of finite length. Let us consider the $G_2 \times H$-smooth part of $\Hom_{G_1 \times H}\big( \widetilde{\pi}_{\pi_1 \otimes \chi_1}, \pi_1 \otimes \chi_1\big)$.
Recall that $\Hom_{G_1 \times H}\big( \widetilde{\pi}_{\pi_1 \otimes \chi_1}, \pi_1 \otimes \chi_1\big)\simeq \Hom_{G_1 \times H}\big( \widetilde{\pi}, \pi_1 \otimes \chi_1\big)$, and it follows that  $H$ acts canonically on the latter $\Hom$-space via $\chi_1^{-1}\circ \gamma$. Therefore it suffices to extract the $G_2$-smooth part of $\Hom_{G_1 \times H}\big( \widetilde{\pi}, \pi_1 \otimes \chi_1\big)$. Now $\Hom_{G_1 \times H}\big( \widetilde{\pi}, \pi_1 \otimes \chi_1\big) \simeq \Hom_{G_1}\big( (\widetilde{\pi} \otimes \chi_1^{-1})_{H}, \pi_1\big)\hookrightarrow  \Hom_{G_1}\big( \pi, \pi_1\big)$, and this process keeps the $G_2$-module structure. Hence  the representation $(\widetilde{\pi}_{\pi_1 \otimes\chi_1})^{\vee}$ of $G_2 \times H$  has finite length,  so does  $\widetilde{\pi}_{\pi_1 \otimes \chi_1}$ itself. By symmetry, the similar result is still valid for $\widetilde{\pi}_{\pi_2 \otimes \chi_1\circ \gamma}$.
\end{proof}
\begin{remark}\label{ouvertmorphisme}
If the above map
$(G_1 \times H) \times (G_2 \times H) \longrightarrow G_1\times G_2 \times H$
 factors through
 $(G_1 \times H) \times (G_2 \times H) \longrightarrow G_1H \times G_2H,$
 for open surjective homomorphisms $p_i : G_i \times H \longrightarrow
 G_i H$, then the result in  Lemmas \ref{troisgroupesdebigrapheforte} also holds for the analogous representation of   $ G_1H \times G_2H$.
\end{remark}
\begin{proof}
This follows from the fact that each irreducible representation of $G_i H$ can be identified with an irreducible representation of  $G_i \times H $ trivially at $ker(p_i)$.
\end{proof}

\section{The Clifford-Mackey theory}\label{theCliffordMacheytheory}
In this section, we study  Clifford-Mackey theory in our case. We will let $G$ be a locally profinite group, and let $H$ be a closed  subgroup  of $G$.  Suppose that \emph{all} irreducible representations of $G$, $H$ are \emph{admissible}.
 \subsection{} In the first subsection we assume that $H$ is an open normal subgroup of $G$,  $G/H$ is an abelian discrete group.
\begin{theorem}[Clifford-Mackey]\label{cliffordadmissible}
Let $(\pi, V) \in \Irr(G)$. Suppose  $\mathcal{R}_H(\pi) \neq \emptyset$. Then:
\begin{itemize}
\item[(1)] $\Res_{H}^G\pi$ is  a semi-simple representation with  finite   multiplicities.
\item[(2)] If $\sigma_1, \sigma_2 \in \mathcal{R}_H(\pi)$, then there is an element $g\in G$ such that $\sigma_2 \simeq \sigma_1^g$, where $\sigma_1^g (h): =\sigma_1(ghg^{-1})$ for $h\in H$.
\item[(3)] There is a positive integer $m$ such that $\Res_H^G\pi\simeq \sum_{\sigma\in \mathcal{R}_H(\pi)} m \sigma$.
\item[(4)] Let $(\sigma, W) $ be an irreducible  constituent  of $\Res_H^G\pi$. Then:
 \begin{itemize}
\item[(a)] $I_G^0(\sigma)=\left\{ g\in G \mid g(W)=W\right\}$ is an open normal subgroup of $G$. For two irreducible constituents  $(\sigma_1, W_1)$, $(\sigma_2, W_2)$ of $(\Res_H^G \pi, V)$, we have $I_G^0(\sigma_1)= I_G^0(\sigma_2)$, denoted by $\widetilde{H}^0$.   Moveover, $\sigma$ is extendible to $\widetilde{H}^0$.
\item[(b)] $I_G(\sigma)=\left\{ g\in G \mid \sigma^g \simeq \sigma\right\}$ is an open normal subgroup of $G$. For any $\sigma_1$, $\sigma_2 \in \mathcal{R}_H(\pi)$, we have $I_G(\sigma_1)= I_G(\sigma_2)$, denoted by $\widetilde{H}$.
\item[(c)] The isotypic component $m\sigma$ of $\sigma$ in $\Res_H^G \pi$ is an irreducible smooth representation of $\widetilde{H}$, denoted by $(\widetilde{\sigma}, \widetilde{W})$.
\end{itemize}
\item[(5)]
 $\Res_{\widetilde{H}}^G\pi\simeq  \oplus_{\widetilde{\sigma} \in \mathcal{R}_{\widetilde{H}}(\pi)} \widetilde{\sigma}$ with $\widetilde{\sigma}|_H \simeq m \sigma$. The action of   $G/{\widetilde{H}}$ on the set $\mathcal{R}_{\widetilde{H}}(\pi)$ is simply  transitive.
\item[(6)] $\pi\simeq \cInd_{\widetilde{H}}^G \widetilde{\sigma}$ for any $\widetilde{\sigma} \in \mathcal{R}_{\widetilde{H}}(\pi)$.
\item[(7)] $\cInd_{\widetilde{H}}^G \widetilde{\sigma}\simeq \Ind_{\widetilde{H}}^G \widetilde{\sigma}$.
\end{itemize}
\end{theorem}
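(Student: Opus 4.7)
My plan is to establish all six parts by combining an isotypic-component analysis of $\Res_H^G \pi$ with Frobenius reciprocity (Corollary~\ref{frobeniusreciprocityregular}, which applies since $G/H$ is abelian hence unimodular and $H$ is open in $G$).

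For (1)--(3), fix $\sigma \in \mathcal{R}_H(\pi)$ and let $V_\sigma \subseteq V$ denote its $\sigma$-isotypic component. Because $H$ is normal in $G$, the operator $\pi(g)$ carries $V_\sigma$ bijectively onto the $\sigma^{g^{-1}}$-isotypic component $V_{\sigma^{g^{-1}}}$, so $\sum_{g \in G} \pi(g) V_\sigma$ is a nonzero $G$-stable subspace and must equal $V$ by irreducibility of $\pi$. Since isotypic components for different $H$-classes are linearly disjoint, I obtain
$$V = \bigoplus_{\tau \in G \cdot \sigma} V_\tau$$
as an $H$-module, which yields immediately (1) semisimplicity, (2) the transitivity of the $G$-conjugation action on $\mathcal{R}_H(\pi)$, and (3) the common multiplicity via the bijection $\pi(g)\colon V_\sigma \to V_{\sigma^{g^{-1}}}$. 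Finiteness of $m$ comes from admissibility: choosing a compact open $K \leq H$ with $\sigma^K \neq 0$, the finite-dimensionality of $(V_\sigma)^K \simeq (\sigma^K)^{\oplus m}$ forces $m < \infty$, and hence $\Res_H^G \pi$ is of finite-dimensional quotient.

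For part (4), the inclusions $H \subseteq I_G^0(\sigma) \subseteq I_G(\sigma)$ combined with $G/H$ abelian force normality of both groups automatically. Openness of $I_G^0(\sigma)$ follows by choosing $w \in W$ nonzero with compact open pointwise stabilizer $K$ in $G$ and computing, using normality of $H$,
$$\pi(k) W = \pi(k) \pi(H) w = \pi(kHk^{-1}) \pi(k) w = \pi(H) w = W \qquad (k \in K),$$
so $K \subseteq I_G^0(\sigma)$. The extension $\widetilde{\sigma}\colon I_G^0(\sigma) \to \Aut_\C(W)$, $g \mapsto \pi(g)|_W$, is a genuine (not projective) representation thanks to the relation $\pi(g) \sigma(h) = \sigma(g h g^{-1}) \pi(g)$. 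Independence of $I_G^0(\sigma)$ from the choice of $W$ I reduce by conjugation and normality to two constituents $W_1, W_2$ in a single isotypic component; there I invoke the projective representation theory of Section~3, under which $I_G(\sigma)/H$ acts projectively on the multiplicity space $\Hom_H(\sigma, V_\sigma)$ and the ``linear-extension'' locus must be intrinsic to $\sigma$.

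For (4c), (5), and (6): any nonzero $\widetilde{H}$-subrepresentation $U \subseteq V_\sigma$ generates $\sum_g \pi(g) U = V$ by $\pi$-irreducibility, and since this sum is direct across distinct isotypic components, one has $U = V_\sigma$, proving (4c) with $\widetilde{\sigma} := \pi|_{\widetilde{H}}$ acting on $V_\sigma$. Grouping the first-paragraph decomposition by $\widetilde{H}$-cosets yields (5). For (6), Corollary~\ref{frobeniusreciprocityregular} produces a nonzero $G$-morphism $F\colon \cInd_{\widetilde{H}}^G \widetilde{\sigma} \to \pi$, which is surjective by irreducibility of $\pi$, and a comparison of the $H$-isotypic decompositions of source and target forces $\ker F = 0$. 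The main obstacle will be the independence of $I_G^0(\sigma)$ from the choice of constituent $W$ within a fixed isotypic component when the multiplicity $m$ exceeds one --- this is the one step that genuinely requires the projective representation machinery of Section~3 and a careful exploitation of the hypothesis that $G/H$ is abelian (not merely $H$ normal).
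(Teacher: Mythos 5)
Your argument for parts (1)--(3) has a genuine gap at the very first step. You write ``fix $\sigma \in \mathcal{R}_H(\pi)$ and let $V_\sigma \subseteq V$ denote its $\sigma$-isotypic component'' and then declare $\sum_{g\in G}\pi(g)V_\sigma$ to be a nonzero $G$-stable subspace. But recall the paper's convention: $\mathcal{R}_H(\pi) = \{\rho\in\Irr(H)\mid \Hom_H(\pi,\rho)\neq 0\}$, so $\sigma$ is known only as an irreducible \emph{quotient} of $\Res_H^G\pi$, not as a sub. The $\sigma$-isotypic \emph{subspace} of $V$ could a priori be zero, and then your translate-and-sum argument has nothing to start from. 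This is exactly the point the paper's proof is built around: it first notes that $\Res_H^G\pi$ is admissible (since $\pi$ is admissible and $H$ is open), applies Lemma~\ref{admissiblerepresentations} to identify $(\Res_H^G\pi)^\vee\simeq\Res_H^G\check\pi$, and thereby converts the quotient $\sigma$ of $\Res_H^G\pi$ into the \emph{sub}representation $\check\sigma\hookrightarrow\Res_H^G\check\pi$. Only then does it form $\check V'=\sum_{g_i}\check\pi(g_i)\check W$, conclude $\check V'=\check V$ by irreducibility of $\check\pi$, obtain semi-simplicity of $\Res_H^G\check\pi$, and dualize back to get semi-simplicity of $\Res_H^G\pi$. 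Your outline is the correct shape of argument, but it skips the detour through the contragredient that makes the starting point legitimate. Once you insert that step, parts (1)--(3) go through essentially as you wrote them.

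A smaller issue: for (4a) you acknowledge yourself that the independence of $I_G^0(\sigma)$ from the choice of constituent $W$ ``genuinely requires the projective representation machinery'' but leave the argument as a gesture toward Section~3. The paper's own proof of (4a) is more elementary: two constituents of the form $\pi(g_i)W$ are conjugate by an element of $G$, so the corresponding stabilizers $I_G^0(\sigma_1)$ and $I_G^0(\sigma_2)$ are $G$-conjugate subgroups containing $H$; since $G/H$ is abelian they are normal, and conjugate normal subgroups coincide. You should either supply that argument or make precise which statement from Section~3 you are invoking, because as written this step is not complete. The rest of (4)--(6) (openness of $I_G^0(\sigma)$ via the pointwise stabilizer $K$, the extension $g\mapsto\pi(g)|_W$, irreducibility of the isotypic component as an $\widetilde H$-module via the translate argument, and the $\cInd$ isomorphism from Frobenius reciprocity plus irreducibility of $\pi$) matches the paper's route.
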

\begin{proof}
1) Let $(\sigma, W) \in \mathcal{R}_H(\pi)$,  $\Omega=\left\{ g_i  \in G\right\}$ a complete set of  coset representatives  of $G/H$. By the contragredient  duality,  $(\check{\sigma}, \check{W})$ is a sub-representation of $(\Res_H^G \check{\pi}, \check{V})$(\emph{cf}. Lmm.\ref{admissiblerepresentations}). The vector space  $\sum_{g_i\in \Omega} \check{\pi}(g_i) \check{W}$ is $G$-invariant, equalling to $\check{V}$. Thus $\Res_{H}^G \check{\pi}$ is semi-simple and contains an irreducible factor representation. It follows that $(\Res_H^G\pi, V)$ is semi-simple as well (lemma \ref{admissiblerepresentations}). Let   $K$  be an open compact subgroup of $H$ such that   the  finite-dimensional vector space $W^K$ is nonzero. By Frobenius reciprocity, we have the relation of dimensions: $m_H(\pi, \sigma) \leq m_H(\pi, \Ind_K^HW^K) \leq m_K(\pi, W^K) < +\infty.$\\
2) Every irreducible sub-representation of $(\Res_H^G \pi, V)$ is isomorphic with $(\Res_H^G \pi, \pi(g_i) W)$ for some $g_i\in \Omega$, and  $(\Res_H^G \pi, \pi(g_i)(W)) \simeq  (\sigma^{g_i^{-1}}, W)$, so the part (2) is clear.\\
3) Let $\sigma_1, \sigma_2$ be two elements in $\mathcal{R}_H(\pi)$. Then there is an element $g\in G$ such that $\sigma_2 \simeq \sigma_1^g$, and $m_H( \pi, \sigma_1)=m_H(\pi^g, \sigma_1^g)=m_H(\pi, \sigma_2)=m$, for some positive integer $m$.\\
4)  The group  $I_G^0(\sigma)$ containing  $H$ is  an open normal subgroup of $G$. For $(\sigma_1,W_1), (\sigma_2, W_2) \in \mathcal{R}_H(\pi)$, there exists $g \in G$ such that $W_1=g(W_2)$.  Then the map from $I_{G}^0(\sigma_1)$ to $I_{G}^0(\sigma_2)$, defined by $h \longrightarrow g^{-1}hg$, is bijective.   It follows that the two normal subgroups  $I_{G}^0(\sigma_1)$ and $I_{G}^0(\sigma_2)$ of $G$ coincide.  The similar proof  works for (b).  By observation, the $\sigma$-isotypic component $\widetilde{\sigma}$($\simeq m\sigma$) is an irreducible representation of  $\widetilde{H}$.\\
5)   Applying the result (1) to $\widetilde{H}$ shows that  $\Res_{\widetilde{H}}^G\pi$ is semi-simple. This will yield a decomposition
$\Res_{\widetilde{H}}^G\pi\simeq \oplus_{\sigma\in \mathcal{R}_{H}(\pi)} \widetilde{\sigma},$   where $ \widetilde{\sigma}|_H \simeq m \sigma$.  Namely, $\widetilde{\sigma_1} \ncong \widetilde{\sigma_2}$ if $\sigma_1 \ncong \sigma_2 \in \mathcal{R}_H(\pi)$.  For $\widetilde{\sigma_1}, \widetilde{\sigma_2} \in \mathcal{R}_{\widetilde{H}}(\pi)$,  we can find  $g \in G$ such that $\widetilde{\sigma_1} \simeq \widetilde{\sigma_2}^g$. On the other hand,  if $\widetilde{\sigma} \simeq \widetilde{\sigma}^g$, a priori $\sigma\simeq \sigma^g$  so that $g\in \widetilde{H}$. In this way we verify  that the action of $G/{\widetilde{H}}$ on $\mathcal{R}_{\widetilde{H}}(\pi)$ is  simply transitive.\\
6)   Let $\Lambda=\left\{ g_i \right\}_{i\in I}$ be a set of  representatives for $G/{\widetilde{H}}$. Then  $\widetilde{V}=\sum_{g_i\in \Lambda} \pi(g_i) \widetilde{W}$ is $G$-invariant, and $\widetilde{V}=V$. By Frobenius reciprocity, we have $\alpha: \Hom_{\widetilde{H}} (\widetilde{\sigma}, \pi) \stackrel{ \sim }{\longrightarrow } \Hom_G(\cInd_{\widetilde{H}}^G \widetilde{\sigma}, \pi)$, which is of dimension $1$. By the explicit construction in \cite[p.24]{BernZ}, the map $\alpha(\Id_{\widetilde{\sigma}})$ shall give a $G$-isomorphism from $\cInd_{\widetilde{H}}^G(\widetilde{\sigma})$ to $\pi$.\\
7) Under the admissible assumption, $\check{\pi}$ is also an irreducible representation of $G$. Hence $\check{\pi} \simeq  \Ind_{\widetilde{H}}^G \check{\widetilde{\sigma}}= \cInd_{\widetilde{H}}^G \check{\widetilde{\sigma}}$, and $\check{\check{\pi}} =  (\cInd_{\widetilde{H}}^G \check{\widetilde{\sigma}})^{\vee} \simeq \Ind_{\widetilde{H}}^{G}  \widetilde{\sigma}$ for the reason that  $\widetilde{\sigma}$ is an admissible representation of $H$ as well as $\widetilde{H}$.
\end{proof}
\begin{corollary}\label{semisimple}
Keep the above notations. Suppose now that $H_1$ is a closed  subgroup of $\widetilde{H}$ and $H_1 \supseteq H$ . Then $\mathcal{R}_{H_1}(\pi) \neq \emptyset$ and $\Res_{H_1}^G \pi$ is semi-simple as well.
\end{corollary}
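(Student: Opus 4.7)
The plan is to reduce to a second application of Theorem~\ref{cliffordadmissible}, now with $(\widetilde H, H_1)$ in the role of $(G, H)$. First observe that $H_1$ is open in $G$, since it contains the open subgroup $H$; and since $H_1/H$ is a subgroup of the abelian group $\widetilde H/H$, $H_1$ is normal in $\widetilde H$ with abelian discrete quotient $\widetilde H/H_1$. By part~(5) of Theorem~\ref{cliffordadmissible},
\[
\Res_{\widetilde H}^G \pi \;\cong\; \bigoplus_i \widetilde\sigma_i,\qquad \widetilde\sigma_i \in \Irr(\widetilde H),
\]
and hence $\Res_{H_1}^G \pi \cong \bigoplus_i \Res_{H_1}^{\widetilde H}\widetilde\sigma_i$. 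It therefore suffices to show each summand $\Res_{H_1}^{\widetilde H}\widetilde\sigma_i$ is nonzero, semisimple, and admits an irreducible $H_1$-quotient.

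To handle a fixed summand, the intention is to apply Theorem~\ref{cliffordadmissible} to the triple $(\widetilde H, H_1, \widetilde\sigma_i)$. The structural hypotheses have been verified above. The admissibility condition on irreducible smooth representations of the intermediate open subgroups $\widetilde H$ and $H_1$ of $G$ follows from the hypothesis at the level of $G$: any $\tau \in \Irr(\widetilde H)$ occurs as a subquotient of $\Res_{\widetilde H}^G \Pi$ for some admissible $\Pi \in \Irr(G)$ (take $\Pi$ to be an irreducible subquotient of the finitely generated representation $\cInd_{\widetilde H}^G \tau$ and invoke Frobenius reciprocity), and admissibility passes from the admissible $\Pi$ to $\tau$ by restriction to the open subgroup $\widetilde H$ and then to the subquotient; the same argument applies to $H_1$.

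The main obstacle is verifying $\mathcal R_{H_1}(\widetilde\sigma_i) \neq \emptyset$, because $\widetilde\sigma_i$ need not be finitely generated as an $H_1$-module when $\widetilde H/H_1$ is infinite, so Proposition~\ref{quotientzero} does not apply directly. The plan here is to exploit the $H$-isotypic decomposition $\widetilde\sigma_i \cong \sigma_i \otimes \mathbb C^m$ of the admissible irreducible $\widetilde H$-module $\widetilde\sigma_i$: under this identification, and by Lemma~\ref{waldspurger1}, the $\widetilde H$-action translates to an irreducible projective representation of the discrete abelian group $\widetilde H/H$ on the finite-dimensional multiplicity space $\mathbb C^m$, and $H_1$-submodules of $\widetilde\sigma_i$ correspond to projective $H_1/H$-subspaces of $\mathbb C^m$. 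Finite-dimensionality ensures the existence of an irreducible projective $H_1/H$-quotient of $\mathbb C^m$, hence of an irreducible $H_1$-quotient of $\widetilde\sigma_i$, giving $\mathcal R_{H_1}(\widetilde\sigma_i) \neq \emptyset$. With this last hypothesis in place, Theorem~\ref{cliffordadmissible} applied to $(\widetilde H, H_1, \widetilde\sigma_i)$ delivers the semisimple decomposition of $\Res_{H_1}^{\widetilde H}\widetilde\sigma_i$, and summing over $i$ completes the proof.
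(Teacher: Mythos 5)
Your proof is correct, and it takes a genuinely different route from the paper's. The paper argues directly at the level of subrepresentations: it picks an irreducible $H$-constituent $(\sigma, W)$, forms the $H_1$-module $W_1$ generated by $W$ inside the $\sigma$-isotypic component $m\sigma$, and descends on the finite $H$-length to produce an irreducible $H_1$-\emph{sub}representation, after which semisimplicity follows by summing $G$-conjugates as in the proof of Theorem~\ref{cliffordadmissible}(1). You instead reduce the corollary to a second application of Theorem~\ref{cliffordadmissible}, this time to the triple $(\widetilde H, H_1, \widetilde\sigma_i)$, and the real work goes into discharging its hypotheses: first the admissibility of $\Irr(\widetilde H)$ and $\Irr(H_1)$, where your argument via $\cInd$ and Frobenius reciprocity is valid (though you should take $\Pi$ to be an irreducible \emph{quotient} of $\cInd_{\widetilde H}^G\tau$, not just a subquotient, so the reciprocity step applies cleanly), and second the nonemptiness $\mathcal{R}_{H_1}(\widetilde\sigma_i)\neq\emptyset$, for which you use the Clifford/multiplicity-space identification $\widetilde\sigma_i \cong \sigma_i \otimes \C^m$ to locate an irreducible $H_1$-\emph{quotient}. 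The latter step is sound, although the citation to Lemma~\ref{waldspurger1} is slightly loose: what is actually being used is the Schur-lemma fact $\End_H(\sigma_i^{\oplus m}) = M_m(\C)$ for the irreducible admissible $\sigma_i$, from which the bijection between $H_1$-submodules of $\widetilde\sigma_i$ and projective $H_1/H$-invariant subspaces of $\C^m$ follows; Lemma~\ref{waldspurger1} as stated concerns direct products $G_1 \times G_2$, whereas here $\widetilde H$ is not $H \times \widetilde H/H$. Compared with the paper, your route buys conceptual uniformity (a genuine second invocation of the main theorem) and makes explicit the intermediate admissibility assumptions that the paper's terse ``the process of proving Theorem~\ref{cliffordadmissible}(1) shall give the result'' leaves implicit, at the cost of bringing in the projective-representation apparatus that the paper's shorter length-descent argument avoids.
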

\begin{proof}
Let $(\sigma, W)$ be an irreducible constituent of $(\Res_H^G \pi, V)$. The action of  $H_1$  on $W$ produces a finitely generated representation of $H_1$, denoted by $(\sigma_1, W_1)$. This representation  admits an exact sequence of $\mathcal{H}(H_1)$-modules:
$1 \longrightarrow U_1 \longrightarrow W_1 \longrightarrow U \longrightarrow 1,$
for  an irreducible quotient representation $(\rho, U)$ of $H_1$ and  a sub-representation $(\rho_1, U_1)$ of $H_1$. As we know, $\Res_H^{H_1} \sigma_1$ ($\subseteq  m\sigma$) is semi-simple. It follows that $\Res_{H}^{H_1} \rho_1 \simeq m_1 \sigma$ for certain $m_1$ smaller than $ m$.  Note that  $U_1$ is also a finitely generated $\mathcal{H}(H_1)$-module.($U_1|_H \simeq m_2 \sigma$) By induction on  $m$, finally we can find an irreducible sub-representation of $(\sigma_1, W_1)$ or $(\Res_{H_1}^G\pi, V)$. The proving process of the theorem \ref{cliffordadmissible} (1) shall give the result.
\end{proof}

\begin{corollary}\label{IndCidn}
Under the conditions of Theorem \ref{cliffordadmissible}, let $\chi \in \Irr(G/H)$; then $\cInd_{\widetilde{H}}^G (\widetilde{\sigma} \otimes  \chi|_{\widetilde{H}}) \simeq \big( \cInd_{\widetilde{H}}^G \widetilde{\sigma} \big) \otimes \chi$, and $\cInd_{\widetilde{H}}^G (\widetilde{\sigma} \otimes  \chi|_{\widetilde{H}}) = \Ind_{\widetilde{H}}^G ( \widetilde{\sigma} \otimes \chi|_{\widetilde{H}})$.
\end{corollary}
\begin{proof}
Let $\widetilde{\Delta}=\left\{ g_i \in G\right\}_{i\in I}$, assumed  to  contain $1$,   be a complete set of coset representatives of  $G/{\widetilde{H}}$. By Frobenius reciprocity,  we have  $\alpha: \Hom_G\big( \cInd_{\widetilde{H}}^G (\widetilde{\sigma} \otimes  \chi|_{\widetilde{H}}), \big( \cInd_{\widetilde{H}}^G \widetilde{\sigma} \big) \otimes \chi\big) \simeq \Hom_{\widetilde{H}}\big( \widetilde{\sigma} \otimes \chi|_{\widetilde{H}},\sum_{g\in \widetilde{\Delta}} \widetilde{\sigma}^{g} \otimes \chi|_{\widetilde{H}}\big)$. Then $\alpha^{-1}(\Id_{\widetilde{\sigma} \otimes \chi|_{\widetilde{H}}})$ shall give a $G$-morphism from  $\cInd_{\widetilde{H}}^G (\widetilde{\sigma} \otimes  \chi|_{\widetilde{H}})$  to $\big(  \cInd_{\widetilde{H}}^G \widetilde{\sigma} \big) \otimes \chi$.  By   investigating their restrictions to $\widetilde{H}$, we see that the morphism  is bijective.
The second assertion follows from  Theorem \ref{cliffordadmissible} (7)  by replacing $ \widetilde{\sigma}$ with $ \widetilde{\sigma} \otimes \chi|_{\widetilde{H}}$.
\end{proof}
\begin{corollary}\label{lastsubgroup}
Under the conditions of Theorem \ref{cliffordadmissible},  there exists a normal subgroup $H_m$ of $G$ such that
\begin{itemize}
\item[(1)] $H \subseteq H_m \subseteq \widetilde{H}$,
\item[(2)] $H_m/H$ is finitely generated,
\item[(3)] $\Res_{H_m}^G\pi$ is multiplicity-free.
\end{itemize}
\end{corollary}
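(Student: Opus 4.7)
The plan is to reduce the construction of $H_m$ to a finite-dimensional linear-algebra problem on the commutant algebra of $\widetilde{\sigma}|_H$ for a fixed $\widetilde{\sigma}\in\mathcal{R}_{\widetilde{H}}(\pi)$. Say $\widetilde{\sigma}|_H\simeq m\sigma$, and set $D:=\End_H(\widetilde{\sigma})\simeq M_m(\C)$. Because $\widetilde{H}$ normalizes $H$, conjugation by the operator $\widetilde{\sigma}(\widetilde{h})$, $\widetilde{h}\in\widetilde{H}$, preserves $D$; moreover $H$ acts trivially (elements of $D$ commute with $\sigma(H)$ by definition). This gives an action of $\widetilde{H}/H$ on the $m^2$-dimensional $\C$-algebra $D$ by algebra automorphisms, and by unwinding definitions its fixed algebra is exactly $\End_{\widetilde{H}}(\widetilde{\sigma})$. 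Since $\widetilde{\sigma}$ is an admissible irreducible $\widetilde{H}$-module (admissibility inherited from $\pi$ by restriction to the open subgroup $\widetilde{H}$), Schur's lemma yields $D^{\widetilde{H}/H}=\C$.

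Now I use a termination-by-dimension argument to produce $H_m$. Starting with $\Gamma_0=1$, so $D^{\Gamma_0}=D$ has dimension $m^2$, I iteratively adjoin elements of $\widetilde{H}/H$: at stage $k$, if $D^{\Gamma_k}\neq\C$ then I pick some $T\in D^{\Gamma_k}\setminus\C$; since $D^{\widetilde{H}/H}=\C$, there exists $h\in\widetilde{H}/H$ with $h\cdot T\neq T$, whence $D^{\langle\Gamma_k,h\rangle}\subsetneq D^{\Gamma_k}$, and I set $\Gamma_{k+1}=\langle\Gamma_k,h\rangle$. This strictly decreases $\dim D^{\Gamma_k}$, so it terminates in at most $m^2-1$ steps at a finitely generated $\Gamma=\langle h_1,\ldots,h_N\rangle\subseteq\widetilde{H}/H$ with $D^\Gamma=\C$. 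Choosing lifts $\widetilde{h}_i\in\widetilde{H}$, I set
\[
H_m:=H\cdot\langle\widetilde{h}_1,\ldots,\widetilde{h}_N\rangle.
\]
Then $H\subseteq H_m\subseteq\widetilde{H}$, the quotient $H_m/H$ is finitely generated, and $H_m\triangleleft G$ because $H_m/H$ is a subgroup of the abelian group $G/H$. By construction $\End_{H_m}(\widetilde{\sigma})=D^{H_m/H}=\C$; combined with the semi-simplicity of $\widetilde{\sigma}|_{H_m}$ furnished by Corollary \ref{semisimple}, this forces $\widetilde{\sigma}|_{H_m}$ to be irreducible.

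To conclude multiplicity-freeness of $\Res_{H_m}^G\pi$, observe that for any $\widetilde{\sigma}'\in\mathcal{R}_{\widetilde{H}}(\pi)$, Theorem \ref{cliffordadmissible}(5) provides $g\in G$ with $\widetilde{\sigma}'\simeq\widetilde{\sigma}^g$; then $G$-normality of $H_m$ makes $\widetilde{\sigma}'|_{H_m}$ the $g$-twist of the irreducible $\widetilde{\sigma}|_{H_m}$, hence itself irreducible. Moreover distinct $\widetilde{\sigma}'$'s restrict to distinct elements of $\mathcal{R}_H(\pi)$ (by the simply transitive $G/\widetilde{H}$-action in Theorem \ref{cliffordadmissible}(5)), so their restrictions to $H_m$ remain non-isomorphic already after further restriction to $H$. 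Therefore
\[
\Res_{H_m}^G\pi=\bigoplus_{\widetilde{\sigma}'\in\mathcal{R}_{\widetilde{H}}(\pi)}\widetilde{\sigma}'|_{H_m}
\]
is a direct sum of pairwise non-isomorphic irreducible $H_m$-modules, hence multiplicity-free. The one conceptually delicate point is the identification $\End_{\widetilde{H}}(\widetilde{\sigma})=D^{\widetilde{H}/H}$ and the verification that admissibility of $\widetilde{\sigma}$ as an $\widetilde{H}$-module is inherited from $\pi$; once this bridge between Schur and the termination-by-dimension argument is secured, the remainder is bookkeeping inside the finite-dimensional space $D$.
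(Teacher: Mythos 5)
Your proof is correct, and it takes a genuinely different (and more careful) route than the paper's. The paper's argument is terse: it fixes a decomposition $\widetilde{\sigma}|_H = \pi(g_1)W \oplus \cdots \oplus \pi(g_m)W$, sets $H_m = \langle H, g_1, \ldots, g_m\rangle$, and simply asserts that $\widetilde{\sigma}$ is then an irreducible $\C[H_m]$-module; multiplicity-freeness of $\Res_{H_m}^G\pi$ is deduced by the same bookkeeping you do at the end (conjugacy under $G/\widetilde{H}$ and distinctness of the restrictions to $H$). You instead work on the commutant $D = \End_H(\widetilde{\sigma}) \simeq M_m(\C)$, identify $D^{\widetilde{H}/H} = \End_{\widetilde{H}}(\widetilde{\sigma}) = \C$ via Schur's lemma, and run a termination-by-dimension argument to produce finitely many elements of $\widetilde{H}/H$ whose fixed subalgebra in $D$ is already $\C$; combined with the semi-simplicity supplied by Corollary \ref{semisimple} this yields irreducibility of $\widetilde{\sigma}|_{H_m}$. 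What your route buys is that it actually \emph{proves} the irreducibility step, which the paper merely asserts and which in fact is not automatic for an arbitrary choice of the $g_i$: take $G = \widetilde{H}$ to be the extraspecial $2$-group of order $32$, $H$ its center, and $\pi$ its $4$-dimensional irreducible; one may choose the $g_i$ in the ``translation'' half of a Heisenberg presentation so that the $\pi(g_i)W$ form a basis, yet $\langle H, g_1, \ldots, g_4\rangle$ is an abelian group of order $8$ on which $\pi$ restricts to a sum of four distinct characters, not irreducibly. (The corollary's conclusion still holds in that example, but the intermediate claim in the paper's proof fails for that choice.) The price you pay, that $H_m/H$ may need up to $m^2 - 1$ generators rather than $m$, is irrelevant for the statement. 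Two small points worth making fully explicit in your write-up: admissibility of $\widetilde{\sigma}$, which you need to invoke Schur's lemma over $\widetilde{H}$, is inherited from $\pi$ by restriction to the open subgroup $\widetilde{H}$; and the identity $\End_{H_m}(\widetilde{\sigma}) = D^{H_m/H}$ holds because commuting with $\widetilde{\sigma}(H_m)$ is equivalent to commuting with $\widetilde{\sigma}(H)$ together with invariance under conjugation by lifts of the chosen generators.
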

\begin{proof}
Suppose $\widetilde{\sigma}|_H=\pi(g_1)( W)   \oplus \cdots \oplus \pi(g_m)(W)$ for some $g_1, \cdots, g_m \in G$. We let $H_m$ be the subgroup of $G$ generated by $H$ and these  $g_1, g_2, \cdots, g_m$.  Clearly $H_m/H$ is finitely generated.  By definition, $\widetilde{\sigma}$ is an irreducible $\C[H_m]$-module, which forces $\Res_{H_m}^G\pi$ to be multiplicity-free.
\end{proof}
\begin{proposition}\label{twograph}
For  $(\pi_1, V_1)$, $(\pi_2, V_2) \in \Irr(G)$,  we have:
\begin{itemize}
\item[(1)] $\mathcal{R}_H(\pi_1) \cap \mathcal{R}_H(\pi_2) \neq \emptyset$ only if $\mathcal{R}_H(\pi_1)= \mathcal{R}_H(\pi_2) \neq \emptyset$.
\item[(2)] If $\mathcal{R}_H(\pi_1)=\mathcal{R}_H(\pi_2) \neq \emptyset$, then $\pi_1 \simeq \pi_2 \otimes \chi_{G/H}$ for some character $\chi_{G/H} $ of $G/H$.
\end{itemize}
\end{proposition}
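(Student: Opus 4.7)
The plan splits into two parts, one for each assertion.

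For (1), I will show each $\mathcal{R}_H(\pi_i)$ coincides with the $G$-conjugacy orbit of any one of its members. Theorem \ref{cliffordadmissible}(2) gives one inclusion: all elements of $\mathcal{R}_H(\pi_i)$ are $G$-conjugate. For the reverse, given $\sigma_0 \in \mathcal{R}_H(\pi_i)$ realized by $0 \neq f \in \Hom_H(\pi_i|_H, \sigma_0)$, the twisted map $v \mapsto f(\pi_i(g)^{-1}v)$ is a nonzero element of $\Hom_H(\pi_i|_H, \sigma_0^{g^{-1}})$, so the full $G$-orbit of $\sigma_0$ sits inside $\mathcal{R}_H(\pi_i)$. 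Hence, given any shared $\sigma_0 \in \mathcal{R}_H(\pi_1) \cap \mathcal{R}_H(\pi_2)$, both sets coincide with its $G$-orbit.

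For (2), fix $\sigma \in \mathcal{R}_H(\pi_1) = \mathcal{R}_H(\pi_2)$. The stabilizer $\widetilde{H} := I_G(\sigma)$ of Theorem \ref{cliffordadmissible}(4b) depends only on $\sigma$, so the same $\widetilde{H}$ serves both $\pi_1$ and $\pi_2$. By (5) I may choose $\widetilde{\sigma}_i \in \mathcal{R}_{\widetilde{H}}(\pi_i)$ whose $H$-restriction is $m_i \sigma$ for the same $\sigma$, and (6) yields $\pi_i \simeq \cInd_{\widetilde{H}}^G \widetilde{\sigma}_i$. The goal is to produce a character $\tilde{\chi}$ of the abelian quotient $\widetilde{H}/H$ with $\widetilde{\sigma}_1 \simeq \widetilde{\sigma}_2 \otimes \tilde{\chi}$: once that is in hand, I extend $\tilde{\chi}$ to a character $\chi$ of the discrete abelian group $G/H$ (possible because $\C^\times$ is divisible, hence injective in the category of abelian groups), and the projection formula $\cInd_{\widetilde{H}}^G(\widetilde{\sigma}_2 \otimes \chi|_{\widetilde{H}}) \simeq (\cInd_{\widetilde{H}}^G \widetilde{\sigma}_2) \otimes \chi$ delivers $\pi_1 \simeq \pi_2 \otimes \chi$.

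The main obstacle, which I treat as a miniature Clifford lemma, is producing $\tilde{\chi}$. Consider $\mathcal{V} := \Hom_H(\widetilde{\sigma}_2, \widetilde{\sigma}_1)$. Since each $\widetilde{\sigma}_i|_H \simeq m_i \sigma$ and $\End_H(\sigma) = \C$ by Schur (using admissibility of $\sigma$), $\mathcal{V}$ is finite-dimensional of dimension $m_1 m_2$. The group $\widetilde{H}$ acts on $\mathcal{V}$ via $(k \cdot f)(w) = \widetilde{\sigma}_1(k) f(\widetilde{\sigma}_2(k^{-1}) w)$, with $H$ acting trivially, so the action factors through the abelian discrete group $\widetilde{H}/H$. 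Any nonzero finite-dimensional complex representation of an abelian group contains a one-dimensional subrepresentation (the image of the group algebra in $\End_\C \mathcal{V}$ is a finite-dimensional commutative $\C$-algebra, whose simple modules are one-dimensional over the algebraically closed field $\C$), so there exist $\tilde{\chi}: \widetilde{H}/H \to \C^\times$ and $0 \neq f \in \mathcal{V}$ with $k \cdot f = \tilde{\chi}(k) f$. Unwinding the definition shows $f \in \Hom_{\widetilde{H}}(\widetilde{\sigma}_2 \otimes \tilde{\chi}, \widetilde{\sigma}_1)$, and since both sides are irreducible, $f$ is an isomorphism; the equality $m_1 = m_2$ then falls out for free.
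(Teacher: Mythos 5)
Your proof is correct and follows the paper's Clifford-theoretic route closely: part (1) via $G$-conjugation, and part (2) by restricting both $\pi_i$ to the common inertia group $\widetilde{H}=I_G(\sigma)$, finding a character $\tilde\chi$ of the abelian quotient $\widetilde{H}/H$ that intertwines $\widetilde{\sigma}_2$ with $\widetilde{\sigma}_1$, extending $\tilde\chi$ to $G/H$ by divisibility of $\C^\times$, and inducing back via the projection formula. The one genuine point of divergence is the eigenvector step: you invoke the abstract fact that a nonzero finite-dimensional complex representation of an abelian group admits a one-dimensional subrepresentation (via the commutative Artinian algebra $\operatorname{image}(\C[\widetilde H/H])$ in $\End_\C\mathcal V$), whereas the paper builds an explicit intertwiner $f\in\Hom_H(\widetilde{\sigma}_1,\widetilde{\sigma}_2)$, takes its cyclic $\widetilde H/H$-span $\mathcal F$, and separately verifies that $\mathcal F$ is a \emph{smooth} $\widetilde H/H$-module before extracting a one-dimensional constituent. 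Since $H$ is open in this section, $\widetilde H/H$ is discrete and every representation of it is automatically smooth, so the paper's smoothness verification is redundant and your omission of it is not a gap; your version is the cleaner one. The endgame also differs cosmetically: you apply the projection formula $\cInd_{\widetilde H}^G(\widetilde{\sigma}_2\otimes\chi|_{\widetilde H})\simeq(\cInd_{\widetilde H}^G\widetilde{\sigma}_2)\otimes\chi$ directly, while the paper replaces $\pi_1$ by $\pi_1\otimes\chi_{G/H}$ and then invokes Theorem \ref{cliffordadmissible}(6); these are equivalent.
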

\begin{proof}
1) By symmetry,  we only check one-side inclusion. Let $(\sigma,W)\in \mathcal{R}_H(\Res_H^G\pi_1)\cap \mathcal{R}_H(\Res_H^G\pi_2)$. For $\sigma' \in \mathcal{R}_H(\pi_1)$, by Theorem \ref{cliffordadmissible} there exists $g\in G$ such that $\sigma^g \simeq \sigma'$. Hence $m_H(\pi_2, \sigma')=m_H(\pi_2, \sigma^g)=m_H( \pi_2^g, \sigma^g)=m_H(\pi_2, \sigma)$; this implies that $\sigma' \in \mathcal{R}_H(\pi_2)$, so  $\mathcal{R}_H(\pi_1) \subseteq \mathcal{R}_H(\pi_2)$.

2) For simplicity, we identity $(\sigma, W)$ as an irreducible constituent of $(\Res_H^G \pi_1, V_1)$ as well as $(\Res_H^G \pi_2, V_2)$.   Let $\widetilde{H}$ be the open normal subgroup of $G$ defined as in Theorem \ref{cliffordadmissible}(4)  for the above $\sigma$. Let  $(\widetilde{\sigma_1}, \widetilde{V_1})$, $(\widetilde{\sigma_2}, \widetilde{V_2})$ be the $\sigma$-isotrypic components  of $\Res_{\widetilde{H}}^G\pi_1$ and  $\Res_{\widetilde{H}}^G \pi_2$ respectively.  On  $\Hom_H(\widetilde{\sigma_1}, \widetilde{\sigma_2})$, we  impose a natural $\widetilde{H}/H$-action defined as follows:
    $[\overline{g} \varphi](v_1)= \varphi^{\overline{g}}(v_1):= \widetilde{\sigma_2}(g)\varphi(\widetilde{\sigma_1}(g^{-1})v_1)$,  for $ \varphi \in \Hom_H(\widetilde{\sigma_1}, \widetilde{\sigma_2})$, $\overline{g} \in \widetilde{H}/H$, $v_1\in \widetilde{V_1}$. Here, $g\in \widetilde{H}$ is a representative of $\overline{g} $.
By Theorem \ref{cliffordadmissible}, we have
 $\Res_H^{\widetilde{H}}\widetilde{\sigma_1}= \oplus_{i=1}^{m_1} \pi_1(g_i) W$, for some suitable  $g_1=1, g_2, \cdots, g_{m_1}$ in $\widetilde{H}$,
  so that we can construct an  element  $f\in \Hom_H(\widetilde{\sigma_1}, \widetilde{\sigma_2})$ by $f|_{\pi_1(g_i)(W)}(\pi_1(g_i)w)=\pi_2(g_i)w$, for $w\in W$. Write $\mathcal{F}=\{ \sum_{i} c_i f^{\overline{g_i}} \mid \overline{g_i} \in \widetilde{H}/H, c_i \in \C \},$
an $\widetilde{H}/H$-module of finite dimension. Let us show that  $\mathcal{F}$ is actually a \emph{smooth} representation of $\widetilde{H}/H$. Fix $0 \neq w_0 \in W$ and  let $ K =\cap_{i=1}^{m_1}\big( \Stab_{\widetilde{H}}(\pi_1(g_i) w_0) \cap \Stab_{\widetilde{H}}(\pi_2(g_i) w_0)\big)$.  For $k\in K$ we denote its image in $\widetilde{H}/H$ by $\overline{k}$. Then for $w= \sum_{j=1}^n c_j\pi_1(h_j) w_0 \in W,$ we have
$$f^{\overline{k}}(\pi_1(g_i)w)=\sum_{j=1}^n c_j\pi_2(g_ih_jg_i^{-1}) f^{\overline{k}}\big( \pi_1(g_i) w_0\big)= \sum_{j=1}^n c_j \pi_2(g_ih_jg_i^{-1}) f\big(\pi_1(g_i)  w_0\big)= f(\pi_1(g_i)w).$$
 Hence $\Stab_{\widetilde{H}/H}(f) \supseteq  K$  is an open subgroup of $\widetilde{H}/H$. Similarly,  $\Stab_{\widetilde{H}/H}(f^{\overline{g}}) \supseteq g^{-1}Kg$ is also open for $\overline{g} \in \widetilde{H}/H$.  So   $\mathcal{F}$  is  smooth  and contains a sub-representation $(\chi_{\widetilde{H}/H}, U)$ of $\widetilde{H}/H$.  Any nonzero element $ F\in U$  lies inside  $ \Hom_{\widetilde{H}}(\chi_{\widetilde{H}/H}\otimes\widetilde{\sigma_1}, \widetilde{\sigma_2})$, so we  conclude that $\widetilde{\sigma_2} \simeq \widetilde{\sigma_1} \otimes \chi_{\widetilde{H}/H}$. Now the character $\chi_{\widetilde{H}/H}\in \Hom(\widetilde{H}/H, \C^{\times})$ can extend  to a  continuous homomorphism $\chi_{G/H}$ from $G/H$ to $\C^{\times}$, since $\C^{\times}$ is a divisible  group and $\widetilde{H}$ is open.   By replacing $\pi_1$ with $\pi_1 \otimes \chi_{G/H}$, we may assume $\mathcal{R}_{\widetilde{H}}(\pi_1) \cap \mathcal{R}_{\widetilde{H}}(\pi_2) \neq \emptyset$ and  the above $\chi_{\widetilde{H}/H}$ is trivial. The result then follows from Theorem \ref{cliffordadmissible} (6).
\end{proof}

\begin{proposition}\label{dimesionsmall1}
Let  $(\pi, V)$ be a smooth  representation of $G$ with  finite   multiplicity. Let $(\pi_1, V_1) \in \mathcal{R}_G(\pi)$ such that $\mathcal{R}_{H}(\pi_1) \neq \emptyset$.
\begin{itemize}
\item[(1)] $\mathcal{R}_H(\pi_1) \subseteq \mathcal{R}_H(\pi)$.
\item[(2)] $m_H( \pi, \sigma_1)=m_H(\pi, \sigma_2)$ for $\sigma_1, \sigma_2 \in \mathcal{R}_H(\pi_1)$.
\item[(3)] If $m_H( \pi, \sigma) \leq 1$ for all $\sigma\in \mathcal{R}_H(\pi_1)$, then $m_G(\pi, \pi_1) \leq 1$.
\end{itemize}
\end{proposition}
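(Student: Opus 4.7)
The plan is to handle the three parts sequentially, leaning on the structure theorems for $\Res_H^G\pi_1$ from Theorem \ref{cliffordadmissible}.

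Part (1) should be essentially formal. Given $\sigma\in\mathcal{R}_H(\pi_1)$, pick a nonzero $f\in\Hom_G(\pi,\pi_1)$; by Schur's lemma $f$ is surjective. By Theorem \ref{cliffordadmissible}(1), $\Res_H^G\pi_1$ is semisimple, so there is a nonzero $H$-map $\phi\colon\pi_1\to\sigma$. The composite $\phi\circ f\in\Hom_H(\pi,\sigma)$ is then nonzero, placing $\sigma$ in $\mathcal{R}_H(\pi)$.

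For Part (2) I would use conjugation. By Theorem \ref{cliffordadmissible}(2), pick $g\in G$ with $\sigma_2\simeq\sigma_1^g$. Define $\Psi_g\colon\Hom_H(\pi,\sigma_1)\to\Hom_H(\pi,\sigma_1^g)$ by $(\Psi_g f)(v)=f(\pi(g)v)$; the identity $\sigma_1^g(h)=\sigma_1(ghg^{-1})$ makes this well defined, and its inverse is $\Psi_{g^{-1}}$. Hence $m_H(\pi,\sigma_1)=m_H(\pi,\sigma_1^g)=m_H(\pi,\sigma_2)$.

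The substance is in Part (3). Set $m=m_G(\pi,\pi_1)$ and let $f_1,\dots,f_m$ be a basis of $\Hom_G(\pi,\pi_1)$ (each $f_i$ is surjective by Schur's lemma). Bundle them into
\[F=(f_1,\dots,f_m)\colon\pi\longrightarrow\pi_1^{\oplus m}.\]
I claim $F$ is surjective. The quotient $Q=\pi_1^{\oplus m}/F(\pi)$ is a quotient of the semisimple module $\pi_1^{\oplus m}$, hence itself $\pi_1$-isotypic; if $Q\neq 0$ it admits a nonzero $G$-map $Q\to\pi_1$, pulling back to a nonzero $p\in\Hom_G(\pi_1^{\oplus m},\pi_1)$ vanishing on $F(\pi)$. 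But $\End_G(\pi_1)=\C$ by Schur together with the standing admissibility hypothesis, so $p$ has the form $(v_1,\dots,v_m)\mapsto\sum_i c_iv_i$; hence $p\circ F=\sum_i c_if_i=0$ and linear independence of the $f_i$ forces all $c_i=0$, a contradiction. So $F$ is surjective. Pre-composition with $F$ then injects $\Hom_H(\pi_1^{\oplus m},\sigma)\simeq\Hom_H(\pi_1,\sigma)^{\oplus m}$ into $\Hom_H(\pi,\sigma)$, yielding
\[m_H(\pi,\sigma)\;\geq\;m\cdot m_H(\pi_1,\sigma)\;\geq\;m,\]
where the last inequality uses $\sigma\in\mathcal{R}_H(\pi_1)$. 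If $m_H(\pi,\sigma)\leq 1$ for every such $\sigma$, this forces $m_G(\pi,\pi_1)=m\leq 1$, as required.

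The main obstacle I anticipate is the surjectivity of $F$: one must convert linear independence of $\C$-linear functionals in $\Hom_G(\pi,\pi_1)$ into fullness of the diagonal image in $\pi_1^{\oplus m}$. This hinges on $\End_G(\pi_1)=\C$, which is precisely where the global admissibility hypothesis on $\Irr(G)$ is called in. Once that handle is secured, the pullback inequality for multiplicities and the reduction to $m\leq 1$ are immediate.
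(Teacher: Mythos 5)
Your proof of (1) and (2) is essentially the standard verification and is fine (for (1), invoking semisimplicity of $\Res_H^G\pi_1$ is unnecessary since $\sigma\in\mathcal{R}_H(\pi_1)$ already hands you a nonzero $H$-map $\pi_1\to\sigma$ by definition; and surjectivity of a nonzero $f\colon\pi\to\pi_1$ comes from irreducibility of $\pi_1$, not Schur, but these are cosmetic). The real divergence is in (3). The paper routes the argument through the Clifford intermediate subgroup $\widetilde{H}=I_G(\sigma)$: it first shows $m_{\widetilde{H}}(\pi,\widetilde{\sigma})\leq 1$ for each $\widetilde{\sigma}\in\mathcal{R}_{\widetilde{H}}(\pi_1)$ by a ``two maps become proportional after composing with a fixed projection'' argument, then identifies $\Hom_G(\pi,\pi_1)$ with the $G/\widetilde{H}$-invariants of $\Hom_{\widetilde{H}}(\pi,\pi_1)$ and uses the simple transitivity of $G/\widetilde{H}$ on $\mathcal{R}_{\widetilde{H}}(\pi_1)$ to pin the dimension down to $\dim\Hom_{\widetilde{H}}(\pi,\widetilde{\sigma})\leq 1$.

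Your argument avoids $\widetilde{H}$ entirely: you assemble a basis of $\Hom_G(\pi,\pi_1)$ into a single $G$-map $F\colon\pi\to\pi_1^{\oplus m}$, prove $F$ surjective by the standard isotypic-quotient plus Schur ($\End_G(\pi_1)=\C$) argument, and then pull back along $F$ to get the quantitative inequality $m_H(\pi,\sigma)\geq m\cdot m_H(\pi_1,\sigma)\geq m$. This is correct, and it is in fact sharper than what the statement requests: you establish $m_G(\pi,\pi_1)\leq m_H(\pi,\sigma)$ for every $\sigma\in\mathcal{R}_H(\pi_1)$ without assuming the multiplicity-one hypothesis, which then drops out as a corollary. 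Your route is cleaner and more elementary — it uses none of the machinery of Theorem \ref{cliffordadmissible}(4)--(6) and indeed makes no use of $H$ being normal or $G/H$ abelian in part (3) — whereas the paper's route fits more naturally into the surrounding narrative of Clifford--Mackey theory and is the pattern reused in later sections. Both are valid; yours generalizes more readily.
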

\begin{proof}
(1) is obvious and (2) follows from Theorem \ref{cliffordadmissible} (2). For (3) we take the subgroup $\widetilde{H}$ of $G$ for the representation $\pi_1$ as defined  in Theorem \ref{cliffordadmissible}(4). Then $\Res_{\widetilde{H}}^G\pi_1\simeq \oplus_{\widetilde{\sigma} \in \mathcal{R}_{\widetilde{H}}(\pi_1)} \widetilde{\sigma}$, where $\widetilde{\sigma}|_{H}= m \sigma$ for some  $\sigma \in \mathcal{R}_H(\pi_1)$. We first  show that $m_{\widetilde{H}}(\pi, \widetilde{\sigma}) \leq 1$. If $f, g \in \Hom_{\widetilde{H}}(\pi, \widetilde{\sigma})$, and $0 \neq p \in \Hom_H(\widetilde{\sigma}, \sigma)$, then $p \circ f$, $p \circ g \in \Hom_H(\pi, \sigma)$. This means $p \circ f$ is proportional to $ p \circ g$, in other words, $ p\circ g=c p\circ f$ for some $c\in \C^{\times}$.  The map  $g-cf \in \Hom_{\widetilde{H}}(\pi, \widetilde{\sigma})$  is either surjective or zero; as $p\circ (g-cf)=0$, it has to be zero. Hence $m_{\widetilde{H}}(\pi, \widetilde{\sigma}) \leq 1$. As before, the set $\Hom_{\widetilde{H}}(\pi, \pi_1)$ is a $G/{\widetilde{H}}$-module. By the decomposition of $\Res_{\widetilde{H}}^G\pi_1$, we have  $\Hom_{\widetilde{H}}(\pi, \pi_1) \hookrightarrow \prod_{\widetilde{\sigma} \in \mathcal{R}_{\widetilde{H}}(\pi_1)} \Hom_{\widetilde{H}}(\pi, \widetilde{\sigma})$. We denote the canonical map from $\Hom_{\widetilde{H}}(\pi, \pi_1)$ to $\Hom_{\widetilde{H}}(\pi, \widetilde{\sigma})$ by $p_{\widetilde{\sigma}}$.  Each $F \in \Hom_{\widetilde{H}}(\pi, \pi_1)$ is determined uniquely by the family $\{ p_{\widetilde{\sigma}} \circ F\}_{\widetilde{\sigma} \in \mathcal{R}_{\widetilde{H}}(\pi_1)}$ and $G/{\widetilde{H}}$ acts transitively on  $\{ p_{\widetilde{\sigma}} \circ F\}_{\widetilde{\sigma} \in \mathcal{R}_{\widetilde{H}}(\pi_1)}$.   Since $\Hom_G(\pi, \pi_1) \simeq \Hom_{\widetilde{H}}(\pi, \pi_1)^{G/{\widetilde{H}}}$,  finally   $\dim\Hom_G(\pi, \pi_1)= \dim \Hom_{\widetilde{H}}(\pi, \pi_1)^{G/{\widetilde{H}}} = \dim\Hom_{\widetilde{H}}(\pi, \widetilde{\sigma}) \leq 1$ as required.
\end{proof}

\begin{lemma}\label{cycliccase}
Under the situation of Theorem \ref{cliffordadmissible}, if  $G/H$ is a cyclic group, then $\Res_H^G\pi$ is multiplicity-free.
\end{lemma}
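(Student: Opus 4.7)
The plan is to reduce the multiplicity $m$ from Theorem~\ref{cliffordadmissible}(3) to the dimension of an irreducible projective representation of the cyclic group $\widetilde{H}/H$, and then invoke the classical fact that this dimension is one. First I would fix $(\sigma,W)\in\mathcal{R}_H(\pi)$ and the open normal subgroup $\widetilde{H}$ supplied by Theorem~\ref{cliffordadmissible}(4)(b); since $\widetilde{H}/H$ is a subgroup of the cyclic group $G/H$, it is itself cyclic. I would then pick $(\widetilde{\sigma},\widetilde{V})\in\mathcal{R}_{\widetilde{H}}(\pi)$ with $\widetilde{\sigma}|_H\simeq m\sigma$ and identify $\widetilde{V}\simeq\C^{m}\otimes W$ so that $H$ acts by $I_m\otimes\sigma$.

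Next, using $\sigma^g\simeq\sigma$ for every $g\in\widetilde{H}$, Schur's lemma furnishes an intertwiner $T_g\in\GL(W)$, unique up to scalar, with $T_g\sigma(h)T_g^{-1}=\sigma(ghg^{-1})$ for $h\in H$; after normalizing $T_h=\sigma(h)$ on $H$, the assignment $g\mapsto T_g$ becomes a projective representation of $\widetilde{H}$ with some $2$-cocycle $\alpha$. A second application of Schur shows that $\widetilde{\sigma}(g)\circ(I_m\otimes T_g^{-1})$, which commutes with $H$ on the $\sigma$-isotypic space $\C^{m}\otimes W$, must equal $B_g\otimes I_W$ for a unique $B_g\in\GL(\C^{m})$. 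Multiplicativity of $\widetilde{\sigma}$ forces $g\mapsto B_g$ to be a projective representation with cocycle $\alpha^{-1}$ that is trivial on $H$, hence descends to a projective representation of the cyclic group $\widetilde{H}/H$ on $\C^{m}$. Since subspaces $U\subseteq\C^{m}$ correspond to $\widetilde{H}$-subrepresentations $U\otimes W$ of $\widetilde{\sigma}$, the irreducibility of $\widetilde{\sigma}$ translates into irreducibility of the projective representation $B$.

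The final step is the classical observation that every irreducible projective representation of a cyclic group on a finite-dimensional complex space is one-dimensional. Fixing a generator $\overline{g_0}$ of $\widetilde{H}/H$: if $\widetilde{H}/H\simeq\Z$ then $B_{g_0}\in\GL(\C^{m})$ is arbitrary and automatically has an eigenvector; if $\widetilde{H}/H$ is finite of order $n$ the cocycle constraint forces $B_{g_0}^{n}$ to be a scalar, so $B_{g_0}$ is diagonalizable and again has an eigenvector. In either case a one-dimensional $B$-invariant subspace exists, which gives $m=1$, so $\Res_H^G\pi=\oplus_{\sigma\in\mathcal{R}_H(\pi)}\sigma$ is multiplicity-free. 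The main subtlety I expect is the scalar bookkeeping required to extract the cocycle $\alpha$ and transfer it correctly to the projective representation $B$; continuity issues do not arise because $\widetilde{H}/H$ is discrete, and the finite-dimensionality of $\C^{m}$ (hence the existence of eigenvectors) is guaranteed by Theorem~\ref{cliffordadmissible}(1) together with the admissibility hypothesis.
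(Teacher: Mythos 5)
Your proof is correct, but it follows a genuinely different route from the paper's. The paper uses the cyclicity of $\widetilde{H}/H$ to extend $\sigma$ to a \emph{genuine} (non-projective) irreducible smooth representation $\widetilde{\sigma}'$ of $\widetilde{H}$ (choosing the intertwiner $A$ for $\sigma^s\simeq\sigma$ and defining $\widetilde{\sigma}'(s^ih)=A^i\sigma(h)$, with an implicit normalization of $A$ so this is well-defined when $\widetilde{H}/H$ is finite), and then invokes Proposition \ref{twograph}: since $\widetilde{\sigma}$ and $\widetilde{\sigma}'$ are two irreducibles of $\widetilde{H}$ with $\mathcal{R}_H(\widetilde{\sigma})=\mathcal{R}_H(\widetilde{\sigma}')=\{\sigma\}$, they differ by a character of $\widetilde{H}/H$, whence $\widetilde{\sigma}|_H\simeq\widetilde{\sigma}'|_H\simeq\sigma$ and $m=1$. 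Your argument instead stays at the projective level: you build the Clifford tensor factorization $\widetilde{\sigma}\simeq B\otimes T$ (essentially the split used in Theorem \ref{thetensorprojectivereps} for the compact case, but derived here directly from two applications of Schur), observe that $B$ descends to an irreducible projective representation of the cyclic group $\widetilde{H}/H$ on $\C^m$, and finish by noting that a single matrix over $\C$ always has an eigenvector. What your route buys is self-containedness and independence from Proposition \ref{twograph}, at the cost of some scalar/cocycle bookkeeping; what the paper's route buys is brevity by reusing the already-proved Clifford correspondence. One small simplification available to you: you do not need to separate the $\Z$ and $\Z/n\Z$ cases or argue diagonalizability of $B_{g_0}$ — in either case $B_{g_0}\in\GL(\C^m)$ has an eigenvector because $\C$ is algebraically closed, and a $B_{g_0}$-invariant line is automatically invariant for the whole cyclic projective representation.
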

\begin{proof}
 Keep the notations in the theorem \ref{cliffordadmissible}. By hypothesis, the subgroup  $\widetilde{H}/H$ is also cyclic generated by one element $ \overline{s}$ with a  representative  $s$ in $\widetilde{H}$. Since $\sigma^s \simeq \sigma$, there exists a $\C$-linear map $A: W \longrightarrow W$ such that $\sigma^s(h) A=A \sigma(h) \textrm{ for all } h \in H$.\footnote{In case $\# \widetilde{H}/H =n < \infty$, and $s^n=h_0\in H$,  we have  $A^n \sigma(h)=\sigma^{s^n}(h) A^n=\sigma(h_0)\sigma(h) \sigma(h_0^{-1})A^n$, for all $h\in H$. By Schur's Lemma, $\sigma(h_0)=cA^n$, for certain $c\in \mathbb{C}^{\times}$. Hence, we can replace the above $A$ so that the constant number $c=1$.}Then there is a \emph{well-defined} $\widetilde{H}$-homomorphism $\widetilde{\sigma}': \widetilde{H}\longrightarrow \Aut(W); s^i h \longmapsto A^{i} \sigma(h)$.
 In fact, $\widetilde{\sigma}'$ is an irreducible smooth representation of  $\widetilde{H}$ because $H$ is open.  Consequently $\widetilde{\sigma'}|_{H} \simeq \sigma$. By Prop. \ref{twograph}, we get $\widetilde{\sigma} \simeq \widetilde{\sigma}' \otimes \chi_{\widetilde{H} /H} $ for some character $\chi_{\widetilde{H}/H}$ of $\widetilde{H}/H$, so it forces  $m=1$.
\end{proof}

 \begin{lemma}\label{towerofnormalsubgroups}
Under the situation of Theorem \ref{cliffordadmissible},  there exists a tower of normal subgroups of $G$:  $H=H_0 \lhd H_1 \lhd \cdots \lhd  H_n \lhd  H_{n+1}= G$,  such  that
\begin{itemize}
\item[(1)] $H_{i+1}/H_i$ is a cyclic group, for $i=0, \cdots, n-1$,
\item[(2)] $\mathcal{R}_{H_i}(\pi) \neq \emptyset$, for $i=0, \cdots, n$,
\item[(2)] for each $i$ and $\sigma_{i+1} \in \mathcal{R}_{H_{i+1}}(\pi)$,  $\Res_{H_i}^{H_{i+1}}\sigma_{i+1}$ is multiplicity-free.
\end{itemize}
 \end{lemma}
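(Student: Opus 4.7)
The plan is to combine Corollary \ref{lastsubgroup} with the structure theorem for finitely generated abelian groups, using Lemma \ref{cycliccase} to handle each cyclic step and Corollary \ref{semisimple} to keep the Clifford-Mackey hypotheses available throughout.

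First, I would invoke Corollary \ref{lastsubgroup} to extract a normal subgroup $H_m$ of $G$ with $H \subseteq H_m \subseteq \widetilde{H}$, $H_m/H$ finitely generated, and $\Res_{H_m}^G \pi$ multiplicity-free. Since $G/H$ is abelian, so is $H_m/H$, hence $H_m/H$ is a finitely generated abelian group. The structure theorem then gives a filtration $0 = \overline{H_0} \subsetneq \overline{H_1} \subsetneq \cdots \subsetneq \overline{H_m} = H_m/H$ with each successive quotient cyclic; pulling back through the projection $H_m \twoheadrightarrow H_m/H$ produces a chain $H = H_0 \subsetneq H_1 \subsetneq \cdots \subsetneq H_m$. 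Each $H_i$ is open because it contains the open subgroup $H$, and each is automatically normal in $G$ because $G/H$ is abelian and $H_i/H \leq G/H$. Setting $H_{m+1} = G$ completes the required tower.

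The conditions are then checked in order. Condition (1) is immediate from the construction. For condition (2), each $H_i$ with $i \leq m$ satisfies $H \subseteq H_i \subseteq \widetilde{H}$, so Corollary \ref{semisimple} yields $\mathcal{R}_{H_i}(\pi) \neq \emptyset$; for $i = m+1$ the statement is trivial. For condition (3), fix $\sigma_{i+1} \in \mathcal{R}_{H_{i+1}}(\pi)$ and split into two cases. When $i = m$, the only element of $\mathcal{R}_G(\pi)$ is $\pi$ itself, and $\Res_{H_m}^G \pi$ is multiplicity-free by the choice of $H_m$ in Step 1. When $i < m$, the quotient $H_{i+1}/H_i$ is cyclic, so I would apply Lemma \ref{cycliccase} at the level $(H_{i+1}, H_i, \sigma_{i+1})$: note that $\sigma_{i+1}$ is admissible as a subrepresentation of the admissible $\Res_{H_{i+1}}^G \pi$ (restriction to the open subgroup $H_{i+1}$ preserves admissibility), and $\mathcal{R}_{H_i}(\sigma_{i+1}) \neq \emptyset$ follows from Corollary \ref{semisimple} applied at this level, so the conclusion $\Res_{H_i}^{H_{i+1}} \sigma_{i+1}$ is multiplicity-free.

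The main subtlety is bookkeeping: one must ensure that the hypotheses of Lemma \ref{cycliccase} and Corollary \ref{semisimple} remain in force at each intermediate stage of the tower, i.e.\ that the admissibility of irreducible representations of the intermediate groups $H_i$ (needed to invoke the Clifford--Mackey package) holds, and that each $\sigma_{i+1}$ really does have a nonempty $\mathcal{R}_{H_i}$, so that $\widetilde{H_i}$ makes sense relative to $\sigma_{i+1}$ and the cyclic-quotient argument of Lemma \ref{cycliccase} applies verbatim. Once this propagation is verified, the three items fall out cleanly from the two-step construction above.
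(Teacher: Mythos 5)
Your proof is correct and takes essentially the same route as the paper's: extract $H_m$ from Corollary \ref{lastsubgroup}, decompose the finitely generated abelian group $H_m/H$ into a chain with cyclic successive quotients, pull back to $G$, and invoke Lemma \ref{cycliccase} step by step. You simply fill in the verifications of conditions (2) and (3) (via Corollary \ref{semisimple} and the $i=m$ case) that the paper leaves implicit.
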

 \begin{proof}
 We can take $H_{n}$ to be the group $H_m$ as defined in  Cor.\ref{lastsubgroup};  by the part (2) there, $H_n/H_0$  is an abelian group generated by $m$ elements, so it is isomorphic to a  direct sum of cyclic groups $F_1 \oplus F_2 \oplus  \cdots \oplus F_n$. By Lmm.\ref{cycliccase}, we only need to let $H_i$ be the inverse image of $F_1 \oplus \cdots \oplus F_{i}$ in $G$. Then these $H_i$ satisfy the desired conditions.
\end{proof}

\subsection{}
In this second subsection, we assume that $H$ is a closed normal subgroup  of $G$ with  cocompact quotient. The main regular  results of this subsection  have already obtained by  Silberger in \cite{Sil} or by  Henniart in \cite{Hen}, but for completeness we reproduce them again.    We fix an element  $(\pi, V) \in \Irr(G)$.  Assume the category $\Rep(H)$ is locally noetherian.(cf. \cite[\S 4]{Bern0} )
\begin{lemma}\label{thequotient1}
$\mathcal{R}_H(\pi) \neq \emptyset$, and $m_H(\pi, \sigma)< +\infty$, for $(\sigma, W) \in  \mathcal{R}_H(\pi)$.
\end{lemma}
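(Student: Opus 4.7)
The plan is to reduce both assertions to results already established earlier in the paper, using only the fact that $G/H$ (equivalently $H\backslash G$) is compact together with the admissibility hypothesis on the irreducible representations of $H$.

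First I would observe that $\Res_H^G \pi$ is a finitely generated smooth representation of $H$. This is exactly Proposition \ref{finitegenerated}(2) applied to the closed subgroup $H\subseteq G$: since $G/H$ is compact, so is $H\backslash G$, and since $(\pi,V)$ is irreducible and nonzero it is certainly finitely generated over $G$, hence over $H$. This gives a finite set of $H$-generators $\{e_1,\dots,e_n\}$ for $V$, which is the object already named in the paragraph preceding the lemma.

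Next, to get $\mathcal{R}_H(\pi)\neq\emptyset$, I would invoke Proposition \ref{quotientzero}: since $\Res_H^G\pi$ is finitely generated and nonzero, we must have $\mathcal{R}_H(\Res_H^G\pi)\neq\emptyset$, which is exactly the claim. (Note we are using the standing assumption that all irreducible smooth representations of $H$ are admissible, so Proposition \ref{quotientzero} applies in the form stated.)

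Finally, for the finiteness of $m_H(\pi,\sigma)$, I would imitate the proof of Proposition \ref{typeimpliquequotientadmissible}. Choose an open compact subgroup $K$ of $G$ fixing each of the generators $e_1,\dots,e_n$; then $K\cap H$ is an open compact subgroup of $H$ fixing all the $e_i$. For any $f\in\Hom_H(\pi,\sigma)$ and any $k\in K\cap H$, one has $f(e_i)=f(\pi(k)e_i)=\sigma(k)f(e_i)$, so $f(e_i)\in W^{K\cap H}$. Since $f$ is $\mathcal{H}(H)$-linear and the $e_i$ generate $V$ over $H$, the map $f$ is determined by the tuple $(f(e_1),\dots,f(e_n))\in (W^{K\cap H})^n$. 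The admissibility of $\sigma$ makes $W^{K\cap H}$ finite-dimensional, so
\[
m_H(\pi,\sigma)\ =\ \dim_{\C}\Hom_H(\pi,\sigma)\ \leq\ n\cdot\dim_{\C}W^{K\cap H}\ <\ \infty.
\]

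There is essentially no hard step here; the entire statement is a direct packaging of Proposition \ref{finitegenerated}(2), Proposition \ref{quotientzero}, and the counting argument from Proposition \ref{typeimpliquequotientadmissible}. The only small point to verify with care is that $K\cap H$ is genuinely an open compact subgroup of $H$ (it is, because $H$ is closed in $G$ and $K$ is open compact), so that admissibility of $\sigma$ can be used at the level of $H$ rather than $G$.
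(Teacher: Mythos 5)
Your proof is correct and follows essentially the same route as the paper: the paper establishes finite generation of $\Res_H^G\pi$ via Proposition~\ref{finitegenerated}(2) in the sentence just before the lemma, and its one-line proof simply points to Proposition~\ref{typeimpliquequotientadmissible} (with the non-emptiness implicitly coming from Proposition~\ref{quotientzero}, exactly as you make explicit). You have merely unpacked the counting argument and spelled out the small point that $K\cap H$ is open compact in $H$, which is harmless and in fact clarifying.
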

\begin{proof}
See Prop.\ref{finitegenerated}(2) and Lmm.\ref{typeimpliquequotientadmissible}.
\end{proof}
\begin{lemma}\label{comiso}
For $(\sigma, W)\in \mathcal{R}_H(\pi)$, there exists an open compact group $K$ of $G$ such that $\sigma^k \simeq \sigma$, for  $k\in K$.
\end{lemma}
\begin{proof}
Let $ f: V\longrightarrow W$  be a non-zero $H$-morphism.   Assume that  $\ker(f)$  is generated by   vectors $v_1, \cdots, v_m$  as an  $H$-module. Let  $K$  be an open compact subgroup of $G$ such that $K \subseteq \cap_{i=1}^m \Stab_G(v_i)$.     For  any  $k\in K$,  $v=\sum_{i=1}^m c_i h_i v_i\in \ker(f)$ with $c_i\in \mathbb{C}, h_i \in H$,  we have
$kv=\sum_{i=1}^m c_i kh_i v_i=\sum_{i=1}^m c_i  kh_i k^{-1}v_i$.  Hence $kv\in \ker(f)$.  So there is a canonical $\mathbb{C}$-linear map $\pi(k): V/\ker(f) \longrightarrow V/\ker(f)$, and $\pi(k) \pi(h)=\pi(khk^{-1}) \pi(k)$, for $h\in H$. Hence $\sigma^k\simeq \sigma$.
\end{proof}
\begin{lemma}\label{ssmcheck}
$(\Res_H^G \check{\pi}, \check{V})$ is a semi-simple representation with finite  multiplicity.
\end{lemma}
\begin{proof}
Assume $(\sigma, W) \in \mathcal{R}_H(\pi)$, and let $f: V \longrightarrow W $ be  a non-zero $H$-morphism.    Given the open compact subgroup $K$  of $G$ in the proof of lemma \ref{comiso}, we let $W'$ be the  $K$-complement of $\ker(f)$ in $V$. Then $f: W' \longrightarrow W$ is a bijective  $K\cap H$-morphism.  Applying the contragredient duality to $f$,   we get an $H$-embedding  $\check{f}: \check{\sigma} \hookrightarrow (\Res_H^G \pi)^{\vee}$.  Given another  open compact subgroup $K_1 \subseteq K$, we have     $$\check{f}: \check{\sigma}^{K_1\cap H} \simeq ( \sigma^{K_1\cap H})^{\ast} \hookrightarrow  [ (\Res_H^G \pi)^{\vee}]^{K_1 \cap H}\simeq ( \pi^{K_1\cap H})^{\ast},$$
which stems from  $$f: \pi^{K_1\cap H}=[\ker(f) \oplus W']^{K_1\cap H}\simeq \ker(f)^{K_1\cap H}\oplus W'^{K_1\cap H} \longrightarrow W^{K_1\cap H}.$$
Here $K_1\cap H$ is a normal subgroup of $K_1$, and $\ker(f)^{K_1\cap H}$, $W'[K_1\cap H]$ both are  $K_1$-stable.  Let   $\{w_1', \cdots, w_n'\}$  be a basis of $W'^{K_1\cap H}$ . Then the image of
$\check{\sigma}^{K_1\cap H}$  in $(\Res_H^G \pi)^{\vee}$ is $ \cap_{i=1}^n\Stab_{G}(w'_i)\cap  K_1$-stable, and it  lies in $\Res_H^G\check{ \pi}$.   Therefore   $\check{f}: \check{\sigma} \hookrightarrow (\Res_H^G \pi)^{\vee}$
factors through $\Res_H^G \check{\pi} \hookrightarrow (\Res_H^G \pi)^{\vee}$.  So we can identify $ (\check{\sigma}, \check{W})$ as an irreducible constituent  of  $(\Res_H^G \check{\pi}, \check{V})$. Let  $\Delta=\{ g\in G\}$ be a coset representatives of $G/H$. Then  $\sum_{g\in \Delta} \check{\pi}(g) \check{W}$ is also $G$-invariant, and coincides with $\check{V}$. Moreover $m_{H}(\check{\pi}, \check{\sigma})< +\infty$ by Lmm.\ref{thequotient1}.
\end{proof}
\begin{lemma}\label{comind}
For $(\sigma, W)\in \Irr(H)$, $(\pi, V)\in \Irr(G)$,  $\Hom_G(\cInd_H^G \sigma, \pi)\simeq \Hom_H(\sigma, \pi)$.
\end{lemma}
\begin{proof}
By Frobenius reciprocity, $\Hom_G(\cInd_H^G \sigma, \pi)\simeq \Hom_H\big(\sigma, (\Res_H^G \check{\pi})^{\vee}\big)$. By the above proof,  any $f\in \Hom_H\big(\sigma, (\Res_H^G \check{\pi})^{\vee}\big)$ has to factor through $\Res_{H}^G \pi \hookrightarrow (\Res_H^G \check{\pi})^{\vee}$.
\end{proof}
 \begin{question}
 If   $\Rep(H)$ is not assumed  to be locally noetherian, what the proper condition needs    to add, so that  the similar   result also  holds ?
\end{question}

\begin{remark}\label{notcompactfrb}
If $H$ is not assumed to be a normal subgroup of $G$, but   for any open compact subgroup $K_H$ of $H$, assume that there exists a finite number of elements  $x_1, \cdots, x_n \in H$  such that $\mathcal{H}(H, K_H)=\epsilon_{K_H}  \ast \mathcal{H}(H) \ast \epsilon_{K_H} $ is  an algebra which can be  generated by $\epsilon_{K_H}$, $\epsilon_{x_1}, \cdots, \epsilon_{x_n}$, \footnote{When  $H$ is a $p$-adic reductive group, the condition is satisfied. (cf. \cite[p.27, Corollaire 3.4]{BernD})}  then $\Hom_G(\cInd_H^G \sigma, \pi)\simeq \Hom_H(\delta_{H\setminus G}^{-1} \otimes \sigma, \pi)$, for $ (\sigma, W)\in \Irr(H)$, $(\pi, V)\in \Irr(G)$.
\end{remark}
\begin{proof}

By Frobenius reciprocity, $ \Hom_G\big(\cInd_H^G \sigma, \pi\big)\simeq \Hom_H\big(\delta_{H\setminus G}^{-1} \otimes \sigma, (\Res_H^G \check{\pi})^{\vee}\big) \simeq
\Hom_{H}\big( \check{\pi}, \delta_{H\setminus G}\otimes  \check{\sigma}\big) $.  Let    $0\neq f \in \Hom_H\big(\delta_{H\setminus G}^{-1} \otimes \sigma, (\Res_H^G \check{\pi})^{\vee}\big)$, and the corresponding $\check{f}\in \Hom_{H}\big( \check{\pi}, \delta_{H\setminus G}\otimes  \check{\sigma}\big)$.   Assume $\ker(\check{f})$ is generated by $\check{v}_1, \cdots, \check{v}_n$ as an $H$-module.  Let $K$ be an open compact subgroup of $\cap_{i=1}^n \Stab_G (\check{v}_i)$.

 Consider $K_H=K\cap H$. For simplicity, assume $1\in \{ x_1, \cdots, x_n\}$. Consider the continuous map $\eta: G \times H \longrightarrow G\times H; (g,h) \longrightarrow (g, ghg^{-1})$. Then $X_i=\eta^{-1}( G \times [x_i (K\cap H)]^c)\cap [G \times x_i (K\cap H) ]$ is a closed  subset of $G \times x_i (K\cap H)$, where $[x_i (K\cap H)]^c$ denotes the complement of $x_i (K\cap H)$ in $H$.  Let $p_1: G \times  x_i (K\cap H) \longrightarrow G$ be the canonical projection. By  the tube lemma in topology, $p_1(X_i) $ is a closed subset of $G$. We let $U_i=G\setminus p_1(X_i)$; it contains   $1_G$, and for any $t\in U_i$, $t x_i (K\cap H)\subseteq  x_i (K\cap H)t$, in particular for $x_i=1$, $t\in U_i$, $t (K\cap H)\subseteq (K\cap H)t$. Let $K_0$ an open compact subgroup  of $\cap_{i=1}^n U_i \cap K \subseteq G$.   For  $k\in K_0$, $t\in K\cap H$, and  any open compact subgroup $T\subseteq K\cap H\subseteq H$, we have (1) $\epsilon_{k}\ast \epsilon_{x_i}=\epsilon_{x_i} \ast \epsilon_{h} \ast \epsilon_{ k} $, for some $h\in K\cap H$, (2) $\epsilon_{k} \ast \epsilon_{K\cap H} = \epsilon_{K\cap H} \ast \epsilon_{k}$, (3) $\epsilon_{k}\ast \epsilon_{t}=\epsilon_{ktk^{-1}} \ast \epsilon_{k}$, (4)  $\epsilon_{k}\ast \epsilon_{T}=\epsilon_{kTk^{-1}} \ast \epsilon_{k}$ (here $kTk^{-1} \subseteq K\cap H$);  hence  for  $\epsilon_T \ast \epsilon_t \ast \epsilon_{x_i} \ast \epsilon_{K\cap H}\in \mathcal{H}(H)\ast \epsilon_{K\cap H}$,  $\epsilon_k\ast  \epsilon_T \ast \epsilon_t \ast \epsilon_{x_i} \ast \epsilon_{K\cap H}=\epsilon_{kTk^{-1}} \ast \epsilon_{ktk^{-1}} \ast \epsilon_{x_i} \ast \epsilon_{h} \ast \epsilon_{K\cap H}\ast \epsilon_{ k} \in \mathcal{H}(H)\ast \epsilon_{K\cap H}\ast \epsilon_k$. So $\epsilon_k\ker(\check{f})=\epsilon_k \ast \mathcal{H}(H) \ker(\check{f})=\sum_{i=1}^m \epsilon_k  \ast \mathcal{H}(H) \ast \epsilon_{K\cap H} \check{v}_i\subseteq \sum_{i=1}^m  \mathcal{H}(H) \ast \epsilon_k   \check{v}_i \subseteq \ker(\check{f})$.

We now   let $\check{W'}$ be the  $K$-complement of $\ker(\check{f})$ in $\check{V}$.  Given another  open compact subgroup $K_1 \subseteq K_0$, we have     $$f: (\delta_{H\setminus G}^{-1} \otimes \sigma)^{K_1\cap H}  \hookrightarrow  [ (\Res_H^G \check{\pi})^{\vee}]^{K_1 \cap H}\simeq ( \check{\pi}^{K_1\cap H})^{\ast},$$
which stems from  $$\check{f}: \check{\pi}^{K_1\cap H}=[\ker(\check{f}) \oplus \check{W'}]^{K_1\cap H}\simeq \ker(\check{f})^{K_1\cap H}\oplus \check{W'}^{K_1\cap H} \longrightarrow \check{W}^{K_1\cap H}.$$
Note that $\ker(\check{f})= \ker(\check{f})^{K_1\cap H} \oplus \ker(\check{f})[K_1\cap H]$. Let   $\{\check{w}_1', \cdots, \check{w}_m'\}$  be a basis of $\check{W'}^{K_1\cap H}$ . Then the image of
$(\delta_{H\setminus G}^{-1} \otimes \sigma)^{K_1\cap H}$  in $(\Res_H^G \check{\pi})^{\vee}$ is $ \cap_{i=1}^m\Stab_{G}(\check{w}'_i)\cap  K_1$-stable, and it  lies in $\Res_H^G\check{ \check{\pi}}\simeq \Res_H^G \pi$.     Therefore   $f: \delta_{H\setminus G}^{-1} \otimes \sigma \hookrightarrow (\Res_H^G \check{\pi})^{\vee}$
factors through $\Res_H^G \pi\hookrightarrow (\Res_H^G \check{\pi})^{\vee}$.
 \end{proof}
 Go back to the normal  case.
\begin{lemma}\label{thequotientofrp}
\begin{itemize}
\item[(1)] $\Res_H^G \pi$ is a semi-simple representation with finite  multiplicity.
\item[(2)] If $\sigma_1, \sigma_2 \in \mathcal{R}_H(\pi)$, then there is an element $g\in G$ such that $\sigma_2 \simeq \sigma_1^g$, where $\sigma_1^g (h): =\sigma_1(ghg^{-1})$ for $h\in H$.
\item[(3)] There is a positive integer $m$ such that $\Res_H^G\pi\simeq \sum_{\sigma\in \mathcal{R}_H(\pi)} m \sigma$.
\end{itemize}
\end{lemma}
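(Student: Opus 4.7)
The plan is to mirror the argument of Theorem~\ref{cliffordadmissible}(1)--(3). The only genuine work lies in establishing semi-simplicity of $\Res_H^G\pi$; once that is in hand, parts~(2) and~(3) are routine bookkeeping about the $G$-action on the isotypic decomposition.

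For part~(1), I will first produce an irreducible $H$-subrepresentation of $V$ itself. The Corollary immediately preceding the lemma supplies such a subrepresentation inside $\check V$ for any $(\pi,V)\in\Irr(G)$; applying it not to $\pi$ but to the (irreducible and admissible) contragredient $\check\pi$ gives an irreducible $H$-subrepresentation of $\check{\check V}$, and the biduality $\pi\simeq\check{\check\pi}$ transports this back into $V$. Let $V^{\mathrm{ss}}\subseteq V$ denote the sum of all irreducible $H$-submodules. Because $H$ is normal in $G$, each $\pi(g)$ carries irreducible $H$-submodules to irreducible $H$-submodules, so $V^{\mathrm{ss}}$ is $G$-stable; being non-zero, the irreducibility of $\pi$ forces $V^{\mathrm{ss}}=V$. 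The complementary condition $m_H(\pi,\sigma)<\infty$ is already contained in Lemma~\ref{thequotient1}.

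Semi-simplicity yields the canonical isotypic decomposition $V=\bigoplus_{\sigma\in\mathcal{R}_H(\pi)}V(\sigma)$. Using normality of $H$, one checks that for each $g\in G$ the operator $\pi(g)$ restricts to a $\C$-linear isomorphism $V(\sigma)\xrightarrow{\sim}V(\sigma^{g^{-1}})$; hence for any $G$-orbit $\mathcal O\subseteq\mathcal{R}_H(\pi)$ the subspace $\bigoplus_{\sigma\in\mathcal O}V(\sigma)$ is $G$-stable, and irreducibility of $\pi$ collapses the orbit decomposition to a single orbit. That is part~(2). The same isomorphisms force all $V(\sigma)$ to share a common $\sigma$-multiplicity $m=m_H(\pi,\sigma)$, finite by Lemma~\ref{thequotient1}, yielding part~(3).

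The chief obstacle is the admissibility input in Step~1: without the biduality $\pi\simeq\check{\check\pi}$ there is no obvious way to convert the Corollary's existence statement for $\check V$ into one for $V$ itself. Thereafter the argument is essentially a direct transcription of the Clifford--Mackey reasoning in Theorem~\ref{cliffordadmissible}.
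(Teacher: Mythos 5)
Your proof is correct and follows the same Clifford--Mackey template the paper invokes with its terse ``the proof is similar as that of Theorem~\ref{cliffordadmissible}.'' The one place you genuinely have to improvise is in part~(1), and you handle it well: the Corollary immediately before the lemma produces an irreducible $H$-subrepresentation only inside $\check V$, and in the open-subgroup setting of Theorem~\ref{cliffordadmissible} one can simply show $\check V|_H$ is semi-simple and then ``dualize back'' because $\Res_H^G\pi$ is automatically admissible; here $H$ is merely closed, $\Res_H^G\check\pi$ need not be admissible, and that route stalls. Applying the Corollary to $\check\pi$ and pulling the result back through the biduality $\pi\simeq\check{\check\pi}$ (valid since $\pi$ is admissible as a $G$-representation) is exactly the clean way to land an irreducible $H$-submodule inside $V$ itself, after which $V=\sum_{g\in G}\pi(g)W$ gives semi-simplicity directly, the finite-dimensional-quotient claim is Lemma~\ref{thequotient1}, and the orbit/isotypic argument for (2) and (3) is the standard one. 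In short: correct, and a slightly sharper execution of the step the paper leaves implicit.
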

\begin{proof}
By Lmm.\ref{ssmcheck}, $0 \neq m_H(\check{\pi}, \check{\sigma}) \simeq m_G(\check{\pi}, \Ind_{H}^G\check{\sigma})=m_{G}(\cInd_H^G \sigma, \pi) =m_H(\sigma, \pi)$.  By the similar proof of  Theorem \ref{cliffordadmissible},  we obtain the  results (1)---(3).
\end{proof}
\begin{remark}\label{remarkcont}
Keep the notations. Then $\check{\pi}|_{H} \simeq \oplus_{\sigma \in \mathcal{R}_{H}(\pi)} m \check{\sigma}$.

\end{remark}
\begin{proof}
It follows from  $m_{H}(\check{\pi}, \check{\sigma})=m_G(\check{\pi}, \Ind_{H}^G\check{\sigma})= m_{G}(\cInd_H^G \sigma, \pi) =m_H(\sigma, \pi)$.
\end{proof}
In the following, we assume that $(\sigma, W)$ is an irreducible constituent of $(\Res_H^G \pi, V)$.  Let  $I_G(\sigma)=\{ g\in G\mid \sigma^g\simeq \sigma\}$, and $I_G^0(\sigma)=\left\{ g\in G \mid \pi(g)(W)=W\right\}$. The $\sigma$-isotypic component of $(\Res_H^G \pi, V)$ is an irreducible  $I_G(\sigma)$-module,  denoted by $(\widetilde{\sigma}, \widetilde{W})$.
 \begin{lemma}\label{opennormal}
Both  $I_G^0(\sigma)$, $I_G(\sigma)$ are open subgroups of $G$. Moveover, $(\sigma, W)$ is extendible to $I_G^0(\sigma)$, and $\pi\simeq \cInd_{I_G(\sigma)}^G \widetilde{\sigma}$.
 \end{lemma}
\begin{proof}
 1) Let  $0 \neq w_0\in W$ and  $K_{w_0}=\Stab_G(w_0)$. For $g\in K_{w_0}, h \in H$, we have
$\pi(g) \sigma(h) w_0=\pi(ghg^{-1}) \pi(g) w_0=\sigma(ghg^{-1}) w_0$; this means that $g$ stabilizes $W$, so $I_G(\sigma)$,  $I_G^0(\sigma)$ contains  $K_{w_0}$, and  both are  open subgroups of $G$. \\
2)  Since $I_G(\sigma)/H$ is an open subgroup of the compact group $G/H$, $[G : I_G(\sigma)]$ has finite cardinality.  By Frobenius reciprocity, we have $\Hom_G( \pi, \cInd_{I_G(\sigma)}^G \widetilde{\sigma}) \neq 0$. On the other hand, $\Hom_{G}(\cInd_{I_G(\sigma)}^G \widetilde{\sigma}, \cInd_{I_G(\sigma)}^G \widetilde{\sigma}) \simeq \Hom_{I_G(\sigma)}(\widetilde{\sigma},  \cInd_{I_G(\sigma)}^G \widetilde{\sigma}) $. By  the structure of $\cInd_{I_G(\sigma)}^G \widetilde{\sigma}$ as described in \cite{BushH}, we have $\Res_{H}^{G} \cInd_{I_G(\sigma)}^G \widetilde{\sigma} \simeq \sum_{g\in G/{I_G(\sigma)}} g \widetilde{W}$\footnote{Notice that $g\widetilde{W}$ perhaps is not $I_G(\sigma)$-stable.}. Any non-zero $f\in\Hom_{I_G(\sigma)}(\widetilde{\sigma},  \cInd_{I_G(\sigma)}^G \widetilde{\sigma})$, is also an $H$-morphism, and then has  image  in $\widetilde{\sigma}$. Therefore $m_{I_G(\sigma)}(\widetilde{\sigma},  \cInd_{I_G(\sigma)}^G \widetilde{\sigma})=1$, and $\pi \simeq \cInd_{I_G(\sigma)}^G \widetilde{\sigma}$.
\end{proof}

\begin{remark}\label{finitelengths}
$\Res_{I_G^0(\sigma)}^{I_G(\sigma)} \widetilde{\sigma}$ is a smooth representation of finite length.
\end{remark}
\begin{proof}
Note that $I_G^0(\sigma)/H$, $I_G(\sigma)/H$ both are open closed subgroups of $G/H$, so the indices  $[G: I_G^0(\sigma) ]$,   $[G: I_G(\sigma) ]$ both are finite.
\end{proof}
\begin{lemma}\label{theopensubgroup}
There is  an open normal subgroup $J_G(\sigma)$ of $I_G(\sigma)$ such that $H \subseteq J_{G}(\sigma) \subseteq I_G^0(\sigma)$.
\end{lemma}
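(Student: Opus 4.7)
The plan is to take the obvious candidate, the intersection of the $I_G(\sigma)$-conjugates of $I_G^0(\sigma)$, and show that this intersection is in fact finite, so that openness is preserved.

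More concretely, I would set
\[
J_G(\sigma) := \bigcap_{g \in I_G(\sigma)} g\, I_G^0(\sigma)\, g^{-1}.
\]
By construction $J_G(\sigma)$ is normal in $I_G(\sigma)$ and contained in $I_G^0(\sigma)$ (take $g=1$). Since $H$ is normal in $G$ with $H \subseteq I_G^0(\sigma)$, each conjugate $gI_G^0(\sigma)g^{-1}$ still contains $H$, so $H \subseteq J_G(\sigma)$. It then remains only to prove openness in $I_G(\sigma)$, for which it suffices to see that only finitely many distinct conjugates occur in the intersection.

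For this, I would pass to the quotient by $H$. Because $I_G(\sigma)$ is open in $G$ and the projection $G \to G/H$ is open, $I_G(\sigma)/H$ is an open subgroup of the compact group $G/H$; any open subgroup of a compact group is closed, hence compact, so $I_G(\sigma)/H$ is itself compact. Analogously, $I_G^0(\sigma)/H$ is open in $I_G(\sigma)/H$, therefore of finite index. One also checks directly that $H$ normalises $I_G^0(\sigma)$: if $\pi(g)W=W$ then for $h \in H$ one has $\pi(hgh^{-1})W = \pi(h)\pi(g)W = \pi(h)W = W$. Consequently the conjugates $gI_G^0(\sigma)g^{-1}$ depend only on the class $gH \in I_G(\sigma)/H$, and the number of distinct conjugates is at most the index of the normaliser of $I_G^0(\sigma)/H$ in $I_G(\sigma)/H$, which in turn is bounded by the (finite) index $[I_G(\sigma)/H : I_G^0(\sigma)/H]$.

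Thus $J_G(\sigma)$ is a finite intersection of open subgroups of $I_G(\sigma)$, hence open; the remaining properties are already built into the definition. The only subtle point, and the step I expect to be the real content, is the compactness argument for $I_G(\sigma)/H$ which forces the finiteness of the index $[I_G(\sigma) : I_G^0(\sigma)]$; everything else is formal. Note that no use of admissibility is needed beyond what has already entered the definitions of $I_G(\sigma)$ and $I_G^0(\sigma)$.
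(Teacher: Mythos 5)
Your proof is correct. The underlying idea is the same as the paper's --- pass to the quotient by $H$ and exploit that $I_G(\sigma)/H$ is an open, hence compact, subgroup of the compact group $G/H$ --- but the mechanism you use to get openness of the intersection is genuinely different. The paper picks an open compact subgroup $\overline{K}$ of $I_G^0(\sigma)/H$, forms $\overline{K_0}=\bigcap_{\overline{g}\in I_G(\sigma)/H}\overline{g}\,\overline{K}\,\overline{g}^{-1}$, and invokes its Lemma \ref{twocompactsubgroups}(2), which is a purely topological argument: the complement of $\overline{K_0}$ in $I_G(\sigma)/H$ is a union of conjugates of a compact set, hence compact, hence closed, so $\overline{K_0}$ is open. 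No finiteness of the number of conjugates is ever needed there. You instead intersect the $I_G(\sigma)$-conjugates of $I_G^0(\sigma)$ itself and observe that this is a \emph{finite} intersection, because $I_G^0(\sigma)/H$ has finite index in the compact group $I_G(\sigma)/H$ and conjugates depend only on cosets mod $H$ (since $H$, being normal in $G$ and stabilising $W$, normalises $I_G^0(\sigma)$). This yields the normal core of $I_G^0(\sigma)$ in $I_G(\sigma)$, which is the maximal subgroup satisfying the lemma, whereas the paper's $J_G(\sigma)$ may be strictly smaller depending on the choice of $\overline{K}$. Your argument is slightly more elementary in that it avoids Lemma \ref{twocompactsubgroups}, and it also records the useful byproduct that $[I_G(\sigma):I_G^0(\sigma)]<\infty$; the paper's argument, on the other hand, works even in situations where one does not have finite index, which is why it is phrased through compactness of conjugate intersections. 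Both are valid here, and in fact your observation that $I_G^0(\sigma)/H$ is itself open compact shows the paper could have taken $\overline{K}=I_G^0(\sigma)/H$ and recovered your construction.
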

\begin{proof}
 Notice that $I_G^0(\sigma)/H$ is  an open compact subgroup of $G/H$. We let $\overline{K_0} = \cap_{\overline{g} \in I_G(\sigma)/H} \overline{g} \tfrac{I_G^0(\sigma)}{H} \overline{g}^{-1}$. By Lmm.\ref{twocompactsubgroups} (2), $\overline{K_0}$ is an open normal subgroup of $I_G(\sigma)/H$, and  we denote its inverse image in $I_G(\sigma)$ or $I_G^0(\sigma)$ by $J_G(\sigma)$.
\end{proof}

\subsubsection{}
\label{hypothesisBresult}
In the following, we shall rewrite some results of \S 11 in \cite{CuRe} to our situation. We write $\pi_{[\sigma]} =\cInd_{J_G(\sigma)}^{I_G(\sigma)} W$. Let $\Delta=\left\{ g_i\in I_G(\sigma)\right\}_{i\in I}$ containing $1$,  be a set of representatives for $I_G(\sigma)/{J_G(\sigma)}$, and $\mathcal{W}$ the  canonical image of  $W $ in $\cInd_{J_G(\sigma)}^{I_G(\sigma)} W$.(cf.Lmm.\ref{therestriction1}) Following \cite[\S 11]{CuRe}, we let $D=\End_{I_G(\sigma)}(\pi_{[\sigma]})$, and write the map $
\varphi \in D$ on the right-hand side, i.e. $v\in \cInd_{J_G(\sigma)}^{I_G(\sigma)} W$, $v\longrightarrow (v) \varphi$. Notice:
\begin{itemize}
\item[(1)] $(\pi_{[\sigma]}, J_G(\sigma), \pi_{[\sigma]}(g) \mathcal{W})$ is an irreducible representation of $J_{G}(\sigma)$, isomorphic to $(\pi_{[\sigma]}, J_G(\sigma), \mathcal{W})$, for $g\in \Delta$.
\item[(2)] Let $\epsilon_g: \mathcal{W} \longrightarrow \pi_{[\sigma]}(g) \mathcal{W}$ be an intertwining operator between $(\pi_{[\sigma]}, J_G(\sigma), \mathcal{W})$ and $(\pi_{[\sigma]}, J_G(\sigma), \pi_{[\sigma]}(g) \mathcal{W})$.
\item[(3)] $\epsilon_g$ can extend uniquely to an element $\mathcal{E}_g$ in $D$, given by $[\pi_{[\sigma]}(x) f_w]\mathcal{E}_g:= \pi_{[\sigma]}(x) [(f_w)\epsilon_g]$ for $x\in \Delta$, $f_w\in \mathcal{W}$.
\item[(4)] $\mathcal{E}_{g_1} \circ \mathcal{E}_{g_2} = \alpha(g_1,g_2) \mathcal{E}_{g_3}$, for $g_i \in \Delta$\footnote{$\Delta$ is a discrete set of finite cardinality.}, where $\alpha(g_1, g_2) \in \C^{\times}$ and $g_1g_2J_G(\sigma)=g_3 J_G(\sigma)$.
\item[(5)] The above $\alpha(-, -)$ defines a $2$-cocycle of one class in $\Ha^2(I_{G}(\sigma)/J_G(\sigma), \C^{\times})$.
\end{itemize}
We fix  an embedding $\widetilde{W} \longrightarrow \Ind_{J_G(\sigma)}^{I_G(\sigma)} W$ such that the image of $W$ is $\mathcal{W}$, and let $\mathcal{N}=\left \{ \right.\varphi: \cInd_{J_G(\sigma)}^{I_G(\sigma)} W \longrightarrow \Ind_{J_G(\sigma)}^{I_G(\sigma)} W$, an $I_G(\sigma)$-homomorphism with  image in $\widetilde{W} \}$. Note that $\mathcal{N}$ is a left $D$-ideal. Following \cite[\S 11]{CuRe}, we define two projective smooth representations $(\rho_1, \mathcal{W})$, $(\rho_2, \mathcal{N})$
 of $I_G(\sigma)$ as follows:
 \begin{itemize}
\item[(1)]For $x=gg_0 \in I_G(\sigma)$ with $g\in \Delta$,  and $ g_0 \in J_G(\sigma)$,   $f_w \in \mathcal{W}$, $\rho_1(x) f_w :=  ( \pi_{[\sigma]}(x) f_w)\mathcal{E}^{-1}_g$.
\item[(2)] $\rho_2$ factors through $I_G(\sigma)/J_G(\sigma)$, and $(v)[\rho_2(g) \varphi]:= ( (v)\mathcal{E}_g)\varphi$, for $g\in \Delta$, $v \in \cInd_{J_G(\sigma)}^{I_G(\sigma)} W$, $\varphi \in \mathcal{N}$.
\end{itemize}

\begin{lemma}
$(\rho_2,  \mathcal{N})$ is an irreducible projective representation of $I_G(\sigma)$.
\end{lemma}
\begin{proof}
By construction, the space $\mathcal{N}$ is spanned by $ \mathcal{E}_{g_1} \circ  \varphi, \mathcal{E}_{g_2}\circ\varphi  , \cdots,   \mathcal{E}_{g_m}\circ \varphi$, for any non zero element $\varphi \in \mathcal{N}$, and some suitable $g_1, \cdots, g_m \in \Delta$(related to $\varphi$).
\end{proof}

\begin{theorem}[Clifford]\label{thetensorprojectivereps}
The irreducible representation $(\widetilde{\sigma}, \widetilde{W})$ of $I_G(\sigma)$ is linearly isomorphic with the tensor projective representation $\rho_1 \otimes \rho_2$ of $I_G(\sigma)$.
\end{theorem}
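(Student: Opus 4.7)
My plan is to construct the isomorphism explicitly as the evaluation pairing
\[
\Phi\colon \mathcal{N} \otimes_{\C} \mathcal{W} \longrightarrow \widetilde{W}, \qquad \varphi \otimes f_w \longmapsto \varphi(f_w).
\]
Since by definition every $\varphi \in \mathcal{N}$ is an $I_G(\sigma)$-homomorphism $\pi_\sigma \to \Ind_{J_G(\sigma)}^{I_G(\sigma)} W$ whose image lies in $\widetilde{W}$, the element $\varphi(f_w)$ indeed sits inside $\widetilde{W}$. The map is $\C$-bilinear and hence factors through the tensor product.

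The core step is to check $I_G(\sigma)$-equivariance with respect to $\rho_1\otimes\rho_2$ on the left and $\widetilde{\sigma}$ on the right. For any $x = g g_0$ with $g\in \Delta$ and $g_0 \in J_G(\sigma)$, the explicit formulas listed in \S\ref{hypothesisBresult} give
\[
\Phi\bigl((\rho_1(x) \otimes \rho_2(x))(\varphi \otimes f_w)\bigr)
= (\varphi \circ \mathcal{E}_g)\bigl(\mathcal{E}_g^{-1} \pi_\sigma(x) f_w\bigr)
= \varphi(\pi_\sigma(x) f_w)
= \widetilde{\sigma}(x)\,\Phi(\varphi \otimes f_w),
\]
the last equality using that $\varphi$ intertwines the $I_G(\sigma)$-actions and that $\widetilde{W}$ is a subrepresentation of $\Ind_{J_G(\sigma)}^{I_G(\sigma)} W$. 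This computation also shows that the two projective cocycles attached to $\rho_1$ and $\rho_2$ are mutually inverse, so $\rho_1\otimes\rho_2$ is a genuine (not merely projective) representation of $I_G(\sigma)$, which is exactly what makes the comparison with $\widetilde{\sigma}$ legal.

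Next I would verify non-vanishing and surjectivity. Any non-zero $\varphi\in \mathcal{N}$ must be non-zero on some $f_w\in \mathcal{W}$, because $\mathcal{W}$ generates $\pi_\sigma$ as an $I_G(\sigma)$-module by Lemma~\ref{therestrition}(2); hence $\Phi\ne 0$. Since $(\widetilde{\sigma}, \widetilde{W})$ is irreducible and $\Im(\Phi)$ is a non-zero $I_G(\sigma)$-invariant subspace, $\Phi$ is surjective. For injectivity I would restrict everything to $J_G(\sigma)$. Because $\rho_1$ factors through $I_G(\sigma)/J_G(\sigma)$ it is trivial on $J_G(\sigma)$, while $\rho_2|_{J_G(\sigma)}$ coincides with $\pi_\sigma|_{J_G(\sigma)}$ on $\mathcal{W}$, which is the original irreducible $J_G(\sigma)$-representation on $W$. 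Thus $(\rho_1\otimes\rho_2)|_{J_G(\sigma)} \simeq (\dim_{\C} \mathcal{N})\cdot \mathcal{W}$. On the other side, Frobenius Reciprocity yields
\[
\dim_{\C}\Hom_{I_G(\sigma)}(\pi_\sigma, \widetilde{W}) = \dim_{\C}\Hom_{J_G(\sigma)}(W, \widetilde{W}),
\]
and a diagram chase identifies $\mathcal{N}$ with this $\Hom$-space, so $\widetilde{W}|_{J_G(\sigma)}$ is $\mathcal{W}$-isotypic of the same multiplicity $\dim_{\C}\mathcal{N}$. The surjective map $\Phi$ between two finite-length $J_G(\sigma)$-representations with equal $\mathcal{W}$-multiplicities must be injective, completing the proof.

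The main obstacle I expect is the bookkeeping with the projective $2$-cocycle $\alpha(-,-)$: one has to ensure that the inverse $\mathcal{E}_g^{-1}$ appearing in the definition of $\rho_2$ and the composition $\varphi\circ \mathcal{E}_g$ used for $\rho_1$ combine so that the cocycle and its inverse cancel pointwise for every $x\in I_G(\sigma)$, independently of the coset representative chosen. The secondary delicate point is confirming the multiplicity identity $\dim_\C \mathcal{N} = \dim_\C \Hom_{J_G(\sigma)}(\mathcal{W},\widetilde{W})$, which requires using Lemmas~\ref{thequotient1} and~\ref{finitelengths} to keep all dimensions finite and to justify the Frobenius Reciprocity step in our induced setting.
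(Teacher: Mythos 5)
Your proof is correct and follows essentially the same strategy as the paper: you use the same explicit evaluation map $\varphi\otimes f_w\mapsto\varphi(f_w)$, the same equivariance computation exploiting the cancellation $\varphi\circ\mathcal{E}_g$ against $\mathcal{E}_g^{-1}\circ\pi_\sigma(x)$, and the same use of irreducibility of $\widetilde{\sigma}$ for surjectivity. The one place where you genuinely diverge is injectivity. The paper's argument is terser: it checks that $\digamma$ does not vanish on any pure tensor $\varphi\otimes f_w$ with both factors non-zero, using $I_G(\sigma)$-equivariance of $\varphi$ together with Lemma~\ref{therestrition}(2); for this to yield injectivity one still needs the standard observation that any $J_G(\sigma)$-submodule of $\mathcal{N}\otimes\mathcal{W}$ (with $J_G(\sigma)$ acting trivially on $\mathcal{N}$ and irreducibly on $\mathcal{W}$) has the form $\mathcal{N}'\otimes\mathcal{W}$, which the paper leaves implicit. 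You instead count multiplicities: identify $\mathcal{N}\cong\Hom_{I_G(\sigma)}(\pi_\sigma,\widetilde{W})\cong\Hom_{J_G(\sigma)}(W,\Res\widetilde{W})$ via Frobenius reciprocity, so $\dim\mathcal{N}$ equals the $W$-multiplicity of $\Res_{J_G(\sigma)}\widetilde{W}$, and then conclude that a surjection between two $J_G(\sigma)$-modules of the same finite length must be injective. Your route is more explicit and self-contained; the paper's is shorter but relies on an unstated lemma about tensoring with a trivial factor. Both are valid, and the finiteness input you appeal to (Lemma~\ref{finitelengths}) is exactly what makes the length comparison legitimate.
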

\begin{proof}
By observation, $\rho_1 \otimes \rho_2$ is a honest representation of $I_G(\sigma)$. Assume $\widetilde{W}=\oplus_{i=1}^m \pi_{[\sigma]}(g_i) \mathcal{W}$ in $\cInd_{J_G(\sigma)}^{I_G(\sigma)} W$, for different elements $\overline{g_i} \in I_{G}(\sigma)/J_{G}(\sigma)$. Let  $\varphi_i\in \mathcal{N}$, corresponding to $\epsilon_{g_i}: \mathcal{W} \longrightarrow \pi_{[\sigma]}(g_i) \mathcal{W}$ by Frobenius reciprocity. Then  $\{\varphi_1, \cdots, \varphi_m\}$  forms a basis of $\mathcal{N}$.  Let   $\digamma: \mathcal{W}  \otimes \mathcal{N} \longrightarrow \widetilde{W}; \sum_{i=1}^m f_{w_i} \otimes\varphi_i  \longmapsto \sum_{i=1}^m(f_{w_i})\varphi_i$. Firstly, if $\sum_{i=1}^m f_{w_i} \otimes \varphi_i \neq 0$, and $\sum_{i=1}^m(f_{w_i})\varphi_i=0$, then $(f_{w_i})\varphi_i=0$, and  $(\pi_{[\sigma]}(g) f_{w_i})\varphi_i=0$ for all $g\in I_G(\sigma)$, contradicting to Lmm.\ref{therestriction1}(2). So the injectivity of  $\digamma$ follows. Secondly, letting $x=gg_0$ with $g\in \Delta$, $g_0 \in J_G(\sigma)$, we then have
$$\digamma\big(\rho_1 \otimes \rho_2(x) (f_w\otimes \varphi )\big)=(\pi_{[\sigma]}(x) f_w)\varphi=\pi_{[\sigma]} (x) (f_w) \varphi=\pi_{[\sigma]}(x) \digamma(f_w\otimes \varphi  ),$$
which shows that $\digamma$ is an $I_{G}(\sigma)$-morphism, and then the surjectivity follows.
\end{proof}

\subsection{} In the third part, we do not  assume that $H$ is a normal subgroup of $G$.  First of all we assume that $H$ is an open subgroup of $G$. Let  $\Delta=\{s_i\in G\}_{i\in I}$ be a complete  set of representatives for  $H\setminus G/H$, and assume $1\in \Delta$. Let $H_s=s^{-1}Hs$. For $(\rho, W)\in \Rep(H)$, set $\rho^{s}(x)= \rho(sxs^{-1})$, $x\in H_s\cap H$.     For any $s\in \Delta$, $s\neq 1$,  assume that  the cardinality  of  bisets $ (H_s\cap H)\setminus H/(H_s\cap H)$ is infinite.
\begin{lemma}\label{ddf1}
Let  $(\sigma_i, W_i)\in \Rep(H)$. For any   $1\neq s\in \Delta$,  if  $\Res_{H_s\cap H}^H \sigma_1$ is finitely generated,  $\Hom_G(\cInd_{H}^G \sigma_1, \cInd_{H}^G \sigma_2)\simeq \Hom_H(\sigma_1, \sigma_2)$.
\end{lemma}
\begin{proof}
 By Frobenius reciprocity and Lmm.\ref{therestriction1}, $$\Hom_G(\cInd_{H}^G \sigma_1, \cInd_{H}^G \sigma_2)\simeq \Hom_H(\sigma_1,  \oplus_{s\in \Delta}\cInd^H_{H_s\cap H}(\sigma_2)^s) \hookrightarrow \prod_{s\in \Delta}\Hom_H(\sigma_1,\cInd_{H_s\cap H}^H(\sigma_2)^s).$$  For a fixed $s\in \Delta$ with $s\neq 1$,      let $\Sigma_s$ be a complete set of representatives for $  (H_s\cap H)\setminus H/(H_s\cap H)$.   Denote the representation $((\sigma_2)^s,W_2)$ of $H$ simply by $(\rho,W_2^{\rho})$. By Lmm.\ref{therestriction1},  as  $H_s\cap H$-module, we can embed $W_2^{\rho}$ in $ \cInd^H_{H_s\cap H}\rho$,  with  the image denoted by $\mathcal{W}_2$. Then by Lmm.\ref{therestriction1}, $\cInd^H_{H_s\cap H}W_2^{\rho} \simeq \oplus_{t\in \Sigma_s}  \mathcal{W}_{2,t}$,    $ \mathcal{W}_{2,t}= \oplus_{g\in (H_s\cap H)/[(H_s\cap H)_t\cap (H_s\cap H)]} gt^{-1}\mathcal{W}_2$, $\mathcal{W}_{2, t}\simeq  \cInd_{(H_s\cap H)_t\cap (H_s\cap H)}^{H_s\cap H}\rho^{t}$.

 Assume $W_1$ is generated by $w_1, \cdots, w_l$ as an $H\cap H_s$-module.   If $0\neq B \in \Hom_H(\sigma_1, \cInd_{H_s\cap H}^H(\sigma_2)^{s})$,   there exists a finite natural number $m$, such that all $B(w_i)\in \oplus_{j=1}^m \mathcal{W}_{2, t_j}\simeq \oplus_{j=1}^m \cInd_{(H_s\cap H)_t\cap (H_s\cap H)}^{H_s\cap H}\rho^{t_j} $.     Note that  for $t\in H$,  $w\in W_1$, $B(tw)=tB(w)\in \oplus_{j=1}^m t\mathcal{W}_{2, t_j}$.  However  $tw= \sum_{j=1}^m c_{i}h_{i} w_i$, for some $c_{i} \in \mathbb{C}$, $h_{i} \in H_s\cap H$, and $    B(tw)\in\oplus_{j=1}^m    \mathcal{W}_{2, t_j}$.  Now asume $e_1\in W_1$, $0\neq B(e_1)= \sum_{j=1}^m c_j  w_{2, j}$,   for  some non-zero $w_{2,j} \in \mathcal{W}_{2, t_j}$, and some $c_j\in \mathbb{C}$, with $c_{j'}\neq 0$.  Assume $ w_{2, j}=\oplus_{k=1}^{n_j} g_{kj} t_j^{-1} w_{k,j}$, for some non-zero $w_{k,j}\in  \mathcal{W}_2$. Then
 $B(t_{m+1}^{-1} t_{j'}g_{1j'}^{-1}  e_1)=[\oplus_{j=1, j\neq j'}^m\oplus_{k=1}^{n_j} c_jt_{m+1}^{-1} t_{j'}g_{1j'}^{-1}  g_{kj} t_j^{-1} w_{k,j}]\oplus c_{j'}t_{m+1}^{-1} w_{1,j'}\oplus [\oplus_{k\neq 1} c_{j'}t_{m+1}^{-1} t_{j'}g_{1j'}^{-1}  g_{kj'} t_{j'}^{-1} w_{k,j'}]$. Since $ c_{j'}t_{m+1}^{-1} w_{1,j'} \notin  \oplus_{j=1}^m \mathcal{W}_{2, t_j}$, a contradiction.  Therefore $\Hom_H(\sigma_1, \cInd_{H_s\cap H}^H(\sigma_2)^{s})=0$, for any $1 \neq s\in \Delta$, and the first  result follows.
    \end{proof}
    If $K$ is an open compact subgroup of $G$, for each positive integer  $n$, we let $\mathcal{N}(K)_n=\{ K^i \mid K^i \lhd K, [K: K^i]=n\}$.

    \begin{lemma}\label{ddf2}
Let  $(\sigma_i, W_i)\in \Rep(H)$.  For any $1\neq s\in \Delta$,   if assume (1)  up to $H_s\cap H$-conjugacy  there exists and  only exists  a finite number of maximal open compact groups in $H$,  (2) for  each maximal open compact subgroup $K$ of $H_s\cap H$, and each $n$, the set $\mathcal{N}(K)_n$ is finite,  then  $\Hom_G(\cInd_{H}^G \sigma_1, \cInd_{H}^G \sigma_2)\simeq \Hom_H(\sigma_1, \sigma_2)$, for any  admissible representation  $(\sigma_1, W_1)$   of $H$.
\end{lemma}
\begin{proof}
Keep   the notations of the first paragraph  in the proof of the foregoing lemma.
  Let us choose  $\{K_1, \cdots, K_m\}$ to be a total set of  maximal open compact subgroups of $H$,  up to $H_s\cap H$-conjugacy.  Let $K$ be an open compact subgroup of $H_s\cap H$, such that $W_1^K\neq 0$. By Lmm.\ref{twocompactsubgroups}, we assume that $K$ is a normal subgroup of  each  $K_i$.  Assume  $0\neq B \in \Hom_H(\sigma_1, \cInd_{H_s\cap H}^H(\sigma_2)^{s})$, and $B(W_1^K)\subseteq\oplus_{\alpha=1}^m   \mathcal{W}_{2, t_{\alpha}}\simeq \oplus_{\alpha=1}^m \rho^{t_{\alpha}}$.
 Under the condition (2) we  let  $\mathcal{L}_i$ denote  the total set of  normal  open compact subgroups $L_i$ of  $K_i$, satisfying  $[K_{i} : L_{i}]=[K_{1}: K]$, and let $\mathcal{L}=\cup_{i}   \mathcal{L}_i$.

 For a fixed   $t\in H$,   there exists $h_t\in H_s\cap H$, such that $K_{t}=t^{-1} Kt\subseteq (K_1)_t =h_t K_j h_t^{-1}$, for certain $j$.     So $K_t \lhd (K_1)_t=(K_j)_{h^{-1}_t}$, $K_{th_t} \lhd K_j$, and $[K_j : K_{th_t} ]=[(K_j)_{h^{-1}_t} : K_t]=[(K_1)_t: K_t  ]=[K_1 : K]$.  Hence $K_{th_t} =L_t$, for some $L_t\in \mathcal{L}$.  Set $D_t=K_{th_{t}}\cap K=L_t\cap K$. Then    $\epsilon_{D_t h^{-1}_t t^{-1} K} \in \mathcal{H}(H, D_t)$.  For $0\neq w\in W_1^{K}$,  $B(\epsilon_{D_t h^{-1}_t t^{-1} K} w)=B(\epsilon_{h_t^{-1}t^{-1}} \ast \epsilon_{th_tD_th^{-1}_tt^{-1}} \ast \epsilon_Kw)=h_t^{-1}t^{-1}B(w)\in \oplus_{\alpha=1}^m  h_t^{-1}t^{-1} \mathcal{W}_{2, t_{\alpha}}$.  Moreover $0\neq \epsilon_{D_{t} h_t^{-1}t^{-1} K} w\in W_1^{D_{t}}$.  Now let $\widetilde{W}_1=\sum_{L\in \mathcal{L}} W_1^{L\cap K}  \subseteq W_1$, then   $\widetilde{W}_1$ has finite dimension,  and  $W_1^K \subseteq \widetilde{W}_1$,  $W_1^{D_t} \subseteq \widetilde{W}_1$. Hence $B( \widetilde{W}_1)$ belongs to a direct sum of  finite number of   $\rho^{t_{\beta}}$'s.  This makes a contradiction similar to  the above proof.  Therefore $\Hom_H(\sigma_1, \cInd_{H_s\cap H}^H(\sigma_2)^{s})=0$, for any $1 \neq s\in \Delta$, and the second   result holds.
    \end{proof}

    \subsection{}In the fourth part we  interfere with    unitary representations of locally profinite groups.  Our main references are \cite{KT}, \cite{Ma}. The results in  them  are   mainly about representations of locally compact groups,  so let us first rewrite some of them  to fit  us well.

We call  a smooth representation  $(\rho, W)$ of $H$ \emph{preunitary}   if there exists a non-degenerate hermitian form $\langle, \rangle$ on $W$, such that $\langle \rho(h)v, \rho(h)w\rangle= \langle v, w\rangle$, for $v, w\in W$, $g\in H$. Here  $W$ is  not  required to be a  complete   vector space.

Until the end of this section,  we will let  $(\rho, \langle, \rangle, W)$  be   a smooth preunitary  representation  of $H$,  and    let $\mathcal{W}_{\rho}$ or $\mathcal{W} $ denote its complete vector space.
\begin{lemma}
$(\rho, \mathcal{W} )$ is a unitary representation of $H$ in the usual sense(cf. \cite{Ma}).
\end{lemma}
\begin{proof}
 Let  $h_0\in H$, $w_0\in W^{K_1}$, $K_1$ being an open compact subgroup of $H$. For any $\epsilon>0$, when $\Vert w-w_0\Vert<\epsilon$,  and $h\in h_0 K_1$,  we have
 $\Vert \rho(h)w-\rho(h_0) w_0\Vert\leq \Vert \rho(h)w-\rho(h) w_0\Vert+ \Vert \rho(h)w_0-\rho(h_0) w_0\Vert =\Vert w- w_0\Vert<  \epsilon$. So $\rho: H \times W \longrightarrow  W; (h, w) \longmapsto  \rho(h)w$ is continuous, and it can extend well  to  a unitary  representation  $\rho: H \times \mathcal{W} \longrightarrow  \mathcal{W}$.
   \end{proof}
\subsubsection{Admissible case}
In this subsection we will  assume $(\rho,W)$ is  admissible  unless specific  illustration.
   \begin{lemma}
For any open compact subgroup $K_1$ of $H$, let $W \simeq  \oplus_{\sigma\in \hat{K}_1} W^{\sigma}$  be the   direct sum of  its $ K_1$-isotypic components. (cf. \cite[p.15, Pro.]{BushH}). Then:
 \begin{itemize}
\item[(1)] $ W^{\sigma_i}  \bot W^{\sigma_j}$,  for different $\sigma_i, \sigma_j \in \hat{K}_1$;
\item[(2)] For each $(\sigma, U) \in \hat{K}_1$, $W^{\sigma}$ is an algebraic direct sum of its mutually orthogonal  $H$-subspaces $W^{\sigma}_i$ such that each $W^{\sigma}_i$ is  isomorphic to $U$ as $K_1$-modules.
\end{itemize}
\end{lemma}
\begin{proof}
1) For  non-zero vectors $v_i \in W^{\sigma_i}$, $v_j \in W^{\sigma_j}$,  the vector spaces $ K_1v_i$, $K_1v_j$ generated by $v_i$, $v_j$, both have finite dimension. Finally  it reduces to study  a unitary representation $K_1v_i \oplus K_1v_j $ of a finite group,  so the result holds.\\
2) Let $e_1, \cdots, e_n$ be a basis of $U$. Then we can find an open compact subgroup $K_2 \subseteq \cap_{i=1}^n \Stab_{K_1}(e_i)$  such that $K_2 \rhd K_1$.  Hence  $W^{\sigma}$ is a preunitary representation  of a finite group $\frac{K_1}{K_2}$ of finite dimension; the result holds.
\end{proof}

    Let $(\overline{\rho}, \overline{W})$ denote the complex  conjugate representation of $(\rho, W)$.
\begin{lemma}\label{K1in}
      $\overline{\rho} \simeq \check{\rho}$ and $\mathcal{W}^{K_1}=W^{K_1}$, for any open compact subgroup $K_1$ of $H$. In this case, $(\check{\rho}, \check{W})$ is  a preunitary representation of $H$.  \footnote{If $\rho$ is not admissible, we can't  ensure  that $\check{\rho}$ is also preunitary. }

    \end{lemma}
   \begin{proof}
 1)  Any non-zero vector  $\overline{w} \in \overline{W}$ defines a non-trival $\mathbb{C}$-linear function on $W$ as $w\longrightarrow \langle w, \overline{w}\rangle$, for $w\in W$. Moreover  it induces a $\mathbb{C}$-linear and  $H$-monomorphism
    $\overline{W} \longrightarrow \check{W}$; by considering their  $K_1$-invariant parts we see $\overline{W} \simeq \check{W}$ as $H$-modules.

  2) Assume $\Res_{K_1}^H \rho\simeq \oplus_{i\in I} m_i\pi_i$, for mutually  orthogonal irreducible representations $\pi_i$ of $K_1$.
  \footnote{If assume that $G$ is a second-countable  group, then it contains a countable  neighbourhood basis $\{K_i\}$ of $1_G$; we can assume each $K_i$ is an open compact subgroup of $G$.   So $V=\cup V^{K_i}$ has countable dimension.}Let $e_{i}^{1}, \cdots, e_i^{n_i}$ be an orthonormal basis of $m_i\pi_i$.  Then every element $\widetilde{a} \in \mathcal{W}$ has the following form:
  $\widetilde{a}=\sum_{i\in I}\sum_{j=1}^{n_i} a_{ij}e_i^{j}$, such that $\sum_{i\in I}\sum_{j=1}^{n_i}\mid a_{ij}\mid^2< +\infty$. If $k\cdot\widetilde{a}=\widetilde{a}$,  for any $k\in K_1$, then $k\cdot \sum_{j=1}^{n_i} a_{ij} e_i^j= \sum_{j=1}^{n_i} a_{ij} e_i^j$, in other words, $m_i\pi_i$ has a $K_1$-invariant vector $ \sum_{j=1}^{n_i} a_{ij} e_i^j$, so only a finite number of such vectors is non-zero; thus   $\widetilde{a} \in W^{K_1}$.
     \end{proof}
   \begin{lemma}\label{semisimpleu}
 $W$ is an algebraic direct sum of its irreducible and mutually orthogonal  $H$-subspaces.
  \end{lemma}
 \begin{proof}
 For any $H$-subspace $W_1$ of $W$,  the orthogonal complement  $W_1^{\bot}$ in $W$ is also $H$-invariant.  Since $(\rho, W)$ is admissible,  $W=W_1\oplus W_1^{\bot}$. So by \cite[p.14, Prop.]{BushH},  $(\rho, W)$ is semi-simple.  We order  the set $\mathcal{R}$  of all sets $\mathcal{S}_I=\{V_i\}_{i\in I}$ by set inclusion, where $\{V_i\}_{i\in I}$ consists of  mutually orthogonal and irreducible  $H$-subspaces $V_i$ of $W$. By the above discussion,  $\mathcal{R}$ is non-empty and each chain $\mathcal{C}=\{ \mathcal{S}_I\}$ in $\mathcal{R}$ has an upper bounded given by the union $\cup_{I} \mathcal{S}_I$.  Then Zorn's Lemma yields a maximal element $\{V_j\}_{j\in J}$ in $\mathcal{R}$.  Let $W'=\oplus_{j\in J} V_j$; if $W'\neq W$, then $W'^{\bot}$( not zero)  is also an $H$-space and  contains an irreducible $H$-subspace $V'$. Now  $\{V_j\}_{j\in J}\cup \{V'\}$ is also in $\mathcal{R}$,  contradicting  to the maximality of $\{V_j\}_{j\in J}$. Therefore  $W=\oplus_{j\in J} V_j$, and we are done.
 \end{proof}
 \begin{corollary}\label{semisimpleu2}
   If $\rho$ is finitely generated , then $W$ is a finite  direct sum of its irreducible and mutually orthogonal  $H$-subspaces.
   \end{corollary}

       \begin{lemma}\label{simlem}
Let $(\pi_1, \langle, \rangle_1, V_1)$, $(\pi_2, \langle, \rangle_2, V_2)$  be  two admissible preunitary smooth representations  of $H$, with the complete vector spaces $\mathcal{V}_1$, $\mathcal{V}_2$ respectively.
\begin{itemize}
\item[(1)]  If $\pi_1$ has finite length, then  every $0\neq F\in \Hom_H(V_1, V_2)$  is  continuous;
\item[(2)]  If both   $\pi_i$ are representations of finite length, then $\Hom_H(V_1, V_2) \simeq B_H(\mathcal{V}_1, \mathcal{V}_2)$.
\end{itemize}
\end{lemma}
\begin{proof}
(1) By Lmm.\ref{semisimpleu}(2), it is sufficient to assume that $\pi_1$ is irreducible and $f$ is surjective; in this case  $V_2$ is isomorphic to $V_1$ as $H$-modules. Assume $V_1^{K_1}\neq 0$, for an  open compact subgroup $K_1$ of $H$.
Then  $F: V_1^{K_1} \longrightarrow V_2^{K_1}$  is a bijective linear map between two norm spaces of finite dimension. Let $\{e_1, \cdots, e_n, \cdots\}$ be a complete orthonormal  basis of $V_1$, such that $\{e_1, \cdots, e_m\}$ forms a complete orthonormal  basis of $V_1^{K_1}$. Let $\{f_1, \cdots, f_m\}$ be a  complete orthonormal  basis of $V_2^{K_1}$.

For an element $v_1=(e_1, \cdots, e_m)\begin{pmatrix}
b_1\\
\vdots\\
b_m
\end{pmatrix}\in V_1^{K_1}$,  let us write $F(v_1)=(f_1, \cdots, f_m)A\begin{pmatrix}
b_1\\
\vdots\\
b_m
\end{pmatrix}$, where $A$ is the matrix corresponding to the linear map $F$.  It is known that there exists a unitary matrix $U$ such that $\overline{U}^{T} \overline{A}^T AU=\diag(a_1, \cdots, a_m)$ for some positive real  numbers $a_i$.  By changing the orthonormal basis of $V_i^{K_1}$, henceforth we simply assume $\overline{A}^T A=\diag(a_1, \cdots, a_m)$.

 For any  $v\in V_1$,  assume $v=\sum_{i=1}^n c_i \pi_1(h_i) e_1$, for some $c_i\in \mathbb{C}$, $h_i\in H$, and write  $\pi_1(h_j^{-1}h_i)e_1=v_{ji}+w_{ji}$ for some  $v_{ji}=\sum_{k=1}^m d_{jik}e_k\in  V_1^{K_1}$, $ w_{ji}\in\oplus_{1\neq \tau \in \Irr(K_1) } V_1^{\tau}$(here $v_{ji}\bot w_{ji}$).  Then $\Vert v\Vert_1^2=\sum_{i,j=1}^n c_i\overline{c_j}\langle \pi_1(h_j^{-1}h_i)e_1,  e_1\rangle_1=\sum_{i,j=1}^n c_i\overline{c_j} \langle v_{ji}, e_1\rangle_1=\sum_{i,j=1}^n c_i\overline{c_j} d_{ji1}$.
Note that $$\langle \pi_2(h_j^{-1}h_i)F(e_1),  F(e_1)\rangle_2 =\langle F(v_{ji}),  F(e_1)\rangle_2=\langle(f_1, f_2, \cdots, f_m)A\begin{pmatrix}
d_{ji1}\\
d_{ji2}\\
\vdots\\
d_{jim}
\end{pmatrix} , (f_1, f_2,  \cdots, f_m)A\begin{pmatrix}
1\\
0\\
\vdots\\
0
\end{pmatrix} \rangle_2$$ $$=\Tr A\begin{pmatrix}
d_{ji1}\\
d_{ji2}\\
\vdots\\
d_{jim}
\end{pmatrix} \begin{pmatrix}
1, 0, \cdots, 0\end{pmatrix} \overline{A}^T  =\Tr \begin{pmatrix}
1, 0, \cdots, 0\end{pmatrix} \overline{A}^TA\begin{pmatrix}
d_{ji1}\\
d_{ji2}\\
\vdots\\
d_{jim}
\end{pmatrix} =a_1d_{ji1}.$$
Consequently,
\[ \Vert F(v)\Vert_2^2= \Vert \sum_{j=1}^n c_i \pi_2(h_i) F(e_1)\Vert_2^2=\sum_{i,j=1}^n c_i\overline{c_j} \langle \pi_2(h_j^{-1}h_i)F(e_1), F(e_1)\rangle_2=\sum_{i,j=1}^n c_i\overline{c_j} d_{ji1}a_1=\Vert v\Vert_1^2 a_1.\]
Hence  $F$ is continuous. \\
(2) Any  $F\in \Hom_{H}(V_1, V_2)$ can extend uniquely to an element $\widetilde{F}\in B_{H}(\mathcal{V}_1, \mathcal{V}_2)$.  Conversely, the restriction  of any $\widetilde{F}\in \Hom_{H}(\mathcal{V}_1, \mathcal{V}_2)$ to $V_1$ defines an $H$-morphism $F: V_1 \longrightarrow \mathcal{V}_2^{\infty}=V_2$.
\end{proof}

From the above proof,  we obtain a result in Casselman's note,  \cite[p.23, Prop.2.1.15]{Cass}:
\begin{corollary}\label{caca}
For an irreducible  (admissible) representation $(\rho, W)$ of $H$, up to scalar multiplication there is at most  one non-degenerate $H$-invariant Hermitian inner product on $W$.
\end{corollary}
\begin{proof}
See also Bernstein's  unpublished  note on representation.
\end{proof}
\begin{remark}
There exists an equivalence between the category of unitary representations of $H$ of finite length and the category of smooth preunitary representations of $H$ of finite length.
\end{remark}
\begin{proof}
Let $(\pi, V)$ be the smooth part of an irreducible  unitary representation $(\Pi, \mathcal{V})$ of $H$. By investigating its restriction to open compact subgroups, we see that $V\neq 0$.   If $\pi$ contains a non-zero subrepresentation $\rho$, then the  completions of $\pi$ and $\rho$  must be  equal;  by the admissible condition, $\rho=\pi$.   We leave the rest details to the reader.
\end{proof}
  \subsubsection{Non-admissible case}\label{nonad}
  Let us  investigate   the general  case that $(\rho, W)$ is only a preunitary smooth representation of $H$. Assume $W$ is a \emph{second-countable} space. For the complex conjugate representation $(\overline{\rho}, \overline{W})$,  let us write the corresponding  scalar multiplication  by $\odot$,  namely $c\odot w:= \overline{c}w$, for $c\in \mathbb{C}$, $w\in \overline{W}=W$.
   \begin{lemma}\label{item44}
  \begin{itemize}
  \item[(1)]  There exists an orthonormal basis $\{e_1, \cdots, e_n, \cdots\}$ of $\mathcal{W}$ such that $e_i \in W$, and $\{ e_1, \cdots, e_n, \cdots \}$ forms an algebraic basis of $W$;
   \item[(2)]  For any open compact subgroup $K_1$ of $H$, $W^{K_1}$ is dense in $\mathcal{W}^{K_1}$;
  \item[(3)] As $H$-modules,  $\overline{W} \hookrightarrow \overline{\mathcal{W}}^{\infty} \hookrightarrow\check{W}$;
  \item[(4)] Let $(\pi, V)$ be another preunitary smooth representation of $H$, $\mathcal{V}$  the completion of $V$, and  assume $V$ is second-countable.  Then
  \begin{itemize}
  \item[(a)] $\Hom_H(W, V) \simeq \Hom_H(\overline{W}, \overline{V}); f \longrightarrow \overline{f}=f$.
  \item[(b)] Let $f: W\longrightarrow V$ be a non-zero  \emph{continuous} $H$-morphism. Then it will induce the following canonical $H$-morphisms: (I)  $f: \mathcal{W}^{\infty} \longrightarrow \mathcal{V}^{\infty}$ or $ \overline{\mathcal{W}}^{\infty} \longrightarrow \overline{\mathcal{V}}^{\infty} $, (II) $ \check{f}: \check{V} \longrightarrow \check{W}$,  (III)  $f^{\ast}:  \overline{\mathcal{V}}^{\infty} \longrightarrow \overline{\mathcal{W}}^{\infty}$.
        \end{itemize}
    \end{itemize}
  \end{lemma}
  \begin{proof}
   Part (1) comes from \cite[Chapitre V 23, Prop.6]{NB}.  For (2) assume $W=\oplus_{\sigma\in \Irr(K_1)} W^{\sigma}$, and let $\{h^{\sigma}_{1}, \cdots, h^{\sigma}_{n}, \cdots \}$ be an  orthonormal basis of $W^{\sigma}$. Note that for different $\sigma_i, \sigma_j\in \Irr(K_1)$, $W^{\sigma_i} \bot W^{\sigma_j}$. Thus $\{h^{\sigma}_i\}$ forms an orthonormal basis  of $W$.  For any $x=\sum_{i, \sigma} c_i^{\sigma} h_i^{\sigma}\in \mathcal{W}^{K_1}$, with $\sum_{i, \sigma} \vert c^{\sigma}_i\vert^2 < + \infty$, we have $k h_{i}^{\sigma}\in W^{\sigma}$, for $k\in K_1$. Hence $x=\sum_i c_i^{1_{K_1}} h_i^{1_{K_1}} $ with $h_i^{1_K}\in W^{K_1}$, i.e. $W^{K_1}$ is dense in $\mathcal{W}^{K_1}$.
   The rest parts are straightforward.
           \end{proof}
           \begin{corollary}\label{semiirrr}
           Keep the notations. If $(\pi, V)$ is an irreducible  subrepresentation of $(\rho, W)$, then $(\pi, V)$ is a direct summand of $(\rho, W)$.
           \end{corollary}
           \begin{proof}
           By Cor.\ref{caca}, we can find    a  unitary embedding $\iota: V\hookrightarrow W$, which  will introduce $\iota=\overline{\iota} :  \overline{V} \hookrightarrow \overline{W}$ and  $\overline{\iota}^{\ast} :  W \longrightarrow \mathcal{V}^{\infty}\simeq V$.  For $v_1, v_2\in V$, we have $\langle \overline{\iota}^{\ast}\circ \iota(v_1), v_2\rangle_V=\langle \iota(v_1), \iota(v_2)\rangle_W=\langle v_1, v_2\rangle_V$, so $ \overline{\iota}^{\ast}\circ \iota(v_1)=v_1$, $W=\iota(V)\oplus \ker(\overline{\iota}^{\ast})$.
  \end{proof}
  Let $B_H(W,V)$ denote the set of all continuous $H$-morphisms from $W$ to $V$.
 \begin{lemma}
      Keep the notations of Lmm.\ref{item44}.  If  $(\pi, V)$ is an irreducible representation, and $\dim B_H(W, V)<+\infty$, then  $f^{\ast} (\overline{V}) \subseteq \overline{W}$.
    \end{lemma}
\begin{proof}
First we have an orthogonal decomposition $\overline{\mathcal{W}}=f^{\ast}(\overline{\mathcal{V}})\oplus [f^{\ast}(\overline{\mathcal{V}})]^{\bot}$, and a short exact sequence $0\longrightarrow f^{\ast}(\overline{\mathcal{V}}) \longrightarrow \overline{\mathcal{W}} \stackrel{p}{\longrightarrow} [f^{\ast}(\overline{\mathcal{V}})]^{\bot} \longrightarrow 0$. If  $p=0$, then $ f^{\ast}(\overline{\mathcal{V}}) \simeq \overline{\mathcal{W}} $, the result is clearly right.  Assume now $p\neq 0$. As $\overline{W}$ is dense in $\overline{\mathcal{W}}$, the restriction of $p$ to $\overline{W}$ is non-zero. Hence $0 \longrightarrow \ker p \cap \overline{W} \longrightarrow \overline{W} \stackrel{p}{\longrightarrow} \{[f^{\ast}(\overline{\mathcal{V}})]^{\bot}\}^{\infty}$. If $\overline{W} \cap \ker p=0$, then $ \overline{W}$ is a subspace of  $\{[f^{\ast}(\overline{\mathcal{V}})]^{\bot}\}^{\infty}$; considering their completions, we get $ \overline{\mathcal{W}} \hookrightarrow [f^{\ast}(\overline{\mathcal{V}})]^{\bot}$; considering their $\overline{\pi}$-components, we get a contradiction.  Therefore $\overline{W} \cap \ker p \simeq [f^{\ast}(\overline{\mathcal{V}})]^{\infty}$, i.e. $f^{\ast}(\overline{V})\subseteq \overline{W}$.
\end{proof}
\begin{corollary}
Under the above condition, $(\pi, V)$ is a  direct summand of $(\rho, W)$.
\end{corollary}
\begin{proof}
Note that $\dim B_H(\overline{W}, \overline{V})<+\infty$. Then applying the above result to $f=\overline{f}: \overline{W} \longrightarrow \overline{V}$,  we get $\overline{f}^{\ast}: V \longrightarrow W$.  Then the result follows from Cor.\ref{semiirrr}.
\end{proof}

        \begin{lemma}
            Keep the notations of Lmm.\ref{item44}.  If $(\pi, V)$ is an irreducible representation  and  $\dim B_H(W,V)=\infty$, then there exists an element $g\in B_{H}(W, V)$ such that $g^{\ast} (\overline{V}) \not\subseteq \overline{W}$.
            \end{lemma}
            \begin{proof}
            Let $\mathcal{W}_{\pi}$ denote the $(\pi, \mathcal{V})$-isotypic component of $(\rho, \mathcal{W})$.  Since $\mathcal{W}$ is a second-countable vector space and $\dim B_H(W, V) =+\infty$,   we have $\mathcal{W}_{\pi} \simeq \oplus_{i=1}^{\infty}\mathcal{V}_i$, with $\mathcal{V}_i\simeq \mathcal{V}$;  let $P_i$ be the  projection on  its  $i$-component $\mathcal{V}_i$.  Note that the restriction of $P_i$ to $W$ is non-trivial, and it is surjective onto $V$.  Clearly there exist  two exact sequences:
            $0 \longrightarrow \mathcal{W}[\pi] \longrightarrow \mathcal{W}  \stackrel{P=\sum_{i=1}^{\infty}P_i}{\longrightarrow }\mathcal{W}_{\pi} \simeq \oplus_{i=1}^{\infty}\mathcal{V} \longrightarrow 0,$
            and
            $0 \longrightarrow \mathcal{W}[\pi] \cap W\longrightarrow W \stackrel{P}{\longrightarrow }\mathcal{W}_{\pi}^{\infty}$. For a finite set $\{1, \cdots, l\}$, $P_l=\oplus_{i=1}^l p_i:\mathcal{W}  \longrightarrow \oplus_{i=1}^l \mathcal{V}_i$ is surjective.  By Lmm.\ref{item44}(2), for any $K$, $P_l(W^K)$ is dense in  $ [\oplus_{i=1}^l \mathcal{V}_i]^K=\oplus_{i=1}^l V_i^K$; the later vector space has finite dimension, so they are equal. Finally  the image of $P|_{W}$ contains $\sum_i V_i$.

           Now we define $g=\sum_{i=1}^{\infty} \frac{1}{2^{i}} P_i$. As $\Vert P_i\Vert\leq 1$,   $\Vert g\Vert\leq 1$, so $g\in B_H(\mathcal{W}, \mathcal{V})$. Note that $g$ factors through $\mathcal{W} \longrightarrow \mathcal{W}_{\pi} \simeq \oplus_{i=1}^{\infty} \mathcal{V}$, and $g\neq 0$. Hence $g: W \longrightarrow V$ is surjective, and it factors through $W \longrightarrow P(W)$. Let $K$ be an open compact subgroup of $H$ such that $V^{K}\neq 0$ with a linear orthonormal  base, say  $\{ h_1, \cdots, h_n\}$; let $h_{1, i}, \cdots, h_{n,i}$ be the corresponding  respective elements in the $i$-component $V$ of $\oplus_{i=1}^{\infty} V $.  For each $h_{j,i}$, let $e_{j,i}$ be one preimage of it in $W^K$. Then $g(e_{j,i})=g(h_{j,i})=\frac{1}{2^{i}}h_j \neq 0$.

               Now  assume $\{ e_1, \cdots, e_i, \cdots \}$ is  an orthonormal basis of $W^{K}$.  Then there exists infinite $i$'s such that  $g(e_i) \neq 0$.     Let us  write  $g(e_i)=\sum_{j} c_{ji} h_j$. Since $g: W^{K} \longrightarrow V^{K}$ is surjective, there exists $j\in \{1, \cdots, n\}$, such that  $c_{ji} \neq 0$, for infinite $i$'s.   Then for such $j$,
    $g^{\ast}(h_j)=\sum_{i} \langle g^{\ast}(h_j), e_i\rangle_{\overline{W}} \odot e_i= \sum_{i} \langle e_i, g^{\ast} (h_j)\rangle_W  \odot e_i = \sum_{i} \langle g(e_i),  h_j\rangle_W  \odot e_i =\sum_i c_{ji} \odot  e_i \notin \overline{W}$.
  \end{proof}

 We can let $(\rho_{semi}, W_{semi})$ be  the sum of all irreducible  subrepresentations of $(\rho, W)$. Then $(\rho_{semi}, W_{semi})$ is the maximal semi-simple sub-representation  of $(\rho, W)$.

    \begin{remark}\label{contablydiem}
 Assume the category $\Rep(H)$ is locally noetherian. Then  $W/W_{semi}$ has  no  irreducible subrepresentation.
  \end{remark}
  \begin{proof}
Assume that there exists    an irreducible $H$-module $\frac{W_1}{W_{semi}}$ of $\frac{W}{W_{semi}}$.  Let $p: W_1 \longrightarrow \frac{W_1}{W_{semi}}$ be the canonical  projection.  For any $u\in W_1$, with  $p(u)\neq 0$,  let $W_u$ denote the $H$-module generated by $u$. Then there exists a short exact sequence $0 \longrightarrow W_u \cap W_{semi} \longrightarrow W_u \stackrel{p}{\longrightarrow} \frac{W_1}{W_{semi}} \longrightarrow 0$. Now  $W_u \cap W_{semi} $ is finitely generated, and then it is admissible, semi-simple. Hence $W_u$ is admissible, and  semi-simple. So $W_u \subseteq W_{semi}$, a contradiction.
          \end{proof}

   Let $\mathcal{W}_{semi}$ be the completion of $W_{semi}$.  By the general theory on unitary representations of locally compact groups,  $ \mathcal{W}=\mathcal{W}_d\oplus \mathcal{W}_c$, for the   discrete component $\mathcal{W}_d$, and the  continuous component $\mathcal{W}_c$. Here  $\mathcal{W}_c$ has no irreducible subrepresentation.     The following results are straightforward.
    \begin{lemma}
    \begin{itemize}
    \item[(1)] There exists an orthonormal basis $\{e_1, \cdots, e_n, \cdots\}$ of $\mathcal{W}_{semi}$ such that  $e_i \in W_{semi}$, and  $\{ e_1, \cdots, e_n, \cdots \}$ forms an algebraic basis of $W_{semi}$.
    \item[(2)]  If $W_{semi}=\oplus_{i\in \mathbb{N}} V_i$, for $(\rho_i, V_i ) \in \Irr_u(H)$, with the completion $ (\rho_i, \mathcal{V}_i) \in \widehat{H}$,   then $\mathcal{W}_{semi}=\widehat{\oplus}_{i\in \mathbb{N}} \mathcal{V}_i$.
      \item[(3)] $\mathcal{W}_d^{\infty} \supseteq W_{semi}$.
      \item[(4)] $\mathcal{W}^{\infty}_c$ has no irreducible subrepresentation.
        \end{itemize}
    \end{lemma}
   \begin{proof}
For (4), if there exists an irreducible  subrepresentation $(\pi,V)$, then $V\hookrightarrow \mathcal{W}^{\infty}_c$ is a continuous map, and it will  induce an $H$-morphism on their completions, a contradiction.
      \end{proof}

\subsubsection{Unitary induced representation}
        Let us recall some results of unitary  induced  representations in \cite{Ma} (cf. \cite{KT}).  Let  $\delta_{H\setminus G}=\frac{\Delta_G}{\Delta_H}$.  Let $\nu_{H\setminus G}$ be  a  positive \emph{semi-invariant}  measure on $ H\setminus G$(cf. \cite[p.32]{BushH}).  In this  text, we define the  \emph{unitary induced representation} $(\Pi=\mathfrak{Ind}_{H}^G \rho, \mathcal{V}=\mathfrak{Ind}_{H}^G \mathcal{W})$ of $G$ as follows:

        Let $\cInd_H^G [\delta_{H\setminus G}^{1/2}\otimes \mathcal{W}]$ denote the space of  continuous functions $f$ on $G$ with values in $ \mathcal{W}$ having compact support modulo $H$, such that $f(hx)=\delta^{1/2}_{H\setminus G}(h)\rho(h)f(x)$ for $h\in H$, $x\in G$; let $ \mathcal{V}=\mathfrak{Ind}_{H}^G \mathcal{W}$ be the completion of  $\cInd_H^G [\delta_{H\setminus G}^{1/2} \otimes \mathcal{W}]$ under the  norm defined as $\Vert f\Vert^2=\int_{ H\setminus G} \Vert f(\dot{x})\Vert^2  d\nu_{H\setminus G} (\dot{x})$, for $f(x)\in \cInd_H^G [\delta_{H\setminus G}^{1/2} \otimes \mathcal{W}]$. The scalar product is given by $\langle f_1, f_2\rangle=\int_{H \setminus G}\langle f_1(\dot{x}), f_2(\dot{x})\rangle_W d\nu_{H\setminus G} (\dot{x})$, for $ f_1, f_2\in \cInd_H^G [\delta_{H\setminus G}^{1/2} \otimes  \mathcal{W}]$; the action of  $G$ on the space $ \mathcal{V}$  is given by right translation, i.e. $\Pi(g)f(x)=f(xg)$, for $x,g \in G$.    \footnote{The definition given above is a slight difference from  \cite{KT}, \cite{Ma} at   the action of  $G$ on the space $ \mathcal{V}$.}
\begin{remark}
One can  refer to \cite[Section 2.3]{KT}, \cite[Sections 2, 3]{Ma} for the exact description of the space $\mathcal{V}$ and its certain subspaces.
Loosely speaking,  $ \mathcal{V}$ can be viewed as a space of all classes of  measure functions $f$ from $G$ to $\mathcal{W}$, such that (1) $f(hx)=\delta^{1/2}_{H\setminus G}(h) f(x)$ for all $h\in H$, and  almost all $x\in G$;  (2)  $\Vert f\Vert<+ \infty$.
\end{remark}
       \begin{lemma}\label{compactindas}
 \begin{itemize}
 \item[(1)] $\cInd_H^G [\delta_{H\setminus G}^{1/2} \otimes W]$ is dense in $\cInd_H^G  [\delta_{H\setminus G}^{1/2}\otimes \mathcal{W}]$, and consequently it is dense  in $\mathcal{V}$;
 \item[(2)] If $G/H$ is compact, and $(\rho, W)$ is an admissible representation of $H$, then  $\cInd_H^G [\delta_{H\setminus G}^{1/2} \otimes W]$ is just the smooth part of $\mathcal{V}$.
  \end{itemize}
 \end{lemma}
\begin{proof}
1) For $f\in \cInd_H^G  [\delta_{H\setminus G}^{1/2} \otimes \mathcal{W}]$,  assume $\supp f \subseteq HK$, and $K \subseteq \cup_{j=1}^l y_j K_j$, for some  open  compact subgroups $K_j$ of $G$.  Let $M=\sum_{j=1}^l \int_{H\setminus [H y_jK_j]} \delta_{H\setminus G}(\dot{x}) d\nu_{H\setminus G}(\dot{x})$.  For any $\epsilon >0$, and $x\in K$, there exists an
open compact subgroup $K_x$ of $G$ such that $\Vert f(xk)-f(x)\Vert_W< \frac{\epsilon}{\sqrt{6M}}$ for  any  $k\in K_x$, and $xK_x\subseteq \cup_{j=1}^l y_jK_j$.  As  $K\subseteq \cup_{x\in K} [xK_x]$,  we can choose a finite subcover, say $\{ x_iK_{x_i}, i=1, \cdots, n\}$.

Note that $W$ is dense in $\mathcal{W}$, so there exists $v_i\in W$ such that $\Vert v_i-f(x_i)\Vert_W<  \frac{\epsilon}{\sqrt{6M}}$. For each $i$,  we assume $v_i \in \rho^{x_iJ_ix_i^{-1}\cap H, \delta_{H\setminus G}^{-1/2}}$, for an open compact subgroup $J_i\subseteq K_{x_i}$.  By Lmm.\ref{twocompactsubgroups}, we choose an open compact subgroup $K_{\epsilon}\subseteq \cap_{i=1}^n J_i$ satisfying $K_{\epsilon} \lhd K_{x_i}$ for  $i=1, \cdots, n$.

Let $\Delta=\{ s_1, \cdots, s_r\}$ be a subset of the complete  representatives  for $H\setminus G/K_{\epsilon}$ such that $HK \subseteq \cup_{t=1}^r Hs_t K_{\epsilon}$, and $Hs_t K_{\epsilon} \cap HK \neq\emptyset$. If  $Hs_t K_{\epsilon} \cap Hx_i K_{x_i} \neq \emptyset$, we can replace $s_t$ by $x_ik_{it}$, for some $k_{it} \in K_{x_i}$.  By reordering the index,  we assume $\Delta=\{ x_ik_{it}, i=1, \cdots, m; t=1, \cdots, n_i\}$ with $k_{it} \in K_{x_i}$ and $m\leq n$.

  Now we  define $f_{\epsilon} \in  \cInd_H^G [\delta_{H\setminus G}^{1/2} \otimes W]$ as follows: $\supp f_{\epsilon} \subseteq \sqcup_{i=1}^m\sqcup_{t=1}^{n_i}Hx_i k_{it} K_{\epsilon}$,  and $f_{\epsilon}(hx_i k_{it} k)=\delta_{H\setminus G}^{1/2}(h) \rho(h) v_i$ for $h\in H$, $k\in K_{\epsilon}$; here $v_i \in W^{[x_i K_{\epsilon}x_i^{-1} ]\cap H, \delta_{H\setminus G}^{-1/2}}=W^{[x_ik_{it} K_{\epsilon}k_{it}^{-1} x_i^{-1}] \cap H, \delta_{H\setminus G}^{-1/2}}$.  Moreover, for $hx_i k_{it} k \in Hx_i k_{it} K_{\epsilon}\subseteq  Hx_i K_{x_i}\subseteq \cup_{j=1}^l Hy_j K_j$, we have
  \[ \Vert f(x_i k_{it} k)-f_{\epsilon}(x_i k_{it} k)\Vert_W=\Vert f(x_i k_{it}k)- v_i\Vert_W\]\[\leq\Vert f(x_i k_{it}k)-  f(x_i)\Vert_W+\Vert  f(x_i)-v_i\Vert_W \leq \frac{2\epsilon}{ \sqrt{6M}}, \]

    \[\int_{H\setminus G} \Vert f (\dot{x})-f_{\epsilon}(\dot{x})\Vert_W^2 d\nu_{H\setminus G}(\dot{x})=\sum_{i=1}^m\sum_{t=1}^{n_i}\int_{H\setminus [Hx_i k_{it} K_{\epsilon}]}  \Vert f (\dot{x})-f_{\epsilon}(\dot{x})\Vert_W^2 d\nu_{H\setminus G}(\dot{x})\]
    \[\leq \sum_{i=1}^m\sum_{t=1}^{n_i} \int_{H\setminus [Hx_i k_{it} K_{\epsilon}]} \delta_{H\setminus G}(\dot{x}) d \nu_{H\setminus G}(\dot{x}) \  \sup_{k\in K_{\epsilon}  }\Vert f(x_i k_{it} k)-f_{\epsilon}(x_i k_{it} k)\Vert^2_W\]
     \[\leq \frac{2\epsilon^2}{3M}    \sum_{i=1}^m\sum_{t=1}^{n_i}  \int_{ H\setminus [Hx_i k_{it} K_{\epsilon}]} \delta_{H\setminus G}(\dot{x}) d\nu_{H\setminus G}(\dot{x})< \epsilon^2.\footnote{ Here $\delta_{H\setminus G}(\dot{x})(hx_i k_{it} k)= \delta_{H\setminus G}(h)$, for $h\in H$, $k\in K_{\epsilon}$.}\]

     2) The second statement is a corollary of Lemmas \ref{compactadm}, \ref{K1in}.
                   \end{proof}
 \begin{remark}\label{zerospace}
$V_0=\{f\in \cInd_H^G [\delta_{H\setminus G}^{1/2} \otimes W]\mid \Vert f\Vert=0\} $ is a zero vector space.
\end{remark}
\begin{proof}
For $f\in V_0$, assume it's $K$-invariant, and $\supp(f) \subseteq \sqcup_{i=1}^n Hg_i K$.  Then $$0=\int_{H\setminus G} \Vert f(\dot{x}) \Vert^2d\nu_{H\setminus G}(\dot{x})=\sum_{i=1}^n \int_{H\setminus Hg_i K}  \Vert f(\dot{x}) \Vert^2d\nu_{H\setminus G}(\dot{x})= \sum_{i=1}^n \Vert f(g_i)\Vert^2 \int_{H\setminus Hg_i K} \delta_{H\setminus G}(\dot{x})d\nu_{H\setminus G}(\dot{x}).$$ So all $f(g_i)=0$, and  $f=0$.
 \end{proof}
 \begin{example}
$\cInd_{1_G}^G 1$  is a preunitary representation, with the completion $L^2(G,  \nu_G)$, for a right Haar measure $\nu_G$ of $G$.
\end{example}
\begin{question}
How to compare  the induced topology on $\mathcal{H}(G)$  with  the topology on it  introduced  in \cite{BernD}.
\end{question}

\begin{remark}
For any admissible irreducible $(\pi, V)$ of $G$,  if $V^K \neq 0$, then by Frobenius reciprocity, $(\pi, V)\in \mathcal{R}_G(\cInd_{K}^G 1_K) $. Thus   $(\pi, V)$ can be  a quotient of a finite-generated preunitary representation, but  $(\pi, V)$ may not be a preunitary representation.
\end{remark}
\begin{lemma}
Keep the notations. If $(\pi, V)$ is a preunitary irreducible representation, and the map  $\cInd_{K}^G 1_K \longrightarrow V$ is continuous, then  $(\pi,V)$ of  $G$ can embed into $L^2(G, \nu_G)^{\infty}$ as  $G$-module.
\end{lemma}
\begin{proof}
    It is not hard to show that the canonical  map $\cInd_1^K 1 \longrightarrow 1_K$ is  continuous, and then  the map  $\cInd_{K}^G \cInd_1^K 1\longrightarrow \cInd_K^G 1_K$ is also  continuous(the norm definitions). By the algebraic and topological  isomorphism, $\cInd_{K}^G \cInd_1^{K} 1 \simeq \cInd_{1_G}^G 1$, we get a continuous  $G$-morphism $\cInd_{1_G}^G 1 \longrightarrow V$.  So the result follows from Lmm.\ref{item44}(4)(b).
        \end{proof}
        The above result is not always right for all irreducible preunitary representations, see  \cite[p.120, Coro.]{Ma}.   However    we can get an alternate result by  going into  $L^p$-space not just $L^2$-space.  These results will   not be used later.

For an open compact subgroup $K$ of $G$, let $\mu_K$ denote the normalized Haar measure of $K$, i.e. $\mu_K(K)=1$, and we always  choose a semi-invariant measure $\nu_{K\setminus G}$ such that $\int_G f(x) \Delta^{-1}_G(x) d\mu_G(x)=\int_{K\setminus G} f(\dot{x}) d\nu_{K\setminus G}(\dot{x})$, for any  left $K$-invariant $f(x) \in C_c^{\infty}(G)=C_c^{\infty}(G, \delta_{K\setminus G})$. Here $\mu_G$ is a fixed left Haar measure of $G$.   In the following lemma, we will treat $ \cInd_K^G 1_K$ as a topological subspace of $L^1(K\setminus G, \nu_{K\setminus G})$.
\begin{lemma}
Keep the notations.   If  $(\pi, V)$  is  a preunitary irreducible representation,  then  any non-zero $G$-morphism $f: \cInd_K^G 1_K \longrightarrow V$ is continuous.
\end{lemma}
\begin{proof}
1) Let $K_1$ be an open compact subgroup of $K$.  Note  that $\mathcal{H}(G, K_1)=[\cInd_{K_1}^G 1_{K_1}]^{K_1}$, which is a topological subspace of $L^1(K_1\setminus G, \nu_{K_1\setminus G})$. Set $\sigma_1=\cInd_{K_1}^G 1_{K_1} \subseteq L^1(K_1\setminus G, \nu_{K_1\setminus G})$. Firstly let us   show that $ \pi: \mathcal{H}(G, K_1)  \times V^{K_1} \longrightarrow V^{K_1}$ is continuous. For $g\in \mathcal{H}(G, K_1), v\in V^{K_1}$,  $$\pi(g) v=\int_G g(x) \pi(x) v d\mu_G(x)=\mu_G(K_1) \sum_{x\in G/K_1}  g(x) \pi(x)v.$$ Then
$$\Vert \pi(g)v\Vert_{\pi}\leq \mu_G(K_1)(\sum_{t\in G/K_1} \vert g(t)\vert \Vert \pi(t)v\Vert_{\pi})=\mu_G(K_1) (\sum_{t\in G/K_1} \vert g(t)\vert )\cdot \Vert v\Vert_{\pi}$$
$$=\int_{G} \vert g(x)\vert d\mu_{G} (x)\cdot \Vert v\Vert_{\pi}= \Vert v\Vert_{\pi}\int_{K_1\setminus G} \vert g(\dot{x})\vert \Delta_G(\dot{x})d\nu_{K_1\setminus G}(\dot{x})= \Vert v\Vert_{\pi} \cdot\Vert g \Delta\Vert_{\sigma_1}.$$

2) Secondly  set $\sigma=\cInd_K^G 1_K$. For $w\in [\cInd_K^G 1_K]^{K_1}\subseteq \mathcal{H}(G, K_1)$,  we have $w=\sigma(g) 1_K$, for certain right $K$-invariant $g\in \mathcal{H}(G, K_1)$.  Then $$\Vert w\Vert_{\sigma}=\Vert \sigma(g) 1_K\Vert_{\sigma}=  \int_{K\setminus G}  \vert [\sigma(g) 1_K](\dot{x})\vert d\nu_{K\setminus G} (\dot{x}) =\int_{K\setminus G}  \vert \int_{G} g(t) 1_K(\dot{x}t) d\mu_G(t)\vert  d\nu_{K\setminus G} (\dot{x}) $$
$$=\int_G \vert \int_{G} g(t) 1_K(xt) d\mu_G(t)\vert  \Delta_G^{-1}(x) d\mu_G (x) = \int_G \vert  \int_K g(x^{-1}t) d\mu_G(t) \vert \Delta_G^{-1}(x) d\mu_G(x)$$
$$= \mu_G(K) \int_G  \vert g(x^{-1} )\vert \Delta_G^{-1}(x) d\mu_G(x) =\mu_G(K) \int_G \vert g(x )\vert  d\mu_G(x)=\mu_G(K)\Vert g\Delta_G\Vert_{\sigma_1}.$$
So  $\Vert f(w)\Vert_{\pi}=\Vert \pi(g) f(1_K)\Vert_{\pi} \leq\Vert g\Delta_G \Vert_{\sigma_1} \Vert f(1_K)\Vert_{\pi}\leq \Vert w\Vert_{\sigma} \Vert f(1_K)\Vert_{\pi}\frac{1}{\mu_G(K)}$;  $f$ is continuous.
\end{proof}

By the knowledge of Functional Analysis, we can identify  $L^{\infty}(G, \nu_G)$ as the topological dual space of $L^1(G, \nu_G)$.  As before, let  $[L^{\infty}(G, \nu_G)]^{\infty}$ denote   the $G$-smooth part of  $L^{\infty}(G, \nu_G)$.
\begin{corollary}
Every irreducible preunitary representation $(\pi,V)$ of  $G$ can embed into $[L^{\infty}(G, \nu_G)]^{\infty}$ as $G$-module.
\end{corollary}
    \begin{proof}
Assume $(\pi, V)\in \mathcal{R}_G(\cInd_{K}^G 1_K) $. We treat $\cInd_1^K 1$ as a topological subspace of $L^1(K, \nu_K)$.   Then  the  canonical  map $\cInd_1^K 1 \longrightarrow 1_K$ is  continuous, and the map  $\cInd_{K}^G \cInd_1^K 1\longrightarrow \cInd_K^G 1_K$ is also  continuous.    It is not hard to show that the algebraic isomorphism $\cInd_{K}^G \cInd_1^{K} 1 \simeq \cInd_{1_G}^G 1$ is also  a homeomorphism.   Finally  we get a continuous $G$-morphism $\alpha: \cInd_{1_G}^G 1 \longrightarrow V$. For any $\overline{v} \in \overline{V}$, $g\in\cInd_{1_G}^G 1$, the map $g\longrightarrow \langle \alpha(g), \overline{v}\rangle $ is a continuous linear map.  Since $\cInd_{1_G}^G 1$ is dense in $L^1(G, \nu_G)$, by duality there exists a unique $\alpha^{\ast}_{\overline{v}}\in L^{\infty}(G, \nu_G)$, such that $\int_G  g(x)\alpha_{\overline{v}}^{\ast}(x) d\nu_G(x) =\langle \alpha(g), \overline{v}\rangle $.  Then  $\alpha^{\ast}: \overline{V} \longrightarrow \overline{L^{\infty}(G, \nu_G)}; \overline{v} \longrightarrow \alpha_{\overline{v}}^{\ast}$ is a well-defined, $\mathbb{C}$-linear, $G$-morphism. Hence $ \overline{V} \simeq \check{V} \hookrightarrow [\overline{L^{\infty}(G, \nu_G)}]^{\infty}$. Similarly, $V \hookrightarrow [L^{\infty}(G, \nu_G)]^{\infty}$.
  \end{proof}
    \subsubsection{Direct sum decompositions}

 Go back to the unitary induced representations.  Let $J$ be another closed subgroup of $G$. Let $\Delta=\{ s\in G\}$ be a  complete set of double coset representatives for  $H\setminus G/J$.  Assume the cardinality  of $\Delta$ is \emph{countable}.   For $s\in \Delta$,  let   $\mathcal{V}_s$ denote the space of all measure functions $f$ from $HsJ$ to $\mathcal{W}$ such that: (1) $f(hx)=\delta^{1/2}_{H\setminus G}(h)\rho(h) f(x)$, for all $h\in H$, and almost all $x\in HsJ$, (2) $\Vert f\Vert^2=\int_{H \setminus [HsJ]}\Vert f(\dot{x})\Vert_W^2 d\nu_{H\setminus G} (\dot{x})<+\infty$. Note that it is possible that $\Vert f\Vert=0$ for any $f\in \mathcal{V}_s$, or  $\mathcal{V}_s=0$;  now let $\Delta'$ be the subset of $\Delta$ by riding  of all those $s$.  Then $\mathcal{V}\simeq  \widehat{\oplus}_{s\in \Delta'} \mathcal{V}_s$ as $J$-modules.

  For a fixed $s\in \Delta'$, set $H_s=(s^{-1}Hs)$ and let $(\rho^s, \mathcal{W})$ denote the unitary representation of $H_s\cap J$.   Similar to  lemma \ref{homeo}, it can be shown that the canonical mapping $\iota_s: (H_s\cap J)\setminus J \longrightarrow H\setminus (HsJ); [H_s\cap J x] \longmapsto  [Hsx]  $,  is   homeomorphisc.
For   $f\in  \mathcal{V}_s  $, define a function $A_s(f)$  on $J$ as  $A_s(f)(h)=f(sh)$, for $h\in J$.  Note that  for $h_1\in H_s\cap J$, all almost $h\in J$, $$A_s(f)(h_1h)=f(sh_1h)=\delta^{1/2}_{H\setminus G}(sh_1s^{-1}) \rho(sh_1s^{-1})f( sh) =\delta^{1/2}_{H\setminus G}(sh_1s^{-1}) \rho(sh_1s^{-1})A_s(f)( h).$$
Let $\mathcal{U}_s$ denote the space of all functions $A_s(f)$ on $J$.  Then $\mathcal{U}_s \supseteq \cInd_{H_s\cap J }^J (\theta^{1/2}\otimes W)$, where $\theta^{1/2}(h_1)=\delta^{1/2}_{H\setminus G}(sh_1s^{-1})$ for $h_1\in H_s\cap J $. We endow a norm on $\mathcal{U}_s$ defined as $\Vert A_s(f)\Vert^2= \int_{H\setminus HsJ} \Vert f(\dot{x})\Vert_W^2 d\nu_{H\setminus G} (\dot{x})$.  Then it will induce a non-zero linear functional  $I_{\theta}$ on  $\cInd_{H_s\cap J}^{J} \theta$ satisfying the   two conditions in \cite[p.31, Coro.]{BushH} for $H_s\cap J \setminus J$. Hence corresponding to  $\theta$, there exists  a positive semi-invariant measure on $H_s\cap J \setminus J$,  denoted by  $\nu_{H_s\cap J \setminus J}$, such that
$\Vert A_s(f)\Vert^2=\int_{H_s\cap J\setminus J}  \Vert A_s(f)(\dot{h})\Vert_W^2d\nu_{H_s\cap J\setminus J}(\dot{h})$.  If   the action of $J$ on $\mathcal{U}_s$ is given by right translation, we  indeed  obtain  the   unitary representation $(\mathfrak{Ind}_{H_s\cap J}^{J}\rho^s, \mathcal{U}_s)$ of $J$ induced from  $(\rho^s, \mathcal{W})$. As a consequence,    we obtain
\begin{lemma}[{\cite[p.116, Lmm.6.1]{Ma}}]\label{UnINd}
$\Res_{J}^G \mathfrak{Ind}_{H}^G \mathcal{W} \simeq \widehat{\oplus}_{s\in \Delta'} \mathfrak{Ind}_{H_s\cap J}^{J}\rho^s$(unitary equivalence).
\end{lemma}
\begin{example}
Let  $G=\GL_2(F)\supseteq B=\{\begin{pmatrix} t_1 & n\\ 0& t_2\end{pmatrix}\} \supseteq T=\{ \begin{pmatrix} t_1 & 0 \\ 0& t_2\end{pmatrix} \}\supseteq \omega=\begin{pmatrix} 0 & 1 \\ 1&0\end{pmatrix}$, $\Delta=\{1, \omega\}$.  Consider  $(\rho, W)=$the trivial representation of $B$,  $H=J=B$,  $\delta_{H\setminus G}(g)=\Delta_B^{-1}(g)= \Vert \frac{t_1}{t_2}\Vert_F$ for $g=\begin{pmatrix} t_1 & n \\ 0& t_2\end{pmatrix}\in B$.  Then  $\Res_{B}^G\mathfrak{Ind}_{B}^G \rho \simeq  \mathfrak{Ind}_{T}^{B}\rho^{\omega}$. (Notice that not all irreducible representations of $B$  are admissible.)
\end{example}
\subsubsection{}
  Let us now consider $J=H$; assume $G/H$ is compact,  and $1\in \Delta$. We  want to get the similar result  analogue of Lmm.\ref{ddf2}. Let us  first present some lemmas for later use. Recall that  $\mu_H$   stands for  a left Haar measure of $H$.

\begin{lemma}
There exists  a locally constant  left (resp. right) rho-function $\rho_{H\setminus G}$ (resp. $\rho_{G/H}$) for $(G,H)$ such that it is  everywhere strictly positive on $G$,  $\rho_{H  \setminus G}(h^{-1}x)=\tfrac{\Delta_G(h)}{\Delta_H(h)} \rho_{H\setminus G}(x)$  (resp. $\rho_{G/H}(xh^{-1})=\tfrac{\Delta_G(h)}{\Delta_H(h)} \rho_{G/H}(x)$),   for $h\in H$, $x\in G$.
\end{lemma}
\begin{proof}
Without loss of generality, we will only show the existence of $\rho_{H\setminus G}$.  Now let $p: G \longrightarrow H\setminus G$ be  the canonical projection.    It is known that $H\setminus G$ is paracompact.  For an open compact subgroup $U$ of $G$,  $\{ p(xU)\}_{x\in G}$  forms a family of   open compact subset coverings  of $H\setminus G$.  Let  $\{V_i\}$ be   a locally  finite open-compact refinements of this covering.  For each $i$, $p^{-1}(V_i)$ is an open subset of $G$ with  an open-compact  subset covering, say $\{ W_{ij}\}$. Clearly $\{p(W_{ij})\}$ covers $V_i$ and  has a finite subcover $\{V_{ij}\}_{j=1}^m$. Let us write $W_i=\cup_{j=1}^m W_{ij}$. Then $W_i$ is an open compact set of $G$, and $p(W_i)=V_i$.

Let $g_i$ denote the characteristic function of $W_i$, a locally constant function.  Note that for $x\in G$, there is  at most  a  finite number of $i$ such that $g_i(x)\neq 0$.  We now set $g=\sum g_i$.  For any open compact set $K$ of $G$, $p(HK)$ is  compact and intersects with only a finite number of $V_i$'s, say $V_1, \cdots, V_n$. Then $HK\cap \supp(g) $ belongs to $\cup_{i=1}^n W_i$, and  it is a compact set.   Now we define $\rho_{H\setminus G}(x)=\int_H \frac{\Delta_G(h)}{\Delta_H(h)} g(hx) \Delta^{-1}_{H}(h)d\mu_H(h)$.
It is not hard to show that  $\rho_{H\setminus G}$ satisfies all the required conditions except for the locally constant condition.  Note that for the above $K$,   $HK\cap \supp(g)$ only  intersects with  $W_1, \cdots, W_n$. Then $\rho_{H\setminus G}(x)= \sum_{i=1}^n\int_H \frac{\Delta_G(h)}{\Delta_H(h)}g_i(hx)\Delta^{-1}_H(h)d\mu_H(h)$, for $x\in K$, so  $\rho_{H\setminus G}$ is locally constant at $K$.
\end{proof}
By following the above proof, we can also show that there exists  a  left-right rho-function $\rho_{H\setminus G/H}$, which is locally  constant  and  everywhere strictly positive on $G$.
 \begin{corollary}\label{tranun}
 $\Hom_{G} (\cInd_{H}^G\sigma_1, \cInd_{H}^G \sigma_2)\simeq  \Hom_G(\cInd_{H}^G(\delta_{H\setminus G}^{1/2}\otimes \sigma_1), \cInd_{H}^G (\delta_{H\setminus G}^{1/2}\otimes \sigma_2))$, for two smooth representations  $(\sigma_1, W_1)$, $ (\sigma_2, W_2)$  of $H$.
   \end{corollary}
\begin{proof}
By Frobenius reciprocity, $\Hom_{G} (\cInd_{H}^G\sigma_1, \cInd_{H}^G \sigma_2) \simeq \Hom_{H}(\cInd_{H}^G\sigma_1,  \sigma_2)$, and $\Hom_G(\cInd_{H}^G(\delta_{H\setminus G}^{1/2}\otimes \sigma_1), \cInd_{H}^G (\delta_{H\setminus G}^{1/2}\otimes \sigma_2)) \simeq \Hom_{H}(\cInd_{H}^G(\delta_{H\setminus G}^{1/2}\otimes\sigma_1),  \delta_{H\setminus G}^{1/2}\otimes\sigma_2)$.  So it reduces to show the above  two right-hand  $\Hom_H$-vector spaces are isomorphic. For $f\in \cInd_H^G W_1$,  it can be shown that $\rho^{-1/2}_{H\setminus G/H} f \in \cInd_{H}^G(\delta_{H\setminus G}^{1/2}\otimes W_1)$. Then  the isomorphism can be given by  $\varphi \longrightarrow  \rho^{-1/2}_{H\setminus G/H} \varphi(\rho^{1/2}_{H\setminus G/H} -)$, for $\varphi \in \Hom_{H} (\cInd_{H}^G\sigma_1,\sigma_2)$.
 \end{proof}

 Recall  that  a quasi-invariant measure  on $H\setminus G$ is  a regular  Borel (real) measure $\mu$ on $ H\setminus G$ such that for a Borel subset $[X]$ of $H\setminus G$,   $\mu([X])=0$ iff $\mu([X]g)=0$ for any  $g\in G$.

\begin{lemma}
Keep the notations,  $\rho^{-1}_{H\setminus G} \nu_{H\setminus G} $  defines a quasi-invariant measure on $H\setminus G$.
\end{lemma}
\begin{proof}
Let $C_c(H\setminus G)$ denote the space of continuous functions on  $H\setminus G$ with compact support, provided with the topology of uniform convergence. Then  $C_c^{\infty}(H\setminus G)$, the underlying set of $\cInd_H^G 1$, is dense in $C_c(H\setminus G)$.   Denote $\theta=\delta_{H\setminus G}$. Through  the bijective  mapping  $ \iota: C^{\infty}_c(H\setminus G) \longrightarrow C_c^{\infty}(H\setminus G, \delta_{H\setminus G}); f \longrightarrow \rho_{H\setminus G}^{-1}f $, we obtain  a non-zero positive  linear functional $I_{\theta}\circ \iota$ on $ C^{\infty}_c(H\setminus G)$, which is invariant under the right translation of $G$. By Risez's theorem, $ I_{\theta}\circ \iota(f)=\int_{H\setminus G} f(\dot{x})\rho_{H\setminus G}^{-1}(\dot{x}) d\nu_{H\setminus G}(\dot{x})$, for $f\in C_c^{\infty}(H\setminus G)$.(cf. \cite[pp. 30-31]{BushH})
\end{proof}
  For  $f\in C_c^{\infty}(G)$, let $f_H(g)=\int_{H}\delta^{-1}_{H\setminus G}(h) f(hg)\Delta^{-1}_H(h)d\mu_H(h)=\int_{H} f(hg) \Delta_G^{-1}(h) d\mu_H(h)$; then $f_H\in C_c^{\infty}(H\setminus G, \delta_{H\setminus G})$.
  \begin{lemma}\label{triples}
There exists a triple  $(\mu_H, \mu_G, \nu_{H\setminus G})$ such that  $$\int_G f(g)\Delta_G^{-1}(g)d\mu_G(g)=\int_{H\setminus G}d\nu_{H\setminus G}(\dot{x})  \int_{H} f(hx) \Delta_G^{-1}(h) d\mu_H(h), \quad \quad f\in C_c^{\infty}(G).$$
\end{lemma}
\begin{proof}
Note that the right-hand side defines a right $G$-invariant  $\mathbb{C}$-linear map on $C_c^{\infty}( G)$, so we can find such $\mu_G$ satisfying the condition.
\end{proof}
In the following, we will always fix one  such triple.
\begin{lemma}\label{decom}
\begin{itemize}
\item[(1)]  For any open compact non-zero subset $K$ of $G$,  $\int_{H\setminus HK} \rho_{H\setminus G}^{-1}(\dot{x}) \nu_{H\setminus G} (\dot{x}) \neq 0$;
\item[(2)] In Lmm.\ref{UnINd}, if  let   $J$ just be   the above $K$,  then the set $\Delta'=\Delta$.
\end{itemize}
\end{lemma}
\begin{proof}
1) Let $\mu$ denote the measure $\rho^{-1}_{H\setminus G} \nu_{H\setminus G}$ on $H\setminus G$. Assume the converse.  Then for some  open compact subset $K_1$ of $K$, $\mu([H\setminus HK_1])=0$; $\mu([H\setminus HK_1x])=0$ for any $x\in G$;  $\mu([H\setminus HC])=0 $ for any compact set $C$ of $G$. Since $\mu$ is a regular measure, finally we see that  $\mu$ is the  zero measure on $H\setminus G$, a contradiction! \\
2) Keep the notations of  the lemma \ref{UnINd}.  Let  $\Delta_{s, H, K}$ be a measure section of  $HsK$ with respect to   $H$(cf. \cite[Lmm.1.1]{Ma}). For one  $0\neq w\in W $, we define $f_w(hx)=\rho^{-1/2}_{H\setminus G}(x) \delta_{H\setminus G}^{1/2}(h)  \rho(h)w$, for $h\in H$,  $x\in \Delta_{s, H, K}$.  Then  $f_w$ is a measure function from $HsK$ to $\mathcal{W}$, and satisfies the first condition on the definition of $\mathcal{V}_s$.  Moreover $\int_{H\setminus HsK} \Vert f_w(\dot{x})\Vert^2\nu_{H\setminus G}(\dot{x})= \Vert w\Vert^2 \int_{H\setminus HsK} \rho^{-1}_{H\setminus G} (\dot{x}) \nu_{H\setminus G}(\dot{x}) \neq 0$. Hence $f_w\in \mathcal{V}_s \neq 0$.
\end{proof}
\subsubsection{}
Keep the assumption  that $G/H$ is compact. Assume now the category $\Rep(H)$ is locally noetherian;  for any open compact subgroup $K_1$ of $H$, assume $\mathcal{H}(H,K_1)$ is an algebra which can be  generated by  $\epsilon_{K_1}$ and a finite number of $\epsilon_{x_1}$, $\cdots$, $\epsilon_{x_n}$.
\begin{lemma}\label{comm}
  $\Hom_G\big(\cInd_{H}^G(\delta_{H\setminus G}^{1/2}\otimes \sigma_1), \cInd_{H}^G (\delta_{H\setminus G}^{1/2}\otimes \sigma_2)\big)  \simeq \Hom_H\big(\delta_{H\setminus G}^{-1/2}\otimes \sigma_1, \cInd_{H}^G (\delta_{H\setminus G}^{1/2}\otimes \sigma_2)\big)$, for an  admissible representation  $(\sigma_1, W_1)$   of $H$,   and  an irreducible  preunitary  representation $(\sigma_2,W_2)$ of $H$.
\end{lemma}
\begin{proof}
By Lemmas \ref{compactadm}, \ref{semisimpleu}, $ \cInd_{H}^G(\delta_{H\setminus G}^{1/2}\otimes \sigma_2)$ is an admissible preunitary semisimple representation. Assume
$\cInd_{H}^G(\delta_{H\setminus G}^{1/2}\otimes \sigma_2) \simeq \oplus_{i \in I }  m_i\pi_i$, for different $(\pi_i, V_i)\in \Irr(G)$, and positive  integers $m_i$. By Remark \ref{notcompactfrb} there exists $$ \alpha_i: \Hom_H\big(\delta_{H\setminus G}^{-1/2}\otimes \sigma_1,  \pi_i\big)  \simeq \Hom_G\big(\cInd_{H}^G(\delta_{H\setminus G}^{1/2}\otimes \sigma_1), \pi_i \big). $$

   Let  $f$ be a $K$-invariant vector  in $\cInd_{H}^G(\delta_{H\setminus G}^{1/2}\otimes W_1)$, and assume $H\setminus G/ K=\sqcup_{t=1}^l Hg_t K$.  Assume $V_i^{[g_t K g_t^{-1}] \cap H}=\{v_{it1}, \cdots, v_{itr_t}\}$,  $K_i\lhd[\cap_{j,k} \Stab_{G}(g_t^{-1} v_{ijk})\cap K]$,  and $Hg_tK=\sqcup_{j=1}^{n_i}Hg_t a_{ij}K_i=\sqcup_{j=1}^{n_i}Hg_tK_i a_{ij}$. By the discussion in \cite[p. 24]{BernZ}, for $A_i \in\Hom_H\big(\delta_{H\setminus G}^{-1/2}\otimes \sigma_1,  \pi_i\big) $, $\alpha_i$ can be given as follows:
   \begin{align*}
   [\alpha_i(A_i)](f) &=\int_{H\setminus G} \pi_i(g^{-1}) [A_if(g)] d\nu_{H\setminus G}(g)\\
 & =\sum_{t=1}^l\sum_{j=1}^{n_i} \pi_i(a_{ij}^{-1} ) [A_if(g_t)] \int_{H\setminus Hg_tK_ia_{ij}} \delta^j_{H\setminus G} d\nu_{H\setminus G}\\
  &=\sum_{t=1}^l\sum_{j=1}^{n_i} \pi_i(a_{ij}^{-1} ) [A_if(g_t)] \int_{H\setminus Hg_tK_i} \delta^0_{H\setminus G} d\nu_{H\setminus G}.
  \end{align*}
  Here,  $\delta_{H\setminus G}^j, \delta_{H\setminus G}^0 \in C_c^{\infty}(H\setminus G, \delta_{H\setminus G})$, $\delta^j_{H\setminus G} (hg_t a_{ij} k)=\delta_{H\setminus G}(h)$, $\delta^0_{H\setminus G} (hg_t  k)=\delta_{H\setminus G}(h)$, for $h\in H$, $k\in K$, and  $f(g_t)  \in [\delta_{H\setminus G}^{1/2}\otimes W_1]^{[g_t K g_t^{-1}] \cap H}$, only dependent on $f$, $K$, $g_t$. Note that there exists
\begin{align*}
 \alpha: \Hom_H\big(\delta_{H\setminus G}^{-1/2}\otimes \sigma_1, \cInd_{H}^G (\delta_{H\setminus G}^{1/2}\otimes \sigma_2)\big)& \hookrightarrow \prod_{i\in I} m_i  \Hom_H\big(\delta_{H\setminus G}^{-1/2}\otimes \sigma_1, \pi_i\big) \\
 & \simeq \Hom_G\big(\cInd_{H}^G(\delta_{H\setminus G}^{1/2}\otimes \sigma_1), \prod_{i\in I} m_i\pi_i \big).
 \end{align*}

  Let  $A\in \Hom_H\big(\delta_{H\setminus G}^{-1/2}\otimes \sigma_1, \cInd_{H}^G (\delta_{H\setminus G}^{1/2}\otimes \sigma_2)\big)$ with the projection $\oplus_{j=1}^{m_i}A_{ij}$  in $\Hom_H\big(\delta_{H\setminus G}^{-1/2}\otimes \sigma_1, m_i \pi_i\big)$;  since $\delta_{H\setminus G}^{-1/2}\otimes \sigma_1$ is admissible, for any open compact subgroup $K_H$ of $H$, $A_{ij}\big( [\delta_{H\setminus G}^{-1/2}\otimes W_1]^{K_H}\big)=0$, for almost all $i$. Therefore $[\alpha(A)](f)=\prod_{i, j}\int_{H\setminus G} \pi_i(g^{-1}) [A_{ij}f(g)] d\nu_{H\setminus G}(g)=\int_{H\setminus G} \sum_i m_i\pi_i(g^{-1}) [Af(g)] d\nu_{H\setminus G}(g)$, i.e. $\alpha$ gives the required isomorphism.
        \end{proof}
    \begin{corollary}
    Assume all  irreducible representations of $H$, $G$ are admissible, and $G/H$ is compact;  then $\delta_{H\setminus G}=1$. \footnote{ We follow the notations of \cite[p.44]{BernZ}. For the  parabolic subgroup $P_n$ of $\GL_n(F)$,  since  $\delta_{P_n}$ is non-trivial,  it always exists a non-admissible irreducible  smooth representation of $P_n$. (cf. Remark \ref{notcompactfrb},  \cite[p.51, 5.22 Coro.]{BernZ}).  Question: does the result also hold for  the other parabolic groups? (Rodier + Bernstein+ Zelevinsky?sufficient?)}
    \end{corollary}
    \begin{proof}
    We take the above $\sigma_1=\sigma_2=$ the trivial representation of $H$.  Then $0\neq m_H\big(\delta_{H\setminus G}^{-1/2}\otimes \sigma_1, \cInd_{H}^G (\delta_{H\setminus G}^{1/2}\otimes \sigma_2)\big)$.  So $\delta_{H\setminus G}^{-1/2}\otimes \sigma_1$ is also a preunitary representation. Hence $\delta_{H\setminus G}=1$.            \end{proof}

Let  $\Delta=\{s_i\in G\}_{i\in I}$ be a complete  set of representatives for  $H\setminus G/H$;  assume $1\in \Delta$, and $\Delta$ is a countable set. Let $H_s=s^{-1}Hs$. For $(\sigma, W)\in \Rep(H)$, set $\sigma^{s}(x)= \sigma(sxs^{-1})$, $x\in H_s\cap H$.     For any $s\in \Delta$, $s\neq 1$,  assume that   $H_s\cap H$ is a normal subgroup of $H$ and   $\frac{H}{H_s\cap H}$ is not compact. Recall the notation   $\mathcal{N}(K)_n$ in Lmm.\ref{ddf2}.
\begin{lemma}\label{ddff2}
If for any $1\neq s\in \Delta$,   assume: (1) up to $H_s\cap H$-conjugacy there  exists at least one and at most  a finite number of maximal open compact subgroups in $H$,  (2)  for  each maximal open compact subgroup $K$ of $ H$, and each $n$, the set $\mathcal{N}(K)_n$ is finite,  then  $m_G(\cInd_{H}^G \sigma_1, \cInd_{H}^G \sigma_2)\leq m_H(\sigma_1, \sigma_2)$, for an  admissible representation  $(\sigma_1, W_1)$   of $H$,    an admissible preunitary  representation $(\sigma_2,W_2)$ of $H$.
\end{lemma}
\begin{proof}
By Lemmas \ref{simlem}, \ref{UnINd}, \ref{comm}, \begin{align*}
\Hom_G(\cInd_{H}^G \sigma_1, \cInd_{H}^G \sigma_2)\simeq  \Hom_H(\sigma_1, \cInd_{H}^G  \sigma_2)\\
\hookrightarrow \Hom_H(\sigma_1,\mathfrak{Ind}_{ H}^G \sigma_2)
\hookrightarrow  \prod_{s\in \Delta'}\Hom_H(\sigma_1,\mathfrak{Ind}_{H_s\cap H}^H(\sigma_2)^s).
\end{align*}

Now let  us choose $\{K_1, \cdots, K_m\}$ to  be a total set of maximal  open compact subgroups of $ H$,  up to $H_s\cap H$-conjugacy.   Let $K$ be an open compact subgroup of $ H$, such that $W_1^K\neq 0$. By Lmm.\ref{twocompactsubgroups}, we assume that $K$ is a normal subgroup of  each  $K_i$.  For a fixed $s\in \Delta'$ with $s\neq 1$,      let $\Sigma_s$ be a complete set of representatives for $ H_s\cap H\setminus H/ K$.  Since $H$  is $\sigma$-compact(cf. Section \ref{notation}), the cardinality of $\Sigma_s$ is denumerable.  For  simplicity, write  $\tau$ for $(\sigma_2)^s$.  Assume $0\neq B \in \Hom_H( \sigma_1, \mathfrak{Ind}_{H_s\cap H}^H\tau)$. For simplicity,  assume $B(W_1^{K})\neq 0$.

Under the condition (2) we  let  $\mathcal{L}_i$ denote  the total set of  normal  open compact subgroups $L_i$ of  $K_i$, satisfying  $[K_{i} : L_{i}]=[K_{1}: K]$, and let $\mathcal{L}=\cup_{i}   \mathcal{L}_i$.    For a fixed   $t\in \Sigma_s^{-1}=\{ r^{-1}\mid r\in \Sigma_s\}$,   there exists $h_t\in H_s\cap H$, such that $K_{t}=t^{-1} Kt\subseteq (K_1)_t =h_t K_j h_t^{-1}$, for certain $j$.     So $K_t \lhd (K_1)_t=(K_j)_{h^{-1}_t}$, $K_{th_t} \lhd K_j$, and $[K_j : K_{th_t} ]=[(K_j)_{h^{-1}_t} : K_t]=[(K_1)_t: K_t  ]=[K_1 : K]$.  Hence $K_{th_t} =L_t$, for certain $L_t\in \mathcal{L}$.  Set $D_t=K_{th_{t}}\cap K=L_t\cap K$. Then    $\epsilon_{D_t h^{-1}_t t^{-1} K} \in \mathcal{H}(H, D_t)$.  For $0\neq w\in W_1^{K}$,  $B(\epsilon_{D_t h^{-1}_t t^{-1} K} w)=B(\epsilon_{h_t^{-1}t^{-1}} \ast \epsilon_{th_tD_th^{-1}_tt^{-1}} \ast \epsilon_Kw)=h_t^{-1}t^{-1}B(w)$.  Moreover $0\neq \epsilon_{D_{t} h_t^{-1}t^{-1} K} w\in W_1^{D_{t}}$.  Now let $\widetilde{W}_1=\sum_{L\in \mathcal{L}} W_1^{L\cap K}  \subseteq W_1$; then   $\widetilde{W}_1$ has finite dimension,  and  $W_1^K \subseteq \widetilde{W}_1 $,  $W_1^{D_t} \subseteq \widetilde{W}_1$. Replacing  $th_t $ by $t$, we may assume $K_t \lhd K_j$ for some $j$, and $K_t\in \mathcal{L}$. Let us choose an open compact subgroup  $K_0\subseteq \cap_{L\in \mathcal{L}} L$ such that  $K_0\lhd K$, $K_0\lhd K_t$. Notice that for $t\in \Sigma_s$, $K_0 \lhd K_{t^{-1}}$.  Let  $m=\max_{L \in \mathcal{L}} [L: K_0]$.

Assume that $\{e_1=B(w_1), \cdots, e_n=B(w_n)\}$ forms an orthonormal  basis of $B( \widetilde{W}_1)$. By Lemmas \ref{UnINd}, \ref{decom}, there exists a unitary equivalence  $A=\widehat{\oplus}_{r\in \Sigma_s} A_s: \mathfrak{Ind}_{H_s\cap H}^H\tau\simeq \widehat{\oplus}_{r\in \Sigma_s} \mathfrak{Ind}_{(H_{s}\cap H)\cap K}^{K} \tau^r$.  Then $A(e_i)=\sum_{r\in \Sigma_s} e_{ir}$, for some $e_{ir} \in \mathfrak{Ind}_{(H_{s}\cap H)\cap K}^{K} \tau^r$.

     Choose  $0\neq w_0\in W_1^{K}$ such that  $B(w_0) =v_0\neq 0$, assume  $v_0=\sum_{i=1}^{n} c_i e_i$, with $ c_i=\langle v_0, e_i\rangle$ and  $\Vert v_0\Vert^2=\sum_{i=1}^{n}|c_i|^2 $.  Note that $A(v_0)=\sum_{r\in \Sigma_s}  v_{0r}= \sum_{r\in \Sigma_s} \sum_{i=1}^n c_i e_{ir}$. Let $m\epsilon^2=\Vert v_{0r_0} \Vert^2=\Vert \sum_{ i=1}^n  c_i e_{ir_0}\Vert^2>0$, for some $r_0\in \Sigma_s$.  For such $\epsilon$, there exists a finite subset $\delta\subseteq \Sigma_s$ such that $\sum_{r\notin \delta}  \Vert e_{ir}\Vert^2 < \frac{\epsilon^2}{n\Vert v_0\Vert^2}$, for each $i=1, \cdots, n$.    For  $l \in \Sigma_s\setminus \delta$,   $\cup_{t\in \Sigma_s}(H_s\cap H)lK (H_s\cap H)t^{-1}K=\cup_{t\in \Sigma_s}(H_s\cap H)lK t^{-1}(H_s\cap H)K=(H_s\cap H)l[\cup_{t\in \Sigma_s}K t^{-1}(H_s\cap H)]K= (H_s\cap H)lHK=H$.
      So there exist $t\in \Sigma_{s}$, $l \in \Sigma_s\setminus \delta$, such that $(H_s\cap H)lK (H_s\cap H)t^{-1}K=\sqcup_{j=0}^{n_0} (H_s\cap H)r_j K\supseteq (H_s\cap H)r_0 K$.

   Assume $v_{t}=B(\epsilon_{D_t  t^{-1} K}  w_0)= \sum_{i=1}^n d_{ti} e_i$ with $d_{ti}=\langle v_{t}, e_i\rangle \in \mathbb{C}$ and  $\sum_{i=1}^n |d_{ti}|^2= \Vert v_{t}\Vert^2$.  On  the other hand, $B(\epsilon_{D_{t} t^{-1} K}  w_0)=  t^{-1}v_0=\sum_{i=1}^n c_i t^{-1} e_i$, and  $\Vert v_{t}\Vert^2=\Vert v_0\Vert^2$.  Assume $A(v_t)=\sum_{r\in \Sigma_s} v_{tr}$. Then
   \begin{align*}
   \sum_{r\notin \delta} \Vert v_{tr}\Vert^2=\sum_{r\notin \delta} \Vert d_{t1} e_{1r}+\cdots + d_{tn}e_{nr}\Vert^2
   \leq\sum_{r\notin \delta} (\sum_{i=1}^n \vert d_{ti}\vert^2)(\sum_{i=1}^n \vert e_{ir}\vert^2)
   \\
   \leq \Vert v_0\Vert^2\sum_{i=1}^n ( \sum_{r\notin \delta}  \Vert e_{ir}\Vert^2 ) <\epsilon^2.
   \end{align*}
 For each $1\neq s\in \Delta$, we will fix a triple $( \mu_{H_s\cap H}, \mu_{H}, \nu_{(H_s\cap H)\setminus H})$ as in Lmm.\ref{triples}.  For $k\in K$, if we write $A( t^{-1} v_0)= A(t^{-1}kv_0)=\sum_{r\in \Sigma_s} f_{rk}$, then
 \begin{align*}
 \mu_{H}(K)\sum_{r\notin \delta} \Vert v_{tr}\Vert^2  &  = \mu_{H}(K) \sum_{r\notin \delta}  \Vert f_{rk}\Vert^2  \geq  \mu_{H}(K) \Vert f_{lk}\Vert^2 \\
  & =\mu_{H}(K)\int_{(H_s\cap H)\setminus (H_s\cap H)lK} \Vert \sum_{i=1}^n c_i t^{-1}ke_i(\dot{x}) \Vert^2d\nu_{(H_s\cap H)\setminus H}(\dot{x})\\
  & =\mu_{H}(K)\int_{(H_s\cap H)\setminus (H_s\cap H)lKt^{-1}} \Vert \sum_{i=1}^n c_i e_i(\dot{x}) \Vert^2d\nu_{(H_s\cap H)\setminus H}(\dot{x})\\
  & \geq \frac{\mu_{H}(K)}{m}  \int_{(H_s\cap H)\setminus (H_s\cap H)lKt^{-1}K} \Vert  \sum_{i=1}^n c_i e_i(\dot{x}) \Vert^2d\nu_{(H_s\cap H)\setminus H}(\dot{x}) \quad (\textrm{the next lemma \ref{lllll}} )\\
  & =\frac{\mu_{H}(K)}{m}  \sum_{j=0}^{n_0} \Vert \sum_{i=1}^nc_ie_{ir_{j}}\Vert^2\geq \frac{\mu_{H}(K)}{m}  \Vert \sum_{i=1}^nc_ie_{ir_{0}}\Vert^2 =\mu_H(K)\epsilon^2.
  \end{align*}
  This makes a contradiction!  Therefore $\Hom_H( \sigma_1,\mathfrak{Ind}_{H_s\cap H}^H(\sigma_2)^s)=0$, for any $1 \neq s\in \Delta'$;  hence  the    result holds.
  \end{proof}
\begin{lemma}\label{lllll}
Keep the above notations.
\begin{itemize}
\item[(1)] $\mu_H(K)=\mu_{H_s\cap H}(K\cap (H_s\cap H)) \nu_{H_s\cap H \setminus H}(\frac{(H_s\cap H) K}{ H_s\cap H})$, for any open compact subgroup $K$ of $H$.
\item[(2)]  Let $C=lKt^{-1}$ be an open compact subset of $H$.   Then for any $K$-right invariant $f(\dot{x}) \in C_c^{\infty}(\frac{H}{H_s\cap H})$, we  have
$\mu_{H}(K)\int_{\frac{(H_s\cap H)C}{H_s\cap H}} \vert f(\dot{x})\vert   \nu_{(H_s\cap H)\setminus H}(\dot{x})\geq \frac{\mu_{H}(K)}{m}\int_{\frac{(H_s\cap H)CK}{H_s\cap H}} \vert  f(\dot{x}) \vert  \nu_{(H_s\cap H)\setminus H}(\dot{x})$.
\end{itemize}
\end{lemma}
\begin{proof}
1) Since $H_s\cap H \lhd H$, we may assume $\nu_{(H_s\cap H)\setminus H}=\nu_{\frac{H}{H_s\cap H}}$, a right Haar measure.   Then
$$\mu_H(K)  = \int_{H} 1_{K}(x)  d\mu_H(x)=\int_{H} 1_{K}(x) \Delta_{H}(x)^{-1} d\mu_H(x)$$
  $$= \int_{\frac{H}{H_s\cap H}}  d\nu_{\frac{H}{H_s\cap H}}(\dot{x}) \int_{H_s\cap H} 1_K(hx) \Delta_{H}(h)^{-1}d\mu_{H_s\cap H}(h)$$
  $$= \int_{\frac{(H_s\cap H)K}{H_s\cap H}}  d\nu_{\frac{H}{H_s\cap H}}(\dot{x}) \int_{K\cap (H_s\cap H)} 1_K(hx) d\mu_{H_s\cap H}(h)$$
  $$= \nu_{\frac{H}{H_s\cap H}}(\frac{(H_s\cap H) K}{ H_s\cap H})\mu_{H_s\cap H}(K\cap (H_s\cap H)).$$
2) Assume $\frac{(H_s\cap H )C}{H_s\cap H}=\frac{(H_s\cap H )lt^{-1}K_{t^{-1}}}{H_s\cap H}=\sqcup_{i=1}^{m_2} \frac{(H_s\cap H ) lt^{-1}a_iK_0}{H_s\cap H}=\sqcup_{i=1}^{m_2}\frac{(H_s\cap H ) lt^{-1}K_0a_i}{H_s\cap H}$,     $\frac{(H_s\cap H)K}{H_s\cap H}=\sqcup_{j=1}^{m_1} \frac{(H_s\cap H )K_0 b_j}{H_s\cap H}$,
 for some $a_i \in K_{t^{-1}}$, $b_j \in K$.  Clearly $m_1\leq m$. Then
 $\frac{(H_s\cap H)C K}{H_s\cap H}=\frac{(H_s\cap H )lt^{-1}K_{t^{-1}}K}{H_s\cap H}=\cup_{i, j} \frac{(H_s\cap H ) lt^{-1}a_ib_jK_0}{H_s\cap H}=\cup_{i, j} \frac{(H_s\cap H ) lt^{-1}K_0a_ib_j}{H_s\cap H} $, so
\begin{align*}
 &\mu_{H}(K)\int_{\frac{(H_s\cap H)C}{H_s\cap H}} \vert f(\dot{x})\vert   \nu_{\frac{H}{H_s\cap H}}(\dot{x})\\
 & = \mu_{H_s\cap H}(K\cap (H_s\cap H)) \nu_{\frac{H}{H_s\cap H}}(\frac{(H_s\cap H) K}{ H_s\cap H})\int_{\frac{(H_s\cap H)C}{H_s\cap H}} \vert f(\dot{x})\vert   \nu_{\frac{H}{H_s\cap H}}(\dot{x})\\
& =\mu_{H_s\cap H}(K\cap (H_s\cap H))  \nu_{\frac{H}{H_s\cap H}}(\frac{(H_s\cap H) K}{ H_s\cap H}) \sum_{i=1}^{m_2} \vert f(\dot{l}\dot{t}^{-1}\dot{a_i})\vert \nu_{\frac{H}{H_s\cap H}}(\frac{(H_s\cap H)lt^{-1} K_0 a_i}{ H_s\cap H})\\
 &=\mu_{H_s\cap H}(K\cap (H_s\cap H))  \sum^{m_2, m_1}_{i,j=1} \vert f(\dot{l}\dot{t}^{-1}\dot{a_i})\vert   \nu_{\frac{H}{H_s\cap H}}(\frac{(H_s\cap H) K_0 b_j}{ H_s\cap H})\nu_{\frac{H}{H_s\cap H}}(\frac{(H_s\cap H) lt^{-1}K_0 a_i}{ H_s\cap H})\\
&=\mu_{H_s\cap H}(K\cap (H_s\cap H))    [\nu_{\frac{H}{H_s\cap H}}(\frac{(H_s\cap H) K_0 }{H_s\cap H})\nu_{\frac{H}{H_s\cap H}}(\frac{(H_s\cap H) lt^{-1}K_0 }{ H_s\cap H})] \sum^{m_2, m_1}_{i,j=1} \vert f(\dot{l}\dot{t}^{-1}\dot{a_i})\vert \\
 &=\mu_{H_s\cap H}(K\cap (H_s\cap H))    \nu_{\frac{H}{H_s\cap H}}(\frac{(H_s\cap H) K_0 }{H_s\cap H}) \sum^{m_2, m_1}_{i,j=1}     \nu_{\frac{H}{H_s\cap H}}(\frac{(H_s\cap H)lt^{-1}K_0 a_ib_j}{H_s\cap H}) \vert f(\dot{l}\dot{t}^{-1}\dot{a}_i\dot{b}_j)\vert \\
 &\geq \mu_{H_s\cap H}(K\cap (H_s\cap H))    \nu_{\frac{H}{H_s\cap H}}(\frac{(H_s\cap H) K_0 }{H_s\cap H}) \int_{\frac{(H_s\cap H )CK}{H_s\cap H}}\vert  f(\dot{x}) \vert  \nu_{(H_s\cap H)\setminus H}(\dot{x})\\
& =\frac{\mu_{H}(K)}{ m_1} \int_{\frac{(H_s\cap H )CK}{H_s\cap H}}\vert  f(\dot{x}) \vert  \nu_{(H_s\cap H)\setminus H}(\dot{x})
 \geq \frac{\mu_{H}(K)}{ m} \int_{\frac{(H_s\cap H )CK}{H_s\cap H}}\vert  f(\dot{x}) \vert  \nu_{(H_s\cap H)\setminus H}(\dot{x})
 \end{align*}
\end{proof}

  \begin{corollary}\label{notm}
   Under the conditions of Lmm.\ref{ddff2},   $m_{G}(\cInd_{H}^G\sigma_2,\cInd_{H}^G \sigma_1)\leq m_H(\sigma_2, \sigma_1)$.
   \end{corollary}
   \begin{proof}
 By \cite[p.25, Exercise]{BushH}, $m_{G}(\cInd_{H}^G\sigma_2,\cInd_{H}^G \sigma_1)= m_{G}(\cInd_{H}^G\sigma_2,(\cInd_{H}^G \check{\sigma}_1)^{\vee})= m_{G}(\cInd_{H}^G \check{\sigma}_1,\cInd_{H}^G\check{\sigma}_2) \leq m_H( \check{\sigma}_1,  \check{\sigma}_2)=m_{H}(\sigma_2, \sigma_1)$.
  \end{proof}
\subsection{ }
In this last subsection, we let $G$ be a locally profinite group with a normal subgroup $H$.  Assume $G$ is a second-countable group.  Let $\Irr_u(H)$ denote  the set of all equivalence classes of   irreducible preunitary representations of $H$, and  let $\widehat{H}$ denote    the set of all equivalence classes of irreducible unitary smooth  representations of $H$. Clearly there exists a conjugate action of $G$ on   $\Irr_u(H)$ or $\widehat{H}$, given by $\rho^g(h)=\rho(ghg^{-1})$, for $g\in G$, $\rho\in \Irr_u(H)$ or $\widehat{H}$.  Let $\mathbb{T}$ denote the unit circle group in $\mathbb{C}^{\ast}$.

Assume (I) $G$, $H$ are   groups of \emph{type I}, (II) $\widehat{H}/G$ is \emph{ countably separated}(cf. \cite[p.186]{Ma2}), \footnote{The  condition (II) is equivalent to say that $H$ is regularly embedded in $G$.( see also \cite[p.277, footnote]{Ma1}).}(III) For any $\omega \in \widehat{H}$, the orbit $\{ \omega^g \mid g\in G\}$ is \emph{countable}, (IV) there exists an open  subgroup $O$ of $G$, such that   $\Ha^2(O, \C^{\times})$ only contains  elements of finite order.    Let $(\pi, V)$ be  an irreducible preunitary representation of $G$, and $(\Pi, \mathcal{V})$ its  corresponding  unitary representation of $G$.

\begin{theorem}[Clifford-Mackey, a unitary version]\label{unitary}
\begin{itemize}
\item[(1)] $\Res_{H}^{G} \Pi$ is semi-simple.
\item[(2)] There exists an  integer  $m=1, \cdots, n$, or infinite, such that $\Res_{H}^{G} \Pi\simeq \widehat{\oplus}_{\Sigma\in \mathcal{R}_{H}(\Pi) }m\Sigma$.
\item[(3)] Let $(\Sigma, \mathcal{U}) $ be an irreducible subrepresentation of $\Res_{H}^{G}\Pi$. Then $I_{G}(\Sigma)=\left\{ g\in G \mid \Sigma^g \simeq \Sigma\right\}$ is an open  subgroup of $G$.
\item[(4)]  There exists  an irreducible  representation $(\widetilde{\Sigma}, \widetilde{\mathcal{U}})$ of $I_{G}(\Sigma)$, such that:
 \begin{itemize}
 \item[(a)] $\Res_{H}^{I_G(\Sigma)} \widetilde{\Sigma} \simeq m\Sigma$,
 \item[(b)] $\mathfrak{Ind}_{I_{G}(\Sigma)}^{G} \widetilde{\Sigma}\simeq \Pi$.
\end{itemize}
 \item[(5)]  There exists a projective irreducible unitary representation  $(\widetilde{\Phi}_1, \widetilde{\mathcal{W}})$  of $I_{G}(\Sigma)$  associated to a $2$-cocycle $c(-,-)$ with respect to the measurable cohomology group $ \Ha^{2}(I_{G}(\Sigma)/H, \mathbb{T})$,  such that
 \begin{itemize}
 \item[(a)] $ \Sigma =  \Res_{H}^{I_{G}(\Sigma)} \widetilde{\Phi}_1$,
\item[(b)] $\widetilde{\Phi}_1(g) \Sigma(h) \widetilde{\Phi}_1(g^{-1}) = \Sigma(ghg^{-1})  $, for $h\in H$, $g \in I_{G}(\Sigma)$.
\end{itemize}
Moreover,   $\widetilde{\mathcal{W}}$ is  uniquely  determined by  $\widetilde{\mathcal{U}}$  up to projective equivalence (Schur's  Lemma.).
 \item[(6)] There exists an irreducible projective unitary representation  $(\widetilde{\Phi}_2, \widetilde{\mathcal{N}})$  of $I_{G}(\Sigma)/H$   associated to the $2$-cocycle $c^{-1}(-,-)$ (or write  $\overline{c}(-,-)$) such that $(\widetilde{\Phi}_1\widehat{\otimes} \widetilde{\Phi}_2,   \widetilde{\mathcal{W}}\widehat{\otimes} \widetilde{\mathcal{N}} )$ is  linearly isomorphic to $(\widetilde{\Sigma},\widetilde{\mathcal{U}})$. Moreover, $\widetilde{\mathcal{N}}$  is uniquely  determined by $\widetilde{\mathcal{U}}$ up to projective equivalence.
 \end{itemize}
 \end{theorem}
 \begin{proof}
 These results are essentially due to Mackey and his heredes. One can refer to \cite{Ma}, \cite[Section 3.8]{Ma2}, \cite[Section 4.8]	{KT}, \cite[p.460]{KL},  \cite[pp.214-224, Theorems V.9, V.14, V.15, V.16]{Fa}.  Only the assertions (1) (3) did not  directly appear in the references. By our assumption (III),  and  the result  in \cite[p.279]{Bag}, we know that  $G/I_{G}(\Sigma)$ has  countable cardinality. Then  applying  the theorem 7.1 in \cite{Ma} to $\Pi$ gives   the assertion (1), and also  shows that $\nu_G(I_G(\Sigma)) >0$, or   $\mu_G(I_G(\Sigma)) > 0$.  Hence $I_G(\Sigma)$ is an open subgroup of $G$.
 \end{proof}

\begin{lemma}[{\cite[Theorem A]{AM}}]
The measurable cohomology group $ \Ha^{2}(I_{G}(\sigma)/H, \mathbb{T})$ is isomorphic to the continuous cohomology group $ \Ha^{2}(I_{G}(\sigma)/H, \mathbb{T})$.
\end{lemma}
 We can assume the $c(-,-)$ in Thereom  \ref{unitary} is a continuous $2$-cocycle. Let $(\sigma, U)$, $(\widetilde{\sigma}, \widetilde{ U})$,  $(\phi_1, \mathcal{W})$, $ (\phi_2, \mathcal{N})$ be the corresponding smooth  parts  of  $(\Sigma, \mathcal{U})$, $(\widetilde{\sigma}, \widetilde{\mathcal{U}})$,    $(\widetilde{\Phi}_1, \widetilde{\mathcal{W}})$, $(\widetilde{\Phi}_2, \widetilde{\mathcal{N}})$ respectively.
\begin{lemma}
$I_{G}(\sigma)=\left\{ g\in G \mid \sigma^g \simeq \sigma\right\}=I_{G}(\Sigma)$.
\end{lemma}
\begin{proof}
For  $g\in I_{G}(\Sigma)$, as  $\sigma$, $\sigma^g$ are the smooth parts of $\Sigma$, $\Sigma^g$ respectively,  $\sigma\simeq \sigma^g$. Conversely if $g\in I_G(\sigma)$,  by Lmm. \ref{simlem}(2) we obtain $g\in I_G(\Sigma)$.
\end{proof}
As a consequence, $\delta_{I_G(\sigma)\setminus G}^{1/2} =1$.
\begin{lemma}
$\cInd_{I_{G}(\sigma)}^G \widetilde{\sigma} \simeq \pi$.
\end{lemma}
\begin{proof}
By Lmm.\ref{K1in}, $\pi$ is just the smooth part of $\Pi$, and $\pi$ is an irreducible representation.
\end{proof}
\begin{lemma}
$(\widetilde{\sigma}, \widetilde{U})$ is an admissible representation of $I_{G}(\sigma)$.
\end{lemma}
\begin{proof}
Let $K$  be an open compact subgroup of $G$,  and  let $ \Delta$ be a complete set of representatives for $I_G(\sigma) \setminus G/K$.   By Lmm.\ref{therestriction1},  $\Res_{K}^{G} \pi \simeq \oplus_{s\in \Delta}\cInd_{[I_G(\sigma)]_s\cap K}^K \widetilde{\sigma}^{s}$. Since $\dim \pi^K< +\infty$,  each $m_{K\cap [I_G(\sigma)]_{s}}( \widetilde{\sigma})^{s}, \mathbb{C}) $ is finite or zero, in particular $m_{K\cap I_G(\sigma)}(\widetilde{\sigma}, \mathbb{C}) <+\infty$, which implies the result.
\end{proof}
Notice that $\widetilde{U}$ is the $I_G(\sigma)$-smooth part of $\widetilde{\mathcal{U}}$, not just the $H$-smooth part.

\begin{lemma}
$\Res_H^G \pi$ is semi-simple.
\end{lemma}
\begin{proof}
By Lmm.\ref{therestriction1}(1), $\widetilde{\sigma} \hookrightarrow \cInd_{I_G(\sigma)}^G \widetilde{\sigma}$ as $I_G(\sigma)$-modules, consequently $\sigma \hookrightarrow \cInd_{I_G(\sigma)}^G \widetilde{\sigma}$ as $H$-modules.  The rest proof is similar to that of Theorem \ref{cliffordadmissible}(1).
\end{proof}
 Under the condition (IV), the restriction of the class $[c(-,-)]$ to some open compact subgroup $K$ of $I_{G}(\sigma)$ is trivial, which guarantees that $\mathcal{W} \neq 0$, $\mathcal{N}\neq 0$. Finally we can conclude:
\begin{lemma}\label{inu}
\begin{itemize}
\item[(1)] $\Res_{H}^{G} \pi$ is semi-simple.
\item[(2)] There exists an  integer  $m=1, \cdots, n$, or infinite, such that $\Res_{H}^{G} \pi\simeq \oplus_{\sigma\in \mathcal{R}_{H}(\pi) }m\sigma$.
\item[(3)] Let $(\sigma, U) $ be an irreducible  constituent  of $\Res_{H}^{G}\pi$. Then:
 \begin{itemize}
 \item[(a)]$I_{G}(\sigma)=\left\{ g\in G \mid \sigma^g \simeq \sigma\right\}=I_{G}(\Sigma)$,
\item[(b)] $(\widetilde{\sigma}, \widetilde{U})$ is just the  isotypic component $m\sigma$ of $\sigma$ in $\Res_{H}^{G} \pi$.
\end{itemize}
\item[(4)] $\pi\simeq \cInd_{I_{G}(\sigma)}^{G} \widetilde{\sigma}\simeq \Ind_{I_{G}(\sigma)}^{G} \widetilde{\sigma}$.
\item[(5)] $(\phi_1, \mathcal{W})$, $ (\phi_2, \mathcal{N})$   are irreducible,  projective preunitary smooth  representations of $I_{G}(\sigma)$.
\item[(6)]  $(\phi_1\otimes\phi_2,  \mathcal{W}\otimes  \mathcal{N} )$ is  linearly isomorphic to $(\widetilde{\sigma},\widetilde{U})$ as $I_{G}(\sigma)$-modules. Moreover, $\phi_1$, $\phi_2$  are uniquely  determined by  $\widetilde{\sigma}$ up to projective  equivalence.
\end{itemize}
\end{lemma}
\begin{proof}
Parts (1)(3)(a) are proved above. For   (2): Assume $\widetilde{\sigma}|_{H} \simeq m_1 \sigma$. If the $m$ in Theorem \ref{unitary} is finite, $[\widetilde{\Sigma}^{\infty}]|_{H}$ is an admissible representation of $H$, so is $\widetilde{\sigma}|_{H}$. By Lmm.\ref{simlem}(2), $m=m_1$. If $m=\infty$, and $m_1<+\infty$, then $\widetilde{\sigma}$ is an admissible representation of $H$; applying the same lemma again, we get $m=m_1$, a contradiction. Hence $m_1=m=\infty$. Parts (3)(b),(4) can be deduced from $\pi\simeq  \cInd_{I_{G}(\sigma)}^G \widetilde{\sigma}$,  similar to the proofs of theorem \ref{cliffordadmissible}. For (6):   for any $w\in \mathcal{W}, u\in \mathcal{N}$,
let $U_w, U_u$,   $\chi_u$, $\chi_w$ be the corresponding notations in Definition \ref{thedePro} for $w, u$.  Let $K\subseteq U_w\cap U_u$ be an open compact subgroup of $I_{G}(\sigma)$. Then $\chi_w\otimes \chi_u$ is a character of $K$, which is trivial on certain open compact subgroup $K_0 $ of $K$. So $w\otimes u \in \widetilde{U}$. By irreducibility, $ \mathcal{W}\otimes  \mathcal{N}=\widetilde{U}$. Part (5) can be obtained by using   the admissible conditions.
\end{proof}
\subsubsection{}Our next propose  is to give  a smooth version of the main theorem in \cite[p.283]{Bag} for later use.
Some definitions in this text are different from Baggett's in \cite{Bag}.  So  we will rewrite   some results in that paper. Note that the  open subgroup $I_G(\sigma)$ of $G$ is second countable. Let $X=\frac{I_G(\sigma)}{H}$. By Lemma \ref{crossse}, there exists a continuous cross section $\kappa: X\longrightarrow I_G(\sigma)$.

Let $L^2(X, \mathcal{U})$ be the Hilbert space of measurable, $\mathcal{U}$-valued, square-integrable functions on $X$. By \cite[pp.281-282]{Bag}, there exists an isometry $\alpha$ from $L^2(X, \mathcal{U})$ onto $\mathfrak{Ind}_{H}^{I_G(\Sigma)} \mathcal{U}$;  the map  $\alpha$ is given as follows: for $x\in X$, $h\in H$, $F\in L^2(X, \mathcal{U})$, $\alpha(F)(h\kappa(x))=\Sigma(h) F(x)$.  Moreover through the isometry $\alpha^{-1}$,  the action of $G$ on $\mathfrak{Ind}_{H}^{I_G(\Sigma)} \mathcal{U}$ can be transferred  onto $L^2(X, \mathcal{U})$ in the following way:  for $ F\in L^2(X, \mathcal{U})$, $x\in X$, $g\in I_G(\Sigma)$ with the image $\dot{g}\in X$,
$$[g\cdot F](x)=\alpha^{-1}[ g\cdot \alpha(F)](x)=g\cdot \alpha(F)(\kappa(x))=\alpha(F)(\kappa(x)g)=\alpha(F)[\kappa(x)g \kappa(x\dot{g})^{-1} \cdot \kappa(x\dot{g})]$$ $$=\Sigma(\kappa(x)g \kappa(x\dot{g})^{-1} ) \alpha(F)( \kappa(x\dot{g}))=\Sigma(\kappa(x)g \kappa(x\dot{g})^{-1} ) F(x\dot{g}).$$
As  Hilbert spaces, $\widetilde{\mathcal{W}} \widehat{\otimes} \mathfrak{Ind}_{H, c^{-1}(-,-)}^{I_G(\sigma), c^{-1}(-,-)}  \mathbb{C}\simeq \mathcal{U}  \widehat{\otimes} L^2(X) \simeq  L^2(X, \mathcal{U}) \simeq \mathfrak{Ind}_H^{I_G(\sigma)} \mathcal{U}$. By following \cite[p.283]{Bag}, we can give a  composite isomorphism $\beta$ as follows: for $u\in \widetilde{\mathcal{W}}$, $F\in \mathfrak{Ind}_{H, c^{-1}(-,-)}^{I_G(\sigma), c^{-1}(-,-)}  \mathbb{C}$, $x\in X$,  let  $\beta(u\otimes F)(\kappa(x))=
 F(\kappa(x)) \cdot \widetilde{\Phi_1}(\kappa(x))(u)$.
\begin{theorem}[{\cite[p.283, Theorem]{Bag}}]
As unitary representations of $I_G(\Sigma)$,   $(\widetilde{\Phi_1} \widehat{\otimes} \mathfrak{Ind}_{H, c^{-1}(-,-)}^{I_G(\Sigma), c^{-1}(-,-)}  1, \widetilde{\mathcal{W}} \otimes \mathfrak{Ind}_{H, c^{-1}(-,-)}^{I_G(\Sigma), c^{-1}(-,-)}  \mathbb{C})\stackrel{\beta}{\simeq}(\mathfrak{Ind}_H^{I_G(\Sigma)} \Sigma,\mathfrak{Ind}_H^{I_G(\Sigma)} \mathcal{U})$.
\end{theorem}
\begin{proof}
Let us write $\psi=\mathfrak{Ind}_{H, c^{-1}(-,-)}^{I_G(\Sigma), c^{-1}(-,-)} 1$, and $\Psi=\mathfrak{Ind}_H^{I_G(\Sigma)} \Sigma$.  Keep the above notations. For $h_1\in H$, $y\in X$, we have:
 \begin{align*}
&  \Psi(h_1\kappa(y))\beta(u\otimes F)(\kappa(x))\\
 & = [\beta(u\otimes F)]( \kappa(x)h_1\kappa(y))\\
&  = [\beta(u\otimes F)]\big( \kappa(x)h_1\kappa(y) \kappa(xy)^{-1} \cdot \kappa(xy)\big)\\
& = \Sigma\big(\kappa(x)h_1\kappa(y) \kappa(xy)^{-1}\big)[\beta(u\otimes F)](  \kappa(xy))\\
 &=F\big(\kappa(xy)\big)\cdot \widetilde{\Phi_1}\big(\kappa(x)h_1\kappa(y) \kappa(xy)^{-1}\big)\widetilde{\Phi_1}\big(\kappa(xy)\big)(u)\\
  &=c(1, xy)F\big(\kappa(xy)\big)\cdot \widetilde{\Phi_1}\big(\kappa(x)h_1\kappa(y)\big)(u)\\
    &=c\big (\kappa(x)h_1 \kappa(y)\kappa(xy)^{-1} , \kappa(xy) \big)  1_{\mathbb{C}}\big(\kappa(x)h_1 \kappa(y)\kappa(xy)^{-1} \big)F\big(\kappa(xy)\big)\cdot \widetilde{\Phi_1}\big(\kappa(x) h_1\kappa(y)\big)(u)\\
    &= F\big([\kappa(x) h_1 \kappa(y)\kappa(xy)^{-1}] \kappa(xy)\big) \cdot \widetilde{\Phi_1}\big(\kappa(x)h_1\kappa(y)\big)  (u)\\
    &= F\big(\kappa(x) h_1 \kappa(y)\big) \cdot \widetilde{\Phi_1}\big(\kappa(x)h_1\kappa(y)\big)  (u)\\
     &=c^{-1}\big(\kappa(x), h_1\kappa(y)\big) F\big(\kappa(x) h_1 \kappa(y)\big) \cdot c\big(\kappa(x), h_1\kappa(y)\big)\widetilde{\Phi_1}\big(\kappa(x)h_1\kappa(y)\big)  (u)\\
& =[\psi(h_1\kappa(y))F](\kappa(x)) \cdot \widetilde{ \Phi_1}(\kappa(x))[\widetilde{ \Phi_1}(h_1\kappa(y))u]\\
&=\beta[\widetilde{\Phi_1}\big(h_1\kappa(y)\big)u\otimes \psi\big(h_1\kappa(y)\big)F](\kappa(x)).
\end{align*}
The remainder of the argument is analogous to that in \cite{Bag},  and we will not reproduce here.
\end{proof}
 The next result is our main  consequence of Baggett \cite{Bag}.
 \begin{corollary}\label{semisimplein}
As smooth $I_G(\sigma)$-modules, $ \phi_1\otimes \cInd_{H, c^{-1}(-,-)}^{I_G(\sigma), c^{-1}(-,-)}  1\simeq  \cInd_H^{I_G(\sigma)} \sigma$.
\end{corollary}
\begin{proof}
 By use of Remark \ref{zerospace}, and the above expression of $\beta$, we see that $\beta$ sends $\widetilde{\phi_1}\otimes \cInd_{H, c^{-1}(-,-)}^{I_G(\sigma), c^{-1}(-,-)}  1$ into $\cInd_H^{I_G(\sigma)} \sigma$.
For any open compact subgroup $K$ of $I_{G}(\sigma)$, let $ \Delta$ be a complete set of representatives for $H \setminus I_G(\sigma)/K$. According to Lmm.\ref{decomMa},
$ \Res_{K}^{I_G(\sigma)} [\cInd_{H, c^{-1}(-,-)}^{I_G(\sigma), c^{-1}(-,-)} 1]\simeq \oplus_{s\in \Delta} \cInd_{H_s\cap K, c^{-1}(-,-)}^{K, c^{-1}(-,-)} [1^s]_{\chi_s}$, where  for $k\in H_s\cap K $, $[1^s]_{\chi_s}(k)= \chi_s(k)^{-1}= c(ks^{-1},s)  c^{-1}(s, ks^{-1})$. Note that  $c^{-1}(k_1, k_2)=\chi_s^{-1}(k_1) \chi_s(k_2)^{-1} \chi_s(k_1k_2)$,  for $k_1,k_2\in  H_s\cap K$.
 Similarly, $\Res_{K}^{I_G(\sigma)}   [\phi_1\otimes \cInd_{H, c^{-1}(-,-)}^{I_G(\sigma), c^{-1}(-,-)} 1 ]\simeq \oplus_{s\in \Delta}\phi_1\otimes    \cInd_{H_s\cap K, c^{-1}(-,-)}^{K, c^{-1}(-,-)}[1^s]_{\chi_s}$. Now $\beta$ sends
 $  \phi_1\otimes    \cInd_{H_s\cap K, c^{-1}(-,-)}^{K, c^{-1}(-,-)}[1^s]_{\chi_s}$ into $ \cInd_{H_s\cap K}^{K}  (\phi_1)_{\chi_s}$, here $c(-,-)_{\chi_s}|_{ (H_s\cap K) \times (H_s\cap K)}=1$.  For $k=s^{-1} hs \in H_s\cap K$,
 \begin{align*}
 & (\phi_1)_{\chi_s}(s^{-1}hs)= \phi_1(s^{-1}hs) \chi_s^{-1}(s^{-1}hs)\\
  &= \phi_1(s^{-1} h) \phi_1(s) c^{-1}(s^{-1}h,s)\chi_s^{-1}(s^{-1}hs)\\
 & =\phi_1(s^{-1}) \phi_1(h) \phi_1(s) c^{-1}(s^{-1}, h)  c^{-1}(s^{-1}h,s)\chi_s^{-1}(s^{-1}hs) \\
 &= \phi_1(s)^{-1} \phi_1(h) \phi_1(s) c(s, s^{-1})c^{-1}(s^{-1}, h)  c^{-1}(s^{-1}h,s)\chi_s^{-1}(s^{-1}hs)\\
 &=  \phi_1(s)^{-1} \phi_1(h) \phi_1(s)   c(s, s^{-1}h)c^{-1}(s^{-1}h,s)\chi_s^{-1}(s^{-1}hs)\\
 &=\phi_1(s)^{-1} \phi_1(h) \phi_1(s).
      \end{align*}
Therefore  $(\phi_1)_{\chi_s}|_{K\cap H_s} \simeq \phi_1^s|_{K\cap H_s} \simeq \sigma^s|_{K\cap H_s}$. So it reduces to show  the compact case.
 By  \cite{AM} or Remark \ref{trivalK}, we assume that $K$ is much small so that the restriction of $[c(-,-)]$ to $K$ is trivial.  For simplicity,  modifying the action of $I_{G}(\sigma)$ by a continuous function, we may assume $c(-,-)|_{K\times K}=1$.  Assume
  $\phi_1|_{K}  \simeq \phi_1^s|_{K} \simeq \oplus_{\rho\in \hat{K}} m_{\rho} \rho$, for some $m_{\rho}<+\infty$.   Finally it reduces to show $\rho \otimes \cInd_{H\cap K}^{K} 1 \simeq \cInd_{H\cap K}^{K} \rho$.  Since $\rho$ is a unitary representation of finite dimension, $ \cInd_{H\cap K}^{K} 1 \simeq [\mathfrak{Ind}_{H\cap K}^{K} 1]^{\infty}$, $\rho \otimes  \cInd_{H\cap K}^{K} 1 \simeq [\rho \otimes \mathfrak{Ind}_{H\cap K}^{K} 1]^{\infty}$, and  $\cInd_{H\cap K}^{K} \rho \simeq [\mathfrak{Ind}_{H\cap K}^{K} \rho]^{\infty}$. By following Baggett's proof of the main result ( or cf. \cite[Theorem 2.8.6]{KT}), $\rho \otimes \mathfrak{Ind}_{H\cap K}^{K} 1
 \simeq \mathfrak{Ind}_{H\cap K}^{K} \rho$, so the result holds.
\end{proof}
\subsubsection{Semi-simple case}
\begin{lemma}\label{semiInd}
If assume the complementary condition (V):  for any $(\Sigma, \mathcal{W}) \in \widehat{H}$, the cardinality of $\mathcal{O}_{\Sigma}=\{ \Pi\in \widehat{G} \mid m_H(\Pi, \Sigma) \neq 0\}$ is countable, then $\mathfrak{Ind}_H^G\Sigma$, $\mathfrak{Ind}_H^{I_G(\Sigma)} \Sigma$ both are  semi-simple, and $\mathfrak{Ind}_H^G \Sigma\simeq \widehat{\oplus}_{\Pi\in \mathcal{O}_{\Sigma}}  m(\Pi) \Pi$, for $m(\Pi)=m_H(\Pi, \Sigma)$.
\end{lemma}
 \begin{proof}
 See \cite[p.500, Lmm.9.8]{KL}.
 \end{proof}

 \begin{corollary}\label{semicInd}
Let $\pi$ denote the smooth part of an element $\Pi$ in    $\mathcal{O}_{\Sigma}$. Then $\mathfrak{Ind}_H^G \Sigma\hookrightarrow  \prod_{\Pi\in \mathcal{O}_{\Sigma}}  \Pi^{m(\Pi)}$, and
$\cInd_H^G \sigma\hookrightarrow \prod_{\Pi\in \mathcal{O}_{\Sigma}} \pi^{m(\Pi)}$, $\prod_{\Pi\in \mathcal{O}_{\Sigma}}  \pi^{m(\Pi)}\twoheadrightarrow   \Ind_H^G \sigma$.
\end{corollary}
\begin{proof}
For the last assertion, we can consider the contragredient dual of the second inclusion, and obtain $\prod_{\Pi\in \mathcal{O}_{\Sigma}} \check{\pi}^{m(\Pi)}  \twoheadrightarrow   \Ind_H^G \check{\sigma}$; replacing  both sides by their complex conjugate representations give the result.
 \end{proof}
However we can not ensure that $\cInd_H^G \sigma$ is a semi-simple smooth representation. To achieve that situation, we can strengthen the condition (V), and  in addition assume that there exists at least one $\Sigma$ such that $m(\Pi)=m_H(\Pi, \Sigma)$ is finite. We take the corresponding notations in Theorem \ref{unitary} for  this $\Pi$.  Then $m(\Pi)=\dim \widetilde{\Phi_2} $.  By the results of \cite[pp.487-488]{KL} or \cite{Ma1}, the projective $\overline{c}$-representation $(\widetilde{\Phi_2}, \widetilde{\mathcal{N}})$ of $I_{G}(\Sigma)$ or $\frac{ I_{G}(\Sigma)}{H}$, corresponding to an ordinary irreducible unitary representation of $\frac{I_G(\Sigma)}{H}[\overline{c}]$, where $\frac{I_G(\Sigma)}{H}[\overline{c}]$, a locally compact group(cf. \cite[p.270]{Ma1}), is a central extension  of $\frac{I_G(\Sigma)}{H}$ by $\mathbb{T}$ attached to the $2$-cocycle $\overline{c}(-,-)$. Under our assumptions, $\mathfrak{Ind}_{H, c^{-1}(-,-)}^{I_G(\sigma), c^{-1}(-,-)} \Sigma$ contains a finite dimensional discrete irreducible  component. By the discussion in \cite[p.487]{Bag},  the right regular unitary representation of $\frac{I_G(\Sigma)}{H}[\overline{c}]$ contains  finite dimensional discrete summands. Applying the  corollary in \cite[p.120]{Ma},  we know that $\frac{I_G(\Sigma)}{H}[\overline{c}]$ is a compact group.  Hence $\cInd_{H, c^{-1}(-,-)}^{I_G(\sigma), c^{-1}(-,-)}  1$ is a semi-simple representation, so is $\cInd_H^{I_G(\sigma)} \sigma$ by Cor.\ref{semisimplein}.
  \begin{corollary}\label{semisimplerep}
Under the condition (V), assume  that there exists at least one $\Sigma$ such that $m(\Pi)=m_H(\Pi, \Sigma)$ is finite. Then  $\cInd_H^{I_G(\sigma)} \sigma$ is semi-simple; consequently, $\cInd_H^G \sigma$ is semi-simple as well.
 \end{corollary}

\section{The theta representation I }\label{stronglygraphreI}
In the next sections \ref{stronglygraphreI}, \ref{stronglygraphreII}, \ref{stronglygraphreIII},   we will let $G_1, G_2$ designate locally profinite groups with normal subgroups $H_1$ and $H_2$  respectively such that   $G_1/H_1 \simeq G_2/H_2$ under a mapping $\gamma$  with  the graph $\Gamma/{(H_1\times H_2)}$  of $ (G_1 \times G_2) /{(H_1 \times H_2)}$. Assume that all irreducible smooth representations of $G_i$, $H_i$ are admissible, $i= 1, 2$, and let $(\rho, W)$ be a smooth representation of $\Gamma$.

In this section, assume $H_1$ is an open subgroup of $G_1$,  $G_1/H_1$ is abelian,  and $\mathcal{R}_{H_i}(\pi_i) \neq \emptyset$, for any $\pi_i \in \Irr(G_i)$. Set $\pi= \cInd_{\Gamma}^{G_1 \times G_2} \rho$, $V= \cInd_{\Gamma}^{G_1 \times G_2} W$. Our  main result of this section is the following:
\begin{theorem}\label{graphrepresentation}
  \begin{itemize}
  \item[(1)]  If  the representation $\Res_{H_1 \times H_2}^{\Gamma} \rho$  of $H_1 \times H_2$ is a theta  representation, then so is  the representation $\cInd_{\Gamma}^{G_1 \times G_2} \rho$ of $G_1 \times G_2$.
   \item[(2)] If the representation $\cInd_{\Gamma}^{G_1 \times G_2} \rho$ of $G_1 \times G_2$ is a  theta representation of finite length,
    then the representation   $\Res_{H_1 \times H_2}^{\Gamma} \rho$ of $H_1 \times H_2$ satisfied the graphic property.  Moreover for each $i=1, 2$,  assume (a) $\Rep(H_i)$ is locally noetherian, (b) for any  $\pi_1\otimes \pi_2\in \mathcal{R}_{G_1\times G_2}(\pi)$,   $\Ext_G^1(\pi_i, \pi_i)=0$, then  $\Res_{H_1 \times H_2}^{\Gamma} \rho$ of $H_1 \times H_2$  is a  theta representation of finite length.
    \end{itemize}
\end{theorem}
We shall prove this theorem in the following two subsections.
\subsection{}\label{part1}

\begin{lemma}\label{principallemmedegraphe}
 In the above  theorem \ref{graphrepresentation}(1), if    $(\pi_1, V_1) \in \Irr(G_1)$ and $(\pi_2, V_2) \in \Irr(G_2)$, such that $\pi_1 \otimes \pi_2 \in \mathcal{R}_{G_1  \times G_2}(\pi)$, then:
 \begin{itemize}
\item[(1)] For any  $\sigma \in \mathcal{R}_{H_1}(\pi_1)$, there exists a unique element $\delta \in \mathcal{R}_{H_2}(\pi_2)$ such that $\sigma\otimes \delta\in \mathcal{R}_{H_1 \times H_2}(\rho)$.
\item[(2)] Let  $\widetilde{H_1}=\left\{ g_1 \in G_1\mid \sigma^{g_1} \simeq \sigma\right\}$ and $\widetilde{H_2}=\left\{ g_2 \in G_2 \mid \delta^{g_2} \simeq \delta \right\}$.  Then $\gamma$ induces a bijective map from $\widetilde{H_1}/H_1$ to $\widetilde{H_2}/H_2$ with the graph   $[\Gamma \cap (\widetilde{H_1} \times \widetilde{H_2})]/{(H_1 \times H_2)}$, and  a bijective map from $G_1/{\widetilde{H_1}}$ to $G_2/{\widetilde{H_2}}$ with the graph $[\Gamma\cdot(\widetilde{H_1} \times \widetilde{H_2})] /{(\widetilde{H_1} \times \widetilde{H_2})}$.
\end{itemize}
\end{lemma}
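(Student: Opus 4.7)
The plan is to combine Frobenius reciprocity (Corollary~\ref{frobeniusreciprocityregular}), the Clifford--Mackey decomposition (Theorem~\ref{cliffordadmissible}), and a key $\Gamma$-stability of the graph $\mathcal R_{H_1\times H_2}(\rho)$.

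First I would note that $\Gamma$ is an open subgroup of $G_1\times G_2$ (it contains the open subgroup $H_1\times H_2$) and that $\Res_\Gamma^{G_1\times G_2}(\pi_1\otimes\pi_2)$ is admissible (restriction of an admissible representation to an open subgroup), so Corollary~\ref{frobeniusreciprocityregular} yields
\[
0\neq \Hom_{G_1\times G_2}\bigl(\pi,\,\pi_1\otimes\pi_2\bigr)\simeq \Hom_\Gamma\bigl(\rho,\,\Res_\Gamma^{G_1\times G_2}(\pi_1\otimes\pi_2)\bigr).
\]
Restricting a nonzero element of the right-hand side to $H_1\times H_2$ and using the Clifford decompositions $\Res_{H_i}^{G_i}\pi_i\simeq \bigoplus_{\sigma\in\mathcal R_{H_i}(\pi_i)} m_i\,\sigma$ from Theorem~\ref{cliffordadmissible}, one obtains $\sigma_0\otimes\delta_0\in \mathcal R_{H_1\times H_2}(\rho)$ for \emph{some} $\sigma_0\in\mathcal R_{H_1}(\pi_1)$ and $\delta_0\in\mathcal R_{H_2}(\pi_2)$.

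The crucial step is to observe that $\mathcal R_{H_1\times H_2}(\rho)$ is stable under the twist $(\sigma,\delta)\mapsto(\sigma^{g_1},\delta^{g_2})$ for $(g_1,g_2)\in\Gamma$: the intertwiner $\rho((g_1,g_2))$ on $V$ realizes an isomorphism $\rho\simeq\rho^{(g_1,g_2)}$ of $\Gamma$-representations, a fortiori of $H_1\times H_2$-representations. Now given $\sigma_\alpha\in\mathcal R_{H_1}(\pi_1)$, Theorem~\ref{cliffordadmissible}(2) supplies $g_1\in G_1$ with $\sigma_\alpha\simeq\sigma_0^{g_1}$; choosing any $g_2\in G_2$ with $\gamma(\overline{g_1})=\overline{g_2}$ gives $(g_1,g_2)\in\Gamma$, and the twist stability yields $\sigma_\alpha\otimes\delta_0^{g_2}\in\mathcal R_{H_1\times H_2}(\rho)$ with $\delta_\alpha:=\delta_0^{g_2}\in\mathcal R_{H_2}(\pi_2^{g_2})=\mathcal R_{H_2}(\pi_2)$. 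Uniqueness of $\delta_\alpha$ is immediate from the theta hypothesis on $\rho|_{H_1\times H_2}$ (injectivity of the projection $p_1$).

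For part~(2), I would establish the equivalence that for $(g_1,g_2)\in\Gamma$,
\[
\sigma_\alpha^{g_1}\simeq\sigma_\alpha \iff \delta_\alpha^{g_2}\simeq\delta_\alpha.
\]
If $\sigma_\alpha^{g_1}\simeq\sigma_\alpha$, the twist stability gives $\sigma_\alpha\otimes\delta_\alpha^{g_2}\simeq \sigma_\alpha^{g_1}\otimes\delta_\alpha^{g_2}\in\mathcal R_{H_1\times H_2}(\rho)$, so the injectivity of $p_1$ from part~(1) forces $\delta_\alpha^{g_2}\simeq\delta_\alpha$; the converse uses the symmetric injectivity of $p_2$. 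This identifies $\Gamma\cap(\widetilde{H_1}\times\widetilde{H_2})/(H_1\times H_2)$ as the graph of the $\gamma$-induced bijection $\widetilde{H_1}/H_1\to\widetilde{H_2}/H_2$. The second bijection $G_1/\widetilde{H_1}\to G_2/\widetilde{H_2}$ then arises by passing to further quotients, and its graph is identified with $\Gamma(\widetilde{H_1}\times\widetilde{H_2})/(\widetilde{H_1}\times\widetilde{H_2})$ by a direct coset manipulation.

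The main conceptual point is the $\Gamma$-stability of $\mathcal R_{H_1\times H_2}(\rho)$, which is the bridge between the Clifford--Mackey theory for $G_i\supset H_i$ and the theta structure on $\rho$. Once that is in place, everything else is an interlocked use of Clifford decompositions, Frobenius reciprocity, and the injectivity conditions $p_1$, $p_2$ supplied by the theta hypothesis on $\rho|_{H_1\times H_2}$.
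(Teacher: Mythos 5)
Your proposal is correct and follows essentially the same route as the paper: first Frobenius reciprocity for the open subgroup $\Gamma$ of $G_1\times G_2$ to produce some $\sigma_0\otimes\delta_0\in\mathcal{R}_{H_1\times H_2}(\rho)\cap\mathcal{R}_{H_1\times H_2}(\pi_1\otimes\pi_2)$, then the $\Gamma$-stability of $\mathcal{R}_{H_1\times H_2}(\rho)$ (realized by $\rho((g_1,g_2))$) combined with Clifford conjugacy of constituents and the injectivity of $p_1$, $p_2$ from the theta hypothesis. The paper's own proof phrases the stability step as $\mathcal{R}_{H_1\times H_2}(\rho^{(t_\alpha,s_\alpha)})=\mathcal{R}_{H_1\times H_2}(\rho)$, but this is exactly your twist-stability observation, so the two arguments coincide.
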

\begin{proof}
 1) By Frobenius reciprocity,  as is easy to see that $\Hom_{\Gamma}(\rho, \pi_1 \otimes \pi_2) \neq 0$. A priori, we can find $\sigma_1 \otimes \delta_1 \in \mathcal{R}_{H_1 \times H_2}(\rho) \cap \mathcal{R}_{H_1 \times H_2}(\pi_1 \otimes \pi_2)$. By Theorem \ref{cliffordadmissible}, there is an element $t H_1 \in G_1/{H_1}$ such that $\sigma_1^t \simeq \sigma$. Let $\gamma(tH_1)=s H_2 \in G_2/{H_2}$ with $(t, s) \in \Gamma$. Then $\sigma \otimes \delta \simeq \sigma_1^t\otimes \delta_1^s \in \mathcal{R}_{H_1 \times H_2}(\rho^{(t, s)})=\mathcal{R}_{H_1 \times H_2}(\rho)$.
The uniqueness is clear.

2) Assume  $g_1 H_1 \in G_1/{H_1}$, and let $\gamma(g_1 H_1) =g_2 H_2 \in G_2/{H_2}$. We then have
$\sigma^{g_1} \otimes \delta^{g_2}  \in \mathcal{R}_{H_1 \times H_2}(\rho)$, which implies that $\sigma^{g_1} \simeq \sigma$ iff  $\delta^{g_2} \simeq \delta$, in other words,  $g_1 \in \widetilde{H_1}$ iff $g_2 \in \widetilde{H_2}$. This means that $\gamma$ maps $\widetilde{H_1}/{H_1}$ onto $\widetilde{H_2}/{H_2}$ with the graph $ [\Gamma \cap (\widetilde{H_1}\times \widetilde{H_2})]/{(H_1 \times H_2)}$ and  induces a bijective mapping $\overline{\gamma}$ from $G_1/{\widetilde{H_1}}$ to $G_2/{\widetilde{H_2}}$ with the graph  $[\Gamma \cdot(\widetilde{H_1} \times \widetilde{H_2})]/{(\widetilde{H_1} \times \widetilde{H_2})}$.
\end{proof}
\begin{lemma}\label{therestrictionfinitelengthII}
$\cInd_{H_2}^{G_2} (\rho_{\sigma} )\simeq (\cInd_{H_2}^{G_2} \rho)_{\sigma}$  as $H_1 \times G_2$-modules, for all $\sigma\in \Irr(H_1)$.
\end{lemma}
\begin{proof}
Assume $\sigma \in \mathcal{R}_{H_1}(\rho)$; otherwise both sides vanish. Write $\Pi= \cInd_{H_2}^{G_2} \rho$, and $ (\Pi)_{\sigma}= \cInd_{H_2}^{G_2} \rho_{\sigma}$. By Lmm. \ref{therestriction1}, $ (\Pi)_{\sigma}|_{H_1 \times H_2} =\oplus_{g_2\in \Delta_2}  [(\Pi)_{\sigma}](g_2)(\rho_{\sigma})$, and $\Pi_{\sigma}|_{H_1 \times H_2} \simeq  (\oplus_{g_2\in \Delta_2} \Pi(g_2)(\rho))_{\sigma}$, where $\Delta_2$ is a  set of coset representatives of $G_2/H_2$ in $G_2$. Since $\Hom_{H_1}(\Pi(g_2)(\rho), \sigma) \simeq \Hom_{H_1}(\rho, \sigma)$, we know that
 \begin{align}
 & (\oplus_{g_2\in \Delta_2} \Pi(g_2)(\rho))_{\sigma}  \simeq  \frac{\oplus_{g_2\in \Delta_2} \Pi(g_2)(\rho)}{\cap_{f\in \Hom_{H_1}\big(\oplus_{g_2\in \Delta_2}\Pi(g_2)(\rho), \sigma\big)} \Ker f} \label{equ1}\\
 & \simeq  \frac{\oplus_{g_2\in \Delta_2} \Pi(g_2)(\rho)}{\oplus_{g_2\in \Delta_2}\Pi(g_2)\Big( \cap_{f\in \Hom_{H_1}(\rho, \sigma)}  \Ker f \Big) } \simeq \oplus_{g\in \Delta_2}  \Pi(g_2)(\rho_{\sigma})\label{equ2}
   \end{align}
  Hence an $H_1 \times H_2$-morphism $\rho_{\sigma} \longrightarrow  (\cInd_{H_2}^{G_2}\rho)_{\sigma}$ comes. By Frobenius reciprocity, we obtain an $H_1 \times G_2$-morphism $ \cInd_{H_2}^{G_2} \rho_{\sigma} \longrightarrow  (\cInd_{H_2}^{G_2}\rho)_{\sigma}$, which  is a bijection by the above (\ref{equ1}) (\ref{equ2}).
\end{proof}

If  $\pi_1 \in \Irr(G_1)$,  $\sigma \prec \pi_1|_{H_1}$, we will   let  $\widetilde{\sigma}$ denote the irreducible representation of $\widetilde{H_1}=\left\{ g_1 \in G_1 \mid \sigma^{g_1} \simeq \sigma\right\}$ as  defined  in Theorem \ref{cliffordadmissible} (4) (b). Suppose $\pi_{\pi_1} \simeq \pi_1 \otimes \Theta_{\pi_1}$ as $G_1 \times G_2$-modules, and $\rho_{\sigma} \simeq \sigma \otimes \Theta_{\sigma}$ as $H_1 \times H_2$-modules. For the time being, we write   $\widetilde{\Gamma}=\Gamma \cdot \big( \widetilde{H_1} \times \widetilde{H_2}\big)$, $\widetilde{\rho}=\cInd_{\Gamma}^{\widetilde{\Gamma}} \rho$, and $\widetilde{\rho}_{\widetilde{\sigma}} \simeq \widetilde{\sigma}\otimes \Theta_{\widetilde{\sigma}}$ as $\widetilde{H_1}\times \widetilde{H_2}$-modules.
\begin{lemma}\label{therestriction2}
\begin{itemize}
\item[(1)] $\Theta_{\pi_1} \simeq \cInd_{\widetilde{H_2}}^{G_2} \Theta_{\widetilde{\sigma}}$ as $G_2$-modules.
\item[(2)] If  $\widetilde{\sigma}|_{H_1} \simeq m\sigma$,  then  there exists an embedding $\Theta_{\sigma} \hookrightarrow  \Theta_{\widetilde{\sigma}}|_{H_2}$ as $H_2$-modules.
    \item[(3)] If the above $m=1$, then $\Theta_{\sigma} \simeq   \Theta_{\widetilde{\sigma}}|_{H_2}$ as $H_2$-modules.
\end{itemize}
\end{lemma}
\begin{proof}
1) By  the above lemma, we have $\pi_{\widetilde{\sigma}} \simeq \widetilde{\sigma} \otimes \cInd_{\widetilde{H_2}}^{G_2}  \Theta_{\widetilde{\sigma}}$ as $\widetilde{H_1} \times G_2$-modules. By \cite[p.18]{BushH},   there exists a  $\widetilde{H_1} \times G_2$-morphism  $p: \pi_{\pi_1}\longrightarrow \pi_{\widetilde{\sigma}}$. Then a $G_1\times G_2$-morphism $ \Ind_{\widetilde{H_1}}^{G_1}p: \pi_{\pi_1}\longrightarrow \Ind_{\widetilde{H_1}}^{G_1}\pi_{\widetilde{\sigma}}$ follows.  By Lmm.\ref{quotientdedeuxgroupes} (2), we get a $G_2$-morphism $ \iotaup:\Theta_{\pi_1}\longrightarrow\cInd_{\widetilde{H_2}}^{G_2} \Theta_{\widetilde{\sigma}} $.   For any representation $(\sigmaup_2, U_2)$ of $G_2$, we  have
\begin{equation}\label{cisomorphis}
\Hom_{G_2}( \Theta_{\pi_1}, \sigmaup_2) \simeq  \Hom_{G_1 \times G_2}( \pi, \pi_1 \otimes \sigmaup_2)\simeq \Hom_{\widetilde{H_1} \times G_2}( \pi_{\widetilde{\sigma}}, \widetilde{\sigma} \otimes \sigmaup_2)\simeq \Hom_{G_2}\big( \cInd_{\widetilde{H_2}}^{G_2} \Theta_{\widetilde{\sigma}}, \sigmaup_2\big),
\end{equation}
compatible with the above $\iota$. In particular, if let  $\sigmaup_2= \Theta_{\pi_1}$, then we can find  a $G_2$-morphism $ \varrho$ from $\cInd_{\widetilde{H_2}}^{G_2} \Theta_{\widetilde{\sigma}}$ to $\Theta_{\pi_1}$ such that $ \varrho \circ \iotaup= 1$, So $\iotaup$ is injective. Applying  $\Hom_{G_2}\big( -, \sigmaup_2\big)$ to the short exact sequence  $ \Theta_{\pi_1}\stackrel{\iotaup}{\hookrightarrow }\cInd_{\widetilde{H_2}}^{G_2} \Theta_{\widetilde{\sigma}} \stackrel{\tauup}{\twoheadrightarrow } \frac{\cInd_{\widetilde{H_2}}^{G_2} \Theta_{\widetilde{\sigma}} }{\Im \iotaup}$ shows  that $ \Hom_{G_2}\big( \frac{\cInd_{\widetilde{H_2}}^{G_2} \Theta_{\widetilde{\sigma}} }{\Im \iotaup}, \sigmaup_2\big)=0$; hence $\iotaup$ is also surjective.\\
2) As $H_1 \times H_2$-modules, we have  $ \sigma \otimes \Theta_{\sigma} \simeq \rho_{\sigma}\simeq \frac{\rho}{\cap_{f\in \Hom_{H_1}(\rho, \sigma)} \Ker f }\simeq \frac{\rho}{\cap_{f\in \Hom_{H_1}(\rho, \widetilde{\sigma})} \Ker f }\hookrightarrow \frac{\widetilde{\rho}}{\cap_{f\in \Hom_{H_1}(\rho, \widetilde{\sigma})} \Ker f } \longrightarrow  \frac{\widetilde{\rho}}{\cap_{\widetilde{f}\in \Hom_{\widetilde{H_1}}(\widetilde{\rho} , \widetilde{\sigma})} \Ker \widetilde{f} }\simeq   \widetilde{\rho}_{\widetilde{\sigma}} \simeq  \widetilde{\sigma} \otimes \Theta_{\widetilde{\sigma}} $. So we get an  $H_1 \times H_2$-morphism $\kappa_{\sigma}: \sigma\otimes \Theta_{\sigma} \longrightarrow \sigma \otimes \Theta_{\widetilde{\sigma}}$, and then  an  $ H_2$-morphism $\kappa: \Theta_{\sigma} \longrightarrow \Theta_{\widetilde{\sigma}}$. For any smooth representation $(\sigmaup_2, W_2)$ of $H_2$, by Frobenius reciprocity, we  have
   \begin{equation}\label{sevaliso}
   \begin{aligned} \Hom_{H_2}( \Theta_{\widetilde{\sigma}}, \sigmaup_2) \simeq \Hom_{\widetilde{H_1}\times H_2}( \widetilde{\rho}_{\widetilde{\sigma}}, \widetilde{\sigma}\otimes \sigmaup_2)  \simeq \Hom_{\widetilde{H_1}\times H_2}( \widetilde{\rho}, \widetilde{\sigma}\otimes \sigmaup_2) \simeq \Hom_{H_1\times H_2}(\rho, \widetilde{\sigma}\otimes \sigmaup_2) \\\twoheadrightarrow
    \Hom_{H_1\times H_2}(\rho, \sigma\otimes \sigmaup_2) \simeq  \Hom_{H_1\times H_2}(\rho_{\sigma}, \sigma\otimes \sigmaup_2)\simeq \Hom_{H_2}(\Theta_{\sigma},  \sigmaup_2) , \end{aligned}
   \end{equation}
   i.e. we get a surjective morphism $ \Hom_{H_2}( \Theta_{\widetilde{\sigma}}, \sigmaup_2)
 \longrightarrow \Hom_{H_2}(\Theta_{\sigma},  \sigmaup_2) $ compatible with the above $\kappa$, so the result holds.\\
 3) In the above (\ref{sevaliso}), $ \Hom_{H_2}( \Theta_{\widetilde{\sigma}}, \sigmaup_2)
 \simeq\Hom_{H_2}(\Theta_{\sigma},  \sigmaup_2) $, so $\Theta_{\sigma} \simeq   \Theta_{\widetilde{\sigma}}|_{H_2}$ as $H_2$-modules.
   \end{proof}

\subsection*{The proof of Theorem \ref{graphrepresentation}(1)}
$ $
\\
 Step I. Suppose that both $\pi_1 \otimes \pi_2',  \pi_1 \otimes \pi_2 \in \mathcal{R}_{G_1 \times G_2}(\pi)$. Assume that $\Res_{H_1}^{G_1} \pi_1$, $\Res_{H_2}^{G_2} \pi_2$, $\Res_{H_2}^{G_2} \pi_2'$ all are multiplicity-free.
By virtue of  Frobenius reciprocity, we have
$\Hom_{G_1 \times G_2} (\pi, \pi_1 \otimes \pi_2)  \simeq \Hom_{\Gamma} (\rho, \pi_1 \otimes \pi_2)$, being equal to $ \Hom_{H_1 \times H_2}(\rho, \pi_1 \otimes \pi_2)^{\Gamma/{(H_1 \times H_2)}}$
 for the canonical action of $\Gamma/{(H_1 \times H_2)}$ on $\Hom_{H_1 \times H_2}(\rho, \pi_1 \otimes \pi_2)$.

Now
$\Hom_{H_1 \times H_2}(\rho, \pi_1 \otimes \pi_2) \hookrightarrow \prod_{i, j} \Hom_{H_1 \times H_2} (\rho, \sigma_i \otimes \delta_j) = \prod_{\sigma_{\alpha} \otimes \delta_{\alpha} \in \mathcal{R}_{H_1 \times H_2}(\rho)} \Hom_{H_1 \times H_2}(\rho, \sigma_{\alpha} \otimes \delta_{\alpha});$
every component  of the last term is of dimension one, and $\Gamma$ permutes transitively them. Hence
$1 \leq  m_{G_1 \times G_2} (\pi, \pi_1 \otimes \pi_2) \leq m_{H_1 \times H_2}(\rho, \sigma_{\alpha} \otimes \delta_{\alpha})=1$ as required.

 Let $(\sigma_1, \delta_1) \in \mathcal{R}_{H_1 \times H_2} (\rho) \cap \mathcal{R}_{H_1 \times H_2}(\pi_1 \otimes \pi_2) \textrm{ and } (\sigma_2, \delta_2) \in \mathcal{R}_{H_1 \times H_2} (\rho) \cap \mathcal{R}_{H_1 \times H_2}(\pi_1 \otimes \pi_2').$
 Then there exists an element $g_1 \in G_1$, such that $\sigma_2 \simeq \sigma_1^{g_1}$. If  we write $\gamma(g_1 H_1)= g_2 H_2 \in G_2/{H_2}$, then
 $\sigma_1^{g_1} \otimes \delta_1^{g_2} \simeq \sigma_2 \otimes \delta_1^{g_2} \in \mathcal{R}_{H_1 \times H_2}(\rho).$
 By the property of graph, we get $\delta_2 \simeq \delta_1^{g_2}.$
  Hence $\mathcal{R}_{H_1 \times H_2}(\pi_1 \otimes \pi_2) \cap \mathcal{R}_{H_1 \times H_2}(\pi_1 \otimes \pi_2') \cap \mathcal{R}_{H_1 \times H_2}(\rho) \neq \emptyset,$ and $\mathcal{R}_{H_2}(\pi_2) \cap \mathcal{R}_{H_2}(\pi_2') \neq \emptyset.$

By Lmm.\ref{principallemmedegraphe}, there exists  $\sigma \otimes \delta \in \mathcal{R}_{H_1 \times H_2}(\rho) \cap \mathcal{R}_{H_1 \times H_2}(\pi_1 \otimes \pi_2) \cap \mathcal{R}_{H_1 \times H_2}(\pi_1 \otimes \pi_2')$. Let  $\widetilde{H_1}=\left\{ g_1 \in G_1 \mid \sigma^{g_1} \simeq \sigma\right\}$, $\widetilde{H_2}=\left\{ g_2\in G_2 \mid \delta^{g_2} \simeq \delta\right\}$.  Let $\widetilde{\sigma} \in \mathcal{R}_{\widetilde{H_1}}(\pi_1)$, $\widetilde{\delta} \in \mathcal{R}_{\widetilde{H_2}}(\pi_2)$,  $\widetilde{\delta}' \in \mathcal{R}_{\widetilde{H_2}}(\pi_2')$, such that $\widetilde{\sigma}|_{H_1}\simeq \sigma$, $\widetilde{\delta}|_{H_2}\simeq \delta \simeq \widetilde{\delta}'|_{H_2}$. By Clifford-Mackey theory, $\widetilde{\delta}' \simeq \widetilde{\delta} \otimes \nu $ for certain character $\nu$ of $\widetilde{H_2}/H_2$.
Now let us denote by $\widetilde{\rho}:=\cInd_{\Gamma \cap (\widetilde{H_1} \times \widetilde{H_2})}^{\widetilde{H_1} \times \widetilde{H_2}} \rho$. Then $1=m_{G_1 \times G_2}(\pi, \pi_1 \otimes \pi_2)=m_{\widetilde{H_1} \times G_2}(\pi, \widetilde{\sigma} \otimes \pi_2) = m_{\widetilde{H_1} \times \widetilde{H_2}}( \widetilde{\rho}, \widetilde{\sigma} \otimes \pi_2)$.  For any non-zero $f\in \Hom_{\widetilde{H_1} \times \widetilde{H_2}}( \widetilde{\rho}, \widetilde{\sigma} \otimes \pi_2)$, it also lies in $\Hom_{H_1 \times \widetilde{H_2}}( \widetilde{\rho}, \widetilde{\sigma} \otimes \pi_2)$, which is isomorphic to $\Hom_{H_1 \times H_2}(\rho, \sigma \otimes \pi_2)$. So the image of $f$ belongs to $\widetilde{\sigma}\otimes \widetilde{\delta}$. Therefore $1=m_{\widetilde{H_1} \times \widetilde{H_2}}(\widetilde{\rho}, \widetilde{\sigma} \otimes \widetilde{\delta})=m_{\Gamma \cap (\widetilde{H_1} \times \widetilde{H_2})}(\rho, \widetilde{\sigma} \otimes \widetilde{\delta})=m_{H_1 \times H_2}(\rho, \sigma \otimes \delta)=1.$ Similarly, we have $ m_{\Gamma \cap (\widetilde{H_1} \times \widetilde{H_2})}(\rho, \widetilde{\sigma} \otimes \widetilde{\delta}')=m_{H_1 \times H_2}(\rho, \sigma \otimes \delta)=1$.

For simplicity, we assume that $\widetilde{\sigma}|_{H_1} =\sigma$, $\widetilde{\sigma}|_{H_2} =\delta =\widetilde{\delta}'|_{H_2}$. A  non-trivial element  $T\in \Hom_{H_1  \times H_2}(\rho, \sigma \otimes \delta)$ can  extend uniquely to  $\widetilde{T} \in \Hom_{\Gamma \cap (\widetilde{H_1} \times \widetilde{H_2})}(\rho, \widetilde{\sigma} \otimes \widetilde{\delta})$ and to $\widetilde{T}' \in \Hom_{\Gamma \cap (\widetilde{H_1} \times \widetilde{H_2})}(\rho, \widetilde{\sigma} \otimes \widetilde{\delta}')$.   We may and do suppose   $T=\widetilde{T}=\widetilde{T}'$. Let  $(g,h) \in \Gamma \cap (\widetilde{H_1} \times \widetilde{H_2})$, $v\in W$.  Then
$$\widetilde{T}(\rho(g,h) v)= T(\rho(g,h) v)=\widetilde{\sigma} \otimes \widetilde{\delta}(g,h) T(v)$$
and
$$\widetilde{T}'(\rho(g,h) v)= T(\rho(g,h) v)=\widetilde{\sigma} \otimes \widetilde{\delta}'(g,h) T(v)=\widetilde{\sigma} \otimes \widetilde{\delta}(g,h) T(v)\nu(h).$$
It turns out that $\nu(h)=1$. As  the map $\gamma: [(\widetilde{H_1} \times \widetilde{H_2}) \cap \Gamma ]/{(H_1 \times H_2)} \longrightarrow  \widetilde{H_2}/{H_2}$ is surjective, it is clear that $\nu=1$, and $\widetilde{\delta}' \simeq \widetilde{\delta}$. By  Theorem \ref{cliffordadmissible} (6), $\pi_2 \simeq \cInd_{\widetilde{ H_2}}^{G_2} \widetilde{\delta}$,  $\pi_2' \simeq \cInd_{\widetilde{ H_2}}^{G_2} \widetilde{\delta}'$.   It then follows that    $\pi_2 \simeq \pi_2'$.
Making use of the results of Lmm.\ref{therestriction2} and Prop.\ref{finitegenerated} (1), we can assert that   $\pi_{\pi_1}$ is a finitely generated representation of $G_1 \times G_2$.\ \\
 Step II: the general case. Suppose now that $\pi_1 \otimes \pi_2\in \mathcal{R}_{G_1 \times G_2}(\pi) $  and $\pi_1 \otimes \pi_2' \in \mathcal{R}_{G_1 \times G_2}(\pi)$. Similarly as Step I, we can find $\sigma \otimes \delta \in \mathcal{R}_{H_1 \times H_2}(\rho) \cap \mathcal{R}_{H_1 \times H_2}(\pi_1 \otimes \pi_2) \cap \mathcal{R}_{H_1 \times H_2}(\pi_1 \otimes \pi_2')$. For $\pi_1$, applied Lmm.\ref{towerofnormalsubgroups}, we find a tower of normal subgroups of $G_1$:
$$H_1=H_1^{(0)} \lhd H_1^{(1)} \lhd \cdots \lhd  H_1^{(k)} \lhd  H_1^{(k+1)}=G_1,$$ such  that
 \begin{itemize}
\item[(1)] $H_1^{(k)}\subseteq \widetilde{H_1}$, and $H_1^{(i+1)}/{H_1^{(i)}}$ is a cyclic group, for $ i=0, \cdots  k-1$,
\item[(2)] $\mathcal{R}_{H_1^{(i)}}(\pi_1)\neq \emptyset$,  for $ i= 0, \cdots k$,
\item[(3)] $\Res_{H_1^{(i)}}^{H_1^{(i+1)}}\sigma_{i+1}$ is multiplicity-free, for any $\sigma_{i+1} \in \mathcal{R}_{H_1^{(i+1)}}(\pi_1)$ as  $i$ runs through $0, \cdots, k$.
\end{itemize}
 Let $H_2^{(i)}/H_2=\gamma(H_1^{(i)}/{H_1})$,  for  some  $H_2^{(i)} \subseteq G_2$.  By Lmm.\ref{principallemmedegraphe}, $H_2^{(k)}\subseteq \widetilde{H_2}$,  and $H_2^{(i+1)}/{H_2^{(i)}}$ is a cyclic group for $i=0,  \cdots, k-1$. Note that  according to Cor.\ref{semisimple}, for $i=0, \cdots, k$, $\Res_{H_2^{(i)}}^{G_2}\pi_2$ and $ \Res_{H_2^{(i)}}^{G_2}\pi_2'$  all are semi-simple.  Similarly, for $\pi_2$, applied Lmm.\ref{towerofnormalsubgroups}, there exists a  tower of normal subgroups: $H_2^{(k)} \lhd H_2^{(k+1)} \lhd \cdots \lhd  H_2^{(k+l)} \lhd  G_2,$ satisfying the similar properties as above; then for $\pi_2'$, there exists a similar  tower of normal subgroups: $H_2^{(k+l)} \lhd H_2^{(k+l+1)} \lhd \cdots \lhd  H_2^{(n)} \lhd  H_2^{(n+1)}=G_2$.   Let $H_1^{(i)}/H_1$ be the inverse image of $\gamma(H_2^{(i)}/{H_2})$ in $G_1/H_1$. So finally  we succeed in  constructing a tower of normal subgroups $H_{l}^{(i)}$ of $G_l$, $i=0, \cdots, n$,  $l=1,2$, such that
 \begin{itemize}
 \item[(1)] $\mathcal{R}_{H_1^{(i)}}(\pi_1) \neq \emptyset$, $\mathcal{R}_{H_2^{(i)}}(\pi_2 \oplus \pi_2')\neq \emptyset$,
\item[(2)] $\Res_{H_1^{(i)}}^{H_1^{(i+1)}}\sigma_{i+1}$ and $\Res_{H_2^{(i)}}^{H_2^{(i+1)}}\delta_{i+1}$ both are multiplicity-free, for each $\sigma_{i+1} \in \mathcal{R}_{H_1^{(i+1)}}(\pi_1)$, each $\delta_{i+1} \in  \mathcal{R}_{H_2^{(i+1)}}(\pi_2 \oplus \pi_2')$,
\item[(3)] $\gamma$ induces a bijective map
$\gamma^{(i+1)}: H_1^{(i+1)}/{H_1^{(i)}} \longrightarrow  H_2^{(i+1)}/{H_2^{(i)}}$
with  the graph $\Gamma^{(i+1)}/{(H_1^{(i)} \times H_2^{(i)})}$, where $\Gamma^{(i+1)}=[\Gamma\cap (H_1^{(i+1)} \times H_2^{(i+1)})] \cdot (H_1^{(i)} \times H_2^{(i)})$.

\end{itemize}
For each $1\leq i\leq n+1$, we introduce two representations
$\rho^{(i)}=\cInd_{\Gamma\cap (H_1^{(i)} \times H_2^{(i)})}^{H_1^{(i)} \times H_2^{(i)}} \rho  \textrm{ and }\Delta^{(i)}=\cInd_{\Gamma \cap (H_1^{(i)} \times H_2^{(i)})}^{\Gamma^{(i)}} \rho.$
Then:
  \begin{itemize}
\item[(a)]  $\Res_{H_1^{(i-1)} \times H_2^{(i-1)}}^{\Gamma^{(i)}}\Delta^{(i)}\simeq \Res_{H_1^{(i-1)} \times H_2^{(i-1)}}^{\Gamma^{(i)}} \Big( \cInd_{\Gamma \cap (H_1^{(i)} \times H_2^{(i)})}^{\Gamma^{(i)}} \rho\Big) \simeq \rho^{(i-1)}$.
\item[(b)]  $\rho^{(i)}\simeq \cInd_{\Gamma^{(i)}}^{H_1^{(i)} \times H_2^{(i)}} \Big( \cInd_{\Gamma \cap( H_1^{(i)} \times H_2^{(i)})}^{\Gamma^{(i)}}\rho\Big)\simeq\cInd_{\Gamma^{(i)}}^{H_1^{(i)} \times H_2^{(i)}} \Delta^{(i)}$.
\item[(c)] $\rho^{(n+1)}\simeq\cInd_{\Gamma}^{G_1 \times G_2} \rho\simeq \pi$.
\end{itemize}
By induction, the result of Step I shows that each $\rho^{(i)}$ is a  theta representation with respect to $\mathcal{R}_{H_1^{(i)}}(\pi_1)$ and $\mathcal{R}_{H_2^{(i)}}(\pi_2 \oplus \pi_2')$.  Finally by considering $\rho^{(n+1)}$ we obtain that $m_{G_1 \times G_2}(\pi, \pi_1 \otimes \pi_2)=1$, and $\pi_2 \simeq \pi_2'$. The finiteness conditions on the greatest isotypic quotients arise from the induction.

In view of the proof, we obtain an  analogous  result  of  Roberts  Brooks' Lmm.4.2 in \cite{Rob1}.

\begin{corollary}\label{larelationdebigrapheI}
In  Theorem   \ref{graphrepresentation}(1), if   $\pi_1 \in \Irr(G_1)$, $\pi_2 \in \Irr(G_2)$ with the decompositions
$$ \pi_1=\oplus_{\sigma_i \in \mathcal{R}_{H_1}(\pi_1)} m_1 \sigma_i, \quad \textrm{ and } \quad   \pi_2=\oplus_{\delta_i \in \mathcal{R}_{H_2}(\pi_2)} m_2 \delta_i$$
such that $\pi_1\otimes \pi_2 \in \mathcal{R}_{G_1\times G_2}(\pi),$ then
\begin{itemize}
\item[(1)] there  exists a bijective map   $\theta_{\rho}: \mathcal{R}_{H_1}(\pi_1) \longrightarrow \mathcal{R}_{H_2}(\pi_2); \sigma_{\alpha} \longmapsto \delta_{\alpha}$ such that  $\sigma_{\alpha} \otimes \delta_{\alpha} \in \mathcal{R}_{H_1 \times H_2}(\rho)$ and
$\sigma_{\alpha} \otimes \delta_{\beta} \notin \mathcal{R}_{H_1\times H_2}(\rho)$ for $\alpha \neq \beta$.
\item[(2)] $m_1=1$ if and only if $m_2=1$.
\end{itemize}
\end{corollary}

\subsection{}\label{part52}
In this subsection, we attempt  to prove the second part of Theorem \ref{graphrepresentation} in several steps.  We adopt the beginning  definitions and notations.   Suppose that $\sigma \otimes \delta\in \mathcal{R}_{H_1 \times H_2}(\rho)$. So  we can find $(\pi_1, V_1) \in \Irr(G_1)$, $(\pi_2, V_2)\in \Irr(G_2)$ such that $\sigma$, $\delta$ occur in $\Res_{H_1}^{G_1} \pi_1$, $\Res_{H_2}^{G_2} \pi_2$ as sub-representations with multiplicities $m_1$, $m_2$ respectively.
  Let $\widetilde{\sigma}$, resp. $\widetilde{\delta}$  be the representations of $\widetilde{H_1}$, resp. $\widetilde{H_2}$  as  defined  in Theorem \ref{cliffordadmissible} (4) (b) for $\sigma$ in $\Res_{H_1}^{G_1} \pi_1$, resp. $\delta$ in $\Res_{H_2}^{G_2} \pi_2$. We write $\widetilde{H_2}'$ for the inverse image of $\gamma(\widetilde{H_1}/{H_1})$  in $G_2$, and let $\widetilde{H_1}'$ be the analogous subgroup of $G_1$. Set $\widetilde{\Gamma}=\Gamma(\widetilde{H_1}\times \widetilde{H_2}')$, and $\widetilde{\rho}=\cInd_{\Gamma}^{\widetilde{\Gamma}} \rho$. Then $\pi\simeq \cInd_{\widetilde{\Gamma}}^{G_1 \times G_2} \widetilde{\rho}$.

\subsubsection{ Step 1} Let  us  first find out  $\pi_1, \pi_2$ such that $\pi_1 \otimes \pi_2\in \mathcal{R}_{G_1 \times G_2}(\pi)$.  Consider \begin{equation}\label{equattionss1}
 \begin{aligned}\Hom_{H_1 \times H_2}\big( \rho, \widetilde{\sigma} \otimes \pi_2\big) \simeq \Hom_{H_1 \times G_2} \big( \cInd_{H_1 \times H_2}^{H_1 \times G_2} \rho, \widetilde{\sigma} \otimes \pi_2\big) \simeq \Hom_{H_1 \times G_2} \big( \Res_{H_1 \times G_2}^{G_1 \times G_2}\cInd_{\Gamma}^{G_1 \times G_2} \rho, \widetilde{\sigma} \otimes \pi_2\big)\\
 \simeq \Hom_{G_1 \times G_2} \big( \cInd_{\Gamma}^{G_1 \times G_2} \rho, \Ind_{H_1}^{G_1} \widetilde{\sigma} \otimes \pi_2\big) \simeq \Hom_{H_1\times G_2}\big( \pi_{\pi_2}, \widetilde{\sigma}\otimes \pi_2\big)\simeq \Hom_{H_1}\big( \Theta_{\pi_2}, \widetilde{\sigma}\big)
 \end{aligned}\end{equation}
 The last term  has finite dimension because (1)  $\Theta_{\pi_2}$ is a smooth representation of $G_1$ of finite length, (2) for every $\kappaup\in \Irr(G_1)$, $m_{H_1}( \kappaup, \sigma) <\infty$,  (3) $\widetilde{\sigma}|_{H_1} \simeq  m_1\sigma$.  Then the proof of  Prop.\ref{twograph} shows that $\Hom_{H_1 \times H_2}\big( \rho, \widetilde{\sigma} \otimes \pi_2\big)$ is a smooth representation of $[\Gamma \cap (\widetilde{H_1}\times \widetilde{H_2}')]/(H_1\times H_2)$ via the canonical action, and  it contains at least an irreducible subrepresentation, say $(\psi^{-1}, \mathbb{C} F)$, so that  $F\in \Hom_{\Gamma \cap (\widetilde{H_1}\times \widetilde{H_2}')}\big( \rho, \psi\otimes \widetilde{\sigma} \otimes \pi_2\big)$. Let $\Psi$ be a character of $G_1/H_1$ extending $\psi$. \footnote{Here, the $ \psi$ can extend to a character of $G_1/{H_1}$, because $\cInd_{\widetilde{H_1}/H_1}^{G_1/H_1} \psi$ is finitely generated(\emph{cf}. Prop.\ref{finitegenerated}).} Then
\begin{equation}\label{equattionss7}
1\leq m_{\Gamma \cap (\widetilde{H_1}\times \widetilde{H_2}')}\big( \rho, \psi\otimes \widetilde{\sigma} \otimes \pi_2\big) = m_{\widetilde{H_1} \times  \widetilde{H_2}'}\big( \widetilde{\rho}, \Psi\otimes\widetilde{\sigma} \otimes \pi_2\big) =m_{G_1 \times G_2} \big( \pi, \Psi\otimes \pi_1 \otimes \pi_2\big)\leq1
 \end{equation}

  Clearly $\sigma \prec(\Psi\otimes \pi_1)|_{H_1}$. By replacing $\pi_1$ with $\Psi\otimes \pi_1$, we can  assume the beginning $\pi_1\otimes \pi_2 \in \mathcal{R}_{G_1 \times G_2}(\pi)$.

\subsubsection{Setp 2} Let us  consider the simple case that  $G_i/H_i$ is a finite abelian group.  By induction, we  can even assume that $G_i/H_i$ is a cyclic group. In this case, $m_1=m_2=1$. Consider
 $\Hom_{H_1 \times H_2}\big( \rho, \widetilde{\sigma} \otimes \pi_2\big) \simeq \Hom_{H_1}\big( \Theta_{\pi_2}, \widetilde{\sigma}\big)$
 which   has finite dimension, and  it can be decomposed as a direct sum of one-dimensional vector spaces, say $\sum_{i=1}^n \C F_i$, such that $\widetilde{H_1}/{H_1}$ acts on each $\C F_i$ via a character $\chi_i^{-1}$ of $\widetilde{H_1}/{H_1}$. Immediately, $F_i \in \Hom_{\widetilde{H_1}}\big( \Theta_{\pi_2}, \widetilde{\sigma} \otimes \chi_i\big)$. By Frobenius reciprocity,
 $\Hom_{\widetilde{H_1}} \big( \Theta_{\pi_2}, \widetilde{\sigma} \otimes \chi_i\big) \simeq \Hom_{G_1} \big( \Theta_{\pi_2}, \Ind_{\widetilde{H_1}}^{G_1} \widetilde{\sigma} \otimes \chi_i\big)$.
 By the property of graph of $\pi$ and Cor.\ref{IndCidn},  $\Ind_{\widetilde{H_1}}^{G_1} \widetilde{\sigma} \otimes \chi_i=\cInd_{\widetilde{H_1}}^{G_1} \widetilde{\sigma} \otimes \chi_i \simeq \pi_1$, for $i=1, \cdots, n$.   By  Theorem \ref{cliffordadmissible} we have  $\widetilde{\sigma} \otimes \chi_i \simeq \widetilde{\sigma}^{g_i}$ as $\widetilde{H_1}$-modules, for a representative $g_i\in G_1$ of some $\overline{g_i} \in G_1/{\widetilde{H_1}}$. So $\widetilde{\sigma}^{g_i}|_{H_1}\simeq \widetilde{\sigma} \otimes \chi_i |_{H_1} \simeq\widetilde{\sigma}|_{H_1}$, which implies that $g_i \in \widetilde{H_1}$ and $\widetilde{\sigma} \otimes \chi_i \simeq \widetilde{\sigma}$. Since $m_{\widetilde{H_1}}\big( \Theta_{\pi_2}, \widetilde{\sigma}\big)=1$, we can assert that the kernels of these $F_i$ are the same, and  $F_i$ are linearly independent (Here,  all $F_i$ are   $H_1$-morphisms from $\Theta_{\pi_2}$ to $\sigma$),  hence  $n=1$, i.e., $m_{H_1 \times H_2}\big( \rho, \widetilde{\sigma} \otimes \pi_2\big)=1$; hence  $\mathcal{R}_{H_1 \times H_2}(\rho) \cap \mathcal{R}_{H_1 \times H_2}(\sigma \otimes \pi_2)= \left\{ \sigma \otimes \delta\right\}$, and $m_{H_1 \times H_2}(\rho, \sigma \otimes \delta)=1$.
 If there is another $\delta'\in \Irr(H_2)$ such that $\sigma\otimes \delta' \in \mathcal{R}_{H_1 \times H_2}(\rho)$, then we can find $\pi_2'\in \Irr(G_2)$ such that $\delta'\prec \pi_2'|_{H_2} $, and $\pi_1 \otimes \pi_2' \in \mathcal{R}_{G_1 \times G_2}(\pi)$. Hence $\pi_2'\simeq \pi_2$, and we can assume $\delta'\prec \pi_2$. By the above discussion, we obtain $\delta'\simeq \delta$.
   \subsubsection{Setp 3: $\widetilde{H_v}=\widetilde{H_v}'$.}
 \begin{lemma}\label{semisimpleand}
 The restriction of $\pi_v$ to $\widetilde{H_v} \widetilde{H_v}'$ is semi-simple and multiplicity-free, for $v=1,2$.
 \end{lemma}
 \begin{proof}
 Assume $v=1$.  $\cInd_{\widetilde{H_1}}^{\widetilde{H_1} \widetilde{H_1}'} \widetilde{\sigma}$ is irreducible because $\cInd_{\widetilde{H_1}\widetilde{H_1'}}^{G_1}$ is an exact functor, and $\cInd_{\widetilde{H_1}\widetilde{H_1'}}^{G_1}\big(\cInd_{\widetilde{H_1}}^{\widetilde{H_1} \widetilde{H_1}'} \widetilde{\sigma}\big)\simeq\pi_1$. Let $\widetilde{\Delta_1}'  \subseteq G_1$ denote a complete  set of representatives for $G_1/{\widetilde{H_1} \widetilde{H_1}'}$. As  $\Res_{\widetilde{H_1} \widetilde{H_1}'}^{G_1}\pi_1\simeq  \sum_{g\in \widetilde{\Delta_1}'} \pi_1(g) \big( \cInd_{\widetilde{H_1}}^{\widetilde{H_1}\widetilde{H_1}'} \widetilde{\sigma}\big)$,  the representation   $\Res^{G_1}_{\widetilde{H_1} \widetilde{H_1}'}\pi_1$ is  semi-simple. The multiplicity-free property arises from $\widetilde{H_1}\widetilde{H_1'} \supseteq \widetilde{H_1}$.
 \end{proof}
 \begin{remark}
 $\cInd_{\widetilde{H_1}}^{\widetilde{H_1} \widetilde{H_1}'} \widetilde{\sigma}\simeq \Ind_{\widetilde{H_1}}^{\widetilde{H_1} \widetilde{H_1}'} \widetilde{\sigma}$, and $\cInd_{\widetilde{H_2}}^{\widetilde{H_2} \widetilde{H_2}'} \widetilde{\delta}\simeq\Ind_{\widetilde{H_2}}^{\widetilde{H_2} \widetilde{H_2}'} \widetilde{\delta}$.
 \end{remark}
 \begin{proof}
 Combing Theorem \ref{cliffordadmissible} (7) and the facts that both $\cInd$, $\Ind$  are exact functors, give the results.
 \end{proof}
  For the time being, we let $\widetilde{\Gamma}'= \Gamma \cdot \big( \widetilde{H_1} \widetilde{H_1}' \times \widetilde{H_2} \widetilde{H_2}'\big)$, and $\widetilde{\rho}'= \cInd_{\Gamma}^{\widetilde{\Gamma}'} \rho$.
  \begin{lemma}\label{therestrictionHH'}
  $\widetilde{\rho}'|_{\widetilde{H_1} \widetilde{H_1}' \times \widetilde{H_2} \widetilde{H_2}'} $ is a theta representation with respect to $\mathcal{R}_{ \widetilde{H_1} \widetilde{H_1}'}(\pi_1)$ and $\mathcal{R}_{ \widetilde{H_2} \widetilde{H_2}'}(\pi_2)$.
  \end{lemma}
  \begin{proof}
  If $\widetilde{\sigma}' \in \mathcal{R}_{\widetilde{H_1}\widetilde{H_1}'}(\pi_1)$, $\widetilde{\delta}' \in \mathcal{R}_{\widetilde{H_2}\widetilde{H_2}'}(\pi_2)$, then $\widetilde{\sigma}' \simeq \big( \cInd_{\widetilde{H_1}}^{\widetilde{H_1}\widetilde{H_1}'} \widetilde{\sigma}\big)^{g_1}$, for some $g_1\in G_1$, and $\cInd_{\widetilde{H_1}\widetilde{H_1}'}^{G_1} \widetilde{\sigma}' \simeq \pi_1$. Similar results also hold   for $\widetilde{\delta}'$. In case  $\widetilde{\rho}'_{\widetilde{\sigma}'} \simeq \widetilde{\sigma}' \otimes \Theta_{\widetilde{\sigma}'}$, $\cInd_{\widetilde{H_2}\widetilde{H_2}'}^{G_2} \Theta_{\widetilde{\sigma}'} \simeq \Theta_{\pi_1}$ by Lmm.\ref{therestriction2}. Hence $\Theta_{\widetilde{\sigma}'}$ is a $\widetilde{H_2}\widetilde{H_2}'$-module of finite length, and $m_{\widetilde{H_2}\widetilde{H_2}'} (\Theta_{\widetilde{\sigma}'}, \pi_2) =m_{G_2}(\Theta_{\pi_1}, \pi_2) \leq 1$. By  symmetry, the result holds.
  \end{proof}
 \begin{lemma}
 $\Res_{\widetilde{H_2}'}^{G_2} \pi_2$ is semi-simple and multiplicity-free.
 \end{lemma}
\begin{proof}
By the above result, $\cInd_{\widetilde{H_2}}^{\widetilde{H_2}\widetilde{H_2}'} \widetilde{\delta}$ is a direct summand of $\Res_{\widetilde{H_2}\widetilde{H_2}'}^{G_2} \pi_2$, so $\cInd_{\widetilde{H_2}\cap \widetilde{H_2}'}^{\widetilde{H_2}'} \widetilde{\delta}$ is a direct summand of $\Res_{\widetilde{H_2}'}^{G_2} \pi_2$. By Prop.\ref{finitegenerated}(1), $\mathcal{R}_{\widetilde{H_2}'}\Big( \cInd_{\widetilde{H_2}\cap \widetilde{H_2}'}^{\widetilde{H_2}'} \widetilde{\delta}\Big) \neq \emptyset$, and then $\mathcal{R}_{\widetilde{H_2}'}(\pi_2)\neq \emptyset$. By Theorem \ref{cliffordadmissible}, $\Res_{\widetilde{H_2}'}^{G_2} \pi_2$ is semi-simple.  On the other hand, by Frobenius reciprocity,
 \begin{equation}\label{equattionss}
\begin{aligned}
m_{\widetilde{H_1} \times  \widetilde{H_2}'}\big( \widetilde{\rho}, \widetilde{\sigma} \otimes \pi_2\big)= m_{\widetilde{H_1} \times G_2} \big( \cInd_{\widetilde{H_1} \times \widetilde{H_2}'}^{\widetilde{H_1} \times G_2}  \widetilde{\rho}, \widetilde{\sigma} \otimes \pi_2\big) = m_{\widetilde{H_1} \times G_2} \big( \Res_{\widetilde{H_1} \times G_2}^{G_1 \times G_2}\cInd_{\widetilde{\Gamma}}^{G_1 \times G_2} \widetilde{\rho}, \widetilde{\sigma} \otimes \pi_2\big)\\=m_{G_1 \times G_2} \big( \cInd_{\widetilde{\Gamma}}^{G_1 \times G_2} \widetilde{\rho}, \Ind_{\widetilde{H_1}}^{G_1} \widetilde{\sigma} \otimes \pi_2\big) =m_{G_1\times G_2}(\pi, \pi_1 \otimes \pi_2)=1
\end{aligned}\end{equation}
\end{proof}
Let $\widetilde{\delta}'\in \Irr(\widetilde{H_2}')$ such that $\widetilde{\sigma}\otimes \widetilde{\delta}' \in \mathcal{R}_{\widetilde{H_1} \times  \widetilde{H_2}'}(\widetilde{\rho})\cap \mathcal{R}_{\widetilde{H_1} \times  \widetilde{H_2}'}(\widetilde{\sigma} \otimes \pi_2) $.
\begin{lemma}
$\delta \prec \widetilde{\delta}'|_{H_2}$.
\end{lemma}
\begin{proof}
Assume $\widetilde{\delta}''\in \mathcal{R}_{\widetilde{H_2}'}(\pi_2)$, such that $\delta \prec \widetilde{\delta}''|_{H_2}$. Consider the $[\Gamma \cap (\widetilde{H_1}\times \widetilde{H_2}')]/(H_1\times H_2)$-module  $\Hom_{H_1 \times H_2}\big( \rho, \widetilde{\sigma} \otimes \widetilde{\delta}''\big)$. Similarly as the above step 1, there exist $\psi\in \Irr(\widetilde{H_1}/H_1)$, and $\Psi \in \Irr(G_1/H_1)$, such that $\Psi|_{\widetilde{H_1}/H_1} =\psi$, $\psi \otimes\widetilde{\sigma} \otimes \widetilde{\delta}'' \in \mathcal{R}_{\widetilde{H_1} \times \widetilde{H_2}'}(\widetilde{\rho})$, and $(\Psi\otimes \pi_1) \otimes \pi_2 \in \mathcal{R}_{G_1 \times G_2}(\pi)$.  Hence $\Psi \otimes \pi_1 \simeq \pi_1$, i.e. $ \cInd_{\widetilde{H}_1}^{G_1} (\widetilde{\sigma} \otimes \psi)\simeq\cInd_{\widetilde{H}_1}^{G_1} \widetilde{\sigma} $.  Consequently $\widetilde{\sigma} \otimes \psi \simeq \widetilde{\sigma}$, and $\widetilde{\sigma} \otimes \widetilde{\delta}''  \in \mathcal{R}_{\widetilde{H_1} \times \widetilde{H_2}'}(\widetilde{\rho})$. By (\ref{equattionss7}), we obtain $\widetilde{\delta}' \simeq \widetilde{\delta}''$, and $\delta \prec \widetilde{\delta}'|_{H_2}$.
\end{proof}
Note that $\{ g\in \widetilde{H_2}'\mid \delta^g\simeq \delta\}=\widetilde{H_2} \cap \widetilde{H_2}'$. Let $\widetilde{\widetilde{\delta}}$ denote the $\delta$-isotypic component in $\widetilde{\delta}'|_{H_2}$. Then $\widetilde{\delta}'\simeq \cInd_{\widetilde{H_2} \cap \widetilde{H_2}'}^{\widetilde{H_2}'} \widetilde{\widetilde{\delta}}$.
\begin{lemma}\label{2quots}
$\widetilde{H_2}'/\widetilde{H_2}\cap \widetilde{H_2}'$ is a finite abelian group.
\end{lemma}
\begin{proof}
For any $g\in \widetilde{H_2}'/\widetilde{H_2}\cap \widetilde{H_2}'$, we have $\sigma \otimes \delta^g \in \mathcal{R}_{H_1 \times H_2}(\rho)$; for different $\widetilde{H_2}\cap \widetilde{H_2}'$-cosets $g_1 \widetilde{H_2}\cap \widetilde{H_2}'$, $g_2 \widetilde{H_2}\cap \widetilde{H_2}'$, we know $\delta^{g_1} \ncong \delta^{g_2}$. By above $(\ref{equattionss1})$, we obtain the result.
\end{proof}
\begin{corollary}
For each $i$, $\widetilde{H_i}\widetilde{H_i}'/[\widetilde{H_i}\cap \widetilde{H_i}']$  is a finite abelian group.
\end{corollary}
\begin{proof}
By symmetry, the analogue result of the above lemma \ref{2quots} also holds for $\widetilde{H_1}'/[\widetilde{H_1}\cap \widetilde{H_1}']$, so  $\#\frac{\widetilde{H_i}\widetilde{H_i}'}{\widetilde{H_i}\cap \widetilde{H_i}'}=\#\frac{\widetilde{H_1}'}{\widetilde{H_1}\cap \widetilde{H_1}'}\cdot \#\frac{\widetilde{H_2}'}{\widetilde{H_2}\cap \widetilde{H_2}'} < +\infty $.
\end{proof}

 Set $\widetilde{\Gamma}''= [\Gamma\cap \big(\widetilde{H_1} \widetilde{H_1}' \times \widetilde{H_2} \widetilde{H_2}'\big)] \cdot  [( \widetilde{H_1}\cap \widetilde{H_1}')\times  (\widetilde{H_2}\cap \widetilde{H_2}')]$, and   $\widetilde{\rho}''= \cInd_{\Gamma \cap [(\widetilde{H_1} \widetilde{H_1}' ) \times (\widetilde{H_2}  \widetilde{H_2}')]}^{\widetilde{\Gamma}''} \rho$.
 \begin{remark}
\begin{itemize}
\item[(1)]$\widetilde{\rho}''|_{(\widetilde{H_1} \cap \widetilde{H_1}' )\times( \widetilde{H_2} \cap \widetilde{H_2}')} \simeq \cInd_{\Gamma\cap [(\widetilde{H_1} \cap \widetilde{H_1}' )\times( \widetilde{H_2} \cap \widetilde{H_2}')]}^{(\widetilde{H_1} \cap \widetilde{H_1}' )\times( \widetilde{H_2} \cap \widetilde{H_2}')} \rho$;
\item[(2)] $\cInd_{\widetilde{\Gamma}''}^{\widetilde{H_1}\widetilde{H_1}' \times \widetilde{H_2}\widetilde{H_2}'} \widetilde{\rho}'' \simeq \Res^{\widetilde{\Gamma}'}_{\widetilde{H_1} \widetilde{H_1}' \times \widetilde{H_2} \widetilde{H_2}'} \widetilde{\rho}' $. \end{itemize}
\end{remark}
\begin{proof}
1) It follows from that $\frac{\widetilde{\Gamma}''}{\Gamma\cap (\widetilde{H_1} \widetilde{H_1}' \times \widetilde{H_2}  \widetilde{H_2}')} \simeq \frac{(\widetilde{H_1} \cap \widetilde{H_1}' )\times( \widetilde{H_2} \cap \widetilde{H_2}')}{\Gamma\cap [(\widetilde{H_1} \cap \widetilde{H_1}' )\times( \widetilde{H_2} \cap \widetilde{H_2}')]}$, and $ [(\widetilde{H_1} \cap \widetilde{H_1}' )\times( \widetilde{H_2} \cap \widetilde{H_2}')]\cap  \Gamma \cap [(\widetilde{H_1} \widetilde{H_1}' ) \times (\widetilde{H_2}  \widetilde{H_2}')]= \Gamma\cap [(\widetilde{H_1} \cap \widetilde{H_1}' )\times( \widetilde{H_2} \cap \widetilde{H_2}')]$;

2) Both  sides are  isomorphic to $\cInd_{\Gamma \cap [(\widetilde{H_1} \widetilde{H_1}' ) \times (\widetilde{H_2}  \widetilde{H_2}')]}^{\widetilde{H_1}\widetilde{H_1}' \times \widetilde{H_2}\widetilde{H_2}'}\rho $.
\end{proof}
 Hence we can apply the result of Step 2 to $\widetilde{\rho}''|_{(\widetilde{H_1} \cap \widetilde{H_1}') \times (\widetilde{H_2} \cap \widetilde{H_2}')} $, and obtain:
  \begin{lemma}\label{thethetaofHcapH'}
  $\widetilde{\rho}''|_{(\widetilde{H_1} \cap \widetilde{H_1}' )\times( \widetilde{H_2} \cap \widetilde{H_2}')} $ satisfies the property of graph with respect to $\mathcal{R}_{ \widetilde{H_1} \cap \widetilde{H_1}'}(\pi_1)$ and $\mathcal{R}_{ \widetilde{H_2} \cap \widetilde{H_2}'}(\pi_2)$.
  \end{lemma}
 Suppose now that  $\sigma \otimes \delta' \in \mathcal{R}_{H_1 \times H_2}(\rho)$.  By the result in  Step 1, we  can assume  $\delta' \prec \Res_{H_2}^{G_2} \pi_2$. Let $\widetilde{\delta}'$ be the representation of $\widetilde{H_2}$ as defined in Theorem  \ref{cliffordadmissible} (4) (b) for $\delta'$ in $\Res_{H_2}^{G_2} \pi_2$.

Let $\widetilde{\sigma}' \in \mathcal{R}_{\widetilde{H_1} \cap \widetilde{H_1}'}( \pi_1)$ such that $\sigma \prec \widetilde{\sigma}' $.  By considering $ \Hom_{\Gamma\cap [(\widetilde{H_1} \cap \widetilde{H_1}' )\times( \widetilde{H_2} \cap \widetilde{H_2}')]}\big( \rho, \widetilde{\sigma}'\otimes \widetilde{\delta}\big)\simeq \big[\Hom_{H_1 \times H_2}\big(\rho, \widetilde{\sigma}'\otimes \widetilde{\delta}\big)\big]^{\Gamma\cap [(\widetilde{H_1} \cap \widetilde{H_1}' )\times( \widetilde{H_2} \cap \widetilde{H_2}')]/{(H_1 \times H_2)}}, $ we assert that $\mathcal{R}_{(\widetilde{H_1} \cap \widetilde{H_1}' )\times( \widetilde{H_2} \cap \widetilde{H_2}')}\big(
\widetilde{\sigma}' \otimes [\widetilde{\delta}\otimes\chi_2]\big) \cap  \mathcal{R}_{(\widetilde{H_1} \cap \widetilde{H_1}' )\times( \widetilde{H_2} \cap \widetilde{H_2}')}(\widetilde{\rho}'') \neq \emptyset$, for some character $\chi_2\in \Irr(\frac{\widetilde{H_2} \cap \widetilde{H_2}'}{H_2})$. Similarly, $\mathcal{R}_{(\widetilde{H_1} \cap \widetilde{H_1}' )\times( \widetilde{H_2} \cap \widetilde{H_2}')}\big(\widetilde{\sigma}'\otimes [\widetilde{\delta}'\otimes\chi_2'] \big) \cap  \mathcal{R}_{(\widetilde{H_1} \cap \widetilde{H_1}' )\times( \widetilde{H_2} \cap \widetilde{H_2}')}(\widetilde{\rho}'') \neq \emptyset$, for some character $\chi_2'\in \Irr(\frac{\widetilde{H_2} \cap \widetilde{H_2}'}{H_2})$.  By Lmm.\ref{thethetaofHcapH'}, $\mathcal{R}_{ \widetilde{H_2} \cap \widetilde{H_2}'}(\widetilde{\delta} \otimes \chi_2) \cap \mathcal{R}_{\widetilde{H_2} \cap \widetilde{H_2}'}(\widetilde{\delta}' \otimes \chi_2') \neq \emptyset$, and then $\delta \simeq \delta'$ as $H_2$-modules.

\begin{corollary}
The restriction $\rho|_{H_1 \times H_2}$ satisfies the property of graph.
\end{corollary}

 \begin{lemma}
 $\widetilde{H_2}' \subseteq \widetilde{H_2}$.
 \end{lemma}
 \begin{proof}
 If $(g,h) \in \Gamma \cap (\widetilde{H_1} \times \widetilde{H_2}')$, we have $\delta^h \simeq \delta$, so $\widetilde{H_2}' \subseteq \widetilde{H_2}$.
 \end{proof}
By considering the other side, we can assert $\widetilde{H_1}' \subseteq \widetilde{H_1}$, and then $\widetilde{H_2}' =\widetilde{H_2}$, $\widetilde{H_1}' =\widetilde{H_1}$.

\subsection{}
Continue  the above notations and  remove the superfluous $'$ if possible.  In this last subsection we will prove  the rest part of Theorem \ref{graphrepresentation}(2).

   \begin{lemma}\label{indco}
If $(\omega, U)$ is an indecomposable representation of $G_1/H_1$ of finite dimension $m$, then the Jordan-H\"older set  $\JH(\omega)=\{ \chi\}$,  for certain one-dimensional irreducible representation $\chi$ of $G_1/H_1$.
\end{lemma}
\begin{proof}
Assume $U=U_1 \supseteq U_2\supseteq \cdots \supseteq U_m\supseteq U_{m+1}=0$ is a complete composite series of $U$ as $G_1/H_1$-module such that  $G_1/H_1$ acts on $U_i/U_{i+1} $ via  a character $\chi_i$.  Then after choosing a  proper basis of $U$,  $\omega(h)$ acts on $U$ via an upper triangular matrix $\begin{bmatrix}
\chi_1(h)& \ast&  \ast\\
     &\ddots &   \ast\\
    &           & \chi_m(h)
\end{bmatrix}$.  If   $\chi_i\neq \chi_{i+1}$,  there exists $g\in G_1$ such that $\chi_i(g)\neq \chi_{i+1}(g)$. According to the result  in linear algebra, there exists a primary decomposition $V=\oplus_{i=1}^{n_1} V_i$ with respect to different eigenvalues  of $\omega(g)$.   Then $V_i$ is $G_1/H_1$-invariant; thus $n_1=1$, and all  $\chi_i(g)$ are equal, a contradiction.
\end{proof}
\begin{lemma}\label{npipi}
If $n\pi_i$ is a $G_i$-module  of length $n$ with the Jordan-H\"older set  $\JH(n\pi_i)=\{\pi_i\}$, then $n\pi_i$ is semi-simple.
\end{lemma}
\begin{proof}
We prove the result by induction on $n$. Since $\Ext_{G_i}^1(\pi_i, \pi_i)=0$,  the statement holds for $n=2$. For $n>2$,  there exists at least a short exact sequence of $G_i$-modules: $0 \longrightarrow 2\pi_i \longrightarrow n\pi_i \longrightarrow (n-2)\pi_i \longrightarrow 0$ $(\ast)$, which is determined by an element in $\Ext^1_{G_i}((n-2)\pi_i, 2\pi_i) \simeq \prod \Ext^1_{G_i}(\pi_i, \pi_i)=0$.  Hence the sequence $(\ast)$ is split, and $n\pi_i \simeq  \pi_i \oplus \cdots \oplus \pi_i$.
\end{proof}
   \subsubsection{In case $m_1=m_2=1$.}\  \\

A. \emph{Multiplicity-free property.}
In this case $\Theta_{\pi_2} \simeq \cInd_{\widetilde{H_1}}^{G_1} \Theta_{\widetilde{\delta}}$. Let $\Delta_1$ be a complete coset representatives of $G_1/\widetilde{H_1}$.  Then
\begin{align}
\Hom_{H_1}\big( \Theta_{\pi_2}, \widetilde{\sigma}\big) & \simeq \prod_{s\in \Delta_1} \Hom_{H_1}\big( \Theta_{\widetilde{\delta}}^s, \widetilde{\sigma}\big)
\simeq  \prod_{s\in \Delta_1}  \Hom_{H_1}\big(\Theta_{\widetilde{\delta}}, \sigma^{s^{-1}}\big) \\
& \simeq  \prod_{s\in \Delta_1}  \Hom_{H_1}\big(\Theta_{\delta}, \sigma^{s^{-1}}\big)  \simeq \Hom_{H_1}\big( \Theta_{\delta}, \sigma\big)
\end{align}  Now
 $\Hom_{H_1}\big( \Theta_{\pi_2}, \widetilde{\sigma}\big)$ is  a $\widetilde{H_1}/H_1$-module of finite length. By Krull-Schmidt theorem,  it can be decomposed as  a direct sum of indecomposable modules, say $\mathcal {V}_1\oplus \mathcal {V}_2\oplus \cdots \oplus \mathcal {V}_r$. Each $\mathcal {V}_i$ contains at least an irreducible $\widetilde{H_1}/H_1$-module, say $(\chi_i^{-1}, \mathbb{C} F_i)$. Then $F_i \in \Hom_{\widetilde{H_1}}\big( \Theta_{\pi_2}, \widetilde{\sigma} \otimes \chi_i\big)$.  Similarly as the argument in Setp 2 we can assert that the cardinality $r=1$, and  $\Hom_{H_1}\big( \Theta_{\pi_2}, \widetilde{\sigma}\big)$ is an indecomposable  $\widetilde{H_1}/H_1$-module. Let its contragredient representation denoted by $(\check{\omega_1}, \check{\mathcal{V}_1})$.
  \begin{lemma}\label{sigmadelta1}
$\Hom_{\widetilde{H_1}}\big( \Theta_{\pi_2}, \check{\omega_1}\otimes\widetilde{\sigma}\big)\simeq \Hom_{G_1}\big(\Theta_{\pi_2}, \Ind_{\widetilde{H_1}}^{G_1}( \check{\omega_1}\otimes\widetilde{\sigma})\big)\neq 0$.
 \end{lemma}
 \begin{proof}
 Let $\left\{F_1, \cdots, F_k\right\}$ be a basis of $\mathcal{V}_1$. Let $F_t$ be the dual base of $F_t^{\ast}$ in  $\check{\mathcal{V}_1}$. Then the mapping $\mathbbm{v}=\sum_{j=1}^k F_t^{\ast}\otimes F_t\in\Hom_{H_1}\big( \Theta_{\pi_2}, \check{\omega_1}\otimes\widetilde{\sigma}\big) $, sending $v\in V$ to $ \sum_{j=1}^k F_t^{\ast}\otimes F_t(v)$,  is $\widetilde{H_1}/{H_1}$-invariant.
 \end{proof}
By the above lemma \ref{indco}, we assume $\JH(\check{\omega_1})=\{ \psi\}$. Let $\Psi$ be a character of $G_1/H_1$ extending $\psi$.
\begin{lemma}
$\psi$  is the trivial character.
\end{lemma}
\begin{proof}
It is not hard to see that the Jordan-H\"older set  $\JH(\Ind_{\widetilde{H_1}}^{G_1}( \check{\omega_1}\otimes\widetilde{\sigma}))=\{ \Psi\otimes \pi_1 \}$. By Lmm. \ref{sigmadelta1}, $\Psi\otimes \pi_1\simeq \pi_1$, i.e. $ \Ind_{\widetilde{H_1}}^{G_1}(\psi \otimes\widetilde{\sigma}) \simeq \Ind_{\widetilde{H_1}}^{G_1}\widetilde{\sigma}$; $\psi \otimes \widetilde{\sigma}\simeq \widetilde{\sigma}^{g}$, for some $g\in G_1/\widetilde{H_1}$, $\sigma^{g}\simeq \sigma$; $g\in \widetilde{H_1}$; hence $\psi \otimes \widetilde{\sigma}\simeq \widetilde{\sigma}$ as $\widetilde{H_1}$-modules. Consequently $\Hom_{\widetilde{H_1}}(    \psi \otimes \widetilde{\sigma}, \widetilde{\sigma})\simeq  \Hom_{\widetilde{H_1}}(   \check{\widetilde{\sigma}}\otimes \widetilde{\sigma}, \check{\psi}) \simeq \Hom_{\widetilde{H_1}/H_1}((\check{\widetilde{\sigma}}\otimes \widetilde{\sigma})_{H_1}, \check{\psi})\simeq \Hom_{\widetilde{H_1}/H_1}(1, \check{\psi})\neq 0$. Hence $\check{\psi}$ is the trivial character.
\end{proof}
\begin{lemma}
\begin{itemize}
\item[(1)] $ \cInd_{\widetilde{H_1}}^{G_1}( \check{\omega_1}\otimes\widetilde{\sigma})$ is  semi-simple.
\item[(2)] $\check{\omega_1}\otimes\widetilde{\sigma}$ is semi-simple.
\end{itemize}
\end{lemma}
\begin{proof}
The first statement follows from Lmm.\ref{npipi}. So $ \cInd_{\widetilde{H_1}}^{G_1}( \check{\omega_1}\otimes\widetilde{\sigma}) \simeq \cInd_{\widetilde{H_1}}^{G_1}( \widetilde{\sigma}\oplus\cdots \oplus\widetilde{\sigma} ) $; by considering their $\sigma$-isotypic components we get the second statement.
\end{proof}
If  we have the decomposition: $\check{\omega_1}\otimes\widetilde{\sigma} \simeq \oplus_{i=1}^t  \widetilde{\sigma}_i$, then $\End_{\widetilde{H_1}}\big( \check{\omega_1}\otimes\widetilde{\sigma}\big)\simeq M_{t\times t}(\mathbb{C})$.  On the other hand, $\Hom_{\widetilde{H_1}}\big(  \check{\omega_1}\otimes\widetilde{\sigma}, \check{\omega_1}\otimes\widetilde{\sigma}\big)\simeq \Hom_{H_1}\big(  \check{\omega_1}\otimes\widetilde{\sigma}, \check{\omega_1}\otimes\widetilde{\sigma}\big)^{\widetilde{H_1}/H_1} \simeq \End_{\widetilde{H_1}/H_1}(\check{\omega_1})$, a local ring.  Therefore $t=1$, and $\dim\check{\omega_1}=1=m_{H_1}(\Theta_{\delta}, \sigma)$.

B. \emph{The finiteness condition.}
Before  proving the result, let us present some consequences of Casselman's results  on $\Ext^{\ast}(-,-)$ in  \cite[Appendix]{Cass2}.
 \begin{lemma}\label{proj1}
 $\cInd_{K_2}^{H_2} 1$ is  projective  in $\Rep(H_2)$, for any open compact subgroup $K_2$ of $H_2$.
 \end{lemma}
 \begin{proof}
 Given  a diagram       $\xymatrix{             &  \cInd_{K_2}^{H_2} 1 \ar[d]^F     & \\
                        U  \ar[r]^p  &    V  \ar[r]  & 0}$,  assume $F$ arises from  a $K_2$-morphism $f:  \mathbb{C} \longrightarrow V$, let $v_0=f(1)=p(u_0)$, for some $u_0\in U^{K_2}$, define a $K_2$-morphism  $g: \mathbb{C} \longrightarrow U^{K_2} \hookrightarrow U$ by $g(1)=u_0$, and let  $G : \cInd_{K_2}^{H_2} 1 \longrightarrow U$ be the corresponding  $H_2$-morphism by Frobenius reciprocity. It is not hard to see that $G$ lifts $F$.
  \end{proof}
 \begin{lemma}\label{proj2}
  Assume that $\Rep(H_2)$ is locally noetherian.  For a finitely generated representation $(\lambda, U)$  of $H_2$,  there exists a projective resolution $U_{\cdot} \longrightarrow U$, such that each $U_i$ is finitely generated.
 \end{lemma}
\begin{proof}
Assume $U$ is finitely generated by $u_1, \cdots, u_n$, and assume  an open compact subgroup $K_2\subseteq \cap_{i=1}^n\stab_{H_2} (u_i)$. Let $\iota_i: \cInd_{K_2}^{H_2}1 \longrightarrow U$, arising from a $K_2$-morphism  $\mathbb{C} \longrightarrow U; 1\longrightarrow u_i$. Then $\iota=\oplus_{i=1}^n \iota_i: \oplus_{i=1}^n \cInd_{K_2}^{H_2} 1 \longrightarrow U$ is a surjective $H_2$-morphism, and $\oplus_{i=1}^n\cInd_{K_2}^{H_2} 1$ is  a finitely generated projective object in $\Rep(H_2)$. Since $\Rep(H_2)$ is locally noetherian, we can continue this process, and obtain a required resolution.
\end{proof}

Go back to our proof. Applying the results of Lmm.\ref{therestriction2} to our situation shows that $\Theta_{\pi_1} \simeq \cInd_{\widetilde{H_2}}^{G_2} \Theta_{\widetilde{\sigma}}$ and $\Theta_{\sigma} \simeq \Theta_{\widetilde{\sigma}}|_{H_2}$.  By the property of  the exact functor $\cInd_{\widetilde{H_2}}^{G_2}$,  $\Theta_{\widetilde{\sigma}}$   is an indecomposable finite-length representation of  $\widetilde{H_2}$  (\emph{cf.} Lmm.\ref{mulitione}).
 Let its  Jordan-H\"older multiset  be recorded by $\left\{\widetilde{\delta}=\widetilde{\delta_1}, \cdots, \widetilde{\delta_k}\right\}$.

 \begin{lemma}
Let $\widetilde{\delta_i}$, $\widetilde{\delta_j}$ be two admissible representations of  $\widetilde{H_2}$ such that $\widetilde{\delta_i}|_{H_2} \simeq \oplus_{\nu \in I} \tau_{\nu} $, and $\widetilde{\delta_{j}}|_{H_2} \simeq \oplus_{\mu \in J} \tau_{\mu} $, for finite-length $H_2$-modules $\tau_{\nu}$ and $\tau_{\mu}$.
Let  \begin{equation}\label{ccqe}
0 \longrightarrow \widetilde{\delta_i} \longrightarrow  \widetilde{\Theta} \longrightarrow \widetilde{\delta_j} \longrightarrow 0
\end{equation}
be  an exact sequence of $\widetilde{H_2}$-modules. If  the cardinality of $J$ is finite,  $\widetilde{\Theta}|_{H_2}$ is  a direct sum of finite-length $H_2$-modules.
\end{lemma}
\begin{proof}

By Yodeda's extension theory(\emph{cf}. \cite[Appendix]{Cass2}, \cite[Chapter III]{Mac}),  the above sequence (\ref{ccqe})  is determined by  a class $\xi \in \Ext^1_{\widetilde{H_2}}(\widetilde{\delta_j}, \widetilde{\delta_i})$.
 Let $\xi_1$  be its image in $\Ext^1_{H_2}(\widetilde{\delta_j}, \widetilde{\delta_i})$ under the canonical mapping: $\Ext^1_{\widetilde{H_2}}(\widetilde{\delta_j}, \widetilde{\delta_i}) \longrightarrow  \Ext^1_{H_2}(\widetilde{\delta_j}, \widetilde{\delta_i})$. It is clear that $\widetilde{\Theta}|_{H_2}$ is taken  in charge by  $\xi_1$ and  there exists  $  \prod_{\nu\in I} p_{\nu}: \Ext^1_{H_2}(\widetilde{\delta_j},  \widetilde{\delta_i}) \hookrightarrow  \prod_{\nu\in I} \Ext^1_{H_2}(\widetilde{\delta_j}, \tau_{\nu})$.    Moreover by Lmms. \ref{proj1},\ref{proj2}, $p_{\nu}(\xi_1)=0$ for all $\nu$ but a finite number of $\nu \in I_0 $.  Let $0\longrightarrow \oplus_{\nu \in I_0} \tau_{\nu} \longrightarrow  \widetilde{\Theta}_{I_0} \longrightarrow \widetilde{\delta_j} \longrightarrow 0 \cdots  (\ast\ast)$ be a short exact sequence corresponding to $\prod_{v\in I_0} p_{\nu}(\xi_1) \in \Ext_{H_2}^1\big(\widetilde{\delta_j}, \oplus_{\nu \in I_0} \tau_{\nu} \big)$.  By Yodeda's theory,
  $\widetilde{\Theta }\simeq \widetilde{\Theta}_{I_0}\oplus (\oplus_{\nu\notin I_0} \tau_{\nu})$ as $H_2$-modules. Now  $\widetilde{\Theta}_{I_0}$ has finite length; by Krull-Schmidt theorem the result holds.
\end{proof}

 As $\widetilde{H_2}$-modules, there exists an exact sequence $0\longrightarrow \Theta_{\widetilde{\sigma}, 1} \longrightarrow \Theta_{\widetilde{\sigma}} \longrightarrow \widetilde{\delta_1} \longrightarrow 0$. By reordering the index, we assume   $\widetilde{\delta_2}$ is a quotient of $\Theta_{\widetilde{\sigma}, 1}$.  Then there exists an $\widetilde{H_2}$-module $\Theta^1_{\widetilde{\sigma}}$ such that the following diagram
  \[
\begin{array}{ccccccccccc}
0& \longrightarrow  &   \Theta_{\widetilde{\sigma}, 1}        & \longrightarrow     & \Theta_{\widetilde{\sigma}}  & \longrightarrow & \widetilde{\delta_1}   & \longrightarrow  & 0       \\
  &                  & \downarrow  &                     &  \downarrow      &                 &  \parallel   &                  &          \\
0 & \longrightarrow  &       \widetilde{\delta_2}         & \longrightarrow     &  \Theta^1_{\widetilde{\sigma}} & \longrightarrow & \widetilde{\delta_1}      & \longrightarrow  & 0
\end{array}
\]
is commutative. Moreover $ \Theta_{\widetilde{\sigma}}  \longrightarrow \Theta^1_{\widetilde{\sigma}}$ is surjective.  By the above lemma,  $\Theta^1_{\widetilde{\sigma}}|_{H_2}$ is a direct sum of finite-length $H_2$-modules. Since $\Theta_{\widetilde{\sigma}}|_{H_2}$ has only one quotient representation $\delta$ with multiplicity one, $\Theta^1_{\widetilde{\sigma}}|_{H_2} $ must be an indecomposable module. We can   repeat the above process by replacing $\widetilde{\delta_1}$ with  $\Theta^1_{\widetilde{\sigma}} $,  and obtain an $I_{G_2}(\delta)$-module $\Theta^2_{\widetilde{\sigma}}$ such that  the Jordan-H\"older multiset  of $\Theta^2_{\widetilde{\sigma}}$  is just $\{\widetilde{\delta_1}, \widetilde{\delta_2}, \widetilde{\delta_3}\}$, and $\Theta^2_{\widetilde{\sigma}}|_{H_2} $ is an indecomposable module. After a finite step, finally we can see that  $\Theta_{\widetilde{\sigma}}|_{H_2}\simeq \Theta_{\sigma}$ is  an indecomposable module of finite length.

   C. $\Ext^1_{H_i}=0$.  Applying the exact functor $\cInd_{\widetilde{H_1}}^{G_1}$ to a short exact sequence  of $\widetilde{H_1}$-modules  $0\longrightarrow \widetilde{\sigma} \longrightarrow 2\widetilde{\sigma}    \longrightarrow  \widetilde{\sigma}     \longrightarrow 0$, we obtain $0\longrightarrow \pi_1 \longrightarrow \cInd_{\widetilde{H_1}}^{G_1}2\widetilde{\sigma}    \longrightarrow \pi_1     \longrightarrow 0$. Hence   $\cInd_{\widetilde{H_1}}^{G_1}2\widetilde{\sigma} \simeq\cInd_{\widetilde{H_1}}^{G_1}(\widetilde{\sigma} \oplus \widetilde{\sigma})$.  By considering  their $\sigma$-isotypic components we obtain $ 2\widetilde{\sigma}  \simeq  \widetilde{\sigma} \oplus \widetilde{\sigma}$. Hence $\Ext^1_{\widetilde{H_1}}(\widetilde{\sigma}, \widetilde{\sigma})=0$.

 Assume  $0\longrightarrow \widetilde{\sigma} \stackrel{f}{\longrightarrow} 2\widetilde{\sigma} \stackrel{g}{ \longrightarrow}  \widetilde{\sigma}     \longrightarrow 0$  $(\ast\ast\ast)$  is a short exact sequence of $H_1$-modules. The $\widetilde{H_1}/H_1$-module $\Hom_{H_1}(\widetilde{\sigma}, \Im(f))$ has one dimension, so there exists a character $\chi\in \Irr(\widetilde{H_1}/{H_1})$ such that $f$ defines an $\widetilde{H_1}$-morphism from $\chi\otimes \widetilde{\sigma} $ to $ \widetilde{\sigma}$. Consequently $\Hom_{\widetilde{H_1}}(    \chi\otimes \widetilde{\sigma}, \widetilde{\sigma})\simeq  \Hom_{\widetilde{H_1}}(   \check{\widetilde{\sigma}}\otimes \widetilde{\sigma}, \check{\chi}) \simeq \Hom_{\widetilde{H_1}/H_1}((\check{\widetilde{\sigma}}\otimes \widetilde{\sigma})_{H_1}, \check{\chi})\simeq \Hom_{\widetilde{H_1}/H_1}(1, \check{\chi})\neq 0$. Hence $\check{\chi}$ is the trivial character. Similarly, $g$ also defines an $\widetilde{H_1}$-morphism. Hence the sequence $(\ast\ast\ast)$ is split, and then $\Ext^1_{H_1}(\sigma, \sigma)=0$.  By symmetry, $\Ext^1_{H_2}(\delta, \delta)=0$.
         \subsubsection{In case $m_1m_2>1$}

Invoking the result of above Step II in the proof of Theorem \ref{graphrepresentation}(1), we have a tower  of normal subgroups of $G_i$:
$H_i = H_i^{(0)} \subseteq H_i^{(1)} \subseteq  \cdots \subseteq H_i^{(n)} = \widetilde{H_i} \subseteq G_i$ satisfying the described property there. Using the result in the case $m_1=m_2=1$, inductively we obtain the result.

\section{The theta representation II }\label{stronglygraphreII}
In this section, assume that $G_i/H_i$ is a compact group,  and  the category $\Rep(H_i)$ is locally noetherian,  for $i=1, 2$.  Set $\pi= \cInd_{\Gamma}^{G_1 \times G_2} \rho, V= \cInd_{\Gamma}^{G_1 \times G_2} W$. Our  main purpose of this section is to prove the following result:
 \begin{theorem}\label{themainlemma1}
  \begin{itemize}
   \item[(1)] If the representation $\rho$ of $H_1 \times H_2$ is a theta  representation, then so is  the representation $\pi$ of $G_1 \times G_2$.
   \item[(2)] Suppose that  $\mathcal{L}_{G_i}(\Ind_{H_i}^{G_i}\sigma_i) \neq \emptyset$,   for every $\sigma_i \in \Irr(H_i)$,  $i=1,2$. If  the representation $\pi$   of $G_1 \times G_2$ is a theta representation, then so is the representation  $\rho$ of $H_1 \times H_2$.
   \end{itemize}
 \end{theorem}
  Before proving the results let us present a lemma analogue of  Lmms. \ref{therestrictionfinitelengthII}, \ref{therestriction2}. Assume $\sigma\otimes \delta \in \mathcal{R}_{H_1\times H_2}(\rho)$, and   $ \sigma\prec \pi_1|_{H_1}$, $\delta\prec \pi_2|_{H_2}$, for some $(\pi_i, V_i) \in \Irr(G_i)$.  Let $I_{G_1}(\sigma)=\{g\in G_1\mid \sigma^g \simeq \sigma\}$, $I_{G_2}(\delta)=\{ g\in G_2\mid \delta^g \simeq \delta\}$, and let $\widetilde{\sigma}$ denote the $\sigma$-isotypic component of $\pi_1|_{H_1}$. Let us write  $I'_{G_2}(\delta)$ to be  the inverse image  of $\gamma(\frac{I_{G_1}(\sigma)}{H_1})$ in $G_2$, and let  $\pi_{(\sigma, \delta)}=\cInd_{\Gamma\cap [I_{G_1}(\sigma) \times I'_{G_2}(\delta)]}^{I_{G_1}(\sigma) \times I'_{G_2}(\delta)} \rho$, $[\pi_{(\sigma, \delta)}]_{\widetilde{\sigma}} \simeq \widetilde{\sigma} \otimes \Theta_{\widetilde{\sigma}}$.

\begin{lemma}\label{therestriction23}
\begin{itemize}
\item[(1)] $\cInd_{H_2}^{G_2} (\rho_{\sigma} )\simeq (\cInd_{H_2}^{G_2} \rho)_{\sigma}$  as $H_1 \times G_2$-modules, for all $\sigma\in \Irr(H_1)$.
\item[(2)]
\begin{itemize}
\item[(a)] $\Theta_{\pi_1} \simeq \cInd_{I'_{G_2}(\delta)}^{G_2} \Theta_{\widetilde{\sigma}}$ as $G_2$-modules.
\item[(b)] If  $\widetilde{\sigma}|_{H_1} \simeq m\sigma$,  then  there exists a surjection  $ \Theta_{\widetilde{\sigma}}|_{H_2} \twoheadrightarrow \Theta_{\sigma}  $ as $H_2$-modules.
    \item[(c)] If the above $m=1$, then $\Theta_{\sigma} \simeq   \Theta_{\widetilde{\sigma}}|_{H_2}$ as $H_2$-modules.
\end{itemize}
\end{itemize}
\end{lemma}
\begin{proof}
 1) The canonical map $\Ind_{H_2}^{G_2} \rho \twoheadrightarrow \rho$ will induce an $H_1\times H_2$-morphism $\big( \Ind_{H_2}^{G_2} \rho\big)_{\sigma} \longrightarrow \rho_{\sigma}$, and  an $H_1 \times G_2$-morphism $\big( \Ind_{H_2}^{G_2} \rho\big)_{\sigma} \stackrel{\kappa_{\sigma}}{\longrightarrow}\Ind_{H_2}^{G_2} \rho_{\sigma}$.  For any open compact subgroup $K_2$ of $G_2$, let $\Delta=\{ s_1, s_2, \cdots, s_n\}$ be a complete set of representatives for $H_2\setminus G_2 / K_2$, and  let $H_{2, s}=s^{-1} H_2 s$.  By lemma \ref{therestriction1},
$\Hom_{H_1\times K_2} \big( (\Ind_{H_2}^{G_2} \rho)_{\sigma} ,  \sigma \otimes \mathbb{C}\big) \simeq \Hom_{H_1\times K_2} ( \Ind_{H_2}^{G_2} \rho,  \sigma \otimes \mathbb{C}) \simeq \Hom_{H_1\times K_2} (\oplus_{s\in \Delta}  \cInd_{H_{2,s} \cap K_2}^{K_2} \rho^s,  \sigma \otimes \mathbb{C})
\simeq \oplus_{s\in \Delta}   \Hom_{H_1\times ( H_{2,s} \cap K_2)} (\rho^s,  \sigma \otimes \mathbb{C}) \simeq \oplus_{s\in \Delta}   \Hom_{H_1\times (H_{2,s} \cap K_2)} \big(\rho^s_{\sigma},  \sigma \otimes \mathbb{C}\big)
 \simeq \oplus_{s\in \Delta}   \Hom_{H_1\times K_2} \big( \cInd_{H_{2,s} \cap K_2}^{K_2} \rho^s_{\sigma},  \sigma \otimes \mathbb{C}\big) \simeq \Hom_{H_1\times K_2} ( \Ind_{H_2}^{G_2} \rho_{\sigma}, \sigma \otimes \mathbb{C}) $.
Hence $\kappa_{\sigma}$ is an isomorphism by Lmm.\ref{thedualityequa}.\\
(2)(a) By  the above result(not need the normal condition), we have $\pi_{\widetilde{\sigma}} \simeq \widetilde{\sigma} \otimes \cInd_{I'_{G_2}(\delta)}^{G_2}  \Theta_{\widetilde{\sigma}}$ as $I_{G_1}(\sigma)\times G_2$-modules. By \cite[p.18]{BushH},   there exists an  $I_{G_1}(\sigma) \times G_2$-morphism  $p: \pi_{\pi_1}\longrightarrow \pi_{\widetilde{\sigma}}$. Then a $G_1\times G_2$-morphism $ \Ind_{I_{G_1}(\sigma)}^{G_1}p: \pi_{\pi_1}\longrightarrow \Ind_{I_{G_1}(\sigma)}^{G_1}\pi_{\widetilde{\sigma}}$ follows, and then we get a $G_2$-morphism $ \iotaup:\Theta_{\pi_1}\longrightarrow\cInd_{I'_{G_2}(\delta)}^{G_2} \Theta_{\widetilde{\sigma}} $.   For any representation $(\sigmaup_2, U_2)$ of $G_2$, we  have
\begin{equation}\label{cisomorphis}
\Hom_{G_2}( \Theta_{\pi_1}, \sigmaup_2) \simeq  \Hom_{G_1 \times G_2}( \pi, \pi_1 \otimes \sigmaup_2)\simeq \Hom_{I_{G_1}(\sigma) \times G_2}( \pi_{\widetilde{\sigma}}, \widetilde{\sigma} \otimes \sigmaup_2)\simeq \Hom_{G_2}\big( \cInd_{I'_{G_2}(\delta)}^{G_2} \Theta_{\widetilde{\sigma}}, \sigmaup_2\big),
\end{equation}
Similarly, $\iota$ is an isomorphism.\\
(2)(b)   There   exists a canonical morphism $q: \pi_{(\sigma, \delta)} \twoheadrightarrow \rho$ as $\Gamma \cap [I_{G_1}(\sigma) \times I'_{G_2}(\delta)]$-modules. Moreover,
\begin{equation}\label{sevaliso22}
   \begin{aligned}
   \Hom_{I_{G_1}(\sigma)} (\pi_{(\sigma, \delta)} , \widetilde{\sigma}) \simeq \Hom_{I_{G_1}(\sigma) \times 1} (\pi_{(\sigma, \delta)} , \widetilde{\sigma} \otimes \mathbb{C}) \simeq \Hom_{I_{G_1}(\sigma) \times H_2} ( \pi_{(\sigma, \delta)}, \widetilde{\sigma} \otimes \Ind_{1}^{H_2} 1)\\
\simeq \Hom_{I_{G_1}(\sigma) \times H_2} (\cInd_{H_1 \times H_2}^{I_{G_1}(\sigma) \times H_2} \rho, \widetilde{\sigma} \otimes \Ind_{1}^{H_2} 1) \simeq \Hom_{H_1\times H_2}(\rho, \widetilde{\sigma} \otimes \Ind_{1}^{H_2} 1) \simeq \Hom_{H_1}(\rho, \widetilde{\sigma})
\end{aligned}
   \end{equation}
By following these isomorphisms, for any  $f\in \Hom_{I_{G_1}(\sigma)}(\pi_{(\sigma, \delta)}, \widetilde{\sigma}) $,  as an  $H_1$-module morphism,  it needs to decompose as   $ \pi_{(\sigma, \delta)}  \stackrel{q}{ \twoheadrightarrow} \rho \stackrel{f_1}{\longrightarrow}  \widetilde{\sigma} $, for some $f_1\in  \Hom_{H_1}(\rho, \widetilde{\sigma})$.  The converse also holds. Hence there exists a canonical morphism $  \frac{\pi_{(\sigma, \delta)} }{\cap_{\widetilde{f}\in \Hom_{I_{G_1}(\sigma)}\big(\pi_{(\sigma, \delta)}, \widetilde{\sigma}\big)} \Ker \widetilde{f} }
\stackrel{q}{ \twoheadrightarrow}  \frac{\rho}{\cap_{f\in \Hom_{H_1}(\rho, \widetilde{\sigma})} \Ker f }=\frac{\rho}{\cap_{f\in \Hom_{H_1}(\rho, \sigma)} \Ker f }$, which introduces  an  $H_1 \times H_2$-morphism $ \kappa_{\sigma}: \widetilde{\sigma}\otimes \Theta_{\widetilde{\sigma}} \longrightarrow \sigma \otimes \Theta_{\sigma}$, and then  an  $ H_2$-morphism $\kappa: \Theta_{\widetilde{\sigma}}\longrightarrow \Theta_{\sigma} $. For any smooth representation $(\sigmaup_2, W_2)$ of $H_2$, by Frobenius reciprocity, we  have

   \begin{equation}\label{sevalisomm}
      \begin{aligned}
      \Hom_{H_2}(\Theta_{\sigma},  \sigmaup_2)\simeq \Hom_{H_1\times H_2}(\rho_{\sigma}, \sigma\otimes \sigmaup_2) \hookrightarrow
           \Hom_{H_1\times H_2}(\rho, \widetilde{\sigma}\otimes \sigmaup_2) \\
           \simeq    \Hom_{I_{G_1}(\sigma)\times H_2}( \pi_{(\sigma, \delta)}, \widetilde{\sigma}\otimes \sigmaup_2) \simeq
                     \Hom_{I_{G_1}(\sigma)\times H_2}([ \pi_{(\sigma, \delta)} ]_{\widetilde{\sigma}}, \widetilde{\sigma}\otimes \sigmaup_2) \simeq
                     \Hom_{H_2}( \Theta_{\widetilde{\sigma}}, \sigmaup_2)
                           \end{aligned}
   \end{equation}
   i.e. we get  a injective  morphism $ \Hom_{H_2}(\Theta_{\sigma},  \sigmaup_2)  \longrightarrow \Hom_{H_2}( \Theta_{\widetilde{\sigma}}, \sigmaup_2)
 $ compatible with the above $\kappa$, so the result holds.\\
 (2)(c) In the above (\ref{sevalisomm}), $
 \Hom_{H_2}(\Theta_{\sigma},  \sigmaup_2) \simeq \Hom_{H_2}( \Theta_{\widetilde{\sigma}}, \sigmaup_2)$, so $\Theta_{\sigma} \simeq   \Theta_{\widetilde{\sigma}}|_{H_2}$ as $H_2$-modules.
\end{proof}

\subsection{The proof of the   part (1).}
\begin{lemma}
If $(\pi_1, V_1) \in \Irr(G_1)$, and $(\pi_2, V_2) \in \Irr(G_2)$, such that $\pi_1 \otimes \pi_2 \in \mathcal{R}_{G_1 \times G_2}(\pi)$, then:
\begin{itemize}
\item[(1)] For $\sigma \in \mathcal{R}_{H_1}(\pi_1)$,   there exists a unique element $\delta \in \mathcal{R}_{H_2}(\pi_2)$ such that $\sigma \otimes \delta \in \mathcal{R}_{H_1 \times H_2}(\rho)$.
\item[(2)] If $\sigma\otimes \delta\in \mathcal{R}_{H_1 \times H_2}(\rho)$, then $\gamma$ induces a bijective map from $I_{G_1}( \sigma)/{H_1}$ to $I_{G_2}(\delta)/{H_2}$ with the graph $\Gamma_{(\sigma,\delta)} /{(H_1 \times H_2)}$, where $\Gamma_{(\sigma,\delta)}= \Gamma \cap \big( I_{G_1}(\sigma) \times I_{G_2}(\delta)\big)$.
 \item[(3)]    For two irreducible constituents $(\sigma, \mathbb{U})$, $(\delta, \mathbb{W})$   of $\Res_{H_1}^{G_1} \pi_1$ and $\Res_{H_2}^{G_2} \pi_2$ respectively, we let $I_{G_1}^0 \big( \sigma, \delta\big)=\left\{ g_1\in G_1 \mid g_1(\mathbb{U}) \subseteq \mathbb{U} \textrm{ and } \gamma(g_1)(\mathbb{W}) \subseteq \mathbb{W} \right\}$, and  $I_{G_2}^0 \big( \sigma, \delta\big)=\left\{ g_2\in G_2 \mid g_2(\mathbb{W}) \subseteq \mathbb{W}, \textrm{ and } \gamma^{-1}(g_2)(\mathbb{U}) \subseteq \mathbb{U} \right\}$. Then:
     \begin{itemize}
    \item[(a)] $ I_{G_1}^0 \big( \sigma, \delta\big)$, $I_{G_2}^0 \big( \sigma, \delta\big)$ are open subgroups of $G_1, G_2$ respectively;
    \item[(b)]$\gamma$ maps $I_{G_1}^0 \big( \sigma, \delta\big)/H_1$ onto $I_{G_2}^0 \big( \sigma, \delta\big)/H_2$.
        \end{itemize}
    \end{itemize}
\end{lemma}
\begin{proof}
1)  Let us write $ I'_{G_2}(\delta)/{H_2}=\gamma(I_{G_1}(\sigma)/{H_1})$, and $\widetilde{\sigma}$  the $\sigma$-isotypic component of $\pi_1$. Then $\pi_1\simeq \cInd_{I_{G_1}(\sigma)}^{G_1} \widetilde{\sigma}$.  By Frobenius reciprocity,  \footnote{ By Lmm.\ref{homeo}, $(G_1\times G_2) /\Gamma$ is homeomorphic  to $\frac{G_1}{H_1}$, compatible with the $G_1$-action. Note that $\frac{G_1}{H_1}$ is a compact group having a Haar measure; thus there exists a  left quasi-invariant measure on $(G_1\times G_2)/ \Gamma$, which implies $\Delta_{G_1\times G_2}|_{\Gamma} =\Delta_{\Gamma}$.}
\begin{equation}\label{frobeniusrecomplex}
\begin{aligned}
m_{G_1 \times G_2} \big( \pi, \pi_1 \otimes \pi_2\big)= m_{G_1 \times G_2}\big(\pi , \Ind_{I_{G_1}(\sigma)\times G_2}^{G_1 \times G_2} \widetilde{\sigma} \otimes \pi_2\big)
= m_{ I_{G_1}(\sigma)\times I'_{G_2}(\delta)}\big(\pi_{(\sigma, \delta)},\widetilde{\sigma} \otimes \pi_2\big)\\ \leq m_{ I_{G_1}(\sigma)\times H_2}\big(\pi_{(\sigma, \delta)},\widetilde{\sigma} \otimes \pi_2\big)
=m_{ I_{G_1}(\sigma)\times H_2}\big(\cInd^{I_{G_1}(\sigma)\times H_2}_{H_1\times H_2}\rho,\widetilde{\sigma} \otimes \pi_2\big)\stackrel{}{=}m_{H_1 \times H_2}(\rho,\widetilde{\sigma} \otimes \pi_2)
\end{aligned}
\end{equation}
 So  we can find $\delta \in \mathcal{R}_{H_2}(\pi_2)$ such that $ \sigma \otimes \delta \in \mathcal{R}_{H_1 \times H_2}(\rho) \cap \mathcal{R}_{H_1 \times H_2}(\pi_1 \otimes \pi_2)$. And the uniqueness is clear.\\
2) Assume $g_1 H_1 \in I_{G_1}(\sigma)/{H_1}$, and $\gamma(g_1H_1)=g_2H_2 \in G_2/{H_2}$. We then have $\sigma^{g_1} \otimes \delta^{g_2} \simeq \sigma \otimes \delta^{g_2} \in \mathcal{R}_{H_1 \times H_2}(\rho)$, which implies that $\delta^{g_2} \simeq \delta$, and then $g_2 \in I_{G_2}(\delta)$. The converse also holds, so $\gamma$ maps $I_{G_1}(\sigma)/{H_1}$ onto $I_{G_2}(\delta)/{H_2}$ with the graph $\Gamma\cap \big( I_{G_1}(\sigma) \times I_{G_2}(\delta)\big)/{(H_1 \times H_2)}$.\\
3) The results arise from Lmm.\ref{opennormal}.
\end{proof}

 Keep the notations. We take an open normal subgroup $J_{G_1}(\sigma) $ of $I^0_{G_1}(\sigma, \delta )$ as defined in Lmm.\ref{theopensubgroup}, and write its image in $I_{G_2}(\delta)/H_2$ by $J_{G_2}(\delta)/H_2$ through $\gamma$. Let $(\mathfrak{n}_1,  \mathcal{N}_1)$, $(\mathfrak{n}_2, \mathcal{N}_2)$, resp. $(\mathfrak{m}_1, \mathcal{M}_1)$ and $(\mathfrak{m}_2, \mathcal{M}_2)$ be two projective representations related to $(\widetilde{\sigma}, \widetilde{\mathbb{U}})$, and $(\widetilde{\delta}, \widetilde{\mathbb{W}})$ respectively in Theorem \ref{thetensorprojectivereps}. Let $\Delta_1$, $\Delta_2$ be the relative sets of representatives for $I_{G_1}(\sigma)/J_{G_1}(\sigma)$ and $I_{G_2}(\delta)/J_{G_2}(\delta)$ respectively. On $\Hom_{H_1 \times H_2}(\rho, \widetilde{\sigma} \otimes \widetilde{\delta})$, we impose a  natural left $\Gamma_{(\sigma, \delta)}/{(H_1 \times H_2)}$-action defined as follows:
$$\overline{a} \cdot\varphi(\widetilde{v}):= \varphi^{\overline{a}}(\widetilde{v})=\widetilde{\sigma} \otimes \widetilde{\delta} (a) \varphi\big( \rho(a^{-1}) \widetilde{v}\big)$$
for $\overline{a} \in \Gamma_{(\sigma, \delta)}/{(H_1 \times H_2)}$, $\varphi \in \Hom_{H_1 \times H_2}(\rho, \widetilde{\sigma} \otimes \widetilde{\delta})$, $\widetilde{v} \in \widetilde{\mathbb{U}} \otimes \widetilde{\mathbb{W}}$, and a representative $a$ of  $\overline{a}$ in $\Gamma_{(\sigma, \delta)}$. So $\Hom_{\Gamma_{(\sigma, \delta)}}(\rho, \widetilde{\sigma} \otimes \widetilde{\delta}) \simeq \Hom_{H_1 \times H_2} ( \rho, \mathcal{N}_1 \otimes \mathcal{N}_2 \otimes \mathcal{M}_1 \otimes \mathcal{M}_2 )^{\frac{\Gamma_{(\sigma, \delta)}}{H_1 \times H_2}}$. Recall that $m_{H_1 \times H_2}(\rho, \mathfrak{n}_1 \otimes\mathfrak{m}_1)=1$.  Let us now  fix a nonzero element $F \in \Hom_{H_1 \times H_2} (\rho, \mathfrak{n}_1 \otimes \mathfrak{m}_1)$.  In view of Theorem \ref{thetensorprojectivereps}, we have
$$\Hom_{H_1 \times H_2} (\rho, \mathfrak{n}_1 \otimes \mathfrak{n}_2 \otimes \mathfrak{m}_1 \otimes \mathfrak{m}_2)\simeq \Hom_{H_1 \times H_2}(\rho, \mathfrak{n}_1 \otimes \mathfrak{m}_1)  \otimes \mathcal{N}_2 \otimes \mathcal{M}_2\simeq  \mathcal{N}_2 \otimes \mathcal{M}_2.$$
The action of $\frac{\Gamma_{(\sigma, \delta)}}{H_1 \times H_2}$ on $ \Hom_{H_1 \times H_2}(\rho, \mathfrak{n}_1 \otimes \mathfrak{m}_1)\otimes \mathcal{N}_2 \otimes \mathcal{M}_2$ is described as follows:
\begin{itemize}
\item If $(x_1, x_2) \in \Gamma \cap \big( J_{G_1}(\sigma) \times J_{G_2}(\delta)\big)$, with the projection $(\overline{x_1}, \overline{x_2})$ in $\Gamma_{(\sigma, \delta)}/{(H_1 \times H_2)}$,
 then $(\overline{x_1}, \overline{x_2}) \cdot F(v)\otimes \varphi \otimes \psi  =\widetilde{\sigma}(x_1) \otimes \widetilde{\delta} (x_2) F\big( \rho(x_1^{-1}, x_2^{-1}) v\big) \otimes  \varphi \otimes \psi  =\beta(\overline{x_1}, \overline{x_2})F (v)\otimes  \varphi \otimes \psi $, for $\varphi \in \mathcal{N}_2$, $\psi \in \mathcal{M}_2$, and  suitable $\beta(\overline{x_1}, \overline{x_2}) \in \C^{\times}$.
\item If $(x_1, x_2) \in \Gamma \cap \big( I_{G_1}(\sigma) \times I_{G_2}(\delta)\big)$, with the decomposition $x_1=gg_0$, $x_2=hh_0$, for $g\in \Delta_1$, $g_0 \in J_{G_1}(\sigma)$, $h\in \Delta_2$, $h_0 \in J_{G_2}(\delta)$. Then
    $$(\overline{x_1}, \overline{x_2}) \cdot F(v) \otimes \varphi \otimes\psi=   [\widetilde{\sigma}(x_1) \otimes \widetilde{\delta}(x_2) F\big( \rho(x_1^{-1}, x_2^{-1}) v\big)]\big( \mathcal{E}_g^{-1}  \otimes \mathcal{E}_h^{-1}\big)\otimes \big( \mathcal{E}_g \circ  \varphi\otimes \mathcal{E}_h \circ\psi \big)$$
    for $v\in V$. Note that $ [\widetilde{\sigma} (x_1) \otimes \widetilde{\delta}(x_2)F\big( \rho(x_1^{-1}, x_2^{-1})-\big)](\mathcal{E}_g^{-1} \otimes \mathcal{E}_h^{-1})$ also lies in $\Hom_{H_1 \times H_2} (\rho, \mathbb{U} \otimes \mathbb{W})$, so it equals to $\beta(\overline{x_1}, \overline{x_2}) F(-)$ for some $\beta(\overline{x_1}, \overline{x_2}) \in \C^{\times}$. Finally we conclude that $(\overline{x_1}, \overline{x_2}) \cdot  F\otimes \varphi \otimes \psi= \beta(\overline{x_1}, \overline{x_2}) F \otimes ( \mathcal{E}_g \circ\varphi ) \otimes ( \mathcal{E}_h\circ \psi)$.
\end{itemize}
By use of the isomorphism $\Hom_{H_1 \times H_2}(\rho, \widetilde{\sigma} \otimes \widetilde{\delta}) \simeq  \mathcal{N}_2 \otimes \mathcal{M}_2$, let us denote the induced  representation  of $\frac{\Gamma_{(\sigma, \delta)}}{(H_1 \times H_2)}$ on $ \mathcal{N}_2 \otimes \mathcal{M}_2$ by $(\iota, \mathcal{N}_2 \otimes \mathcal{M}_2)$. Then $\iota$ has the following properties:
\begin{lemma}\label{thedecompsotion1}
\begin{itemize}
\item[(1)] $(\iota,  \mathcal{N}_2 \otimes \mathcal{M}_2)$ is a smooth representation of $\Gamma_{(\sigma, \delta)}/{(H_1 \times H_2)}$.
\item[(2)] $(\iota, \mathcal{N}_2 \otimes \mathcal{M}_2)$ is projectively isomorphic to $(\Res_{\frac{\Gamma_{(\sigma, \delta)}}{H_1\times H_2}}^{\frac{I_{G_1}(\sigma)}{H_1} \times \frac{I_{G_2}(\delta)}{H_2}} \mathfrak{n}_2 \otimes \mathfrak{m}_2, \mathcal{N}_2 \otimes \mathcal{M}_2)$.
    \end{itemize}
\end{lemma}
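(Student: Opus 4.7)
The plan is to read off both assertions directly from the explicit description of the action of $\Gamma_{(\sigma,\delta)}/(H_1\times H_2)$ on $\mathcal{N}_1\otimes\mathcal{M}_1\otimes\C F\simeq\mathcal{N}_1\otimes\mathcal{M}_1$ supplied in the two bullet points just above the statement. Combining those formulas, for $(x_1,x_2)\in\Gamma_{(\sigma,\delta)}$ with the decomposition $x_1=gg_0$, $x_2=hh_0$ (where $g\in\Delta_1$, $h\in\Delta_2$, $g_0\in J_{G_1}(\sigma)$, $h_0\in J_{G_2}(\delta)$), one has
\[
\iota(\overline{x_1},\overline{x_2})(\varphi\otimes\psi)=\beta(\overline{x_1},\overline{x_2})\,\mathfrak{n}_1(\overline{x_1})\varphi\otimes\mathfrak{m}_1(\overline{x_2})\psi,
\]
because $\mathfrak{n}_1$ factors through $I_{G_1}(\sigma)/J_{G_1}(\sigma)$ and, on representatives $g\in\Delta_1$, is given by $\varphi\mapsto\varphi\circ\mathcal{E}_g$, and similarly for $\mathfrak{m}_1$.

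Granted this identity, part~(2) will be immediate: take the identity map as the underlying $\C$-linear bijection and $\mu:=\beta$ as the continuous character intertwining the two projective actions. This exhibits $(\iota,\mathcal{N}_1\otimes\mathcal{M}_1)$ and $(\Res(\mathfrak{n}_1\otimes\mathfrak{m}_1),\mathcal{N}_1\otimes\mathcal{M}_1)$ as isomorphic projective representations of $\Gamma_{(\sigma,\delta)}/(H_1\times H_2)$, provided we know that $\beta$ is continuous. Thus everything reduces to part~(1).

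For (1) I will verify condition~(2) of Definition~\ref{thedePro}. The open subgroup $\mathcal{J}:=\Gamma\cap\bigl(J_{G_1}(\sigma)\times J_{G_2}(\delta)\bigr)/(H_1\times H_2)$ of $\Gamma_{(\sigma,\delta)}/(H_1\times H_2)$ acts on every vector $\varphi\otimes\psi$ by the scalar $\beta$, so once $\beta|_{\mathcal{J}}$ is known to be continuous one may simply take $U_{\varphi\otimes\psi}:=\mathcal{J}$ and $\chi_{\varphi\otimes\psi}:=\beta|_{\mathcal{J}}$. The main technical step is therefore the continuity of $\beta$. I would establish it by choosing $v_0\in V$ with $F(v_0)\neq 0$ and an open compact subgroup $L\subseteq\mathcal{J}$ whose preimage in $\Gamma$ stabilises $v_0$ under $\rho$; on $L$ the defining relation reduces to
\[
\beta(\overline{x_1},\overline{x_2})\,F(v_0)=\widetilde{\sigma}(x_1)\otimes\widetilde{\delta}(x_2)\,F(v_0),
\]
and the smoothness of $\widetilde{\sigma}\otimes\widetilde{\delta}$ forces $\beta$ to be locally constant near the identity, hence continuous on $\mathcal{J}$.

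The main obstacle I anticipate is the sanity check that the twisted map $v\mapsto\widetilde{\sigma}(x_1)\otimes\widetilde{\delta}(x_2)F(\rho(x_1^{-1},x_2^{-1})v)$ does land in the one-dimensional space $\Hom_{H_1\times H_2}(\rho,\mathfrak{n}_2\otimes\mathfrak{m}_2)$, so that $\beta$ is intrinsically well-defined. This is a brief computation: $H_1\times H_2$-equivariance of the twisted map follows from $(x_1,x_2)$ normalising $H_1\times H_2$ together with the $H_1\times H_2$-equivariance of $F$, and membership in $\mathfrak{n}_2\otimes\mathfrak{m}_2$ follows from the stability of $\mathfrak{n}_2$ (resp.\ $\mathfrak{m}_2$) under $J_{G_1}(\sigma)$ (resp.\ $J_{G_2}(\delta)$) that is built into the construction of Theorem~\ref{thetensorprojectivereps}. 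Once this is in place, the one-dimensionality $m_{H_1\times H_2}(\rho,\mathfrak{n}_2\otimes\mathfrak{m}_2)=1$ makes $\beta$ a scalar, and the argument above completes both parts.
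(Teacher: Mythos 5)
Your proposal is correct and follows essentially the same route as the paper. To prove smoothness you reduce to showing that the scalar $\beta$ is locally trivial near the identity of $\mathcal{J}=\Gamma\cap\bigl(J_{G_1}(\sigma)\times J_{G_2}(\delta)\bigr)/(H_1\times H_2)$, and you do so by choosing $v_0$ with $F(v_0)\neq 0$ and shrinking to an open subgroup stabilising both $v_0$ under $\rho$ and $F(v_0)$ under $\widetilde{\sigma}\otimes\widetilde{\delta}$; this is exactly the paper's argument with its $U_{\overline{v}}\cap U_{\overline{u}}$ (the paper works on $V/\Ker F$, you work on $V$, but the two are equivalent since $\Ker F$ is preserved). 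Part (2) is read off from the explicit formula with $\mu=\beta$ in both treatments, and your flagged ``sanity check'' that the twisted operator stays in $\Hom_{H_1\times H_2}(\rho,\mathfrak{n}_2\otimes\mathfrak{m}_2)$ is the same normality-plus-$J$-stability observation the paper takes for granted in the preceding bullet points.
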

\begin{proof}
Note that any non-trivial element in $\Hom_{H_1 \times H_2}(W, \mathcal{N}_1 \otimes \mathcal{M}_1)$ has the same kernel, just as $\Ker F$, so that $\Gamma \cap \big(J_{G_1}(\sigma) \times J_{G_2}(\delta)\big)$ fixes $\Ker F$. Let $0 \neq  \overline{v} \in W/ {\Ker F}$, and $F(\overline{v})=\overline{u} \in \mathcal{N}_1 \otimes \mathcal{M}_1$. Let $U_{\overline{v}}$ and $U_{\overline{u}}$ denote their stabilizers in $\Gamma \cap \big( J_{G_1}(\sigma) \times J_{G_2}(\delta)\big)$. Clearly, $\widetilde{\sigma} \otimes \widetilde{\delta} \big( (x_1,x_2)\big) F\big(\rho(x_1^{-1}, x_2^{-1})-\big) =F(-)$, for $(x_1,x_2) \in U_{\overline{v}} \cap U_{\overline{u}}$. So the first statement is proved. The second statement arises from the explicit action of  $\frac{\Gamma_{(\sigma, \delta)}}{(H_1 \times H_2)}$ described above.
\end{proof}
\begin{lemma}\label{thedimension1pro}
Notations being as above, we then have:
\begin{itemize}
\item[(1)] $m_{\Gamma_{(\sigma, \delta)}}(\rho, \widetilde{\sigma} \otimes \widetilde{\delta})=1$.
\item[(2)] $m_{G_1 \times G_2}(\pi, \pi_1 \otimes \pi_2)=1$.
\end{itemize}
\end{lemma}
\begin{proof}
Let $(\widetilde{\sigma}, \widetilde{\mathbb{U}})$ (resp. $(\widetilde{\delta}, \widetilde{\mathbb{W}})$) be the smooth irreducible representation of $I_{G_1}(\sigma)$ (resp. $I_{G_2}(\delta)$) as defined in Lmm.\ref{finitelengths} relative to $\sigma$(resp. $\delta$), so that $\pi_1 \simeq \cInd_{I_{G_1}(\sigma)}^{G_1} \widetilde{\sigma}$ and $\pi_2 \simeq \cInd_{I_{G_2}(\delta)}^{G_2} \widetilde{\delta}$. By the equality (\ref{frobeniusrecomplex}), we have
$\Hom_{G_1 \times G_2}\big(\cInd_{\Gamma}^{G_1 \times G_2} \rho, \pi_1 \otimes \pi_2\big) \simeq \Hom_{ I_{G_1}(\sigma)\times I_{G_2}(\delta)} \big(  \pi_{(\sigma, \delta)}, \pi_1 \otimes \widetilde{\delta}\big)$. Any non-zero element $f$ in the last Hom-vector space, a fortiori,  belongs to $ \Hom_{H_1\times I_{G_2}(\delta)} \big(  \pi_{(\sigma, \delta)}, \pi_1 \otimes \widetilde{\delta}\big) \simeq \Hom_{H_1 \times H_2} \big( \rho, \pi_1 \otimes \widetilde{\delta}\big)$. By the property of graph, it factors through $\rho \longrightarrow \widetilde{\sigma}\otimes \widetilde{\delta}$. Then $ 1\leq m_{I_{G_1}(\sigma) \times I_{G_2}(\delta)} \big(  \pi_{(\sigma, \delta)}, \pi_1 \otimes \widetilde{\delta}\big)=m_{I_{G_1}(\sigma) \times I_{G_2}(\delta)} \big(  \pi_{(\sigma, \delta)}, \widetilde{\sigma} \otimes \widetilde{\delta}\big) =m_{\Gamma_{(\sigma, \delta)}} \big( \rho, \widetilde{\sigma}\otimes \widetilde{\delta}\big)$, which is smaller than $1$ by Lmm.\ref{thedecompsotion1}(2) and Lmms.\ref{froben}, \ref{twoprojectiverepreses}. Hence both results hold.
\end{proof}
\begin{corollary}\label{finitelenghtmodule}
Keep the notations. There exist  a decreasing  complete chaining of $\Gamma_{(\sigma, \delta)}$-modules: $\widetilde{\mathbb{U}} \otimes \widetilde{\mathbb{W}}  =\mathbb{V}_m \supseteq \cdots \supseteq \mathbb{V}_1 =0$, and a nonzero $\Gamma_{(\sigma, \delta)}$-morphism $f: W \longrightarrow \mathbb{V}_{i+1}$, for some $i$, such that $\Im(f) \cap \mathbb{V}_i =0$, and $\Im(f) \simeq \mathbb{U}\otimes \mathbb{W}$ as $H_1\times H_2$-modules.
\end{corollary}
\begin{proof}
Let $f$ be a non-zero $\Gamma_{(\sigma, \delta)}$-morphism from $W$  to $\widetilde{\mathbb{U}}\otimes \widetilde{\mathbb{W}}$. It is clear that $\Im(f)  \simeq n \mathbb{U} \otimes \mathbb{W}$ as $H_1 \times H_2$-modules. Making use of  $m_{H_1 \times H_2}(W, \mathbb{U} \otimes \mathbb{W}) =1$ shows $n=1$. Hence $\Im(f)$ is an irreducible $\Gamma_{(\sigma, \delta)}$-module. By Lmm.\ref{finitelengths}, the restriction of $\widetilde{\sigma} \otimes \widetilde{\delta}$ to $\Gamma_{(\sigma, \delta)}$ is a smooth representation of finite length, afforded a decreasing chaining of $\Gamma_{(\sigma, \delta)}$-modules, say
$\widetilde{\mathbb{U}} \otimes \widetilde{\mathbb{W}}  =\mathbb{V}_m \supseteq \cdots \supseteq \mathbb{V}_1 =0.$
So there is a nonzero $\Gamma_{(\sigma, \delta)}$-homomorphism $f: W \longrightarrow \mathbb{V}_{i+1}$, for certain $i\in \left\{ 0, \cdots, m-1\right\}$ such that $\Im(f) \cap \mathbb{V}_{i}=0$.
\end{proof}
\subsubsection{Proof of Theorem \ref{themainlemma1}(1)}
The multiplicity-free property has been verified in Lmm.\ref{thedimension1pro} (2).
  We  assume $\pi_1 \otimes \pi_2$, $\pi_1 \otimes\pi_2' \in\mathcal{R}_{G_1 \times G_2}(\pi)$, and will prove that $\pi_2 \simeq \pi_2'$. Keep the above notations, and use  the analogous notations relative to $\pi_2'$ by adding the symbol $'$. Therefore it is sufficient  to show that $\widetilde{\delta} \simeq \widetilde{\delta'}$. To simply the discussion, we identify  $(\delta, \mathbb{W})$ and $(\delta', \mathbb{W}')$.  Since $\# I_{G_2}(\delta)/J_{G_2}(\delta), \#I_{G_2}(\delta)/ J_{G_2}(\delta')$ both are finite, the cardinality of $I_{G_2}(\delta)/J_{G_2}(\delta) \cap J_{G_2}(\delta')$ is also finite. Hence we can use $J_{G_2}(\delta) \cap J_{G_2}(\delta') $ instead of $J_{G_2}(\delta)$, $J_{G_2}(\delta')$ in both cases. Combing Lmm.\ref{thedecompsotion1}(2) with Lmm.\ref{thedimension1pro} (1) shows that the two projective representations $(\mathfrak{n}_2\circ \gamma^{-1}, \mathcal{N}_2)$  and $(\mathfrak{m}_2, \mathcal{M}_2)$ of $I_{G_2}(\delta)/{H_2}$ are projective  isomorphic, and then $(\mathfrak{m}_2, \mathcal{M}_2)\simeq (\mathfrak{m}_2', \mathcal{M}_2')$ as  projective representations of $I_{G_2}(\delta)/{H_2}$. Notice that in Section \ref{hypothesisBresult}, the definition of the projective representation $(\mathfrak{m}_1, \mathcal{M}_1)$ of $I_{G_2}(\delta)/{H_2}$ is  only dependent on the choice of the classes $\left\{ \mathcal{E}_g \mid g\in \Delta_2\right\}$. So we can  identify the two projective smooth representations $(\mathfrak{m}_1, \mathcal{M}_1)$ and $(\mathfrak{m}_1', \mathcal{M}_1')$ of $I_{G_2}(\delta)/{H_2}$, and the actions of $\frac{\Gamma_{(\sigma, \delta)}}{H_1 \times H_2}$ on $ \Hom_{H_1 \times H_2}(\rho, \mathfrak{n}_1 \otimes \mathfrak{m}_1)$,  $\Hom_{H_1 \times H_2}(\rho, \mathfrak{n}_1 \otimes \mathfrak{m}'_1)$. Therefore $(\mathfrak{m}_2, \mathcal{M}_2)$ is linearly isomorphic to $(\mathfrak{m}_2', \mathcal{M}_2')$ by Lmms. \ref{thedecompsotion1}, \ref{thedimension1pro}. Consequently $\widetilde{\delta} \simeq \widetilde{\delta'}$, and $\pi_2 \simeq \pi_2'$. Recall that  $[G_2: I_{G_2}(\delta)]$  has finite index, so  by Lmm.\ref{therestriction23}, $\pi_{\pi_1}$ is a finitely generated representation of $G_1 \times G_2$.

In view of the proof, we obtain an  analogue  result  of  Roberts  Brooks' Lmm.4.2 in \cite{Rob1}.
\begin{corollary}\label{mutiplicity}
In  Theorem \ref{themainlemma1}, if   $\pi_1 \in \Irr(G_1)$, $\pi_2 \in \Irr(G_2)$ with the decompositions
$$ \pi_1|_{H_1}\simeq \oplus_{\sigma_i \in \mathcal{R}_{H_1}(\pi_1)} m_1 \sigma_i, \quad \textrm{ and } \quad   \pi_2|_{H_2}\simeq \oplus_{\delta_i \in \mathcal{R}_{H_2}(\pi_2)} m_2 \delta_i$$
such that $\pi_1\otimes \pi_2 \in \mathcal{R}_{G_1\times G_2}(\pi),$ then
\begin{itemize}
\item[(1)] there  exists a bijective map   $\theta_{\rho}: \mathcal{R}_{H_1}(\pi_1) \longrightarrow \mathcal{R}_{H_2}(\pi_2); \sigma_{\alpha} \longmapsto \delta_{\alpha}$ such that  $\sigma_{\alpha} \otimes \delta_{\alpha} \in \mathcal{R}_{H_1 \times H_2}(\rho)$ and
$\sigma_{\alpha} \otimes \delta_{\beta} \notin \mathcal{R}_{H_1\times H_2}(\rho)$ for $\alpha \neq \beta$.
\item[(2)] $m_1=m_2$.
\end{itemize}
\end{corollary}
\begin{proof}
We follow the  notations in the above proof. Then  the second statement follows from the fact that the two projective representations $(\mathfrak{n}_2\circ \gamma^{-1}, \mathcal{N}_2)$  and $(\mathfrak{m}_2, \mathcal{M}_2)$ of $I_{G_2}(\delta)/{H_2}$ are isomorphic.
\end{proof}

\subsection{The proof of the   part (2).}\label{stronglygraphreIV2}
Assume that $\mathcal{L}_{G_i}(\Ind_{H_i}^{G_i}\sigma_i) \neq \emptyset$,   for any $\sigma_i \in \Irr(H_i)$ as $i=1,2$.
Suppose now that   $\sigma\otimes \delta\in \mathcal{R}_{H_1 \times H_2}(\rho)$, for $(\sigma, U)\in \Irr(H_1)$, $(\delta, W)\in \Irr(H_2)$. Then there exist irreducible representations $(\pi_1, V_1)$ of $G_1$, $(\pi_2,V_2)$ of $G_2$, such that $\sigma \prec \Res_{H_1}^{G_1} \pi_1$, $\delta\prec \Res_{H_2}^{G_2} \pi_2$. Let $I'_{G_2}(\delta)$ (resp. $I'_{G_1}(\sigma)$ ) be the inverse image of $\gamma(\tfrac{I_{G_1}(\sigma)}{H_1})$(resp. $\gamma^{-1}(\tfrac{I_{G_2}(\delta)}{H_2})$) in $G_2$(resp. $G_1$). Let us denote $\Gamma_{(\sigma, \delta)}'= \Gamma \cap (I_{G_1}(\sigma) \times I'_{G_2}(\delta))$, and $\pi_{(\sigma, \delta)}= \cInd_{\Gamma'_{(\sigma, \delta)}}^{I_{G_1}(\sigma) \times I'_{G_2}(\delta)} \rho$.
\subsubsection{Case I. $\tfrac{G_1}{H_1}$ is a finite group. } We first seek out $\pi_1, \pi_2$  such that $\pi_1 \otimes \pi_2 \in \mathcal{R}_{G_1 \times G_2}(\pi)$. The similar procedure as Step 1 in Section \ref{part52} yields,  $\Hom_{H_1 \times H_2}\big( \rho, \widetilde{\sigma} \otimes\pi_2\big) \simeq \Hom_{H_1\times G_2}\big( \pi_{\pi_2}, \widetilde{\sigma}\otimes \pi_2\big)$, which has   finite dimension(Prop.\ref{typeimpliquequotientadmissible}). Hence $\Hom_{H_1 \times H_2}\big( \rho, \widetilde{\sigma} \otimes\pi_2\big)$ is  a smooth  $\Gamma_{(\sigma, \delta)}'/{(H_1 \times H_2)}$-module; it can be   decomposed as $\mathcal{V}_1^{\ast} \oplus \cdots\oplus \mathcal{V}_k^{\ast}$, for  some irreducible representations $(\widetilde{\omega_i}^{\ast}, \mathcal{V}_i^{\ast})$ of  $\Gamma'_{(\sigma,\delta)}/{(H_1 \times H_2)}$. Then the contragredient representation $(\widetilde{\omega_i},\mathcal{V}_i)$  of $(\widetilde{\omega_i}^{\ast}, \mathcal{V}_i^{\ast})$ can be also viewed as an irreducible representation of  $I_{G_1}(\sigma)/H_1$ or $I'_{G_2}(\delta)/H_2$.
 \begin{lemma}\label{sigmadelta}
 $\Hom_{\Gamma_{(\sigma, \delta)}'}\big(\rho, \widetilde{\omega_i}\otimes\widetilde{\sigma}\otimes \pi_2\big)\neq 0$.
 \end{lemma}
 \begin{proof}
 Let $\left\{F_1^{\ast}, \cdots, F_k^{\ast}\right\}$ be a basis of $\mathcal{V}_i^{\ast}$. Let $F_t$ be the dual base of $F_t^{\ast}$ in  $\mathcal{V}_i$. Then the mapping $\mathbbm{v}_i=\sum_{j=1}^k F_t\otimes F_t^{\ast}\in\Hom_{H_1 \times H_2}\big(\rho, \widetilde{\omega_i}\otimes\widetilde{\sigma}\otimes \pi_2\big) $, sending $v\in V$ to $ \sum_{j=1}^k F_t\otimes F_t^{\ast}(v)$,  is $\Gamma'_{(\sigma,\delta)}/{(H_1 \times H_2)}$-invariant.
 \end{proof}

\begin{lemma}\label{ssss}
\begin{itemize}
\item[(1)] $\Ind_{I_{G_1}(\sigma)}^{G_1} \widetilde{\omega_i} \otimes \widetilde{\sigma} $ is a semi-simple representation  of finite length.
\item[(2)] $\cInd_{\Gamma'_{(\sigma, \delta)}}^{I_{G_1}(\sigma)\times G_2} \rho \simeq \Res_{I_{G_1}(\sigma) \times G_2}^{G_1 \times G_2} \pi$.
\item[(3)] There exists $\pi_1 \in \mathcal{R}_{G_1}\big(\Ind_{I_{G_1}(\sigma)}^{G_1} \widetilde{\omega_i}\otimes\widetilde{\sigma}\big) $ such that $\pi_1\otimes \pi_2 \in \mathcal{R}_{G_1\times G_2}(\pi)$, and $\sigma\in \mathcal{R}_{H_1}(\pi_1)$.
\end{itemize}
\end{lemma}
\begin{proof}
 1)Notice that $I_{G_1}(\sigma)$ is an open subgroup of $G_1$ of finite index, and $\widetilde{\omega_i} \otimes \widetilde{\sigma}\hookrightarrow \cInd_{H_1}^{I_{G_1}(\sigma_1)}( \widetilde{\omega_i} \otimes \widetilde{\sigma}) $. Hence $\widetilde{\omega_i} \otimes \widetilde{\sigma}$ is semi-simple,  so the first argument  holds  by \cite[p. 21, Lmm.]{BushH}.\\
 2) $\Gamma\backslash (G_1 \times G_2)$, $\Gamma'_{(\sigma, \delta)} \backslash \big(I_{G_1}(\sigma)\times G_2\big)$ both are homeomorphic with $H_2\backslash G_2$, and $( I_{G_1}(\sigma)\times G_2 ) \cap \Gamma =\Gamma'_{(\sigma, \delta)}$, so  the result follows from Prop.\ref{restricition}.\\
 3) $0 \neq \Hom_{\Gamma'_{(\sigma, \delta)}}\big( \rho, (\widetilde{\omega_i}\otimes \widetilde{\sigma})\otimes \pi_2\big)\simeq \Hom_{I_{G_1}(\sigma) \times I'_{G_2}(\delta)} \big( \cInd_{\Gamma'_{(\sigma, \delta)}}^{I_{G_1}(\sigma)\times I'_{G_2}(\delta)} \rho, (\widetilde{\omega_i}\otimes \widetilde{\sigma})\otimes \pi_2\big)\simeq \Hom_{I_{G_1}(\sigma) \times G_2} \big( \cInd_{\Gamma'_{(\sigma, \delta)}}^{I_{G_1}(\sigma)\times G_2} \rho, (\widetilde{\omega_i}\otimes \widetilde{\sigma})\otimes \pi_2\big) \simeq \Hom_{I_{G_1}(\sigma) \times G_2} \big( \Res_{I_{G_1}(\sigma) \times G_2}^{G_1 \times G_2} \pi, (\widetilde{\omega_i}\otimes \widetilde{\sigma})\otimes \pi_2\big) \simeq \Hom_{G_1\times G_2} \big( \pi, \Ind_{I_{G_1}(\sigma)}^{G_1} (\widetilde{\omega_i} \otimes \widetilde{\sigma})\otimes \pi_2\big)$. By the property of graph of $\pi$, the first statement  is clear. Moreover $\Hom_{I_{G_1}(\sigma)}(\widetilde{\omega_i}\otimes\widetilde{\sigma}, \pi_1) \neq 0$, a fortiori, $\Hom_{H_1}(m\sigma, \pi_1) \neq 0$.
 \end{proof}
Let us  show $I_{G_2}(\delta) =I'_{G_2}(\delta)$ in the following:
From now on we take up one such pair $(\pi_1, \pi_2)$;  consequently  $m_{I_{G_1}(\sigma) \times I'_{G_2}(\delta)}\big(  \pi_{(\sigma, \delta)}, \widetilde{\sigma}\otimes \pi_2\big)\simeq m_{\Gamma_{(\sigma, \delta)}'}\big(\rho, \widetilde{\sigma} \otimes \pi_2\big)\simeq m_{G_1\times G_2} \big( \pi, \pi_1\otimes \pi_2\big)=1$. So   $\pi_2|_{I'_{G_2}(\delta)}$ contains only one   $\widetilde{\delta}' \in \Irr (I'_{G_2}(\delta))$ such that $\widetilde{\sigma} \otimes \widetilde{\delta}' \in \mathcal{R}_{I_{G_1}(\sigma)\times I'_{G_2}(\delta)}\big( \pi_{(\sigma, \delta)}\big)$, and $m_{I_{G_1}(\sigma)\times I'_{G_2}(\delta)}\big(\pi_{(\sigma, \delta)}, \widetilde{\sigma} \otimes \widetilde{\delta}'\big)=1$.
\begin{lemma}\label{deltauinique}
$ \delta\prec \widetilde{\delta}' |_{H_2}$.
\end{lemma}
\begin{proof}
Assume   $\delta \in\mathcal{R}_{H_2}\big( \widetilde{\delta}''\big)$, for some $ \widetilde{\delta}'' \in \mathcal{R}_{I'_{G_2}(\delta)}(\pi_2)$. Then $\Hom_{I_{G_1}(\sigma) \times I'_{G_2}(\delta)}\big( \pi_{(\sigma, \delta)}, \widetilde{\sigma}\otimes ( \widetilde{\delta}'' \otimes \widetilde{\omega})\big) \neq 0$, for certain  suitable irreducible representation $\widetilde{\omega}$ of $I'_{G_2}(\delta)/{H_2}$.   Decompose $(\widetilde{\delta}'' \otimes \widetilde{\omega})|_{I'_{G_2}(\delta)}$ into irreducible components as $\sum_{i=1}^{k} \widetilde{\delta_i}''$. By the proof of Lmm.\ref{ssss}(3), we assert  that $\widetilde{\sigma} \otimes \widetilde{\delta_j}''\in \mathcal{R}_{I_{G_1}(\sigma) \times I'_{G_2}(\delta)}(\pi_{(\sigma, \delta)})$ and  $\pi_2 \prec \Ind_{I'_{G_2}(\delta)}^{G_2} \widetilde{\delta_j}'' $, for some $j$; consequently $\widetilde{\delta_j}'' \in\mathcal{R}_{I'_{G_2}(\delta)} (\pi_2)$. Hence $\widetilde{\delta_j}'' \simeq \widetilde{\delta}'$,   and $0\neq m_{I'_{G_2}(\delta)}(\widetilde{\delta}'' \otimes \widetilde{\omega}, \widetilde{\delta}')=m_{I'_{G_2}(\delta)}(\widetilde{\delta}'' , \widetilde{\delta}'\otimes \check{\widetilde{\omega}})$. So  $\widetilde{\delta}''$ is a direct summand of $\widetilde{\delta}'\otimes \check{\widetilde{\omega}}$, and then $\delta \in \mathcal{R}_{H_2}(\widetilde{\delta}'\otimes \check{\widetilde{\omega}})=\mathcal{R}_{H_2}(\widetilde{\delta}')$.
\end{proof}
\begin{remark}\label{eq1}
If $\Hom_{I_{G_1}(\sigma) \times I'_{G_2}(\delta)} \big( \pi_{(\sigma,\delta)}, \widetilde{\sigma} \otimes  \widetilde{\delta}''\big) \neq 0$, for some $\widetilde{\delta}'' \in \Irr(I'_{G_2}(\delta))$, then $\widetilde{\delta}'' \simeq \widetilde{\delta}'$.
\end{remark}
\begin{proof}
Assume $  \widetilde{\delta}''\prec \pi_2' |_{ I'_{G_2}(\delta)}$, for some  $\pi_2'\in \Irr(G_2)$. Then $\Hom_{I_{G_1}(\sigma) \times I'_{G_2}(\delta)} \big( \pi_{(\sigma,\delta)}, \widetilde{\sigma} \otimes  \pi_2'\big) \simeq \Hom_{G_1 \times G_2} \big( \pi, \pi_1\otimes \pi_2'\big)$. By the property of graph of $\pi$, we have $\pi_2'\simeq \pi_2$. So we can  assume $\widetilde{\delta}''  \prec  \pi_2 |_{ I'_{G_2}(\delta)}$.  By  $m_{I_{G_1}(\sigma) \times I'_{G_2}(\delta)}\big(  \pi_{(\sigma, \delta)}, \widetilde{\sigma}\otimes \pi_2\big)=1$, we obtain $\widetilde{\delta}'' \simeq \widetilde{\delta}'$.
\end{proof}
Note that $\big(\widetilde{\sigma} \otimes \widetilde{\delta}'\big)|_{\Gamma_{(\sigma, \delta)}'}$ is semi-simple. Assume $ \mathcal{R}_{\Gamma'_{(\sigma, \delta)}}(\rho) \cap \mathcal{R}_{\Gamma'_{(\sigma, \delta)}}(\widetilde{\sigma} \otimes \widetilde{\delta}')=\left\{\lambdaup\right\} $. Set     $\gamma_{(\sigma, \delta)}=\cInd_{\Gamma'_{(\sigma, \delta)}}^{I_{G_1}(\sigma) \times I'_{G_2}(\delta)} \lambdaup$.
\begin{lemma}\label{thetamapp1}
$\gamma_{(\sigma, \delta)}$
is a theta representation with respect to $\mathcal{R}_{I_{G_1}(\sigma)}(\pi_1)$ and $\mathcal{R}_{I'_{G_2}(\delta)}(\pi_2)$.
\end{lemma}
\begin{proof}
 A non-zero element $f\in \Hom_{\Gamma'_{(\sigma, \delta)}}\big( \rho,  \lambdaup\big)$ by composing with $\lambda \hookrightarrow \gamma_{(\sigma, \delta)}$,  will induce a surjective morphism $\pi_{(\sigma, \delta)} \longrightarrow \gamma_{(\sigma, \delta)}$, and then $\mathcal{R}_{I_{G_1}(\sigma) \times I'_{G_2}(\delta)}\big(\gamma_{(\sigma, \delta)}\big) \subseteq \mathcal{R}_{I_{G_1}(\sigma) \times I'_{G_2}(\delta)}\big(\pi_{(\sigma, \delta)}\big)$. If $\widetilde{\phi } \otimes  \widetilde{ \varphi} \in \mathcal{R}_{I_{G_1}(\sigma) \times I'_{G_2}(\delta)}\big(\gamma_{(\sigma, \delta)}\big) \cap  \mathcal{R}_{I_{G_1}(\sigma) \times I'_{G_2}(\delta)}\big(\pi_1\otimes \pi_2\big)$, then $\sigma\prec \widetilde{\phi } |_{H_1} $ and $\widetilde{\phi }\prec\cInd_{H_1}^{I_{G_1}(\sigma) } \sigma $. By Clifford theory, the irreducible components  of $ \cInd_{H_1}^{I_{G_1}(\sigma) } \sigma$  correspond bijectively to the irreducible representations of $G_1$ extending $\sigma$(\emph{cf}. \cite[p.82, Theorem 6.11]{I}). Hence $\mathcal{R}_{I_{G_1}(\sigma)}\big(\cInd_{H_1}^{I_{G_1}(\sigma) } \sigma\big) \cap \mathcal{R}_{I_{G_1}(\sigma)}(\pi_1) = \big\{\widetilde{\sigma}\big\}$, and $ \widetilde{\phi}\simeq\widetilde{\sigma} $. Since $m_{I_{G_1}(\sigma) \times I'_{G_2}(\delta)}\big(  \pi_{(\sigma, \delta)}, \widetilde{\sigma}\otimes \pi_2\big)=1$, we obtain $\widetilde{\varphi} \simeq \widetilde{\delta}'$. Of course, $m_{I_{G_1}(\sigma) \times I'_{G_2}(\delta)}\big( \gamma_{(\sigma, \delta)}, \widetilde{\sigma}\otimes \widetilde{\delta}'\big) = 1$.
\end{proof}

\begin{remark}\label{eq2}
If $\Hom_{I_{G_1}(\sigma) \times I'_{G_2}(\delta)} \big( \gamma_{(\sigma,\delta)}, \widetilde{\sigma}' \otimes  \widetilde{\delta}'\big) \neq 0$, for some $\widetilde{\sigma}' \in \Irr(I_{G_1}(\sigma))$ such that $ \sigma\prec \widetilde{\sigma}'|_{H_1}$, then $\widetilde{\sigma}' \simeq \widetilde{\sigma}$.
\end{remark}
\begin{proof}
 $0\neq \Hom_{I_{G_1}(\sigma) \times I'_{G_2}(\delta)} \big( \gamma_{(\sigma,\delta)}, \widetilde{\sigma}' \otimes  \pi_2\big) \hookrightarrow \Hom_{G_1 \times G_2} \big( \pi,\Ind_{ I_{G_1}(\sigma)}^{G_1}\widetilde{\sigma}' \otimes  \pi_2\big) $. Note that $\widetilde{\sigma}' \prec\cInd_{H_1}^{I_{G_1}(\sigma)} \sigma$. By Clifford theory, $\Ind_{ I_{G_1}(\sigma)}^{G_1}\widetilde{\sigma}'$ is an irreducible representation of $G_1$. Hence  $\Ind_{ I_{G_1}(\sigma)}^{G_1}\widetilde{\sigma}' \simeq \pi_1$, and $\widetilde{\sigma}' \simeq \widetilde{\sigma}$.
\end{proof}
\begin{remark}\label{eq3}
The results of Remarks \ref{eq1}, \ref{eq2} hold for $\pi_{(\sigma, \delta)}$, and $\gamma_{(\sigma, \delta)}$.
\end{remark}
 Suppose now $\widetilde{\sigma}|_{H_1}=n\sigma$, $ m_{H_2}(\widetilde{\delta}', \delta)=  m_1 \neq 0$, $m_{H_1 \times H_2}\big(\lambda, \sigma \otimes \delta\big)=k$, and $t=m_{H_1 \times H_2} \big(\lambdaup, \sigma \otimes \widetilde{\delta}'\big) $. Then
 \begin{align}
& \Hom_{H_1 \times H_2} \big(\lambda , \sigma \otimes \widetilde{\delta}'\big) \simeq \Hom_{I_{G_1}(\sigma)  \times I'_{G_2}(\delta)} \big( \gammaup_{(\sigma, \delta)}, \cInd_{H_1}^{I_{G_1}(\sigma) } \sigma \otimes \widetilde{\delta}' \big) \label{equations11}  \\
& \Hom_{H_1 \times H_2} \big(\lambdaup, \widetilde{\sigma }\otimes \delta\big) \simeq \Hom_{I_{G_1}(\sigma)  \times I'_{G_2}(\delta)} \big( \gammaup_{(\sigma, \delta)}, \widetilde{\sigma} \otimes \cInd_{H_2}^{I'_{G_2}(\delta) } \delta\big)  \label{equations12}
\end{align}
By equation (\ref{equations11}), we get $km_1 \leq t=n$, and  by equation (\ref{equations12}), $kn=m_1\neq 0 $. Therefore $k=1$, $m_1=n=t$. Consequently, $\widetilde{\delta}'|_{H_2} \simeq m_1 \delta$\big(because now $\widetilde{\delta}'|_{H_2}\simeq \sum_{i=1}^{t} \delta_i$, for $\delta_i\in \Irr(H_2)$, and then $\sigma \otimes \delta_i \in \mathcal{R}_{H_1 \times H_2}(\lambda)$\big), and $I'_{G_2}(\delta)\subseteq I_{G_2}(\delta)$. By symmetry, $I'_{G_1}(\sigma)\subseteq I_{G_1}(\sigma)$. Hence $I'_{G_2}(\delta)=I_{G_2}(\delta)$. As a consequence, indeed  $\widetilde{\delta}'$ is the $\delta$-isotypic component of $\pi_2|_{H_2}$.

Replacing  $\lambdaup$ in equations (\ref{equations11}), (\ref{equations12})  by $\rho$ itself, we also obtain the same   numerical  equalities, and the similar result that $m_{H_1 \times H_2}\big(\rho, \sigma \otimes \delta\big)=1$.  Moreover,
\begin{equation}\label{eq333}
m_{H_1\times H_2}(\rho, \sigma \otimes \pi_2)=m_{I_{G_1}(\sigma)  \times I_{G_2}(\delta)}(\pi_{(\sigma, \delta)}, \Ind_{H_1}^{I_{G_1}(\sigma)} \sigma \otimes \pi_2)=m_{I_{G_1}(\sigma)  \times I_{G_2}(\delta)}(\pi_{(\sigma, \delta)}, n \widetilde{\sigma} \otimes \pi_2)=n
\end{equation}

If $\sigma \otimes \delta_1 \in \mathcal{R}_{H_1 \times H_2}(\rho)$, then there exists $\pi_2'\in \Irr(G_2)$ such that $ \delta_1\prec\pi_2'|_{H_2} $, and $\pi_1 \otimes \pi_2' \in \mathcal{R}_{G_1 \times G_2}(\pi)$. Hence $\pi_2'\simeq \pi_2$, and we can assume $\delta_1\prec \pi_2|_{H_2}$. By (\ref{eq333}), we see $\delta_1\simeq \delta$. This  completes the proof in the first case.
\subsubsection{Case II. $\tfrac{G_1}{H_1}$ is only a compact group. }
Let $J_{G_1}(\sigma)$, $J_{G_2}(\delta)$ be the subgroups of $I_{G_1}(\sigma)$, $I_{G_2}(\delta)$ respectively as defined in Lmm.\ref{theopensubgroup}, and write their  images in $I_{G_2}(\delta)/H_2$, $I_{G_1}(\delta)/H_1$ by $J'_{G_2}(\delta)/H_2$, $J'_{G_1}(\sigma)/H_1$ respectively.  Let $J_{G_1}^0(\sigma)=J_{G_1}(\sigma)\cap J'_{G_1}(\sigma)$, and $J_{G_2}^0(\delta)=J_{G_2}(\delta)\cap J'_{G_2}(\delta)$. Then:
\begin{lemma}\label{simila4}
\begin{itemize}
\item[ (1)] $J_{G_1}^0(\sigma)$, $J_{G_2}^0(\delta)$ are open normal subgroups of $G_1$,$G_2$ respectively, and $\gamma$ sends $J_{G_1}^0(\sigma)/{H_1}$ onto $J_{G_2}^0(\delta)/{H_2}$.
    \item[ (2)] $\gamma$ induces a bijective group isomorphism $\overline{\gamma}: G_1/J_{G_1}^0(\sigma) \longrightarrow G_2/J_{G_2}^0(\delta)$, with the graph $\widetilde{\Gamma^0_{(\sigma,\delta)}}/{(J_{G_1}^0(\sigma) \times J_{G_2}^0(\delta))}$, where $\widetilde{\Gamma^0_{(\sigma,\delta)}}=\Gamma \cdot (J_{G_1}^0(\sigma) \times J_{G_2}^0(\delta))$.
     \item[ (3)]  $G_1/J_{G_1}^0(\sigma)  $ is a finite group.
    \end{itemize}

\end{lemma}\label{simila5}
We now let $\Gamma^0_{(\sigma, \delta)}=\Gamma\cap \big(J_{G_1}^0(\sigma) \times J_{G_2}^0(\delta)\big)$, and $\widetilde{\rho_{(\sigma, \delta)}}=\cInd_{\Gamma}^{\widetilde{\Gamma^0_{(\sigma,\delta)}}} \rho$. Then $\pi = \cInd_{\widetilde{\Gamma^0_{(\sigma,\delta)}}}^{G_1\times G_2} \widetilde{\rho_{(\sigma, \delta)}}$.
\begin{lemma}\label{simila6}
$\pi^0=\cInd_{\Gamma^0_{(\sigma, \delta)}}^{J_{G_1}^0(\sigma) \times J_{G_2}^0(\delta)} \rho$  is a  theta representation of $J_{G_1}^0(\sigma) \times J_{G_2}^0(\delta)$.
\end{lemma}
\begin{proof}
This is a consequence of Step 1 and Lmm.\ref{simila4}.
\end{proof}
Let us write $\pi^0_{\sigma}\simeq \sigma \otimes \Theta^0_{\sigma}$ as  $J_{G_1}^0(\sigma) \times J_{G_2}^0(\delta)$-modules. Then $\Theta^0_{\sigma}$ is a finitely generated   $ J_{G_2}^0(\delta)$-module. If we write $\rho_{\sigma}\simeq \sigma \otimes \Theta_{\sigma}$ as $H_1\times H_2$-modules, then by Lmm.\ref{therestriction23}(2)(c), $\Theta^0_{\sigma}|_{H_2} \simeq \Theta_{\sigma}$.
 \begin{remark}
 By Prop.\ref{finitegenerated}(2), the restriction of  $\Theta^0_{\sigma}$  to $H_2$ is also  finitely generated.
 \end{remark}

  $\Hom_{H_1 \times H_2}(\rho, \sigma \otimes \delta)$($\simeq \Hom_{H_2}(\Theta_{\sigma}, \delta)$) is a smooth $\Gamma^0_{(\sigma, \delta)}/{(H_1 \times H_2)}$-module of finite dimension  via the canonical action, and it can be   decomposed  as $\mathcal {U}_1^{\ast}\oplus \cdots \oplus \mathcal {U}_k^{\ast}$ for some irreducible representations $(\check{\widetilde{\varpiup}}_i,  \mathcal {U}^{\ast}_i)\in \Irr\big(\Gamma^0_{(\sigma, \delta)}/{(H_1 \times H_2)}\big)$. The result of Lmm.\ref{sigmadelta} also works for this case. So $0 \neq \Hom_{\Gamma^0_{(\sigma, \delta)}}\big( \rho, \widetilde{\varpiup}_i \otimes \sigma \otimes\delta) \simeq \Hom_{J_{G_1}^0(\sigma) \times J_{G_2}^0(\delta)} \big(\pi^0, \widetilde{\varpiup}_i \otimes \sigma \otimes \delta\big)$. Hence there exists a nonzero $J_{G_2}^0(\delta)$-morphism $f: \Theta^0_{\sigma} \longrightarrow \delta\otimes \widetilde{\varpiup}_i$.
  \begin{lemma}\label{subirr}
  $\delta\otimes \widetilde{\varpiup}_i$ is an irreducible $J_{G_2}^0(\delta)$-module.
  \end{lemma}
  \begin{proof}
If  $\varsigma$ is  a nonzero subrepresentation of   $\delta\otimes \widetilde{\varpiup}_i$, then there exists a short exact sequence of   $J_{G_2}^0(\delta)$-modules  $0 \longrightarrow \varsigma \longrightarrow  \delta\otimes \widetilde{\varpiup}_i\longrightarrow \varsigma_0\longrightarrow 0$.  Note that  $ [(\check{\delta}\otimes \varsigma)_{H_2}]^{\ast} \simeq \Hom_{H_2}(\varsigma, \delta)\neq 0$ and $\dim (\check{\delta}\otimes \varsigma)_{H_2} \leq \dim\widetilde{\varpiup}_i$.   Since $\check{\delta}\otimes -$, $(-)_{H_2}$ both are right exact functors, there exists  an exact sequence of $J_{G_2}^0(\delta)/{H_2}$-modules:  $ (\check{\delta}\otimes \varsigma)_{H_2} \stackrel{\kappa}{\longrightarrow }(\check{\delta}\otimes \delta\otimes \widetilde{\varpiup}_i)_{H_2}\simeq  \widetilde{\varpiup}_i \longrightarrow (\check{\delta}\otimes \varsigma_0)_{H_2} \longrightarrow 0$,  $\kappa\neq 0$.   So we obtain  $(\check{\delta}\otimes \varsigma)_{H_2}  \simeq \widetilde{\varpiup}_i$ as $J_{G_2}^0(\delta)/H_2$-modules, and $ (\check{\delta}\otimes \varsigma_0)_{H_2} =0$. Therefore $\varsigma_0=0$ and $\varsigma = \delta\otimes \widetilde{\varpiup}_i  $.
    \end{proof}
  As a consequence, the image of the above $f$ is full. We now apply the above  approach to the representations $ \sigma$ of $J_{G_1}^0(\sigma)$ and  $\widetilde{\omega_i} \otimes \delta$ of $J_{G_2}^0(\delta)$, instead of the ones $\pi_1$ of $G_1$ and  $\pi_2$ of $G_2$. Then  there exist  open  normal subgroups $J_{G_1}^1(\sigma)$ of $J_{G_1}^0(\sigma)$ and $J_{G_2}^1(\delta)$ of $J_{G_2}^0(\delta)$ such that $\gamma$ sends $\frac{J_{G_1}^1(\sigma)}{H_1}$ onto $\frac{J_{G_2}^1(\delta)}{H_2}$ with the image $\frac{\Gamma^1_{(\sigma, \delta)}}{H_1 \times H_2}$, and $[\widetilde{\omega_i} \otimes \delta ]|_{J_{G_2}^1(\delta)} \simeq k \delta $. Set $\widetilde{\Gamma^1_{(\sigma, \delta)}}=[\Gamma\cap \big(J_{G_1}^0(\sigma) \times J_{G_2}^0(\delta)\big)]\cdot [J_{G_1}^1(\sigma) \times J_{G_2}^1(\delta)]$, and $\widetilde{\rho^1_{(\sigma, \delta)}}=\cInd_{\Gamma^0_{(\sigma, \delta)}}^{\widetilde{\Gamma^1_{(\sigma,\delta)}}} \rho$. Then $\pi^0 = \cInd_{\widetilde{\Gamma^1_{(\sigma,\delta)}}}^{J_{G_1}^0(\sigma) \times J_{G_2}^0(\delta)} \widetilde{\rho^1_{(\sigma, \delta)}}$, and $\widetilde{\rho^1_{(\sigma, \delta)}}|_{J_{G_1}^1(\sigma) \times J_{G_2}^1(\delta)} \simeq \cInd_{\Gamma^1_{(\sigma, \delta)}}^{J_{G_1}^1(\sigma) \times J_{G_2}^1(\delta)} \rho$. Hence:

 \begin{lemma}\label{simila6}
 $\pi^1=\cInd_{\Gamma^1_{(\sigma, \delta)}}^{J_{G_1}^1(\sigma) \times J_{G_2}^1(\delta)} \rho$  is a  theta representation of $J_{G_1}^1(\sigma) \times J_{G_2}^1(\delta)$.
 \end{lemma}
\begin{proof}
This is a consequence of Step 1 and the above discussion.
\end{proof}

Note that  $ 0\neq\Hom_{J_{G_1}^0(\sigma) \times J_{G_2}^0(\delta)}\big( \pi^0, \sigma \otimes \delta\otimes \widetilde{\omega_i} \big)\simeq \Hom_{\widetilde{\Gamma^1_{(\sigma,\delta)}}}\big(  \widetilde{\rho^1_{(\sigma, \delta)}},  \sigma \otimes \delta\otimes \widetilde{\omega_i}\big)\hookrightarrow \Hom_{J_{G_1}^1(\sigma) \times J^1_{G_2}(\delta)}\big( \pi^1, \sigma \otimes \delta\otimes \widetilde{\omega_i})$, so $\Hom_{J_{G_1}^1(\sigma) \times J^1_{G_2}(\delta)}\big( \pi^1,  \sigma  \otimes \delta\big)\neq 0 $.

As above,   $\Hom_{H_1 \times H_2}(\rho, \sigma \otimes \delta)$ is a smooth $\Gamma^1_{(\sigma, \delta)}/{(H_1 \times H_2)}$-module via the canonical action, being  decomposed  as $\mathcal {V}_1^{\ast}\oplus \cdots \oplus \mathcal {V}_l^{\ast}$ for some irreducible representations $(\check{\widetilde{\tau}}_i,  \mathcal {V}^{\ast}_i)\in \Irr(\Gamma^1_{(\sigma, \delta)}/{(H_1 \times H_2)})$.  So $0 \neq \Hom_{\Gamma^1_{(\sigma, \delta)}}\big( \rho,  \sigma \otimes\delta\otimes \widetilde{\tau}_i ) \simeq \Hom_{J_{G_1}^1(\sigma) \times J_{G_2}^1(\delta)} \big(\pi^1,  \sigma \otimes\delta\otimes \widetilde{\tau}_i \big)$. By the similar result of Lmm.\ref{subirr}, we know $\delta\otimes \widetilde{\tau}_i $ is irreducible.  By Lmm.\ref{simila6},  $\delta \otimes \widetilde{\tau}_i\simeq  \delta $ as $J_{G_2}^1(\delta)$-modules. Hence  $ 0 \neq \Hom_{ J_{G_2}^1(\delta)}\big( \delta\otimes\widetilde{\tau_i}, \delta\big) \simeq \Hom_{ H_2}\big((\delta\otimes \check{\delta})_{H_2}, \check{\widetilde{\tau_i}}\big)^{\frac{J_{G_2}^1(\delta)}{H_2}}$.  Since $\check{\widetilde{\tau_i}}$ is an irreducible representation of  $\frac{J_{G_2}^1(\delta)}{H_2}$,   we obtain $\check{\widetilde{\tau_i}}\simeq \C$ as $J_{G_2}^1(\delta)$-modules; every non-trivial element in $\mathcal{V}_i^{\ast}$ sits in $\Hom_{\Gamma^1_{(\sigma, \delta)}}\big(\rho, \sigma \otimes \delta\big)$, and it forces $l=1$. Consequently, we obtain
\begin{lemma}
$m_{H_1 \times H_2}( \rho,  \sigma \otimes \delta)=1$.
\end{lemma}
\begin{corollary}
There exist $(\pi_,V_1) \in \Irr(G_1), (\pi_2, V_2)\in \Irr(G_2)$ such that $\sigma \prec \pi_1|_{H_1}$, $\delta\prec \pi_2|_{H_2}$, and $\pi_1\otimes \pi_2 \in \mathcal{R}_{G_1 \times G_2}(\pi)$.
\end{corollary}
\begin{proof}
 The results of Lmms. \ref{sigmadelta}, \ref{ssss} also hold, if
 we see $\sigma$, $\delta$ as representations of $J_{G_1}^1(\sigma)$,$J_{G_2}^1(\delta)$ respectively. Hence the results hold.
\end{proof}
Finally let us check the property of graph.  If $\sigma \otimes \delta' \in \mathcal{R}_{H_1 \times H_2}(\rho)$, we can find  $\pi_2'\in \Irr(G_2)$, such that $\pi_1 \otimes \pi_2'\in \mathcal{R}_{G_1 \times G_2}(\pi)$, and $\delta' \prec\pi_2'|_{H_2}$.  Therefore $\pi_2' \simeq \pi_2$, and we can assume $\delta' \prec \pi_2|_{H_2}$. We define the analogous notion for $\delta'$, and  denote by $J_{G_2}^1(\delta, \delta')=J_{G_2}^1(\delta) \cap J_{G_2}^1(\delta')$, and by $J_{G_1}^1(\sigma, \sigma)$ its corresponding group in $G_1$. By the result of Step 2, the following result holds:
\begin{lemma}
$\pi^1_{(\sigma\sigma, \delta\delta')}=\cInd_{\Gamma\cap [J_{G_1}^1(\sigma, \sigma) \times J_{G_2}^1(\delta, \delta')]}^{J_{G_1}^1(\sigma, \sigma) \times J_{G_2}^1(\delta, \delta')} \rho$  is a theta representation of $J_{G_1}^1(\sigma, \sigma) \times J_{G_2}^1(\delta, \delta')$.
\end{lemma}
By the same discussion as above, we can see that $\sigma \otimes \delta$, $\sigma \otimes \delta'\in \mathcal{R}_{J_{G_1}^1(\sigma, \sigma) \times J_{G_2}^1(\delta, \delta')}\big(\pi^1_{(\sigma\sigma, \delta\delta')}\big)$. Hence $\delta \simeq \delta'$ as $J_{G_2}^1(\delta, \delta')$-modules.
\begin{corollary}
$\delta \simeq \delta'$ as $H_2$-modules.
\end{corollary}
\section{The theta representation III }\label{stronglygraphreIII}
In this section, let $(\rho, \langle, \rangle, W)$ be  a  preunitary smooth   representation of $\Gamma$ with the complete vector space $\mathcal{W}$.    Let $(\pi, V)=\big(\cInd_{\Gamma}^{G_1 \times G_2} (\delta_{\Gamma \setminus (G_1\times G_2)}^{1/2}\otimes \rho), \cInd_{\Gamma}^{G_1 \times G_2} (\delta_{\Gamma \setminus (G_1\times G_2)}^{1/2}\otimes W) \big)$. Let  $(\Pi, \mathcal{V})= (\mathfrak{Ind}_{\Gamma}^{G_1\times G} \rho, \mathfrak{Ind}_{\Gamma}^{G_1\times G} \mathcal{W})$, the unitary induced from $(\rho,\mathcal{W})$. Let $\Irr_u(H_i)$, $\Irr_u(G_i)$ denote the sets of all equivalent  irreducible preunitary  representations of $H_i$, $G_i$ respectively.
Assume (1) $H_i$, $G_i$  are   groups of \emph{type I}, (2) $\widehat{H_i}/G_i$ is \emph{countably separated}, (3) For any $\omega \in \widehat{H_i}$, the orbit $\{ \omega^g \mid g\in G_i\}$ is \emph{countable}, (4) For any $(\sigma_i, U_i) \in \Irr_u(H_i)$,  the cardinality of $ \{ \pi_i \in \Irr_u(G_i) \mid m_{H_i}(\pi_i, \sigma_i)\neq 0\}$ is \emph{countable}, (5) there exists an open  subgroup $O$ of $G$, such that   $\Ha^2(O, \C^{\times})$ only contains  elements of finite order.  Assume $W$ is a second countable vector space, and $G_i$, $H_i$ all are second-countable  groups.
  \begin{theorem}
     \begin{itemize}
   \item[(1)]
If $\Res_{H_1 \times H_2}^{\Gamma} \rho$  is a general theta representation of  $H_1 \times H_2$  with respect to $\Irr_u(H_1) \times \Irr_u(H_2)$, then so is  the representation $\cInd_{\Gamma}^{G_1 \times G_2}(\delta_{\Gamma \setminus (G_1\times G_2)}^{1/2}\otimes \rho)$ of $G_1 \times G_2$  with respect to $\Irr_u(G_1) \times \Irr_u(G_2)$.
   \item[(2)] Suppose that  $m_{H_i}(\lambda_i, \omega_i)<+\infty$,   for  $\lambda_i\in \Irr_u(G_i),  \omega_i \in \Irr_u(H_i)$,  $i=1,2$. If  $\cInd_{\Gamma}^{G_1 \times G_2} (\delta_{\Gamma \setminus (G_1\times G_2)}^{1/2}\otimes\rho)$   of $G_1 \times G_2$ is a general theta representation  with respect to $\Irr_u(G_1) \times \Irr_u(G_2)$, then so is   $\Res_{H_1 \times H_2}^{\Gamma} \rho$ of $H_1 \times H_2$  with respect to $\Irr_u(H_1) \times \Irr_u(H_2)$.
   \end{itemize}
     \end{theorem}
 Remark that $\delta_{\Gamma \setminus (G_1\times G_2)}^{1/2}|_{H_1\times H_2} \simeq  \delta_{(H_1\times H_2) \setminus (G_1\times G_2)}^{1/2}/  \delta_{ (H_1\times H_2)\setminus\Gamma }^{1/2} =1$. Since $I_{G_1}(\sigma) \times I_{G_2}(\delta)$ is an open subgroup of $G_1\times G_2$, $\delta_{\Gamma\setminus (G_1\times G_2)}|_{\Gamma \cap [I_{G_1}(\sigma) \times I_{G_2}(\delta)]} =\delta_{\Gamma \cap [I_{G_1}(\sigma) \times I_{G_2}(\delta)]\setminus  [I_{G_1}(\sigma) \times I_{G_2}(\delta)] } $. By Remark \ref{generalmod}, $\delta_{\Gamma\setminus (G_1\times G_2)}|_{H_1\times H_2} =\delta_{(H_1\times H_2)\setminus G_1\times H_2}=1$.
 \subsection{The proof of  the first  part}
\begin{lemma}\label{simepl}
If $(\pi_1, V_1) \in \Irr_u(G_1)$, and $(\pi_2, V_2) \in \Irr_u(G_2)$, such that $\pi_1 \otimes \pi_2 \in \mathcal{R}_{G_1 \times G_2}(\pi)$, then:
\begin{itemize}
\item[(1)] For $\sigma \in \mathcal{R}_{H_1}(\pi_1)$,   there exists a unique element $\delta \in \mathcal{R}_{H_2}(\pi_2)$ such that $ \sigma \otimes \delta \in \mathcal{R}_{H_1 \times H_2}(\rho)$.
\item[(2)] For  $\sigma \otimes \delta\in \mathcal{R}_{H_1 \times H_2}(\rho)$,  $\gamma$ induces a bijective map from $I_{G_1}( \sigma)/{H_1}$ to $I_{G_2}(\delta)/{H_2}$ with the graph $\Gamma_{(\sigma,\delta)} /{(H_1 \times H_2)}$, where $\Gamma_{(\sigma,\delta)}= \Gamma \cap \big( I_{G_1}(\sigma) \times I_{G_2}(\delta)\big)$.
\end{itemize}
\end{lemma}

\begin{proof}
1)  Let us write $ I'_{G_2}(\delta)/{H_2}=\gamma(I_{G_1}(\sigma)/{H_1})$, and let  $\widetilde{\sigma}$  be the $\sigma$-isotypic component  of $\pi_1|_{H_1}$. Then $\pi_1\simeq \cInd_{I_{G_1}(\sigma)}^{G_1} \widetilde{\sigma}$.   By Frobenius reciprocity, we have
\begin{equation}
\begin{aligned}
0& \neq m_{G_1 \times G_2} \big( \pi, \pi_1 \otimes \pi_2\big)\\
& =m_{G_1 \times G_2}\big(\cInd_{\Gamma}^{G_1 \times G_2} (\delta_{\Gamma \setminus (G_1\times G_2)}^{1/2}\otimes \rho) , \Ind_{I_{G_1}(\sigma)\times G_2}^{G_1 \times G_2} \widetilde{\sigma} \otimes \pi_2\big) \label{equations36}\\
& =m_{I_{G_1}(\sigma)\times I'_{G_2}(\delta)}\big(\cInd_{\Gamma\cap (I_{G_1}(\sigma)\times I'_{G_2}(\delta))}^{I_{G_1}(\sigma)\times I'_{G_2}(\delta)} (\delta_{\Gamma \setminus (G_1\times G_2)}^{1/2}\otimes  \rho),\widetilde{\sigma} \otimes \pi_2\big) \\
 &\leq m_{H_1 \times I'_{G_2}(\delta)}(\cInd_{H_1\times H_2}^{H_1 \times I'_{G_2}(\delta)}(\delta_{\Gamma \setminus (G_1\times G_2)}^{1/2}\otimes \rho), \widetilde{\sigma} \otimes  \pi_2)\\
& =m_{H_1 \times H_2}(\delta_{\Gamma \setminus (G_1\times G_2)}^{-1/2}\otimes \rho, \widetilde{\sigma} \otimes ( \check{ \pi}_2|_{H_2})^{\vee}) \end{aligned}
\end{equation}
 So  by Lmm.\ref{directsums}, we can find $\delta \in \mathcal{R}_{H_2}(\pi_2)$ such that $\sigma \otimes \delta \in \mathcal{R}_{H_1 \times H_2}(\rho)\cap \mathcal{R}_{H_1 \times H_2}(\pi_1 \otimes \pi_2)$. The uniqueness is clear right.\\
2) Assume $g_1 H_1 \in I_{G_1}(\sigma)/{H_1}$, and $\gamma(g_1H_1)=g_2H_2 \in G_2/{H_2}$. We then have $\sigma^{g_1} \otimes\delta^{g_2} \simeq \sigma \otimes \delta^{g_2} \in \mathcal{R}_{H_1 \times H_2}(\rho)$, which implies that $\delta^{g_2} \simeq \delta$, and then $g_2 \in I_{G_2}(\delta)$. The converse also holds, so $\gamma$ maps $I_{G_1}(\sigma)/{H_1}$ onto $I_{G_2}(\delta)/{H_2}$ with the graph $\Gamma\cap \big( I_{G_1}(\sigma) \times I_{G_2}(\delta)\big)/{(H_1 \times H_2)}$.
\end{proof}
We now fix  irreducible constituents $(\sigma, \mathbb{U})$ of $\Res_{H_1}^{G_1} \pi_1$ and  $(\delta, \mathbb{W})$   of  $\Res_{H_2}^{G_2} \pi_2$ such that $\sigma \otimes \delta\in \mathcal{R}_{H_1\times H_2}(\rho)$.
Let $(\mathfrak{n}_1,  \mathcal{N}_1)$, $(\mathfrak{n}_2, \mathcal{N}_2)$, resp. $(\mathfrak{m}_1, \mathcal{M}_1)$ and $(\mathfrak{m}_2, \mathcal{M}_2)$ be two preunitary projective representations related to $(\widetilde{\sigma}, \widetilde{\mathbb{U}})$, and $(\widetilde{\delta}, \widetilde{\mathbb{W}})$ respectively in Lmm.\ref{inu}(6). In the above equations (\ref{equations36}), any map $f\in \Hom_{\Gamma_{(\sigma,\delta)}}\big(\delta_{\Gamma \setminus (G_1\times G_2)}^{-1/2}\otimes  \rho,[\Res_{\Gamma_{(\sigma,\delta)}}^{I_{G_1}(\sigma)\times I_{G_2}(\delta)}\check{\widetilde{\sigma}} \otimes\check{\pi}_2]^{\vee}\big)$ needs to factor through $\widetilde{\sigma} \otimes \widetilde{\delta} \hookrightarrow \widetilde{\sigma} \otimes \pi_2\hookrightarrow [\Res_{\Gamma_{(\sigma,\delta)}}^{I_{G_1}(\sigma)\times I_{G_2}(\delta)}\check{\widetilde{\sigma}} \otimes\check{\pi}_2]^{\vee}\big)$.  Hence $ m_{\Gamma_{(\sigma, \delta)}}\big(\delta_{\Gamma \setminus (G_1\times G_2)}^{-1/2}\otimes  \rho,\widetilde{\sigma} \otimes \pi_2\big)=m_{\Gamma_{(\sigma, \delta)}}\big(\delta_{\Gamma \setminus (G_1\times G_2)}^{-1/2}\otimes  \rho,\widetilde{\sigma} \otimes \widetilde{\delta}\big)\geq 1$.

  On $\mathcal{V}=\Hom_{H_1 \times H_2}(\delta_{\Gamma \setminus (G_1\times G_2)}^{-1/2}\otimes  \rho, \widetilde{\sigma} \otimes \widetilde{\delta})$, we impose a  natural $\Gamma_{(\sigma, \delta)}/{(H_1 \times H_2)}$-action defined as follows:
$[\overline{a}\varphi](\widetilde{v}):=\widetilde{\sigma} \otimes \widetilde{\delta} (a) \varphi\big( \delta_{\Gamma \setminus (G_1\times G_2)}^{-1/2}(a^{-1})\rho(a^{-1}) \widetilde{v}\big)$, for $a\in \Gamma_{(\sigma, \delta)}$.  Recall that $m_{H_1 \times H_2}(\delta_{\Gamma \setminus (G_1\times G_2)}^{-1/2}\otimes \rho, \mathfrak{n}_1 \otimes\mathfrak{m}_1)=1$.  As projective  $\frac{\Gamma_{(\sigma, \delta)}}{H_1 \times H_2}$-modules, we have
\begin{equation}\label{isdenty}
\Hom_{H_1 \times H_2} (\delta_{\Gamma \setminus (G_1\times G_2)}^{-1/2}\otimes \rho, \mathfrak{n}_1 \otimes \mathfrak{n}_2 \otimes \mathfrak{m}_1 \otimes \mathfrak{m}_2)\simeq \Hom_{H_1 \times H_2}(\delta_{\Gamma \setminus (G_1\times G_2)}^{-1/2}\otimes \rho, \mathfrak{n}_1 \otimes \mathfrak{m}_1)  \otimes  \mathcal{N}_2 \otimes \mathcal{M}_2 .
\end{equation}
By Lmm.\ref{inu}(6), we can obtain likewise the result of  Lmm.\ref{thedimension1pro}, that is  $m_{G_1\times G_2}(\pi, \pi_1\otimes \pi_2)=1=m_{\Gamma_{(\sigma, \delta)}}(\delta_{\Gamma \setminus (G_1\times G_2)}^{-1/2}\otimes \rho, \widetilde{\sigma} \otimes \widetilde{\delta})$. Consequently , $(\mathfrak{m}_2, \mathcal{M}_2)\simeq (\mathfrak{n}_2\circ \gamma^{-1}, \mathcal{N}_2) $ as projective $I_{G_2}(\delta)/{H_2}$ -modules.
By symmetry we  now assume $\pi_1 \otimes \pi_2$, $\pi_1 \otimes\pi_2' \in\mathcal{R}_{G_1 \times G_2}(\pi)$, and will prove that $\pi_2 \simeq \pi_2'$. Keep the above notations, and use  the analogous notations relative to $\pi_2'$ by adding the symbol $'$. Therefore it is sufficient  to show that $\widetilde{\delta} \simeq \widetilde{\delta'}$. To simply the discussion, we identify  $(\delta, \mathbb{W})$ and $(\delta', \mathbb{W}')$, and obtain $(\mathfrak{m}_1, \mathcal{M}_1)\simeq (\mathfrak{m}_1', \mathcal{M}_1')$ as projective representations of $I_{G_2}(\delta)$ by Lmms. \ref{inu}(5)(6). Similarly we obtain  $(\mathfrak{m}_2, \mathcal{M}_2)\simeq (\mathfrak{n}_2\circ \gamma^{-1}, \mathcal{N}_2) \simeq (\mathfrak{m}_2', \mathcal{M}_2')$ as  projective representations of $I_{G_2}(\delta)/{H_2}$.  Hence $\widetilde{\delta}\simeq \widetilde{\delta}'$ as projective $I_{G_2}(\delta)$-modules, and $\widetilde{\delta}\simeq \widetilde{\delta}'\otimes\chi$ as  ordinary smooth $I_{G_2}(\delta)$-modules, for some character $\chi$ of $I_{G_2}(\delta)/H_2$. For the decompositions $\widetilde{\mathbb{W}}\simeq  \mathcal{M}_1\otimes \mathcal{M}_2$, $\widetilde{\mathbb{W}}'\simeq  \mathcal{M}'_1\otimes \mathcal{M}'_2$, by modifying a continuous function of  $I_{G_2}(\delta)/H_2$ on $\mathcal{M}_2$ or $\mathcal{M}'_2$, we can  identify $(\mathfrak{m}_1, \mathcal{M}_1)$ and $(\mathfrak{m}_1', \mathcal{M}_1')$.  Hence by (\ref{isdenty}), and $m_{\Gamma_{(\sigma, \delta)}}(\delta_{\Gamma \setminus (G_1\times G_2)}^{-1/2}\otimes \rho, \widetilde{\sigma} \otimes \widetilde{\delta})=1=m_{\Gamma_{(\sigma, \delta)}}(\delta_{\Gamma \setminus (G_1\times G_2)}^{-1/2}\otimes \rho, \widetilde{\sigma} \otimes \widetilde{\delta}')$, we obtain that $ (\mathfrak{m}_2, \mathcal{M}_2)$ is linearly isomorphic to $(\mathfrak{m}'_2, \mathcal{M}'_2)$.   Let $F: \mathcal{M}_1 \otimes \mathcal{M}_2 \longrightarrow \mathcal{M}_1 \otimes \mathcal{M}'_2$ be an $I_{G_2}(\delta)$-isomorphism between $\widetilde{\delta}$ and $\widetilde{\delta}'\otimes \chi$.  By considering $F$ as an $H_2$-morphism and Schur's Lemma, we can write $F=1 \otimes  \varphi$ with $\varphi\in \Hom_{I_{G_2(\delta)}}(\mathcal{M}_2,  \mathcal{M}_2' )$.    Hence $(\mathfrak{m}_2', \mathcal{M}_2')$ is linearly isomorphic with $(\mathfrak{m}_2'\otimes \chi, \mathcal{M}_2')$, which implies that $\widetilde{\delta}'\simeq \widetilde{\delta}'\otimes \chi\simeq \widetilde{\delta}$, $\pi_2\simeq \pi_2'$.

\subsection{The  proof of the second  part}

Assume $\sigma\otimes  \delta \in \mathcal{R}_{H_1 \times H_2}(\rho)$.
Let $(\pi_1, V_1)$, $(\pi_2, V_2)$ be irreducible preunitary representations of $G_1$, $G_2$ respectively such that $ \sigma\prec \pi_1|_{H_1}$, $\delta\prec \pi_2|_{H_2}$.  Let $\widetilde{\sigma}$ denote  the $\sigma$-isotypic component of $\sigma$ in $\pi_1|_{H_1}$, $\widetilde{\delta}$ the $\delta$-isotypic component of $\delta$ in $\pi_2|_{H_2}$,
Let $\frac{I'_{G_1}(\sigma) }{H_1}=\gamma^{-1}\big(\frac{I_{G_2}(\delta)}{H_2}\big)$,  $\frac{I'_{G_2}(\delta)}{H_2}=\gamma\big(\frac{I_{G_1}(\sigma)}{H_1}\big)$, and
 denote   $\Gamma'_{(\sigma, \delta)}=\Gamma \cap [I_{G_1}(\sigma) \times I'_{G_2}(\delta)]$,  $\Gamma_{(\sigma, \delta)}=\Gamma \cap [I_{G_1}(\sigma) \times I_{G_2}(\delta)]=\Gamma \cap [(I_{G_1}(\sigma) \cap I'_{G_1}(\sigma))\times (I_{G_2}(\delta)\cap I'_{G_2}(\delta))]$, and $\pi_{(\sigma, \delta)}=\cInd_{\Gamma'_{(\sigma, \delta)}}^{I_{G_1(\sigma)}\times I'_{G_2}(\delta)} (\delta^{1/2}_{\Gamma\setminus (G_1\times G_2)} \otimes \rho)$, a preunitary representation of $I_{G_1(\sigma)}\times I'_{G_2}(\delta)$.
  Let $(\Sigma, \mathcal{W}_1)$ be the completion of $(\sigma, W_1)$.  By Cor.\ref{semisimplerep}, $\cInd_{H_1}^{G_1}\check{\sigma} \simeq \oplus m(\check{\pi}_{\nu}) \check{\pi}_{\nu}$, for  $\check{\pi}_{\nu} \in \mathcal{R}_{G_1}( \cInd_{H_1}^{G_1}\check{\sigma})$, and finite natural  numbers  $m(\check{\pi}_{\nu}) $. Note that the result of Lmm.\ref{simepl} (2) has not yet proved.   By  Lmm.\ref{semisimpleu},  $\Res_{I_{G_2}(\delta)}^{G_2} \pi_2$, $\Res_{I'_{G_2}(\delta)}^{G_2} \pi_2$ both are semi-simple.  Note that $\frac{I_{G_1}(\sigma)}{H_1}$,  $\frac{ I_{G_2}(\delta)}{H_2}$ both are compact groups, and $\frac{I_{G_1}(\sigma)}{H_1}$,  $\frac{I_{G_2}(\delta)}{H_2}$ are  open subgroups of $\frac{G_1}{H_1}$, $\frac{G_2}{H_2}$ respectively. Hence   by Frobenius reciprocity, we have
\begin{align*}
0 &\neq   \Hom_{H_1 \times H_2}\big( \rho, \sigma \otimes (\check{\pi}_2|_{H_2})^{\vee}\big) \simeq \Hom_{H_1 \times I'_{G_2}(\delta)}(\cInd_{H_1\times H_2}^{H_1 \times I'_{G_2}(\delta)}(\delta_{\Gamma \setminus (G_1\times G_2)}^{1/2}\otimes \rho), \sigma \otimes  \pi_2)\\
&  \simeq \Hom_{I_{G_1}(\sigma) \times I'_{G_2}(\delta)}(\pi_{(\sigma, \delta)}, \Ind_{H_1}^{I_{G_1}(\sigma)}\sigma \otimes  \pi_2)
 \simeq \Hom_{I_{G_1}(\sigma) \times G_2}(\pi, \Ind_{H_1}^{I_{G_1}(\sigma)}\sigma \otimes  \pi_2)\\
  &  \simeq \Hom_{G_1 \times G_2} \big( \pi, \Ind_{H_1}^{G_1}\sigma \otimes \pi_2\big)\simeq \Hom_{G_1 \times G_2} \big( \pi, [\cInd_{H_1}^{G_1}\check{\sigma}]^{\vee} \otimes \pi_2\big) \\
  & \hookrightarrow \prod_{\pi_{\nu}}m(\check{\pi_{\nu}}) \Hom_{G_1 \times G_2} \big(\pi,   \pi_{\nu}\otimes\pi_2\big)
 \end{align*}
 Therefore there exist $\pi_i \in \Irr(G_i)$ such that $\pi_1 \otimes \pi_2\in \mathcal{R}_{G_1 \times G_2}(\pi)$, $\sigma \prec \pi_1|_{H_1}$, $\delta\prec \pi_2|_{H_2}$. Moreover $1=m_{G_1 \times G_2} \big( \pi, \pi_1 \otimes \pi_2\big)=m_{I_{G_1}(\sigma)\times I'_{G_2}(\delta)}\big( \pi_{(\sigma, \delta)},\widetilde{\sigma} \otimes \pi_2\big)$.  So   $\pi_2|_{I'_{G_2}(\delta)}$ contains only one   $\widetilde{\delta}' \in \Irr (I'_{G_2}(\delta))$ such that $\widetilde{\sigma} \otimes \widetilde{\delta}' \in \mathcal{R}_{I_{G_1}(\sigma)\times I'_{G_2}(\delta)}\big( \pi_{(\sigma, \delta)}\big)$, and $m_{I_{G_1}(\sigma)\times I'_{G_2}(\delta)}\big(\pi_{(\sigma, \delta)}, \widetilde{\sigma} \otimes \widetilde{\delta}'\big)=1$.
 \begin{lemma}
\begin{itemize}
\item[(1)] $I'_{G_2}(\delta)/[I_{G_2}(\delta) \cap I'_{G_2}(\delta)]$, $I_{G_2}(\delta)/[I_{G_2}(\delta) \cap I'_{G_2}(\delta)]$ both have finite cardinalities.
\item[(2)] $\cInd_{H_2}^{I'_{G_2}(\delta)} \delta$ is a semi-simple representation.
\end{itemize}
\end{lemma}
\begin{proof}
1) $\frac{I_{G_2}(\delta) \cap I'_{G_2}(\delta)}{H_2}$ is an  open subgroup of $\frac{I_{G_2}(\delta)}{H_2}$ or $\frac{ I'_{G_2}(\delta)}{H_2}$.\\
2) By Coro.\ref{semisimplerep}, $\cInd_{H_2}^{I_{G_2}(\delta)} \delta$ is semi-simple, so is $\Res^{I_{G_2}(\delta)}_{I_{G_2}(\delta) \cap I'_{G_2}(\delta)}\cInd_{H_2}^{I_{G_2}(\delta)} \delta$. Hence $\cInd_{H_2}^{I_{G_2}(\delta) \cap I'_{G_2}(\delta)} \delta$ is semi-simple, and so is  $\cInd_{H_2}^{I'_{G_2}(\delta)} \delta$.
\end{proof}

\begin{lemma}
$ \delta\prec \widetilde{\delta}' |_{H_2}$.
\end{lemma}
\begin{proof}
By Frobenius reciprocity, $0\neq \Hom_{H_1\times H_2}(\rho, \widetilde{\sigma}\otimes \delta) \simeq \Hom_{I_{G_1(\sigma)}\times I'_{G_2}(\delta)}\big( \pi_{(\sigma, \delta)}, \widetilde{\sigma} \otimes \cInd_{H_2}^{I'_{G_2}(\delta)} \delta\big)$. So there exists $\widetilde{\delta}''\prec \cInd_{H_2}^{I'_{G_2}(\delta)} \delta$, such that $\widetilde{\sigma}\otimes \widetilde{\delta}'' \in \mathcal{R}_{I_{G_1(\sigma)}\times I'_{G_2}(\delta)}(\pi_{(\sigma, \delta)})$. Note that
$ \cInd_{H_2}^{G_2}  \widetilde{\delta}'' \prec \cInd_{H_2}^{G_2} \delta$. By virtue of Frobenius reciprocity again, we obtain
$\widetilde{\delta}'' \prec \pi_2$. Hence $\widetilde{\delta}'' \simeq \widetilde{\delta}'$, and $\delta\prec \widetilde{\delta}' |_{H_2}$.
\end{proof}
\begin{lemma}\label{dim}
\begin{itemize}
\item[(1)] If $m_{I_{G_1}(\sigma) \times I'_{G_2}(\delta)} \big( \pi_{(\sigma,\delta)}, \widetilde{\sigma} \otimes  \widetilde{\delta}''\big) \neq 0$, for some $\widetilde{\delta}'' \in \mathcal{R}_{I'_{G_2}(\delta)}\big(\cInd_{H_2}^{I'_{G_2}(\delta)} \delta\big)$, then $\widetilde{\delta}'' \simeq \widetilde{\delta}'$.
\item[(2)] If $m_{I_{G_1}(\sigma) \times I'_{G_2}(\delta)} \big( \pi_{(\sigma,\delta)}, \widetilde{\sigma}' \otimes  \widetilde{\delta}'\big) \neq 0$, for some $\widetilde{\sigma}' \in \Irr(I_{G_1}(\sigma))$ such that $ \sigma\prec \widetilde{\sigma}'|_{H_1}$, then $\widetilde{\sigma}' \simeq \widetilde{\sigma}$.
\end{itemize}
\end{lemma}
\begin{proof}
1) Assume $  \widetilde{\delta}''\prec \pi_2' |_{ I'_{G_2}(\delta)}$, for some  $\pi_2'\in \mathcal{R}_{G_2}\big( \cInd_{H_2}^{G_2} \delta\big)$. Then $\Hom_{I_{G_1}(\sigma) \times I'_{G_2}(\delta)} \big( \pi_{(\sigma,\delta)}, \widetilde{\sigma} \otimes  \pi_2'\big) \simeq \Hom_{G_1 \times G_2} \big( \pi, \pi_1\otimes \pi_2'\big)$. By the property of graph of $\pi$, we have $\pi_2'\simeq \pi_2$. So we can  assume $\widetilde{\delta}''  \prec  \pi_2 |_{ I'_{G_2}(\delta)}$.  By  $m_{I_{G_1}(\sigma) \times I'_{G_2}(\delta)}\big(  \pi_{(\sigma, \delta)}, \widetilde{\sigma}\otimes \pi_2\big)=1$, we obtain $\widetilde{\delta}'' \simeq \widetilde{\delta}'$.\\
2)  $0\neq m_{I_{G_1}(\sigma) \times I'_{G_2}(\delta)} \big( \pi_{(\sigma,\delta)}, \widetilde{\sigma}' \otimes  \pi_2\big)=m_{G_1 \times G_2} \big( \pi,\Ind_{ I_{G_1}(\sigma)}^{G_1}\widetilde{\sigma}' \otimes  \pi_2\big) $. Note that $\widetilde{\sigma}' \prec\cInd_{H_1}^{I_{G_1}(\sigma)} (\delta_{H_1\setminus G_1}\otimes  \sigma)$, and     $ \mathcal{R}_{H_1}(\widetilde{\sigma}')=\{\sigma\}$.  Consequently $\cInd_{ I_{G_1}(\sigma)}^{G_1}\widetilde{\sigma}'$ is a semi-simple representation. By Frobenius reciprocity,  $\Hom_{G_1}(\cInd_{ I_{G_1}(\sigma)}^{G_1}\widetilde{\sigma}', \cInd_{ I_{G_1}(\sigma)}^{G_1}\widetilde{\sigma}') \simeq \Hom_{ I_{G_1}(\sigma)}( \widetilde{\sigma}',  \cInd_{ I_{G_1}(\sigma)}^{G_1}\widetilde{\sigma}') \simeq \Hom_{ I_{G_1}(\sigma)}( \widetilde{\sigma}', \widetilde{\sigma}') $, because every element in $\Hom_{ I_{G_1}(\sigma)}( \widetilde{\sigma}',  \cInd_{ I_{G_1}(\sigma)}^{G_1}\widetilde{\sigma}') $  needs  to factor through $\widetilde{\sigma}' \hookrightarrow \cInd_{ I_{G_1}(\sigma)}^{G_1}\widetilde{\sigma}'$.  Hence  $ \cInd_{ I_{G_1}(\sigma)}^{G_1}\widetilde{\sigma}'$ is an irreducible representation, and $\widetilde{\sigma}'$ is just the $\sigma$-isotypic component of it. Hence  $\cInd_{ I_{G_1}(\sigma)}^{G_1}\widetilde{\sigma}'\simeq \Ind_{ I_{G_1}(\sigma)}^{G_1}\widetilde{\sigma}' \simeq \pi_1$, and $\widetilde{\sigma}' \simeq \widetilde{\sigma}$.
\end{proof}
Suppose now $\widetilde{\sigma}|_{H_1}=n\sigma$, $ m_{H_2}(\widetilde{\delta}', \delta)=  m_1 \neq 0$, $m_{H_1 \times H_2}\big(\rho, \sigma \otimes \delta\big)=k$, and $m_{H_1 \times H_2} \big(\rho, \sigma \otimes \widetilde{\delta}'\big)=t$. Note that  $n<+\infty$.  Then
 \begin{align}
& \Hom_{H_1 \times H_2} \big(\rho , \sigma \otimes \widetilde{\delta}'\big) \simeq \Hom_{I_{G_1}(\sigma)  \times I'_{G_2}(\delta)} \big( \pi_{(\sigma, \delta)}, \Ind_{H_1}^{I_{G_1}(\sigma) } \sigma \otimes \widetilde{\delta}' \big) \label{equations1}  \\
& \Hom_{H_1 \times H_2} \big(\rho, \widetilde{\sigma }\otimes \delta\big) \simeq \Hom_{I_{G_1}(\sigma)  \times I'_{G_2}(\delta)} \big( \pi_{(\sigma, \delta)}, \widetilde{\sigma} \otimes \Ind_{H_2}^{I'_{G_2}(\delta) } \delta\big)  \label{equations2}
\end{align}
So by equation (\ref{equations1}),   $km_1 \leq t=n<+\infty$, and  by equation (\ref{equations2}), $kn\leq m_1\neq 0 $. Therefore $k=1$, $m_1=n=t$. As a consequence, we obtain $\widetilde{\delta}'|_{H_2} \simeq m_1 \delta$, and $I'_{G_2}(\delta)\subseteq I_{G_2}(\delta)$. By symmetry, $I'_{G_1}(\sigma)\subseteq I_{G_1}(\sigma)$. Hence $I'_{G_2}(\delta)=I_{G_2}(\delta)$. Consequently,   $\widetilde{\delta}'$ is the $\delta$-isotypic component of $\pi_2|_{H_2}$.  Note that $m_{H_1\times H_2}(\rho, \sigma \otimes \delta)=k=1$.

 If $\sigma \otimes \delta_1 \in \mathcal{R}_{H_1 \times H_2}(\rho)$, then there exists $\pi_2'\in \Irr(G_2)$ such that $ \delta_1\prec\pi_2'|_{H_2} $, and $\pi_1 \otimes \pi_2' \in \mathcal{R}_{G_1 \times G_2}(\pi)$. Hence $\pi_2'\simeq \pi_2$, and we can assume $\delta_1\prec \pi_2|_{H_2}$. So   $\delta\simeq\delta_1^g$, for certain $g\in G_2$. Since $\sigma \otimes \delta_1\in \mathcal{R}_{H_1\times H_2}(\rho)$,  we have  $\gamma^{-1}(g)\in I_{G_1}(\sigma)$. Hence  $g\in I_{G_2}(\delta)$, and $\delta_1\simeq \delta$.

 \section{The theta representation IV }\label{stronglygraphreIV}
In this section,   let $G_1, G_2$ be locally profinite groups with closed  subgroups $H_1$ and $H_2$  respectively.  Assume all irreducible smooth representations of $G_i$, $H_i$ are admissible, $i= 1, 2$.  Set $H=H_1\times H_2$, $G=G_1\times G_2$. Let  $\Delta=\{s=(s_1, s_2)\in G\}$, containing $1$, be a complete  set of representatives for  $H\setminus G/H$. Assume $\Delta$ is a countable set. For any $s\in \Delta$, $s\neq 1$,   assume: (1) $H_s\cap H$ is a normal subgroup of $H$, (2) $H/(H_s\cap H)$  is  not compact, (3) up to $H_s\cap H$-conjugacy there  exists at least one and at most  a finite number of maximal open compact subgroups in $H$, (4)   for each maximal open compact  subgroup $K$ of $G$, for each positive integer  $n$, the set  $\mathcal{N}(K)_n=\{ K^i \mid K^i \lhd K, [K: K^i]=n\}$ has finite cardinality. Let $(\sigma, U)$ be  a smooth representation of $H$,   set $\pi= \cInd_{H}^{G}\sigma$.
 Assume $U$ is a second countable vector space, and $G$, $H$ both are second countable groups.
For simplicity, we assume $G/H$ is compact in this text.
\subsection{} In the first  part, assume that  $H$ is an open subgroup of $G$. Note that  the  conditions of Lmm.\ref{ddf2}  hold in this case.
\begin{lemma}
 For any $\pi_i\in \Irr(G_i)$, $\mathcal{L}_{H_i}(\pi_i)=\{ \sigma_i \in \Irr(H_i)\mid m_{H_i}(\sigma_i, \pi_i)\neq 0\}\neq \emptyset$.
  \end{lemma}
  \begin{proof}
  Since $H_i$ is an open subgroup of $G_i$, $\Res_{H}^{G} \pi_i$ is also admissible.  Let  $\check{\sigma}_i\in \mathcal{R}_{H}(\check{\pi}_i)$.  Then  $ m_{H_i}(\sigma_i, \pi_i)=m_{H_i}(\check{\pi}_i, \check{\sigma}_i)\neq 0$, which means $\sigma_i\in\mathcal{L}_{H_i}(\pi_i)$.
 \end{proof}

  \begin{proposition}
If $\rho$ is a  general theta representation of $H$, then  so is  the representation $\pi$ of $G$.
\end{proposition}
\begin{proof}
 Assume $\pi_1\otimes \pi_2\in \mathcal{R}_{G}(\pi)$. Let $\check{\sigma}_i\in \mathcal{L}_{H_i}(\check{\pi}_i)$. Then $\check{\pi}_i\in \mathcal{R}_{G_i}\big(\cInd_{H_i}^{G_i}  \check{\sigma}_i\big)$.  So $1\leq m_G(\pi, \pi_1\otimes \pi_2)=m_{G}(\check{\pi}_1\otimes \check{\pi}_2, \cInd_H^G \check{\rho})\leq m_{G}(\cInd_H^G \check{\sigma}_1\otimes \check{\sigma}_2, \cInd_H^G \check{\rho})= m_H(\check{\sigma}_1\otimes \check{\sigma}_2, \check{\rho})= m_{H} (\rho, \sigma_1\otimes \sigma_2)\leq 1;$ the second equality comes from Lmm.\ref{ddf2}.
On the other hand,  if $\pi_1\otimes \pi_2'\in \mathcal{R}_{G}(\pi)$,  then $m_{H} (\rho, \sigma_1\otimes \sigma_2')= 1$, where $\check{\sigma'}_2\in \mathcal{L}_{H_2}(\check{\pi}_2')$.  By the property of graph, $\sigma_2'\simeq \sigma_2$, and $\check{\pi}_2'\in \mathcal{R}_{G_2}\big(\cInd_{H_2}^{G_2}  \check{\sigma}_2\big)$.  If $\pi_2 \ncong \pi_2' $, then  $m_{G}\big(\cInd_H^G( \check{\sigma}_1\otimes\check{\sigma}_2), \check{\pi}_1\otimes (\check{\pi}_2\oplus \check{\pi}'_2)\big)\geq 2$, $m_{G}\big(\check{\pi}_1\otimes[\check{\pi}_2\oplus \check{\pi}'_2],  \cInd_H^G \check{\rho}\big)=2$, and $m_{G}\big(\cInd_H^G \check{\sigma}_1\otimes \check{\sigma}_2, \cInd_H^G \check{\rho}\big)=1$, contradicting to Lmm.\ref{compodouble}.
\end{proof}

\subsection{}
In the second part, assume $(\rho, W)$ is  an admissible preunitary representation of $H$. Assume  the category $\Rep(H)$ is locally noetherian;  for any open compact subgroup $K_1$ of $H$, assume $\mathcal{H}(H,K_1)$ can be  generated by $\epsilon_{K_1}$ and  a finitely number of $\epsilon_{h}$'s. Note that the condition of Coroallary \ref{notm} holds in this case.
\begin{proposition}
If $\rho$ is a  general theta representation of $H$, then  so is  the representation $\pi$ of $G$.
\end{proposition}
\begin{proof}
 Assume $\pi_1\otimes \pi_2\in \mathcal{R}_{G}(\pi)$. Let $\sigma_i\in \mathcal{R}_{H_i}(\pi_i)$. Then  by Frobenius reciprocity $\pi_i \hookrightarrow \cInd_{H_i}^{G_i} \sigma_i$.   So $1\leq m_G(\pi, \pi_1\otimes \pi_2)\leq m_{G}\big(\cInd_H^G \rho, \cInd_H^G (\sigma_1\otimes \sigma_2)\big)\leq m_{H}(\rho,\sigma_1\otimes \sigma_2)\leq 1$; the third  inequality comes from Coro.\ref{notm}.
On the other hand,  if $\pi_1\otimes \pi_2'\in \mathcal{R}_{G}(\pi)$,  then $m_{H} (\rho, \sigma_1\otimes \sigma_2')= 1$, where $\sigma'_2\in \mathcal{R}_{H_2}(\pi_2')$.  By the property of graph, $\sigma_2'\simeq \sigma_2$, and $\pi_2'\hookrightarrow \cInd_{H_2}^{G_2}\sigma'_2$.  If $\pi_2 \ncong \pi_2' $, then  $m_{G}\big(\pi_1\otimes (\pi_2\oplus \pi'_2),\cInd_H^G( \sigma_1\otimes \sigma_2)\big)\geq 2$, $m_{G}\big( \cInd_H^G \rho, \pi_1\otimes(\pi_2\oplus \pi'_2)\big)=2$, and $m_{G}\big( \cInd_H^G\rho,\cInd_H^G (\sigma_1\otimes \sigma_2)\big)=1$, contradicting to Lmm.\ref{compodouble}.
\end{proof}

 \section{Howe correspondences for  the similitude groups}
In this section, we shall show how one can use the results in Sections \ref{stronglygraphreI}, \ref{stronglygraphreII} to do with  Howe correspondences for the similitude groups in the $p$-adic case.  To do so smoothly, we review some known results and methods on the classical theta correspondences and the related topics.
\subsection{Notation and conventions}\label{Notationandconventionsis}
In this last  section, we will  use the following notion and conventions(\emph{cf}. \cite{MVW},  \cite{Scha}). We will let  $F$ be  a non-archimedean local field of \emph{odd} residual characteristic with  ring of integers $\mathcal{O}_F$ and  finite residue field $k_F$.   $E$ will stand for  a \emph{separable} quadratic  field extension of $F$. $\mathbb{H}$ will  denote the unique(non-splitting) quaternion algebra over $F$.  We will  write  $D$ for  a division ring over $F$ with an involution $\tau$ such  that $F$ consists of all $\tau$-fixed points of $D$.  When $D=\mathbb{H}$,  define the reduced trace  by
$\Trd(a):= a+ \tau(a)$ and the reduced norm  by $\Nrd(a):= a \tau(a)$. We  denote by $\mathbb{H}^0$ the set of elements of  pure quaternions, i.e. those elements $a \in \mathbb{H}$ such that $\Trd(a)=0$.

Let $\varepsilon$ be the number $1$ or $-1$.  If $V$ is  a finite-dimensional non-degenerate right (resp. left) $\varepsilon$-hermitian vector space over $D$ endowed  with an $\varepsilon$-hermitian form $(-,-)_{V}: V \times V \longrightarrow D$
     satisfying $(v', v)_{V}= \varepsilon \tau( (v, v')_{V}$, for $v, v'\in V$; as usual, when $\varepsilon=1$, $1$-hermitian is called simply \emph{hermitian} and when $\varepsilon=-1$, $-1$-hermitian is called \emph{skew hermitian};
we will let $\U(V)$ be the group of isometries of $(V, (, )_{V})$, which consists of  $g\in \GL_D(V)$ such that
$ (g \cdot v, g \cdot v')_{V}=(v, v')_{V} \quad \Big( \textrm{resp.}  (v \cdot g , v' \cdot g)_{V}=(v, v')_{V}\Big)$
for all $v, v' \in V$,
 and $\GU(V)$  the group of isometries of similitudes of $(V, (,  )_{V})$, which consists of  $g\in \GL_D(V)$ such that
$ (g \cdot v, g \cdot v')_{V}=\lambda(g) (v, v')_{V} \quad \Big( \textrm{resp. }  (v \cdot g , v' \cdot g)_{V}= \lambda(g)(v, v')_{V}\Big)$
for all $v, v' \in V$, where $\lambda(g) \in F^{\times}$ depending on $g$, is called the \emph{multiplier} of $g$.

 There are two kind of canonical right (resp. left) $\varepsilon$-hermitian vector spaces over $D$.  One is of one dimension  $(D(a), \langle, \rangle)$ (resp. $((a)D, \langle, \rangle )$) for $a\in D^{\times}$ satisfying $a=\varepsilon \tau(a)$, defined as
$$\langle d_1, d_2 \rangle= \tau(d_1) a d_2 \quad \big(\textrm{ resp. } \langle d_1, d_2 \rangle= d_1 a \tau(d_2)\big),  \qquad d_1, d_2 \in D.$$
The other one is of two dimension, so-called  the right (resp. left) $\varepsilon$-hermitian  \emph{hyperbolic plane} $H, \langle, \rangle$  over $D$,  defined as
$$\langle (d_1, d_1^{\ast}), (d_2, d_2^{\ast})\rangle= \tau(d_1) d_2^{\ast} + \varepsilon \tau(d_1^{\ast}) d_2, \quad
\bigg(\textrm{ resp. } \langle (d_1, d_1^{\ast}), (d_2, d_2^{\ast})\rangle= d_1 \tau(d_2^{\ast}) + \varepsilon d_1^{\ast} \tau(d_2)\bigg),$$
for $d_1, d_2, d_1^{\ast}, d_2^{\ast} \in D$. Let $(-, -)_F$ be the Hilbert symbol defined from $F^{\times} \times F^{\times}$ to $\{ \pm 1\}$. Let $(Q, W)$ be a quadratic form defined over $F$ with the Witt decomposition $W\simeq \oplus_{i=1}^m F(a_i)$. The Hasse invariant is given in the following form:
$\epsilon(Q):=\prod_{1 \leq i< j\leq m} (a_i, a_j)_F$. We will let $\mu_n=\langle e^{\frac{2\pi i}{n}}\rangle$,  $ e^{\frac{2\pi i}{n}} \in \C^{\times}$.
\subsection{Weil index}\label{Weilindex}
Let $\psi$ be a non-trivial character of $F$. Let  $V$ be  a (left) vector space over $F$ of dimension $n$, and $V^{\ast}=\Hom(V, F)$ its dual space.  For $v\in V, v^{\ast} \in V^{\ast}$, we write $[v, v^{\ast}]$ for the value of $v^{\ast}$  at $v$. Fix a Haar measure $dv$ for $V$. The Fourier transformation of an element $f\in S(V)$ is defined by
$$ \mathcal{F}(f)(v^{\ast})= \int_{V} f(v) \psi\big( [ v, v^{\ast} ]\big) dv, \qquad  v^{\ast} \in V^{\ast}.$$
Then there is a unique Haar measure $dv^{\ast}$ assigned to $V^{\ast}$, called the duality of $dv$ such that
$$f(-v)=\int_{V^{\ast}}  \mathcal{F}(f) \big( v^{\ast} \big) \psi\big( [ v, v^{\ast}]\big) dv^{\ast}, \qquad \qquad v\in V, f\in S(V).$$
By convention, we define the Fourier transformation on $ T\in S^{\ast}(V)$ with respect to $dv, dv^{\ast}$ by
$$ [ \mathcal{F}(T), f^{\ast} ] = [ T, \mathcal{F}(f^{\ast})], \qquad \qquad  f^{\ast}\in S(V^{\ast}).$$
Recall that if $\alpha$ is an $F$-linear bijection from $V$ to $V^{\ast}$, then the module of $\alpha$ is the number $|\alpha|_F=d(v \cdot \alpha )/dv$ defined by the formula
$$\int_{V^{\ast}} f^{\ast}(v^{\ast}) dv^{\ast}= |\alpha|_F \int_{V} f^{\ast} ( v  \cdot \alpha) dv, \qquad \qquad  f^{\ast} \in S(V^{\ast}).$$
Let $(-,-)$ be a non-degenerate symmetric form on $V$, and $q$ the quadratic form associated, i.e.
$$q(v+v') -q(v) -q(v')=(v,v'),  \qquad \qquad v, v' \in V.$$
Follow above, the symmetric form $(-,-)$ can be written in the form:
 $$(v,v')=[ v , v' \cdot b ], \qquad \qquad v,v'\in V$$
for a unique  $b\in \Hom(V, V^{\ast})$. In particular, we can introduce a symmetric form on  $V^{\ast}$:
$$ ( v^{\ast}, v'^{\ast}):= [ v^{\ast} \cdot b^{-1} ,  v'^{\ast} ],  \qquad \qquad v^{\ast}, v'^{\ast} \in V^{\ast},$$
and the quadratic form associated:
$$q^{\ast}(v^{\ast} + v'^{\ast})  -q^{\ast} (v^{\ast}) -q^{\ast} (v'^{\ast})=(v^{\ast} ,v'^{\ast} ).$$
  Let $\psi (q)(v):= \psi(q(v))$ (resp. $\psi (q^{\ast})(v^{\ast}):= \psi(q^{\ast}(v^{\ast}))$) be a character of second degree of $V$(resp. $V^{\ast}$). By \cite[p. 161, Th\'eor\`eme]{Weil}, they exists a unique root of  unity of  degree $8$, called  the  \textbf{  Weil index} attached to $\psi(q)$, denoted by  $\gamma_{\psi}(q)$, such that
$$\mathcal{F}(\psi( q) dv) = \gamma_{\psi} (q) |b|_F^{-\frac{1}{2}} \psi ( q^{\ast})^{-1} dv^{\ast}, $$
for  $\psi (q) dv  \in S^{\ast}(V)$, and  $\psi (q^{\ast})^{-1} dv^{\ast}\in S^{\ast}(V^{\ast})$.

 Remark that the Weil index only depends on  the Witt class of $(q, V)$  and $\psi$.  For simplicity, we will denote  by  $\gamma_{\psi}(a)$ the Weil index attached to the quadratic form  $v \longmapsto av^2$, and  let $\gamma(a, \psi)= \tfrac{\gamma_{\psi}(a)}{\gamma_{\psi}(1)}$ be its normalizer.
\subsection{The Weil representation I}\label{TheWeilrepresentation}
Let $W$ be a symplectic space over $F$ of dimension $2n$, endowed with a symplectic form $\langle, \rangle$. The Heisenberg group $\Ha(W)$, attached to $W$ and $F$, is a topological group $W \oplus F$, with the  law
$$ (w, t) (w', t')=(w+ w', t+t' + \tfrac{\langle w,w' \rangle}{2} )$$
where $w, w' \in W, t, t' \in F$. The center  of $\Ha(W)$ is $\{ 0\} \times F$.

 Let $\Sp(W)$ be the group of isometries of $(W, \langle, \rangle)$ and $A$ a subgroup of $\C^{\times}$ containing $\{ \pm 1\}$. By \cite[p. 13, Lmm.2.3  and p. 53, Theorem 10.5]{Mo} and \cite[p. 57, Th\'er\`eme 12.1(c)]{Mat}, we know
  $$ \Ha^1(\Sp(W), A)=0 \textrm{ and } \Ha^{2}(\Sp(W), A) \simeq \Hom(\mu_F, A),$$
  where $\mu_F$ is the cyclic group of the roots of unity in $F$ (a finite group). Here,  $\Ha^1(\Sp(W), A),  \Ha^2(\Sp(W), A)$ are the measurable  cohomology groups defined in \cite{Mo}.
So there exists a unique  element in $\Ha^2(\Sp(W), A)$ of order two; this class  gives rise to a unique central topological extension
$$1 \longrightarrow  A \longrightarrow \Mp_A(W) \stackrel{p}{\longrightarrow } \Sp(W) \longrightarrow 1$$
of $\Sp(W)$ by $A$.  As usual,  $\Mp_A(W)$ is called the \emph{Metaplectic group} (w.r.t. $A$). When $A=\mu_2$, $\mu_8$ and $\C^{\times}$, we will denote it   by $\widehat{\Sp}(W)$, $\overline{\Sp}(W)$ and  $\widetilde{\Sp}(W)$ respectively. In particular, the topological groups $\widehat{\Sp}(W)$, $\overline{\Sp}(W)$    are locally profinite.

Fix a non-trivial character $\psi$ of $F$. According
to the Stone-von Neumann theorem, there is only one equivalence class of irreducible smooth complex representation $\rho_{\psi}$ of $\Ha(W)$ with central character $\psi$. Let us denote one model of  this representation by $(\rho_{\psi}, S)$.  Now we define  a semi-direct product group $ \Mp_A(W)\ltimes  \Ha(W)$ by
$$[h_1, (w_1, t_1)][  h_2, (w_2,t_2) ]:=[ h_1h_2 , (w_1 \cdot p(h_2), t_1)+ (  w_2,t_2) ]$$
for $h_1, h_2\in \Mp_A(W)$, $w_1, w_2 \in W$ and $t_1, t_2 \in F$.
\begin{theorem}[Weil]\label{Weil}
$(\rho_{\psi}, S)$ can be extended uniquely  to  a smooth representation of  $  \Mp_A(W)\ltimes \Ha(W)$ such that  $\rho_{\psi}|_{A}(\epsilon)=\epsilon \id_S$, for $\epsilon \in A$.
\end{theorem}
\begin{proof}
The existence is a well-known result,  due to  Andr\'e Weil \cite{Weil}. The  uniqueness is just an exercise, and let us do it now. If $(\pi_{\psi}, S)$, $(\pi_{\psi}', S')$ are two extensions of the representation $(\rho_{\psi}, S)$ of $A \times \Ha(W)$ to $  \Mp_A(W)\ltimes \Ha(W)$, then a $\Sp(W)$-module $\Hom_{A \times \Ha(W)}(\rho_{\psi}, \rho_{\psi})$ comes as defined by $g\cdot \phi(x)=\pi_{\psi}'(g)\phi(\pi_{\psi}(g^{-1})x)$, whence $\phi\in \Hom_{A \times \Ha(W)}(\rho_{\psi}, \rho_{\psi})$, $g\in \Sp(W)$. Since $\Sp(W)$ is perfect(the case that residual characteristic not even), and $\Hom_{A \times \Ha(W)}(\rho_{\psi}, \rho_{\psi})$ has only one dimension, $\pi_{\psi}$ and $\pi_{\psi}'$ coincide.
\end{proof}

The restriction of $\rho_{\psi}$ to $\Mp_A(W)$ is called the \textbf{Weil representation} of $\Mp_A(W)$, denoted by $\omega_{\psi}$  from now on. It is known that $\omega_{\psi}$ has two irreducible components.

Similarly, let $\chi_A^+$ be a character of $A$ given by $x\longrightarrow x^{-1}$, and $\psi^-$ another character of $F$ defined by $x\longrightarrow \psi(-x)$. Now let $(\rho_{\psi^-}, S^-)$ be the smooth representation of $\Mp_A(W)\ltimes \Ha(W)$, associated to $\psi^-$, such that $\rho_{\psi^-, \chi_A^+}(t)=\chi_A^+(t)\Id_{S^-}$, for $t\in A$. By uniqueness, we have:
\begin{corollary}
$\check{\rho}_{\psi} \simeq \rho_{\psi^-, \chi_A^+}$, and $\check{\omega}_{\psi} \simeq \rho_{\psi^-, \chi_A^+}|_{\Mp_A(W)}$.
\end{corollary}
\begin{proof}
The first statement is immediate. Since $\omega_{\psi}$ is a smooth admissible representation of $\Mp_A(W)$, the second one follows.
\end{proof}
\begin{remark}
The Weil representation $\omega_{\psi}$ of $\Mp_A(W)$ arising from a projective representation of $\Sp(W)$ is  primitive defined for $\widehat{\Sp}(W)$.
\end{remark}
\subsection{Rao's cocycle I}\label{RaoscocycleI}
The cocycles associated to $\overline{\Sp}(W)$, $\widehat{\Sp}(W)$ have been constructed by Rao \cite{Rao}, by Perrin \cite{Per}. For convenient use, we recall their results by following \cite{Kud1} and \cite{MVW}.

 Let $(X_1, X_2, X_3)$ be a triple of Lagrangians of $W$. The Levi invariant $L(X_1, X_2, X_2)$ is an isometry class of the following symmetric vector space: When $X_1, X_2, X_3$ are pairwise transversal, the two complete polarizations $W=X_2  \oplus X_1$ and $W=X_2 \oplus X_3$ will give a unique element $u \in \Sp(W)$ such that $x_1 \cdot u=x_1$ for $x_1 \in X_1$ and $X_2 \cdot u=X_3$. As   a result,
 $$(x,y)   := \langle x, y \cdot u\rangle =\langle y, x \cdot u \rangle, \qquad x, y \in X_2$$
 is a non-degenerate symmetric bilinear form on $X_2$. In this situation, set $L(X_1, X_2, X_3)= X_2, (,)$.  Otherwise, let $M= X_1 \cap X_2 + X_2 \cap X_3 + X_3 \cap X_1$,  consider the non-degenerate symplectic vector space $W_M= M^{\bot} /M$ and its pairwise transversal Lagrangians $Z_i= \big( (X_i+ M) \cap M^{\bot}\big) /M$ for $i=1, 2,3$, and then define $L(X_1, X_2, X_3)=L(Z_1, Z_2, Z_3)$. For $L(X_1, X_2, X_3)$, it has the following properties due to Rao:
 $$L(X_{\sigma(1)}, X_{\sigma(2)}, X_{\sigma(3)})=sign (\sigma)L(X_1, X_2, X_3),  \qquad\qquad \sigma \in S_3;$$
  $$  L( X_1 \cdot g,  X_2 \cdot g,  X_3 \cdot g)=L(X_1, X_2, X_3), \qquad \qquad  g\in \Sp(W);$$
  we will denote the quadratic form associated by $Q(X_1, X_2, X_3)$ (\emph{cf}. Section \ref{Weilindex}).

Now let $Y$ be a Lagrangian of $W$, and $\psi$ a non-trivial character of $F$. For $g_1, g_2 \in \Sp(W)$, set
$$q_Y(g_1, g_2):= Q(Y, Y \cdot g_2^{-1}, Y  \cdot g_1 ).$$
\begin{theorem}[Perrin, Rao]\label{RaococycleforMp}
The class of the $2$-cocycle
$c_Y(g_1,g_2)= \gamma_{\psi}\big(q_Y(g_1,g_2)\big)$ in $\Ha^2(\Sp(W), \mu_8)$ is non-trivial of order $2$.
\end{theorem}
It is immediate that
 $$c_Y(p_1 gp,p^{-1} g' p_2)=c_Y(g, g'),  \qquad\qquad p_1, p_2, p\in P, g, g' \in \Sp(W),$$
  $$ c_Y(p, g)=c_Y(g,p)=1, \qquad\qquad g\in \Sp(W), p\in P, $$
where  $P=\{ g\in \Sp(W) \mid Y \cdot g=Y\}$ is a parabolic subgroup of $\Sp(W)$.
\subsection{Rao's cocycle II}\label{RaoscocycleII}
Let $\{ e_1, \cdots, e_n; e_1^{\ast}, \cdots, e_n^{\ast} \}$ be a symplectic basis of $W$ so that $\langle e_i, e_j \rangle=\langle e_i^{\ast}, e_j^{\ast} \rangle=0$, and $\langle e_i, e_j^{\ast} \rangle=\delta_{ij}$. Let $Y$ be the Lagrangian generated by $e_1^{\ast}, \cdots, e_n^{\ast}$, and $P=\{g \in \Sp(W) \mid Y \cdot g = Y\}$. For $S \subseteq \{ 1, \cdots, n\}$, we let $\omega_{S} \in \Sp(W)$, given by
\[e_i \cdot \omega_S=\left\{\begin{array}{clrr}
     -e_i^{\ast} & i \in S \\       e_i  &  i\notin S,
     \end{array}\right. \qquad e_i^{\ast} \cdot \omega_S=\left\{\begin{array}{clrr}
     e_i & i \in S \\       e_i^{\ast}  &  i\notin S.
     \end{array}\right. \]
As is known that there exists a  decomposition (\emph{cf}. \cite[p. 54]{MVW}) $\Sp(W)= \sqcup_{j=1}^n C_j$,
where $C_j= P\omega_SP$ for any $\omega_S$ with $|S|=j$.   In \cite{Rao}, Rao defined the following functions:
$$x: \Sp(W) \longrightarrow F^{\times}/{(F^{\times})}^2; p_1\omega_Sp_2 \longmapsto \det(p_1p_2|_Y) (F^{\times })^2$$
$$t: \Sp(W) \times \Sp(W) \longrightarrow \Z; (g_1, g_2)\longmapsto \tfrac{1}{2}(|S_1| + |S_2| - |S_3| -l)$$
 where $ g_1=p_1\omega_{S_1} p_1', g_2= p_2 \omega_{S_2} p_2'$  and  $ g_1g_2= p_3 \omega_{S_3} p_3', l=\dim q_Y(g_1, g_2)= \dim Q(Y, Y \cdot g_2^{-1}, Y\cdot g_1)$, $S, S_1, S_2, S_3 \subseteq \{1, \cdots, n\}$.

 The Rao's cocycle  is defined by
$$c_{Rao, Y}(g_1, g_2)= (x(g_1), x(g_2))_F(-x(g_1)x(g_2), x(g_1g_2))_F ((-1)^t, \det(2q))_F (-1, -1)_F^{\tfrac{t(t-1)}{2}}\epsilon(2q)$$
where $t=t(g_1,g_2)$, $q=q_Y(g_1,g_2)$ for  $g_1, g_2 \in \Sp(W)$.
\begin{theorem}[Rao]\label{Raococycle}
The class of Rao's cocycle, $[c_{Rao, Y}]$, in  $\Ha^2(\Sp(W),\{ \pm 1\})$ is non-trivial of order $2$.
\end{theorem}
\begin{proof}
See \cite[p. 20, Theorem 4.5]{Kud1}.
\end{proof}

Up to isomorphism, one can think of the group $\widehat{\Sp}(W)$ as  the underlying topological  set $\Sp(W) \times \{ \pm 1 \}$ with the law
$$(g_1, \epsilon_1) \cdot (g_2, \epsilon_2)= (g_1 g_2, c_{Rao, Y}(g_1, g_2) \epsilon_1 \epsilon_2).$$
 The above constructed $2$-cocycles $c_Y$ and $c_{Rao, Y}$  give   the same  class in   $\Ha^2( \Sp(W), \mu_8)$,  so they will differ by  a coboundary.  Following \cite{Rao}, we  define the normalizing constants as
$$m_Y: \Sp(W) \longrightarrow \mu_8; g \longmapsto (x(g), \tfrac{1}{2})_F \gamma(x(g), \psi)^{-1} \gamma_{\psi}(\tfrac{1}{2})^{-j(g)}$$
 for $g=P\omega_SP$, $j(g)=|S|$.
\begin{proposition}[Rao]\label{Raococyclesmu2mu8}
 For $g_1, g_2 \in \Sp(W)$, we have $$c_Y(g_1, g_2)=m_Y(g_1g_2) m_Y(g_1)^{-1} m_Y(g_2)^{-1} c_{Rao, Y}(g_1,g_2).$$
\end{proposition}
\begin{proof}
See Kudla's  famous note  \cite[p. 20, Theorem 4.5]{Kud1}.
\end{proof}
\subsection{Rao'cocycle III}
 Suppose  $W_1$ and $W_2$ are the symplectic subspaces of $W$ generated by $\{ e_1, \cdots, e_{n_1}; e_1^{\ast}, \cdots, e_{n_1}^{\ast}\}$ and $\{ e_{n_1+1}, \cdots, e_{n}; e_{n_1+1}^{\ast}, \cdots, e_{n}^{\ast}\}$ respectively. Let $Y_1 =\spann\{e_1^{\ast}, \cdots, e_{n_1}^{\ast}\}$,  $Y_2= \spann\{ e_{n_1+1}^{\ast}, \cdots, e_{n}^{\ast}\}$, and $Y =\spann\{e_1^{\ast}, \cdots, e_{n}^{\ast}\}$.  Write $\widehat{\Sp}(W_1)$ and  $\widehat{\Sp}(W_2)$ for the metaplectic groups following  the laws
$$(g_1, \epsilon_1) \cdot(g_1', \epsilon_1')= (g_1g_1', c_{Rao, Y_1}(g_1, g_1') \epsilon_1\epsilon_1')$$
and
 $$(g_2, \epsilon_2) \cdot(g_2', \epsilon_2')= (g_2g_2', c_{Rao, Y_2}(g_2, g_2') \epsilon_2\epsilon_2')$$
respectively, for $g_i, g_i' \in \Sp(W_i)$, $\epsilon_i, \epsilon_i' \in \mu_2$.
\begin{proposition}[{\cite[pp. 245-246]{HanM}}]\label{morphismedegroupes1}
There is a group homomorphism:
$$\widehat{\Sp}(W_1) \times \widehat{\Sp}(W_2) \stackrel{\hat{p}}{\longrightarrow} \widehat{\Sp}(W)$$
$$ [(g_1, \epsilon_1), (g_2, \epsilon_2)]  \longmapsto [(g_1,g_2), \epsilon_1\epsilon_2 c_{Rao,Y}((g_1, 1), (1, g_2))]$$
In particular, considering $\hat{p}|_{\widehat{\Sp}(W_1)}$ and  $\hat{p}|_{\widehat{\Sp}(W_2)}$, we obtain
$$c_{Rao, Y_1}(g_1,g_1')=c_{Rao, Y}((g_1, 1), (g_1', 1))$$ and
$$c_{Rao, Y_2}(g_2,g_2')=c_{Rao, Y}((1,g_2), (1, g_2'))$$
for  $g_1, g_1'\in \Sp(W_1)$, $g_2, g_2'\in  \Sp(W_2)$.
\end{proposition}
Let $\psi$ be  a fixed non-trivial character  of $F$.     Let  $\overline{\Sp}(W_1)$ and  $\overline{\Sp}(W_2)$  be the metaplectic groups associated to $\psi$ by following  the laws
$$(g_1, \epsilon_1) \cdot(g_1', \epsilon_1')= (g_1g_1', c_{ Y_1}(g_1, g_1') \epsilon_1\epsilon_1')$$
and
 $$(g_2, \epsilon_2) \cdot(g_2', \epsilon_2')= (g_2g_2', c_{ Y_2}(g_2, g_2') \epsilon_2\epsilon_2')$$
respectively, for $g_i, g_i' \in \Sp(W_i)$, $\epsilon_i, \epsilon_i' \in \mu_8$.

\begin{proposition}[Rao]\label{morphismedegroupes2}
There is a group homomorphism:
$$\overline{\Sp}(W_1) \times \overline{\Sp}(W_2) \stackrel{\overline{p}}{\longrightarrow} \overline{\Sp}(W)$$
$$ [(g_1, \epsilon_1), (g_2, \epsilon_2)]  \longmapsto [(g_1,g_2), \epsilon_1\epsilon_2 ],$$
i.e. $c_{Y}((g_1, g_2), (g_1', g_2)) = c_{ Y_1}(g_1,g_1') c_{Y_2}(g_2, g_2')$ for  $g_1, g_1'\in \Sp(W_1)$, $g_2, g_2'\in  \Sp(W_2)$.
\end{proposition}
\begin{proof}
By Prop.\ref{morphismedegroupes1}, we have
$$c_{Rao, Y}\big(((g_1g_1', 1), (1, g_2g_2')\big) c_{Rao, Y_1}(g_1, g_1') c_{Rao, Y_2}(g_2,g_2')$$ $$=c_{Rao, Y}\big((g_1, g_2), (g_1',g_2')\big) c_{Rao, Y}\big((g_1, 1), (1, g_2)\big) c_{Rao, Y}\big((g_1', 1), (1, g_2')\big).$$
Applying the result of Prop.\ref{Raococyclesmu2mu8}, we get
$$c_Y\big( (g_1,g_2), (g_1', g_2')\big) c_{Y_1}^{-1}(g_1,g_1') c_{Y_2}^{-1}(g_2,g_2')= \tfrac{m_Y\big( g_1g_1', g_2g_2'\big)}{m_{Y_1}(g_1g_1')m_{Y_2}(g_2g_2')} \cdot \bigg(\tfrac{m_Y\big((g_1, g_2)\big) }{ m_{Y_1}(g_1) m_{Y_2}(g_2)}\bigg)^{-1}\cdot \bigg(\tfrac{m_Y\big((g_1', g_2')\big) }{ m_{Y_1}(g_1') m_{Y_2}(g_2')}\bigg)^{-1}$$
$$c_{Rao, Y}\big( (g_1, 1), (1, g_2)\big)^{-1} c_{Rao, Y}\big( (g_1', 1), (1, g_2')\big)^{-1}  c_{Rao, Y}\big( (g_1g_1', 1), (1, g_2 g_2')\big).$$
Note that by definition, for $s_1 \in \Sp(W_1)$, $s_2 \in \Sp(W_2)$, we have
$$ \tfrac{m_Y\big((s_1, s_2)\big)}{m_{Y_1}(s_1) m_{Y_2}(s_2)}= \tfrac{\gamma (x(s_1), \psi)\gamma(x(s_2), \psi)}{\gamma(x(s_1) x(s_2), \psi)}= (x(s_1), x(s_2))_F$$
and
$$c_{Rao, Y}\big( (s_1, 1), (1, s_2)\big)= (x(s_1), x(s_2))_F (-x(s_1)x(s_2), x(s_1)x(s_2))_F= (x(s_1), x(s_2))_F,$$
so the result follows.
\end{proof}
\subsection{The Weil representation II}
Part of the richness of the Weil representations reflects on their different realized models. Down to the earth, let us recall one so-called the Schr\"odinger model of the Weil representation constructed by  Perrin in \cite{Per}. Let us fix a complete polarisation $W=X\oplus Y$.
\subsubsection{Model for $\overline{\Sp}(W) \ltimes \Ha(W)$} The representation $\rho_{\psi}$ of $\overline{\Sp}(W) \ltimes \Ha(W)$ can be realized in $S(X)$ by the following formulas:
\begin{itemize}
\item[(1)] $\rho_{\psi}((x+y, t)) f(x')=\psi(\langle x', y\rangle +\frac{\langle x, y\rangle}{2} +t) f(x+x')$,
\item[(2)] $\rho_{\psi}((g, \epsilonup))f(x')=\epsilonup |a|_F^{\tfrac{1}{2}} \psi(\tfrac{1}{2}\langle x' \cdot a,  x' \cdot b\rangle ) f(x' \cdot a)$,
\item[(3)] $\rho_{\psi}((g', \epsilonup))f(x')=\epsilonup t(g') \int_{\ker(c')\backslash Y} |\overline{c'}|^{\tfrac{1}{2}} \psi\big(\tfrac{1}{2}\langle x' \cdot a', x'\cdot b'\rangle -\langle x' \cdot b', y \cdot c'\rangle + \tfrac{1}{2} \langle y \cdot c', y\cdot d'\rangle  \big)d\dot{y}$,
\end{itemize}
where $w=x+y\in W$, $t\in F$; $g=\begin{pmatrix}
a& b\\
0& d\end{pmatrix}$,  $g'=\begin{pmatrix}
a'& b'\\
c'& d'\end{pmatrix} \in \Sp(W)$, for $a, a'\in \End_F(X)$, $b, b'\in \Hom_F(X, Y)$, $c'\in \Hom_F(Y, X)$, $d,d' \in \End_F(Y)$, $\epsilonup\in \mu_8$, $f\in S(X)$, and $\overline{c'}$ being the isomorphism from $Y/\ker(c')$ to $[Y/\ker(c')]^*$,  $t(g')$ being a complex number of module $1$ given in \cite[Thm. 2.2]{Per}.
\subsubsection{Doubling method}
Let $P(Y)$ be the parabolic subgroup of $\Sp(Y)$ associated to $Y$ admitting a unipotent subgroup $N(Y)$. Then there is a short exact sequence:   $1 \longrightarrow N(Y) \longrightarrow P(Y) \longrightarrow \GL(Y)\longrightarrow 1$. Let $\chi_{\overline{P}(Y)}^+$ be the character of $\overline{P}(Y)$ defined by $[\begin{pmatrix}
a & b \\
0& a^{\ast-1}\end{pmatrix}, \epsilonup]\longrightarrow |\det(a|_X)|_F^{\tfrac{1}{2}} \epsilonup$.
\begin{lemma}
$\rho_{\psi}|_{\overline{P}(Y)\ltimes \Ha(W)}\simeq \cInd_{\overline{P}(Y) \ltimes Y\cdot F}^{\overline{P}(Y) \ltimes \Ha(W)}(\chi^+_{\overline{P}(Y)} \cdot 1_Y \cdot \psi)$.
\end{lemma}
\begin{proof}
It follows from the above Schr\"odinger model.
\end{proof}
As a consequence, we obtain:
\begin{proposition}
Let $\rho_{\psi}$ be the smooth representation of $\overline{\Sp}(W) \ltimes \Ha(W)$ defined as above. Then $[\rho_{\psi} \otimes \check{\rho}_{\psi}]|_{\overline{\Sp}(W) \ltimes \Ha(W)} \simeq \cInd_{\overline{\Sp}(W) \times F}^{\overline{\Sp}(W) \ltimes \Ha(W)} 1\cdot \psi$.
\end{proposition}
\begin{proof}
Let $2W=W\oplus W$ be a vector space over $F$ of dimension $4n$, equipped with the symplectic form $\langle, \rangle$ defined by
$\langle (w_1, w_2), (w_1', w_2')\rangle:=\langle w_1, w_1'\rangle-\langle w_2, w_2'\rangle$, for $ w_i, w_i'\in W$.
Then there exists the following morphism of groups:
$$(\overline{\Sp}(W)\ltimes \Ha(W)) \times (\overline{\Sp}(W)\ltimes \Ha(W))  \longrightarrow \overline{\Sp}(2W)\ltimes \Ha(2W)$$
$$[(g_1, \epsilonup_1;  w_1, t_1),  (g_2, \epsilonup_2;  w_2, t_2)]\longmapsto  [(g_1, g_2), c_{Rao}((g_1, 1), (1, g_2))\epsilonup_1 \epsilonup_2^{-1};  (w_1,w_2), t_1-t_2]$$
Let $\rho_{\psi}'$ be the smooth representation of $ \overline{\Sp}(2W)\ltimes \Ha(2W)$ as defined in Section \ref{TheWeilrepresentation}. It is known that $\rho_{\psi}'|_{\Ha(W) \times \Ha(W)} \simeq \rho_{\psi}|_{\Ha(W)} \otimes \check{\rho}_{\psi}|_{\Ha(W)}$. Applying the result of Theorem \ref{Weil}, we obtain $\rho_{\psi}'|_{(\overline{\Sp}(W)\ltimes \Ha(W)) \times(\overline{\Sp}(W)\ltimes \Ha(W)) } \simeq \rho_{\psi} \otimes \check{\rho}_{\psi}$; its restriction to the canonical diagonal subgroup $\overline{\Sp}(W) \ltimes \Ha(W)$, yields $\rho_{\psi}' |_{\overline{\Sp}(W) \ltimes \Ha(W)} \simeq (\rho_{\psi} \otimes \check{\rho}_{\psi})|_{\overline{\Sp}(W) \ltimes \Ha(W)}$. Now let us choose a Lagrangian subspace $Y=\left\{(w, -w)\mid w\in W\right\}$ of $2W$.  By definition the image of $\overline{\Sp}(W) \ltimes \Ha(W)$ in  $\overline{\Sp}(2W) \ltimes \Ha(2W)$ lies in $\overline{P}(Y)\ltimes \Ha(2W)$, so
\[(\rho_{\psi} \otimes \check{\rho}_{\psi})|_{\overline{\Sp} (W)\ltimes \Ha(W)}  \simeq\Res_{\overline{\Sp}(W)\ltimes\Ha(W)}^{\overline{P}(Y)\ltimes \Ha(2W)}\big(\rho_{\psi}'|_{\overline{P}(Y)\ltimes \Ha(2W)}\big)\]
\[\simeq \Res_{\overline{\Sp}(W)\ltimes\Ha(W)}^{\overline{P}(Y)\ltimes \Ha(2W)} \Big( \cInd_{\overline{P} (Y)\ltimes Y\cdot F}^{\overline{P}(Y) \ltimes \Ha(2W)}\chi^+_{\overline{P}(Y)} \cdot 1_Y \cdot \psi\Big)\simeq \cInd_{\overline{\Sp}(W) \times F}^{\overline{\Sp} (W)\ltimes \Ha(W)} 1_{\mu_8} \cdot \psi\]
\end{proof}
\subsection{Reductive dual pair}\label{Reductivedualpair}
Let $G_1, G_2$ be two closed subgroups of $\Sp(W)$. We call $(G_1, G_2)$ a \emph{reductive dual pair} or \emph{Howe pair}, if
\begin{itemize}
\item[(1)]  $G_1$ is the commutant of $G_2$, and vice-versa,
\item[(2)]  the action of $G_1G_2$ on $W$ is \emph{absolument} semi-simple.
\end{itemize}

A $G_1G_2$-stable orthogonal decomposition $W=\oplus_v W_v$ will yield a decomposition of the pair $(G_1,G_2)$:
$$G_1=\prod_v H_1^{(v)}, G_2=\prod_v H_2^{(v)}$$
with $(H_1^{(v)}, H_2^{(v)})$ a reductive dual pair of $\Sp(W_v)$; while there is no such non-trivial decomposition, we will call $(G_1, G_2)$ irreducible. An irreducible reductive dual pair $(G_1, G_2)$ has the following  form(\emph{cf}. \cite[p. 15]{MVW}):\\

Type I (a). $V_1, \langle, \rangle_1$ (resp. $V_2, \langle, \rangle_2$) denotes a non-degenerate symplectic (resp. orthogonal) vector space over $F$ such that
$ W \simeq V_1 \otimes V_2, \langle, \rangle \simeq \langle, \rangle_1 \otimes \langle, \rangle_2;$
$G_1 \simeq \Sp(V_1)$,  $G_2 \simeq \Oa(V_2)$, and vice-versa.\\

Type I (b). $V_1, \langle, \rangle_1$ (resp. $V_2, \langle, \rangle_2$) denotes a non-degenerate $\varepsilon_1$-hermitian (resp. $\varepsilon_2$-hermitian) vector space over $E$ such that $\varepsilon_1 \varepsilon_2=-1$, $W\simeq V_1 \otimes_E V_2$, $\langle, \rangle \simeq \tr_{E/F}\big(\langle, \rangle_1 \otimes \tau ( \langle, \rangle_2)\big); G_1  \simeq \U(V_1), G_2 \simeq \U(V_2)$.\\

Type I (c). $V_1, \langle, \rangle_1$ (resp. $V_2, \langle, \rangle_2)$ denotes a non-degenerate right $\varepsilon_1$-hermitian (resp. left $\varepsilon_2$-hermitian) vector space over $\mathbb{H}$ such that $\varepsilon_1 \varepsilon_2=-1$, $W \simeq V_1 \otimes_{\mathbb{H}}V_2$, $\langle, \rangle \simeq \Trd_{\mathbb{H}/F}(\langle, \rangle_1 \otimes \tau (\langle, \rangle_2))$; $G_1 \simeq \U(V_1)$, $G_2 \simeq \U(V_2)$ except when $\varepsilon_1=1$, $\varepsilon_2=-1$, $V_2 \simeq \mathbb{H}$.\\

Type II. There exist a division ring  $D'$ over a separable finite extension $K$ of $F$, and two vector spaces $X_1, X_2$ over $D'$ with the dual vector spaces $X_1^{\ast}$, $X_2^{\ast}$ respectively such that
$W \simeq [X_1 \otimes_{D'} X_2 ]\oplus [X_2^{\ast}  \otimes_{D'} X_1^{\ast}]$, $G_1 \simeq \GL_{D'}(X_1)$, $G_2 \simeq \GL_{D'}(X_2)$.\\

Scalar descent. There exist a nontrivial separable field extension $K$ of $F$, a symplectic vector space $V, \langle, \rangle_V$ over $K$ and $0 \neq t_{K/F} \in \Hom_F(K,F)$ (satisfying that  $t_{K/F}: K \times K \longrightarrow F; (a,b) \longmapsto t_{K/F}(ab)$ is a non-degenerate $F$-bilinear form)  such that $W \simeq V_{/F}$, $\langle, \rangle \simeq t_{K/F}(\langle, \rangle_V)$, $(G_1, G_2)$ is a non-trivial irreducible reductive dual pair  mentioned above in $\Sp(V)$. The ``non-trivial'' signifies  $G_i \ncong \{ \pm 1\} $, $\Sp(V)$.\\

Remark that the pairs listed above   all are the   irreducible reductive dual pairs  in $\Sp(W)$.
  Now we   write $\overline{G_1}$ and $\overline{G_2}$  for their inverse images in $\overline{\Sp}(W)$\footnote{When we treat it as a group, we always fix a $2$-cocycle in hand without  mention.}.  The following result  is a  modified version of the  Th\'eor\`eme  in \cite[p. 52]{MVW}     by considering the Metaplectic group $\overline{\Sp}(W)$ instead of $\widetilde{\Sp}(W)$.
\begin{theorem}\label{splittinghowedual}
The group $\overline{G_1}$ splits over  $G_1$,  except when  $W \simeq V_1 \otimes_K V_2$, $\langle, \rangle \simeq t_{K/F}(\langle, \rangle_1 \otimes \langle, \rangle_2)$ with $V_1$ being symplectic  and $V_2$ being orthogonal of odd dimension ( in this  case $\overline{G_1}\simeq \overline{\Sp}(V_1)$).
\end{theorem}

\subsection{The theta correspondence} Let $G_1$, $G_2$ be a reductive dual pair in $\Sp(W)$, and write $\overline{G_1}$, $\overline{G_2}$ for their inverse images in $\overline{\Sp}(W)$ respectively. By \cite[p. 39, Lmm.]{MVW}, $\overline{G_1}$ commutes with $\overline{G_2}$ in $\overline{\Sp}(W)$.
\begin{theorem}[Howe, Waldspurger]\label{thetacorrespondencesforreductivegroups}
Suppose that the residue characteristic of $F$ is not $2$. Then  the restriction of the Weil representation $\rho_{\psi}$ to $\overline{G_1} \times \overline{G_2}$ is a   theta representation of finite length. As usual, the corresponding bijection between $\mathcal{R}_{\overline{G_1}}^0(\rho_{\psi})$ and $\mathcal{R}_{\overline{G_2}}^0(\rho_{\psi})$ is called the local theta (or \emph{Howe}) correspondence.
\end{theorem}
 In the whole context, we assume that  the residue characteristic of $F$ is not $2$(\emph{cf}. Section \ref{Notationandconventionsis}), and the above result is sufficient to us.\footnote{We  mainly limit ourself  to those cases, because the similar results in \cite{Wang1} are not established.} However it is also worth to present  some recent progress on the classical theta correspondences by following \cite{GanT2}, \cite{GanT1}. Here we only cite some interesting results in the personal way.
\begin{remark}
\begin{itemize}
\item[(1)] By Self-reducibility property\footnote{ This proper concept comes from Gurevich and Hadani's paper \cite{GuH1}.}of $\omega_{\psi}$, to prove  the local theta correspondence, it reduces to the above discussed dual pairs of types I, II.
\item[(2)]  The classification of reductive dual pairs as described above also fits in the case where  $F$ is  a local field of  characteristic not $2$.
\item[(3)] For  $F$ being a local field of  characteristic  $2$,  the situation seems not  the same as  above. However one can consult with  L.  Blasco (\emph{cf.}\cite{Bar1}) on  the classification of reductive dual pairs , or turn to Gurevich-Hadani's paper\cite{GeL},   Genestier-Lysenko's \cite{GuH}  for the geometric approach  in this case.
\end{itemize}
\end{remark}
\begin{remark}
\begin{itemize}
\item[(1)] The local theta correspondences for the reductive dual pairs of type II have been established by Minguez(cf.\cite{Mi}) in all residue characteristic. Of course, his paper contains much  more results about this type.
\item[(2)]  For $F$ being a local field of  characteristic not $2$, the local theta correspondences have completely settled  by W. T. Gan with his cooperators B. Sun in \cite{GanT2},  S. Takeda in \cite{GanT1}.
\item[(3)] For much detailed structure  results on local Howe correspondences, one can read a series of  papers: Goran Mui\'c's \cite{Mu1}---\cite{Mu3} and  Mui\'c-Savin's paper \cite{Mu4}.
\end{itemize}
\end{remark}

\subsection{The intermediate group }\label{TheintermediategroupI}
 In  this subsection, we will define a canonical intermediate subgroup of $\Sp(W)$ associated to a reductive dual pair, and explain the  splitting of its  metaplectic form with an obvious  exception.  These results will be crucial  in the following sections in order to study Howe correspondences for the similitude groups. We follow the notations of Section \ref{Notationandconventionsis}. We now let $V$ be a right vector space over $D$. Recall that there is an exact sequence
$$1 \longrightarrow \U(V) \longrightarrow \GU(V) \stackrel{\lambda}{\longrightarrow} \Lambda_{\GU(V)} \longrightarrow 1, $$
where $\lambda$ is the similitude character and $\Lambda_{\GU(V)}\subseteq F^{\times}$.

\begin{lemma}\label{wittlambda}
Suppose that $V= V_H \oplus V^0$ is  a Witt decomposition with $V_H \simeq mH$ and  $V^0$ being anisotropic, where $H$ is an $\varepsilon$-hermitian hyperbolic plane over $D$. Then $\Lambda_{\GU(V)}= \Lambda_{\GU(V^0)}$.
\end{lemma}
\begin{proof}
Without loss of generality,  suppose that $V$ is a right $D$-vector space. For  $g\in \GU(V)$, the action of  $g$ on  $V$  will yield another Witt decomposition  $V=g \cdot  \big(V_H\big)\oplus g \cdot \big( V^0\big)$. By Witt's Theorem,  $g \cdot V^0   = g_0  \cdot V^0$ for some suitable  $g_0 \in \U(V)$. Moreover,  $  g_0^{-1}g\cdot (V^0) =V^0$. So  $ g_0^{-1}g \in \GU(V^0)$, and  $\lambda(g_0^{-1}g)=\lambda(g)$. This shows that $\Lambda_{\GU(V)} \subseteq \Lambda_{\GU(V^{0})}$. On the other hand,  recall  that the  $\varepsilon$-hermitian hyperbolic plane $H$ over $D$ is isometric to $(D \oplus D, \langle, \rangle)$, where $\langle (d_1, d_2), (d_1', d_2')\rangle:=\tau(d_1)d_2' + \varepsilon \tau(d_2)d_1'$; this implies $F^{\times} \supseteq \Lambda_{\GU(H)} \supseteq F^{\times}$. So  for $h_0 \in \GU(V^{0})$ with $\lambda=\lambda(h_0) \in F^{\times}$,  we can find an element   $g_H \in \GU(H)$ satisfying  $\lambda(g_H)=\lambda$. Then   $g:=h_0 \times \underbrace{g_H \times \cdots \times g_H  }_{m}  $,  viewed as an element of $\GU(V)$, satisfies  $\lambda(g)=\lambda(h_0)$.  This completes the proof.
\end{proof}
By this lemma,  we can determine the image of $\lambda$ in $F^{\times}$ by means of  the characteristic of  the anisotropic subspace of $V$. The following result is from \cite[p. 7]{MVW}.
\begin{lemma}\label{vigneraslemma}
  Up to isometry,
\begin{itemize}
\item[-]  an anisotropic quadratic  vector space over $F$ has the following form:
 (i) $F(a)$,  for $ a \in F^{\times} \textrm{ modulo } (F^{\times})^2$, with the canonical form; (ii)  $F_1(a)$, any  quadratic field extension $F_1$ of $F$,  for $ a \in F^{\times} \textrm{ modulo } (F^{\times})^2$ with the form $x \longmapsto a\nnn_{F_1/F}(x)$,  $x\in F_1$; (iii) $\mathbb{H}^0(a)$, with the form $x \longmapsto \tau(x)a x$ for $a \in F^{\times} \textrm{ modulo } (F^{\times})^2$; (iv) $\mathbb{H}$, with the form $x \longmapsto \Nrd(x)$.
\item[-] an anisotropic hermitian  vector space over $E$ has the following form: (i) $E(a)$,  for $ a \in F^{\times} \textrm{ modulo } (F^{\times})^2$, with the form $(x ,y)\longmapsto a\tau(x)y$,  for $x,y\in E$; (ii) $\mathbb{H}$ with the form $(x,y) \longmapsto \tau(x)y$.
\item[-] an anisotropic right hermitian  vector space over $\mathbb{H}$ has the following form: $\mathbb{H}$, with the form $(x,y) \longmapsto \tau(x)y$.
\end{itemize}
\end{lemma}

\begin{proposition}\label{lamedavalues}
Let $V$ be an $\varepsilon$-hermitian vector space over $D$ of dimension $n$.
\begin{itemize}
\item[(1)] If  $D=F$, $\varepsilon=-1$, then $ \U(V)=\Sp(V)$ and $\GU(V)=\GSp(V)$. In this case,  $ \Lambda_{\GU(V)}=  F^{\times}.$
\item[(2)] If $D=F$, $\varepsilon=1$, then $\U(V)=\Oa(V)$ and  $\GU(V)=\GO(V)$. Suppose $V=V^0\oplus mH$ is a Witt  decomposition with   $V^0$ being anisotropic  and $mH$ being  a hyperbolic space. Then
\[ \Lambda_{\GO(V)}= \left\{\begin{array}{lcr}
    F^{\times} &  &\dim{V^0}=0,4, \\
   (F^{\times})^2&  & \dim{V^0}=1, 3, \\
    \nnn_{F_1/F}(F_1^{\times}) &  &\dim{V^0}=2,  V^0=F_1(a).
    \end{array} \right. \]
In case $\dim{V^0}=2 $, $V^0=F_1(a)$ is the space mentioned in Lmm.\ref{vigneraslemma}.
\item[(3)]If $D=E$ is a separable quadratic field extension of  $F$, and  $\varepsilon= \pm 1$,
then \[\Lambda_{\GU(V)}= \left\{\begin{array}{lcr}
  F^{\times} &  & 2| n ,\\
    \nnn_{E/F}(E^{\times}) &  &2\nmid  n .
     \end{array} \right.\]
\item[(4)] If  $D$ is the unique (non-splitting) quaternion algebra $\mathbb{H}$ over $F$ and  $\varepsilon= \pm1$, then $\Lambda_{\GU(V)}= F^{\times}$.
\end{itemize}
\end{proposition}
\begin{proof}
Part (1) is well-known. For (2), when $\dim V^0=0, 1, 2, 4$, the results can be deduced from Lmm.\ref{vigneraslemma}; when $\dim V^0=3$, for $g\in \GU(V)$, $(\det g)^2= \lambda(g)^3$, so $(\lambda(g)^{-1} \det(g))^2= \lambda(g) \in (F^{\times})^2$.  For (3) --- (4), the hermitian cases  follow from  Lmm.\ref{vigneraslemma}. For (3), when $\varepsilon=-1$, according to \cite[p.2]{MVW}, multiplying the skew hermitian $\langle, \rangle$ by an element $\mu \in E^{\times}$  satisfying $-1=\mu^{\tau}/{\mu}$,  gives a hermitian form.   But in this process  the group $\GU(V)$ remains unchanged, so it reduces to the hermitian case. For (4), when $\varepsilon=-1$,  let us fix firstly $a \in F^{\times}$. Without loss of generality,   assume that $V$ is a right $D$-vector space. By Witt's decomposition,  $V\simeq \oplus_{i=1}^n \mathbb{H}(a_i)$  for some $a_i\in \mathbb{H}^0$, where $ \mathbb{H}(a_i)$ is a skew hermitian vector space over $\mathbb{H}$ of dimension 1 defined by $\langle d_1, d_2\rangle=\tau(d_1)a_id_2$. By  \cite[p.  364]{Scha},  we can find suitable  $d^i_a
\in \mathbb{H}$ satisfying  $\tau(d^i_a)
a_id^i_a
= aa_i$ for $1 \leq i \leq n$\footnote{For the proof, see also \cite[Lmm.1]{Tsuk}.}. By definition,  $d^i_a$ lies inside $\GU(\mathbb{H}(a_i))$ and its multiplier  is just $a$.    As before, the element $\delta_a = d^1_a
\times \cdots \times  d^n_a$, viewed as an element of  $\GU(V)$,  satisfies
$\lambda(\delta_a) = a$, so finally $\Lambda_{\GU(V)}=F^{\times}$.
\end{proof}

\begin{corollary}
The order of $\Lambda_{\GU(V)}/{(F^{\times})^2}$ is at most $4$.
\end{corollary}
\subsection{Split Metaplectic subgroups}\label{SplittingMetaplecticsubgroups}
Until the end of this section, we will let $(W, \langle, \rangle)$  be a symplectic space over $F$ of dimension $2n$. Let $\big(W=W_1 \otimes_{D'} W_2, \langle, \rangle = t_{K/F}( \langle, \rangle_1 \otimes \tau(\langle, \rangle_2))\big)$ be a  decomposition of tensor product, such that $(\U(W_1), \U(W_2))$ is  an irreducible reductive dual pair of $\Sp(W)$ (\emph{cf}. Section \ref{Reductivedualpair}).   We shall define a canonical   intermediate subgroup $\Gamma$ of $\Sp(W)$ by
$$\Gamma:= \{ (g_1,g_2)\mid g_1 \in \GU(W_1), g_2 \in \GU(W_2) \textrm{ such that } \lambda_1(g_1) \lambda_2(g_2)=1\},$$
where $\lambda_i$ is the similitude character from $\GU(W_i )$ to $K^{\times}$.  As before, there exists a canonical map:
$$\iota: \Gamma \longrightarrow \Sp(W_1 \otimes W_2, \langle, \rangle_1 \otimes \tau(\langle, \rangle_2)) \hookrightarrow \Sp(W, \langle, \rangle).$$
We will let $\iota(\Gamma)$ be the image of $\Gamma$ in $\Sp(W)$ and  $\overline{\Gamma}$ the inverse image of $\iota(\Gamma)$ in $\overline{\Sp}(W)$.
\begin{theorem}\label{scindagedugroupeR}
The exact sequence
\begin{equation}\label{equationsimple}
1 \longrightarrow \mu_{8} \longrightarrow \overline{\Gamma} \longrightarrow \iota(\Gamma)\longrightarrow 0
\end{equation}
 splits, except  when the reductive dual  pair  is a symplectic-orthogonal type, and the orthogonal vector space over $K$   is of odd dimension.
\end{theorem}
\begin{proof}
Note that  the restriction of any  non-trivial class of order $2$  in  $\Ha^2(\Sp(W), \mu_8)$  to   $ \Ha^2(\Sp(W_1 \otimes W_2, \langle, \rangle_1 \otimes \tau(\langle, \rangle_2)), \mu_8)$ is also  non-trivial  of order $2$. So to prove the above theorem, it is sufficient  to handle  the case $K=F$, which has been done in \cite{Wang1}.
\end{proof}
\begin{remark}\label{excludecase}
In  case  $W=W_1\otimes_{F} W_2$,  for  a symplectic
 space  $W_1$  over $F$ and  an orthogonal space $W_2$ over $F$ of odd dimension, the inverse image of $\Sp(W_1)$ in  $\overline{\Sp}(W)$  is isomorphic with $\overline{\Sp}(W_1)$ so that  the canonical extension $\overline{\Gamma}$ does not split over $\Gamma$.
\end{remark}
\begin{proposition}\label{centralextensionfoot}
In the above case, we let $\overline{\GSp}(W_1)$ be an arbitrary central extension of $\GSp(W_1)$ by $\mu_8$, such that there exists a short exact sequence
$1 \longrightarrow \overline{\Sp}(W_1) \longrightarrow \overline{\GSp}(W_1) \longrightarrow F^{\times} \longrightarrow 1$.\footnote{For the existence,  see  \cite[Theorem 1.1.A]{Bar1}.} Let  $\overline{\Gamma}^{1/2}= \left\{ (\overline{g}, h)\in \overline{\GSp}(W_1) \times \GO(W_2) \mid \overline{\lambda}(\overline{g}) \lambda(h)=1\right\}$  be a subgroup  of $\overline{\GSp}(W_1) \times \GO(W_2)$,  for $\overline{\lambda}$(reps.$\lambda$ )  being  the  similitude character  from $\overline{\GSp}(W_1)$(resp. $\GO(F)$) to $F^{\times}$. Then there exists a homomorphism  $\iota_{1/2} : \overline{\Gamma}^{1/2} \longrightarrow \overline{\Sp}(W)$ such that the following diagram
\begin{equation}\label{clubsuit2}
\begin{array}{ccccc}
 \overline{\Gamma}^{1/2}    &    \stackrel{\iota_{1/2}}{\longrightarrow} & \overline{\Sp}(W)\\
       \downarrow    &              &               \downarrow\\
 \Gamma    &     \stackrel{\iota}{\longrightarrow} & \Sp(W)
  \end{array}
\end{equation}
is commutative.
\end{proposition}
\begin{proof}
Let $\{ e_1, \cdots, e_n; e_1^{\ast}, \cdots, e_n^{\ast} \}$ be a symplectic basis of $W_1$.  Let  $X$ (resp. $X^{\ast}$) be  the Lagrangian subspace  of $W_1$ generated by those $e_i$ (resp. $e_i^{\ast}$).  Let $\{ f_1, \cdots, f_{2m-1}\}$ be an orthogonal basis of $W_2$, $\psi$ a non-trivial character of  $F$.  We will take $c_W$ to be the $2$-cocycle constructed in  Section \ref{RaoscocycleI} associated to the Lagrangian subspace $(X^{\ast}\otimes W_2)$  of $W$ and $\psi$.  By Remark \ref{excludecase}, there exists a homomorphism  from  $\overline{\Sp}(W_1)$ to $\overline{\Sp}(W)$ so that  we can choose a defining  $2$-cocycle $c_{W_1}$ of  $\overline{\Sp}(W_1)$, given by
\[ c_{W_1}(g_1,g_2)=c_W(g_1 \otimes 1, g_2 \otimes 1) \qquad \qquad g_1, g_2 \in \Sp(W_1).\]
Then  $s_1: \overline{\Sp}(W_1) \longrightarrow \overline{\Sp}(W); [g, \epsilon] \longrightarrow [g\otimes 1, \epsilon]$ is a morphism of groups.  By hypothesis, $c_{W_1}$ can extend to be a $2$-cocycle defining $\overline{\GSp}(W_1)$. We then define the map $\iota_{1/2}$ as follows:
\begin{equation}
\iota_{1/2}: \overline{\Gamma}^{1/2} \longrightarrow \overline{\Sp}(W);  \qquad ([g,\epsilon], h) \longmapsto [ g\otimes h, \epsilon]
\end{equation}
This  map satisfies the commutative diagram (\ref{clubsuit2}). Then it reduces to show that $\iota_{1/2}$  is  a homomorphism of groups.

Firstly  $\Oa(W_2)$ belongs to the parabolic subgroup $P(X^{\ast}\otimes W_2)$ of $\Sp(W)$, so $ s_2: \Oa(W_2) \longrightarrow \overline{\Sp}(W), h \longmapsto (1 \otimes h, 1)$ is a morphism of groups.
Moreover,  $s_1([g,\epsilon]) s_2(h)=\iota_{1/2}(\overline{g}, h)$, for $\overline{g}=[g, \epsilon] \in \overline{\Sp}(W_1)$, $h\in \Oa(W_2)$.
Since $s_1(\overline{g})$ commutes with $s_2(h)$ by \cite[p. 44, Lemme]{MVW},  $\iota_{1/2}|_{\overline{\Sp}(W_1) \times \Oa(W_2)}$ is a homomorphism of groups.
Consequently,
$\iota_{1/2} \big( [\overline{g_1} \overline{g_2}, h_1h_2]\big)= \iota_{1/2} \big( [\overline{g_1}, h_1]\big) \cdot \iota_{1/2}\big([\overline{g_2}, h_2]\big),$ for $\overline{g_1}=[g_1, \epsilon_1], \overline{g_2}=[g_2, \epsilon_2] \in \overline{\Sp}(W_1)$ and $h_1, h_2 \in \Oa(W_2)$.
Therefore $c_{W_1}(g_1, g_2)=c_W(g_1 \otimes h_1, g_2 \otimes h_2)$.

Next,  let $\overline{\Gamma}_0^{1/2}$ be a  subgroup of $ \overline{\Gamma}^{1/2}$ consists of  $ [\overline{g_t}, h]$ with $\overline{g_t}=(\begin{pmatrix}
1& 0\\
0&t
\end{pmatrix}, \epsilon)$, $h\in \GO(W_2)$,  for  $t \in K^{\times}, \epsilon \in \mu_8$, and $\lambda(h)=t $.
For $[\overline{g_{t_i}}, h_i]= [ (g_{t_i}, \epsilon_i), h_i]\in \overline{\Gamma}^{1/2}_0$,  $i=1, 2$,
$\iota_{1/2}\big( [\overline{g_{t_i}}, h_i]\big)= [g_{t_i}\otimes h_i, \epsilon_i]= \bigg( \begin{pmatrix}
h_i& 0\\
0&t_ih_i
\end{pmatrix}, \epsilon_i\bigg)$, and
\[ \iota_{1/2}\big( (\overline{g_{t_1}}, h_1)\big) \iota_{1/2}\big( (\overline{g_{t_2}}, h_2) \big)= (g_{t_1t_2} \otimes h_1h_2, c_{W} \bigg( \begin{pmatrix}
h_1& 0\\
0&t_1h_1
\end{pmatrix}, \begin{pmatrix}
h_2& 0\\
0&t_2h_2
\end{pmatrix} \bigg)\epsilon_1\epsilon_2)=(g_{t_1t_2} \otimes h_1h_2, \epsilon_1\epsilon_2).\]
Because of $c_{W_1}(g_{t_1}, g_{t_2})=c_{W}(g_{t_1}\otimes 1, g_{t_2}\otimes 1)=1$, we obtain
$[ \overline{g_{t_1}}, h_1] [\overline{g_{t_2}}, h_2] = [\overline{g_{t_1}} \overline{g_{t_2}}, h_1h_2] =[ (g_{t_1t_2} , \epsilon_1\epsilon_2), h_1h_2],$
and
$\iota_{1/2} \big( [( g_{t_1t_2}, \epsilon_1 \epsilon_2), h_1h_2]\big) = [ g_{t_1t_2} \otimes h_1 h_2, \epsilon_1 \epsilon_2].$
Hence   finally
$\iota_{1/2}\big( [\overline{g_{t_1}}, h_1] [\overline{g_{t_2}}, h_2] \big)= \iota_{1/2}\big( [\overline{g_{t_1}}, h_1]\big) \iota_{1/2}\big( [\overline{g_{t_2}}, h_2]\big)$. Now if  $(\overline{g}, h)= [(g, \epsilon), h] \in \overline{\Gamma}^{1/2}$ decomposed as  $[\overline{g}, h]= [\overline{g_0}, h_0] \cdot [\overline{g_t}, h_t]$, for $[\overline{g_0}, h_0]=[(g_0,\epsilon), h_0] \in \overline{\Sp}(W_1) \times \Oa(W_2)$, $[\overline{g_t}, h_t]=[(g_t, 1), h_t ] \in \overline{\Gamma}_0^{1/2}$, then
$\iota_{1/2}( [\overline{g}, h])= [g\otimes h, \epsilon]= [g_0 \otimes h_0, \epsilon] [g_t\otimes h_t, 1]= \iota_{1/2}( [\overline{g_0}, h_0]) \iota_{1/2}([\overline{g_t}, h_t])$.

Finally, in the general case,  for  $[\overline{g}^{(i)}, h^{(i)}]=\big[(g^{(i)}, \epsilon^{(i)}), h^{(i)}\big] \in \overline{\Gamma}^{1/2}$ as $i=1, 2$, if we write
$[\overline{g}^{(i)}, h^{(i)}]= [\overline{g_0}^{(i)}, h_0^{(i)}][\overline{g_t}^{(i)}, h_t^{(i)}]$ with $[\overline{g_0}^{(i)}, h_0^{(i)}]= [ (g_0^{(i)}, \epsilon^{(i)}), h_0^{(i)}]\in \overline{\Sp}(W_1) \times \Oa(W_2)$ and  $ [\overline{g_t}^{(i)}, h_t^{(i)}]=[ (g_t^{(i)}, 1), h_t^{(i)} ] \in \overline{\Gamma}_0^{1/2}$, then
$$[\overline{g}^{(1)}, h^{(1)}][\overline{g}^{(2)}, h^{(2)}]= [(g_0^{(1)}, \epsilon^{(1)}), h_0^{(1)}] [ (g_t^{(1)}, 1), h_t^{(1)}] [(g_0^{(2)}, \epsilon^{(2)}), h_0^{(2)}] [ (g_t^{(2)}, 1), h_t^{(2)}]$$
$$= [(g_0^{(1)}, \epsilon^{(1)}), h_0^{(1)}]\cdot [ \big( g_t^{(1)} g_0^{(2)} (g_t^{(1)})^{-1}, \epsilon^{(2)}\big), h_t^{(1)}h_0^{(2)} (h_t^{(1)})^{-1}] \cdot [\big(g_t^{(1)}, 1\big), h_t^{(1)}] \cdot [ \big( g_t^{(2)},1\big), h_t^{(2)}]$$
$$= [ \Big( g_0^{(1)} g_t^{(1)} g_0^{(2)} (g_t^{(1)})^{-1}, c_{W_1} \big( g_0^{(1)}, g_t^{(1)} g_0^{(2)} (g_t^{(1)})^{-1}\big) \epsilon_1^{(1)} \epsilon^{(2)}\Big), h_0^{(1)} h_t^{(1)} h_0^{(2)} (h_t^{(1)})^{-1}] \cdot [\big( g_t^{(1)}g_t^{(2)}, 1\big), h_t^{(1)} h_t^{(2)}].$$
By the above discussion,
$$\iota_{1/2}\Big( [\overline{g}^{(1)}, h^{(1)}] [ \overline{g}^{(2)}, h^{(2)}]\Big)$$
 $$= \Big[ g_0^{(1)} g_t^{(1)} g_0^{(2)} (g_t^{(1)})^{-1} \otimes h_0^{(1)} h_t^{(1)} h_0^{(2)} (h_t^{(1)})^{-1}, \epsilon^{(1)} \epsilon^{(2)} c_{W_1} (g_0^{(1)}, g_t^{(1)} g_0^{(2)} (g_t^{(1)})^{-1})\Big]  \Big[ g_t^{(1)} g_t^{(2)} \otimes h_t^{(1)} h_t^{(2)}, 1\Big]$$
 $$= \Big[ g_0^{(1)} \otimes h_0^{(1)}, \epsilon^{(1)}\Big] \Big[ g_t^{(1)} g_0^{(2)} (g_t^{(1)})^{-1} \otimes h_t^{(1)} h_0^{(2)} (h_t^{(1)})^{-1}, \epsilon^{(2)}\Big] \Big[ g_t^{(1)} g_t^{(2)} \otimes h_t^{(1)} h_t^{(2)}, 1\Big];$$
by use of $ c_{W_1}\big(g_0^{(1)}, g_t^{(1)} g_0^{(2)} (g_t^{(1)})^{-1}\big)
=c_{W} (g_0^{(1)} \otimes h_0^{(1)}, g_t^{(1)} g_0^{(2)} (g_t^{(1)})^{-1} \otimes h_t^{(1)} h_0^{(2)} (h_t^{(1)})^{-1})$, the last term  in turn equals
$ \big[ g_0^{(1)} \otimes h_0^{(1)}, \epsilon^{(1)}\big] \big[g_t^{(1)} \otimes h_t^{(1)}, 1\big] \big[g_0^{(2)} \otimes h_0^{(2)}, \epsilon^{(2)}\big] \big[ g_t^{(2)} \otimes h_2^{(2)}, 1 \big]= \iota_{1/2}\big([\overline{g}^{(1)}, h^{(1)}]\big)\iota_{1/2}\big([ \overline{g}^{(2)}, h^{(2)}]\big)$.
This finishes the proof!
\end{proof}
\subsection{Irreducible admissible representations of $\GU(V)$}
In order to obtain  the theta  correspondences for the similitude groups,   we will use the main theorems in Sections \ref{stronglygraphreI}, \ref{stronglygraphreII}.  As required there, we discuss some additional conditions  in this subsection. Throughout this  subsection, we  follow the conventions of Section \ref{Notationandconventionsis}.  In addition, we let $A$ be an abelian group of order $n$. Suppose $2 | n$ and $(p, n)=1$. For the local field $F$, we will write $U_n=\{ u \in F^{\times} \mid   u\equiv 1 \mod \mathfrak{P}^{n} \}$.  Let $U$ be the group of units in $\mathcal{O}_F$, and $\omega$ the prime element  of $F$. Clearly, $U/{U_1} \simeq k_F^{\times}$ is a cyclic group of order $q-1$; by \cite[p. 20]{Mo} , $U\simeq U_1 \times S$ for certain subgroup $S$ of $U$.
\begin{lemma}\label{tidtytyt}
There exists  an isomorphism
$\varphi: \Ha^2(F^{\times}, A) \simeq \Hom(S, A)$.
Moreover, this map can be given by  $s \longmapsto c(\omega, s) c(s,\omega)^{-1}$ for a $2$-cocycle $c$ of $Z^{2}(F^{\times}, A)$.
\end{lemma}
\begin{proof}
This arises from the result of Moore in \cite{Mo}. By Lmm.4.1 there, we get
$\Ha^2(F^{\times}, A) \simeq \Hom(S, A) \oplus \Hom(U_1, A) \oplus \Ha^2(U_1, A)$.
 The last two terms are  $p$-primary groups, and  $A$ has order prime to $p$,  so those terms must vanish. On the other hand, the explicit  map has already been constructed in \cite[ Lmm.4.1]{Mo}.
\end{proof}
\begin{corollary}\label{nullh2}
For the subgroup $(F^{\times})^n$ of $F^{\times}$, the canonical map $\Ha^2(F^{\times}, A) \longrightarrow \Ha^2\big((F^{\times})^n, A\big)$ is null.
\end{corollary}

Now let $(V, \langle, \rangle)$ be a right $\varepsilon$-hermitian vector space over $D$, $\U(V)$ the group of  isometries   of $(V, \langle, \rangle)$ and $\GU(V)$ the  group of
similitudes of  $(V, \langle, \rangle)$. To each class $[c]$ of $\Ha^2( \GU(V), A)$ is associated a central extension
$$ 1 \longrightarrow A  \longrightarrow \widetilde{\GU}^A(V) \longrightarrow \GU(V) \longrightarrow 1$$
of $\GU(V)$ by  the abelian group $A$.  We will denote the inverse image of $\U(V)$ in $\widetilde{\GU}^A(V)$ by $\widetilde{\U}^A(V)$.
\begin{lemma}\label{splittinghomomorphism}
There is an isomorphism:
$$(p_1, p_2, p_3): \Ha^2\big( F^{\times} \times \U(V), A\big)  \longrightarrow  \Ha^2( \U(V), A) \oplus \Hom\big(\U(V), \Hom(F^{\times}, A)\big) \oplus \Ha^2(F^{\times}, A),$$
where $p_1, p_3$ are the restriction homomorphisms; if $c(-,-)$ is a $2$-cocycle  of one class in $\Ha^2\big( F^{\times}\times \U(V), A\big)$, then $p_2([c])$ is given by $u \longrightarrow (x \longmapsto c(x, u) c(u, x)^{-1})$, for $u\in \U(V)$, $x\in F^{\times}$.
\end{lemma}
\begin{proof}
See \cite[Lmm.4.1]{Mo}.
\end{proof}
This lemma can derive the following results:
\begin{lemma}\label{commutativegroups}
\begin{itemize}
\item[(1)] The exact sequence
$1 \longrightarrow A \longrightarrow \widetilde{\GU}^A(V) \longrightarrow \GU(V) \longrightarrow 1$
splits at $(F^{\times})^n$. Here, we identify  $(F^{\times})^n$ as a subgroup of $\GU(V)$ via scalar multiplicities.
\item[(2)] The two subgroups $(F^{\times})^n$ and $\widetilde{\U}^A(V)$ of $\widetilde{\GU}^A(V)$ commute.
\end{itemize}
\end{lemma}
\begin{proof}
1) The homomorphism $\Ha^2(\GU(V), A) \longrightarrow \Ha^2\big((F^{\times})^n, A\big)$ factors through the null map $\Ha^2(F^{\times },A) \longrightarrow \Ha^2\big((F^{\times})^n, A\big)$ (Coro.\ref{nullh2}), so the result follows.\\
2) Let us consider the homomorphism  $(F^{\times })^n \times \U(V) \longrightarrow \GU(V)$, which yields a homomorphism  $\varphi: \Ha^2( \GU(V), A) \longrightarrow \Ha^2((F^{\times})^n \times \U(V), A)$. Note that for each $2$-cocycle $c \in Z^2( \GU(V), A) $, $\varphi([c])$ is just the class of  the restriction of $c(-,-)$ to $(F^{\times})^n \times \U(V)$. Similarly as above, $\varphi$ has to factor through $\Ha^2\Big(F^{\times} \times \U(V), A\Big) \longrightarrow \Ha^2\Big( (F^{\times})^n \times \U(V), A\Big)$, so  by Lmm.\ref{splittinghomomorphism}, $p_2 \circ \varphi([c])=0$, which means $c(x,u)=c(u,x)$ for $x \in (F^{\times})^n$, $u \in \U(V)$ by construction.
\end{proof}

\begin{theorem}
If  $\widetilde{\pi}\in \Irr(\widetilde{\GU}^A(V))$, $\widetilde{\sigma}\in \Irr(\widetilde{\U}^A(V))$,  then  $\widetilde{\pi}, \widetilde{\sigma}$ both are  admissible.
\end{theorem}
\begin{proof}
See \cite[p. 17, and pp. 25-32]{BernD}.
\end{proof}
\begin{corollary}\label{restrictionadmissible}
If $\widetilde{\pi}\in \Irr(\widetilde{\GU}^A(V))$, then $\widetilde{\pi}|_{\widetilde{\U}^A(V)}$ is admissible.
\end{corollary}
\begin{proof}
By \cite[p.142, Coro.]{Neu}, we know that $F^{\times}/{(F^{\times})^{ 2n}}$ is a finite abelian group.   Since  $\widetilde{\GU}^A(V)/[{(F^{\times})^n \widetilde{\U}^A(V)}] \hookrightarrow F^{\times}/{(F^{\times})^{ 2n}}$, the result holds.
 \end{proof}
\subsection{Howe correspondences for the similitude groups}\label{thehowecorrespondenceforsimilitude}
Let $(W, \langle, \rangle)$ be a symplectic vector space over $F$ of dimension $2m$, $(\rho_{\psi}, S)$  the Weil representation of $\overline{\Sp}(W)$ relative to $\psi$ (\emph{cf}.  Theorem \ref{Weil}). We fix  an abelian group $A$ of finite order dividing $2$ and prime to $p$.  Let  $W=W_1 \otimes_{D'} W_2$, $\langle, \rangle = t_{K/F}\big( \langle, \rangle_1 \otimes \tau(\langle, \rangle_2)\big)$ henceforth be a decomposition of tensor product
(Section \ref{SplittingMetaplecticsubgroups})  for a finite separable extension $K$ of $F$.  Let $\widetilde{\GU}^A(W_i)$ be an arbitary central extension of $\GU(W_i)$ by $A$, and $\widetilde{\U}^A(W_i)$ the inverse image of $\U(W_i)$ in $\widetilde{\GU}^A(W_i)$. To such  decomposition of tensor product  is associated a canonical intermediate subgroup $\Gamma$ of $\Sp(W)$ (\emph{cf}. Section \ref{SplittingMetaplecticsubgroups});  denote by $\overline{\Gamma}$ its inverse image in $\overline{\Sp}(W)$. We also  define  an intermediate  subgroup of $\widetilde{\GU}^A(W_1)\times \widetilde{\GU}^A(W_2)$ by $\widetilde{\Gamma}^A=\left\{ (\widetilde{g_1}, \widetilde{g_2})\mid \lambda(\widetilde{g_1}) \lambda(\widetilde{g_2})=1\right\}$.
  \begin{lemma}\label{twoexactsequences}
  \begin{itemize}
  \item[(1)]  $1 \longrightarrow \U(W_i) \longrightarrow \GU(W_i) \stackrel{\lambda}{\longrightarrow} \Lambda_{\GU(W_i)} \longrightarrow 1,$  $i=1,2$;

\item[(2)]  $1 \longrightarrow  \widetilde{\U}^A(W_i) \longrightarrow \widetilde{\GU}^A(W_i) \stackrel{\lambda}{\longrightarrow} \Lambda_{\GU(W_i)} \longrightarrow 1$, $i=1,2$;
\item[(3)]  $1 \longrightarrow \U(W_1) \times \U(W_2) \longrightarrow \Gamma \stackrel{\lambda}{\longrightarrow} \Lambda_{\Gamma} \longrightarrow 1$.
\end{itemize}
\end{lemma}
 \begin{proof}
 It suffices to verify the second exact sequence.  By definition, we have the following commutative diagram:
 \[
\begin{array}{ccccccccccc}
1 & \longrightarrow  &  A            & \longrightarrow     & \widetilde{\U}^A(W_i)  & \longrightarrow & \U(W_i)       & \longrightarrow  & 1       \\
  &                  &  \parallel    &                     &  \downarrow      &                 & \downarrow &                  &          \\
1 & \longrightarrow  &      A        & \longrightarrow     & \widetilde{\GU}^A(W_i) & \longrightarrow & \GU(W_i)      & \longrightarrow  & 1
\end{array}
\]
Using the snake's lemma, we obtain
\[
\begin{array}{ccccccccccc}
  &                  &    1          &                     &      1           &                 &   1        &                  &    \\
  &                  &   \downarrow  &                     & \downarrow       &                 &  \downarrow&                  &     \\
1 & \longrightarrow  &  A        & \longrightarrow     & \widetilde{\U}^A(W_i)  & \longrightarrow & \U(W_i)       & \longrightarrow  & 1       \\
  &                  &  \parallel    &                     &  \downarrow      &                 & \downarrow &                  &          \\
1 & \longrightarrow  & A         & \longrightarrow     & \widetilde{\GU}^A(W_i) & \longrightarrow & \GU(W_i)      & \longrightarrow  & 1     \\
  &                  &  \downarrow   &                     & \footnotesize{\lambda}\downarrow      &                 &  \footnotesize{\lambda}\downarrow  &                  &       \\
1 & \longrightarrow  &     1       &\longrightarrow &\Lambda_{\widetilde{\GU}^A(W_i)}    &  =         &\Lambda_{\GU(W_i)}& \longrightarrow & 1  \\
  &                  &   \downarrow  &                     & \downarrow       &                 &  \downarrow&                  &     \\
 &                  &    1          &                     &      1           &                 &   1        &                  &
\end{array}
\]
\end{proof}
 As a consequence of the above proof, we obtain:
\begin{lemma}\label{twolongexactsequences}
\begin{itemize}
\item[(1)] There is a short exact sequence $1 \longrightarrow \widetilde{\U}^A(W_1) \times \widetilde{\U}^A(W_2) \longrightarrow \widetilde{\Gamma}^A \stackrel{\lambda}{\longrightarrow} \Lambda_{\widetilde{\Gamma}^A}=\Lambda_{\Gamma} \longrightarrow 1$.
\item[(2)] There is a canonical morphism $\widetilde{p}: \widetilde{\Gamma}^A \longrightarrow \GU(W_1) \times \GU(W_2)$ with the image $\Gamma$.
\end{itemize}
\end{lemma}
\begin{proof}
The first statement derives from the equality: $\Lambda_{\widetilde{\GU}^A(W_i)}= \Lambda_{\GU(W_i)}$.  The second one is automatically.
\end{proof}
Notice that $\Lambda_{\GU(W_i)}=\Lambda_{\widetilde{\GU}^A(W_i)} \subseteq \Lambda_{\widetilde{\Gamma}^A}=\Lambda_{\Gamma}$. We hence  define a subgroup of $\GU(W_i)$ related to $\Lambda_{\Gamma}$ by $\Ga^{\Gamma}\Ua(W_i)=$ the inverse image of $\Lambda_{\Gamma}$ in $\GU(W_i)$, and  obtain likewise a subgroup  $\Ga^{\widetilde{\Gamma}^A}\widetilde{\U}^A(W_i)$ of $\widetilde{\GU}^A(W_i)$.
 \subsubsection{Case 1} By Theorem \ref{equationsimple}, apart from the exceptional symplectic-orthogonal cases we are in a position to obtain morphisms from $\Gamma$ to $\overline{\Sp}(W)$.  We now fix once for all one such map $\iota$. The restriction  of $\rho_{\psi}$ to $\Gamma$ (through $\iota$) shall give a smooth representation of  $\Gamma$ denoted by $\omega_{\psi}$, whose inflation, a smooth representation of $\widetilde{\Gamma}^A$  via the map $\widetilde{p}$ in Lmm.\ref{twolongexactsequences} (2) will be denoted by $\widetilde{\omega}_{\psi}$.
 \begin{theorem}\label{maintheoem1}
 \begin{itemize}
\item[(1)] $\pi_{\psi}=\cInd_{\Gamma}^{\GU^{\Gamma}(W_1) \times \GU^{\Gamma}(W_2)}\omega_{\psi}$ is a   theta representation of finite length.
\item[(2)] $\widetilde{\pi_{\psi}}=\cInd_{\widetilde{\Gamma}^A}^{\Ga^{\widetilde{\Gamma}^A}\widetilde{\U}^A(W_1) \times \Ga^{\widetilde{\Gamma}^A}\widetilde{\U}^A(W_2)}\widetilde{\omega}_{\psi}$ is a theta representation of finite length.
\end{itemize}
\end{theorem}
\begin{proof}
For (1) we take a subgroup $F^{\times} \U(W_i)$ of $\Ga^{\Gamma}\U(W_i)$, and $F^{\times}\big(\U(W_1) \times \U(W_2)\big)$ of $\Gamma$.  By Theorem \ref{thetacorrespondencesforreductivegroups} and Remark \ref{ouvertmorphisme}, the induction $\omega_{\psi}^{(1)}=\cInd_{F^{\times}( \U(W_1) \times \U(W_2))}^{F^{\times} \U(W_1) \times F^{\times} \U(W_2)} \big( \omega_{\psi}|_{F^{\times} ( \U(W_1) \times \U(W_2))} \big)$ is a  theta representation of finite length. Note that $\Ga^{\Gamma}\Ua(W_i)/{F^{\times} \U(W_i)} \simeq \Gamma/{ [F^{\times}( \U(W_1) \times \U(W_2))]} \simeq \Lambda_{\Gamma}/{(F^{\times})^2}$, and all are finite abelian groups. Without doubt, $\omega_{\psi}^{(1)}$ can extend naturally to get a smooth representation $\omega_{\psi}^{(2)}=\cInd_{\Gamma}^{\Gamma\big( F^{\times} \U(W_1) \times F^{\times} \U(W_2)\big)} \omega_{\psi}$. As is easily checked that the triple  of  groups $\big( \Ga^{\Gamma}\U(W_1) \times \Ga^{\Gamma}\U(W_2), \Gamma ( F^{\times} \U(W_1) \times F^{\times}\U(W_2))), F^{\times} \U(W_1) \times F^{\times} \U(W_2)\big)$ satisfies the conditions of Theorem \ref{graphrepresentation}; hence $\pi_{\psi}=\cInd_{\Gamma\big( F^{\times} (\U(W_1) \times \U(W_2))\big)}^{\Ga^{\Gamma}\U(W_1) \times \Ga^{\Gamma}\U(W_2)} \omega_{\psi}^{(2)}$ is a   theta representation of finite length. For (2) the proof is the same by replacing the above $F^{\times}$ with $(F^{\times})^{n}$ but using Lmm.\ref{commutativegroups}.
\end{proof}

\subsubsection{Case 2}  Let us discuss the exceptional case: $W= W_1 \otimes_K W_2$ with $V_1$ being symplectic and $V_2$ being orthogonal, in which case we assume that  the abelian group $A$ contains $\mu_8$.  We \emph{fix}   a  central extension  $\widetilde{\GSp}^A(W_1)$ of $\GSp(W_1)$ by $A$ containing at least one group $\overline{\GSp}(W_1)$  in Prop. \ref{centralextensionfoot}. As a consequence we can write  $\widetilde{\GSp}^A(W_1)=\overline{\GSp}(W_1)\otimes_{\mu_8} A$.  Now let us  also define a subgroup of $\widetilde{\GSp}^A(W_1) \times \GO(W_2)$ by
$$\widetilde{\Gamma}_{1/2}^A=\left\{(\widetilde{g}, h)\in \widetilde{\GSp}^A(W_1) \times \GO(W_2) \mid \lambda(\widetilde{g}) \lambda(h)=1\right\}.$$
\begin{lemma}
There exists a homomorphism $\iota_{A}: \widetilde{\Gamma}_{1/2}^A \longrightarrow \widetilde{\Sp}^A(W)$ such that the following diagram
\begin{equation}\label{clubsuit}
\begin{array}{ccccc}
 \widetilde{\Gamma}_{1/2}^A     &    \stackrel{\iota_{A}}{\longrightarrow} & \widetilde{\Sp}^A(W)\\
       \downarrow    &              &               \downarrow\\
 \Gamma    &    \longrightarrow & \Sp(W)
  \end{array}
\end{equation}
is commutative.
\end{lemma}
\begin{proof}
See the proof of Prop.\ref{centralextensionfoot}.
\end{proof}
 Recall that $\widetilde{\GO}^A(W_2)$ is a  central extension  of $\GO(W_2)$ by $A$, and $\widetilde{\Gamma}^A=\left\{(\widetilde{g}, \widetilde{h})\in \widetilde{\GSp}^A(W_1) \times \widetilde{\GO}^A(W_2) \mid \lambda(\widetilde{g}) \lambda(\widetilde{h})=1\right\}$. It is clear that there is an exact sequence
 $$\widetilde{\Gamma}^A \longrightarrow   \widetilde{\Gamma}_{1/2}^A \longrightarrow 0.$$
  The restriction  of $\rho_{\psi}$ to $\widetilde{\Gamma}_{1/2}^A$ (through $\iota_A$)  gives a smooth representation of  $\widetilde{\Gamma}_{1/2}^A$ denoted by $\omega_{\psi}$, and its inflation to the group $\widetilde{\Gamma}^A$ will be denoted by $\widetilde{\omega}_{\psi}$. Similarly  as Lmm.\ref{twoexactsequences}, we have:
  \begin{lemma}
  There is a short exact sequence: $1 \longrightarrow \widetilde{\Sp}^A(W_1) \times \Oa(W_2) \longrightarrow  \widetilde{\Gamma}_{1/2}^A \longrightarrow \Lambda_{\widetilde{\Gamma}^A_{1/2}}=\Lambda_{\Gamma} \longrightarrow 1$.
  Let $\Ga^{\widetilde{\Gamma}^A_{1/2}}\widetilde{\Sp}(W_1)$, $\Ga^{\widetilde{\Gamma}^A_{1/2}}\Oa(W_1)$ be the inverse images of $\Lambda_{\widetilde{\Gamma_{1/2}^A}}$ in $\widetilde{\GSp}^A(W_1)$, $\GO(W_2)$ respectively, and $\Ga^{\widetilde{\Gamma}^A}\widetilde{\Sp}(W_1)$, $\Ga^{\widetilde{\Gamma}^A}\widetilde{\Oa}(W_2)$ the analogous subgroups of   $\widetilde{\GSp}^A(W_1)$, $\widetilde{\GO}^A(W_2)$  respectively.
  \end{lemma}
 \begin{theorem}\label{maintheoem2}
 \begin{itemize}
\item[(1)] $\pi_{\psi}^{1/2}=\cInd_{\widetilde{\Gamma}_{1/2}^A}^{\Ga^{\widetilde{\Gamma}^A_{1/2}}\widetilde{\Sp}(W_1) \times \Ga^{\widetilde{\Gamma}^A_{1/2}}\Oa(W_1)} \omega_{\psi}$ is a  theta representation of finite length.
\item[(2)] $\widetilde{\pi_{\psi}}^{1/2}=\cInd_{\widetilde{\Gamma}^A}^{\Ga^{\widetilde{\Gamma}^A}\widetilde{\Sp}(W_1)\times \Ga^{\widetilde{\Gamma}^A}\widetilde{\Oa}(W_2)}\widetilde{\omega}_{\psi}$ is a   theta representation of finite length.
\end{itemize}
\end{theorem}
\begin{proof}
The proof is similar as  that of the above Theorem \ref{maintheoem1}.
\end{proof}
\subsubsection{Examples}
By aid of the explicit analysis on the case studies in Prop. \ref{lamedavalues}, we can     provide     the representations $\pi_{\psi}$ in Theorem \ref{maintheoem1}, and  $\pi_{\psi}^{1/2}$ in Theorem \ref{maintheoem2} on  different  cases as follows:  Recall the notations in Section \ref{Reductivedualpair}. Assume $W_i=W_i^0\oplus m_i H_i$ with $W_i^0$ being an anisotropic subspace and $H_i$ the hyperbolic plane.
\paragraph{Case (1)} Assume $D=F$, $\epsilon_1=-1$, $\epsilon_2=1$, $\Ua(W_1)=\Sp(W_1)$, $\Ua(W_2)=\Oa(W_2)$, and $\GU(W_1)=\GSp(W_1)$, $\GU(W_2)=\GO(W_2)$.

(i) $\dim_{F} W_2^0=0,4$, $\Gamma=\left\{(g,h) \in \GSp(W_1) \times \GO(W_2)\mid \lambda(g) \lambda(h)=1\right\}$, $\Lambda_{\Gamma}=F^{\times}$, $\Ga^{\Gamma}\Sp(W_1)=\GSp(W_1)$, $\Ga^{\Gamma}\Oa(W_2)=\GO(W_2)$. Then $\pi_{\psi}=\cInd_{\Gamma}^{\GSp(W_1) \times \GO(W_2)} \omega_{\psi}$.

(ii) $\dim_{F} W_2^0=1, 3$, $\widetilde{\Gamma}^{A}_{ 1/2}=\left\{(\widetilde{g},h) \in \widetilde{\GSp}^A(W_1)\times \GO(W_2)\mid \widetilde{\lambda}(\widetilde{g})\lambda(h)=1\right\}$, $\Lambda_{\widetilde{\Gamma}^{A}_{ 1/2}}=F^{\times 2}$, $\widetilde{\GSp}^A_+(W_1):= \Ga^{\widetilde{\Gamma}^{A}_{1/2}}\widetilde{\Sp}^A(W_1)=\left\{\widetilde{g} \in \widetilde{\GSp}^A(W_1) \mid \widetilde{\lambda}(\widetilde{g}) \in F^{\times 2}\right\}$, $\Ga^{\widetilde{\Gamma}^{A}_{1/2}}\Oa(W_2)=\GO(W_2)$. Then $\pi_{\psi}^{1/2}=\cInd_{\widetilde{\Gamma}^{A}_{1/2}}^{\widetilde{\GSp}^A_+(W_1)\times \GO(W_2)} \omega_{\psi}$.

(iii) $\dim W_2^0=2$, $W_2^0=E(f)$, where $E/F$ is a quadratic field extension, $f=1$ or $f\in F \backslash \nnn_{E/F}(E^{\times})$. Let $\Gamma=\left\{ (g, h) \in \GSp(W_1) \times \GO(W_2) \mid \lambda(g) \lambda(h)=1\right\}$, $\Lambda_{\Gamma}=\nnn_{E/F}(E^{\times})$, $\GSp_+(W_1):=\Ga^{\Gamma}\Sp(W_1)=\left\{ g\in \GSp(W_1) \mid \lambda(g)\in \nnn_{E/F}(E^{\times})\right\}$, $\Ga^{\Gamma}\Oa(W_2)=\GO(W_2)$. Then $\pi_{\psi}=\cInd_{\Gamma}^{\GSp_+(W_1) \times \GO(W_2)} \omega_{\psi}$.\\

\paragraph{Cas(2)}
Assume $D=E$  is a quadratic field extension over $F$,
$\Gamma=\left\{ (g, h) \in \GU(W_1) \times \GU(W_2) \mid \lambda(g) \lambda(h)=1\right\}$.

(i)  $\dim_{E} W_1$ ,  $\dim_{E} W_2$ both are even. Then  $\Lambda_{\Gamma}=F^{\times}$, $\Ga^{\Gamma}\Ua(W_i)=\GU(W_i)$, $\pi_{\psi}=\cInd_{\Gamma}^{\GU(W_1) \times \GU(W_2)}\omega_{\psi}$.

(ii) $\dim_{E} W_1$, $\dim_{E}W_2$ both are odd. Then $\Lambda_{\Gamma}=\nnn_{E/F}(E^{\times})$, $\Ga^{\Gamma}\Ua(W_1)=\GU(W_i)$, $\pi_{\psi}=\cInd_{\Gamma}^{\GU(W_1) \times \GU(W_2)} \omega_{\psi}$.

(iii) $\&$ (iv) By symmetry, we  assume  $\dim_{E} W_1$ is even and   $\dim_{E} W_2$ is odd.  Let  $\Lambda_{\Gamma}=\nnn_{E/F}(E^{\times})$, $\GU_+(W_1):= \Ga^{\Gamma}\Ua(W_2)=\left\{ g\in \GU(W_2)\mid \lambda(g)\in \nnn_{E/F}(E^{\times})\right\}$, $\Ga^{\Gamma}\Ua(W_2)=\GU(W_2)$. Then  $\pi_{\psi}=\cInd_{\Gamma}^{\GU_+(W_1) \times \GU(W_2)} \omega_{\psi}$.

\paragraph{Cas (3)}
Assume $D$ is the unique quaternion algebra over  $F$, $\Ga^{\Gamma}\Ua(W_i)=\GU(W_i)$. Then  $\pi_{\psi}=\cInd_{\Gamma}^{\GU(W_1) \times \GU(W_2)} \omega_{\psi}$.\\

 The work can be done similarly for the other representations $\widetilde{\pi_{\psi}}$(\emph{cf}. Theorem \ref{maintheoem1}),  $\widetilde{\pi_{\psi}}^{1/2}$(\emph{cf}. Theorem \ref{maintheoem2}). Indeed, we can also construct other kinds of theta representations as above.  Let us present two  examples.
\paragraph{Cas (1)'}
Assume $D=F$, $\epsilon_1=-1$, $\epsilon_2=1$, $\Ua(W_1)=\Sp(W_1)$, $\Ua(W_2)=\Oa(W_2)$; $\GU(W_1)=\GSp(W_1)$, $\GU(W_2)=\GO(W_2)$.

(i)' If  $\dim_{F}W_2$ is even,  let $E'/F$ be  an arbitrary  quadratic field extension. Now we define  $\Ga^{E'}\Sp(W_1)=\left\{ g\in \GSp(W_1)\mid  \lambda(g)\in  \nnn_{E'/F}(E^{'\times})\right\}$, $\Ga^{E'}\Oa(W_2)=\left\{ h\in \GO(W_2)\mid  \lambda(h)\in \nnn_{E'/F}(E^{'\times})\right\}$, and  $\Gamma^{E'}=\left\{ (g,h) \in \Ga^{E'}\Sp(W_1) \times \Ga^{E'}\Oa(W_2)\mid  \lambda(g)\lambda(h)=1\right\}$. Then $\pi^{E'}=\cInd_{\Gamma^{E'}}^{\Ga^{E'}\Sp(W_1) \times \Ga^{E'}O(W_2)} \big(\omega_{\psi}|_{\Gamma^{E'}}\big)$ is also a theta representation.

(ii)' If  $\dim_{F}W_2$ is odd,
we define $\widetilde{\GSp}^A_+(W_1)=\left\{ \widetilde{g} \in \widetilde{\GSp}^A(W_1)\mid  \widetilde{\lambda}(\widetilde{g})\in F^{\times 2}\right\}$, $\GO_+(W_2)=\left\{ h\in \GO(W_2)\mid h\in \GO(W_2),  \lambda(h) \in F^{\times 2}\right\}$,  and a subgroup  $\Gamma^A_+=\left\{ (g,h) \in \widetilde{\GSp}^A_+(W_1) \times \GO_+(W_2)\mid \widetilde{\lambda}(\widetilde{g})\lambda(h)=1\right\}$ of  $\widetilde{\Gamma}^{A}_{ 1/2}$. Then  $\pi_+=\cInd_{\Gamma^A_+}^{\widetilde{\GSp}^A_+(W_1) \times \GO_+(W_2)} \big(\omega_{\psi}|_{\Gamma^A_+}\big)$ is also a theta representation.

 \labelwidth=4em
\addtolength\leftskip{25pt}
\setlength\labelsep{0pt}
\addtolength\parskip{\smallskipamount}

\end{document}